%% LyX 2.3.6.1 created this file.  For more info, see http://www.lyx.org/.
%% Do not edit unless you really know what you are doing.
\documentclass[12pt,reqno,english]{amsart}
\usepackage[T1]{fontenc}
\usepackage[latin9]{inputenc}
\usepackage{geometry}
\geometry{verbose,tmargin=2.9cm,bmargin=2.9cm,lmargin=3cm,rmargin=3cm,headsep=1cm,footskip=1cm}
\setcounter{tocdepth}{1}
\usepackage{color}
\usepackage{prettyref}
\usepackage{mathrsfs}
\usepackage{mathtools}
\usepackage{bm}
\usepackage{amsbsy}
\usepackage{amstext}
\usepackage{amsthm}
\usepackage{amssymb}
\usepackage{graphicx}
\usepackage{setspace}
\usepackage{esint}
\setstretch{1.2}

\makeatletter
%%%%%%%%%%%%%%%%%%%%%%%%%%%%%% Textclass specific LaTeX commands.
\numberwithin{equation}{section}
\numberwithin{figure}{section}
\theoremstyle{plain}
\newtheorem{thm}{\protect\theoremname}[section]
\theoremstyle{remark}
\newtheorem{notation}[thm]{\protect\notationname}
\theoremstyle{plain}
\newtheorem{lem}[thm]{\protect\lemmaname}
\theoremstyle{remark}
\newtheorem{rem}[thm]{\protect\remarkname}
\theoremstyle{plain}
\newtheorem{cor}[thm]{\protect\corollaryname}
\theoremstyle{plain}
\newtheorem{prop}[thm]{\protect\propositionname}
\theoremstyle{remark}
\newtheorem*{acknowledgement*}{\protect\acknowledgementname}

%%%%%%%%%%%%%%%%%%%%%%%%%%%%%% User specified LaTeX commands.
% Preview source code

\usepackage{babel}
\usepackage{stmaryrd}

\makeatother

\usepackage{babel}
\providecommand{\acknowledgementname}{Acknowledgement}
\providecommand{\corollaryname}{Corollary}
\providecommand{\lemmaname}{Lemma}
\providecommand{\notationname}{Notation}
\providecommand{\propositionname}{Proposition}
\providecommand{\remarkname}{Remark}
\providecommand{\theoremname}{Theorem}

\begin{document}
\title[Non-reversible Metastable Diffusions with Gibbs Invariant Measure
I]{Non-reversible Metastable Diffusions with Gibbs Invariant Measure
I: Eyring--Kramers Formula}
\author{Jungkyoung Lee and Insuk Seo}
\address{J. Lee. Department of Mathematical Sciences, Seoul National University,
Republic of Korea.}
\email{ljk9316@snu.ac.kr}
\address{I. Seo. Department of Mathematical Sciences and Research Institute
of Mathematics, Seoul National University, Republic of Korea.}
\email{insuk.seo@snu.ac.kr}
\begin{abstract}
In this article, we prove the Eyring--Kramers formula for non-reversible
metastable diffusion processes that have a Gibbs invariant measure.
Our result indicates that non-reversible processes exhibit faster
metastable transitions between neighborhoods of local minima, compared
to the reversible process considered in {[}Bovier, Eckhoff, Gayrard,
and Klein, J. Eur. Math. Soc. 6: 399--424, 2004{]}. Therefore, by
adding non-reversibility to the model, we can indeed accelerate the
metastable transition. Our proof is based on the potential theoretic
approach to metastability through accurate estimation of the capacity
between metastable valleys. We carry out this estimation by developing
a novel method to compute the sharp asymptotics of the capacity \textit{without
relying on variational principles such as the Dirichlet principle
or the Thomson principle. }
\end{abstract}

\maketitle

\section{\label{sec1}Introduction}

In the study of the metastability of stochastic dynamical systems,
one of the most important models is the overdamped Langevin dynamics
given by a stochastic differential equation (SDE) of the form 
\begin{equation}
d\boldsymbol{y}_{\epsilon}(t)\,=\,-\nabla U(\boldsymbol{y}_{\epsilon}(t))\,dt+\sqrt{2\epsilon}\,d\bm{w}_{t}\;,\label{e_SDEy}
\end{equation}
where $(\boldsymbol{w}_{t})_{t\ge0}$ represents the standard $d$-dimensional
Brownian motion, $\epsilon>0$ is a small constant parameter corresponding
to the magnitude of the noise, and $U:\mathbb{R}^{d}\rightarrow\mathbb{R}$
is a smooth Morse function\footnote{All the critical points of $U$ are non-degenerate (i.e., the Hessian
at each critical point is invertible) and isolated from others.} with finite critical points. In addtion to its importance in large-deviation
theory, mathematical physics, and engineering (cf. \cite{FW} and
references therein), this process is also well-known for approximating
the minibatch gradient descent algorithm widely used in deep learning
(cf. \cite{GSS} and references therin). 

The analysis of the metastability of this model has attracted considerable
attention in recent decades. Its first successful mathematical treatment
was carried out in a sequence of pioneering studies by Freidlin and
Wentzell in the 1960s from a large-deviation theoretical perspective,
and these achievements have been summarized in \cite{FW}. Subsequently,
the next breakthrough was achieved in \cite{BEGK1} from a potential
theoretical perspective. In particular, the so-called Eyring--Kramers
formula for \eqref{e_SDEy} was eastablished as a refinement of the
large-deviation result obtained in \cite{FW}. 

Recently, several alternative approaches have been developed in the
study of the metastable behavior of the process $\boldsymbol{y}_{\epsilon}(\cdot)$.
We refer to \cite{RS} written by an author of the current article
and Rezakhanlou for the Poisson equation approach, and \cite{new_GLLNreview}
for the quasi-stationary distribution approach. 

\subsubsection*{Metastable behavior of overdamped Langevin dynamics}

To heuristically explain the metastable behavior of the process, we
first consider the overdamped Langevin dynamics $\boldsymbol{y}_{\epsilon}(\cdot)$.
We regard this process as a small random perturbation of the dynamical
system given by an ordinary differential equation (ODE) of the form
\begin{equation}
d\boldsymbol{y}(t)\,=\,-\nabla U(\boldsymbol{y}(t))\,dt\;.\label{e_ODEy}
\end{equation}
Note that the stable equilibria of this dynamical system are given
by the local minima of $U$. Hence, provided that $\epsilon\simeq0$,
the process $\boldsymbol{y}_{\epsilon}(\cdot)$ starting from a neighborhood
of a local minimum of $U$ will remain there for a sufficeiently long
time, as the noise is small compared to the drift term that pushes
the process toward the local minimum. 

The metastability issue arises for the process $\boldsymbol{y}_{\epsilon}(\cdot)$
if $U$ has multiple local minima. To illustrate the corresponding
metastable behavior more clearly, we simply assume that $U$ has two
local minima $\boldsymbol{m}_{1}$ and $\boldsymbol{m}_{2}$ as shown
in Figure \ref{fig1}, and we suppose that the process $\boldsymbol{y}_{\epsilon}(\cdot)$
starts at $\boldsymbol{m}_{1}$. If there is no noise, i.e., $\epsilon=0$,
the process always remains at $\boldsymbol{m}_{1}$. However, when
$\epsilon$ is small but positive, random noise accumulates over a
sufficiently long time and enables the process $\boldsymbol{y}_{\epsilon}(\cdot)$
to make a transition to a neighborhood of another minimum $\boldsymbol{m}_{2}$,
where it then remains for a long time before making another transition.
Such rare transitions between the neighborhoods of local minima constitute
the dynamical metastable behavior of the process $\boldsymbol{y}_{\epsilon}(\cdot)$.
We can expect richer behaviors when $U$ has a more complex landscape.
\begin{figure}
\includegraphics[scale=0.16]{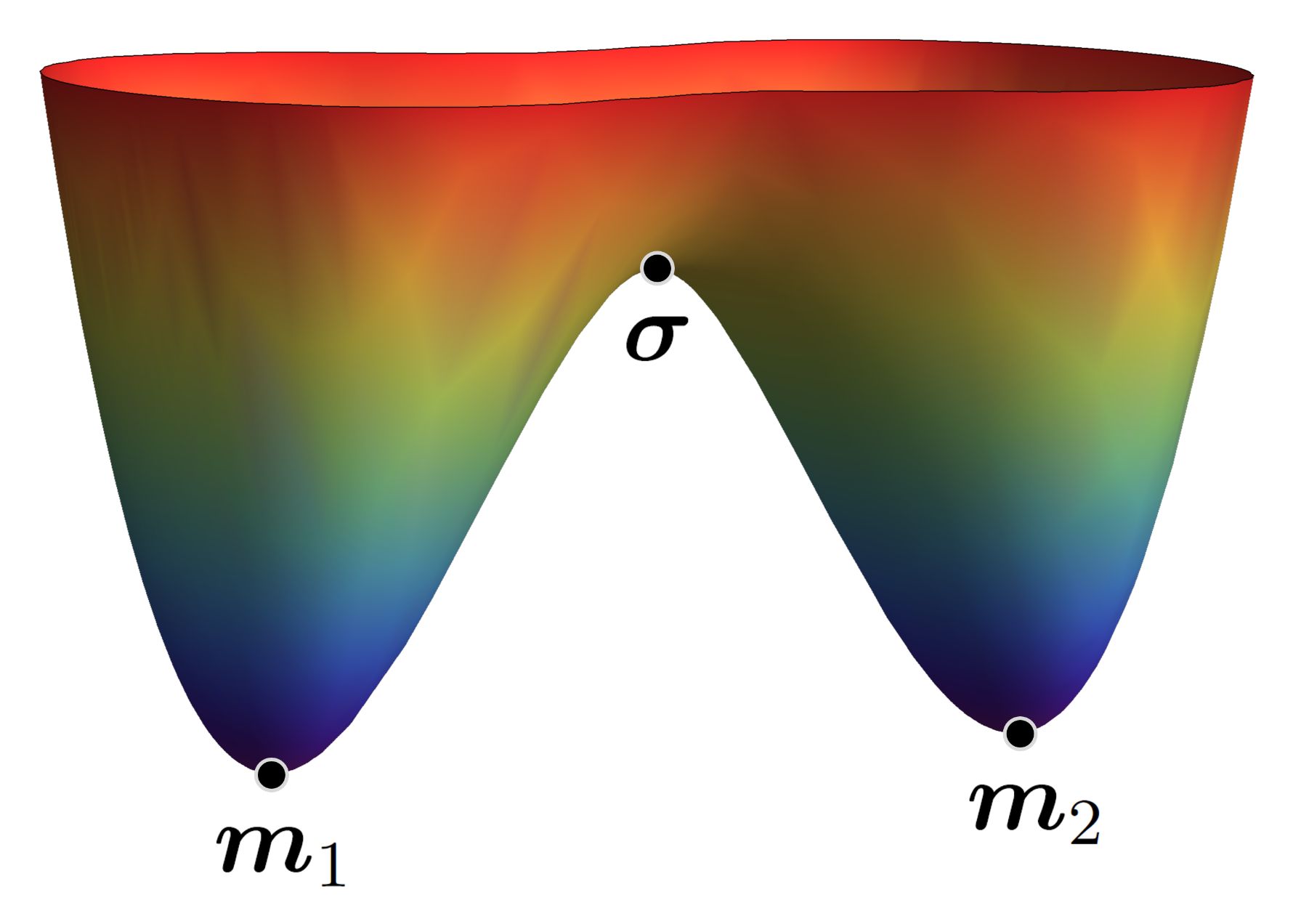}

\caption{\label{fig1}Double-well potential $U$ with two minima $\boldsymbol{m}_{1}$
and $\boldsymbol{m}_{2}$ and a saddle point $\boldsymbol{\sigma}$
between them.}
 
\end{figure}

\subsubsection*{Eyring--Kramers formula}

The Eyring--Kramers formula is the sharp asymptotics, as $\epsilon\rightarrow0$,
of the mean of the time required to observe the transition described
above. It was obtained for the one-dimensional case in classical studies
\cite{Ey,Kra} conducted in the 1930s on the basis of explicit computation.
The generalization of this result to arbitrary dimensions was finally
accomplished in \cite{BEGK1} a few decades later. We recall the double-well
situation illustrated in Figure \ref{fig1} to explain the Eyring--Kramers
formula in a simple form. Let $\tau_{\mathcal{D}_{\epsilon}(\boldsymbol{m}_{2})}$
denote the hitting time with respect to the process $\boldsymbol{y}_{\epsilon}(\cdot)$
of the set $\mathcal{D}_{\epsilon}(\boldsymbol{m}_{2})$, which is
a ball of radius $\epsilon$ centered at $\boldsymbol{m}_{2}$. Then,
the Eyring--Kramers formula is the sharp estimate of the mean transition
time $\mathbb{E}[\tau_{\mathcal{D}_{\epsilon}(\boldsymbol{m}_{2})}|\boldsymbol{y}_{\epsilon}(0)=\boldsymbol{m}_{1}]$.
The Freidlin--Wentzell theory gives the large deviation estimate
for this quantity as 
\begin{equation}
\lim_{\epsilon\rightarrow0}\epsilon\,\log\mathbb{E}[\,\tau_{\mathcal{D}_{\epsilon}(\boldsymbol{m}_{2})}\,|\,\boldsymbol{y}_{\epsilon}(0)=\boldsymbol{m}_{1}\,]\,=\,U(\boldsymbol{\sigma})-U(\boldsymbol{m}_{1})\ ,\label{eqFW}
\end{equation}
where $\boldsymbol{\sigma}$ is the saddle point between the two wells
as shown in Figure \ref{fig1}. The Eyring--Kramers formula is a
refinement of this result (cf. Corollary \ref{cor37} of the current
article), and it gives the precise asymptotics of the expectation
in \eqref{eqFW}. 

The mean transition time is related to the quantification of the mixing
property of the process $\boldsymbol{y}_{\epsilon}(\cdot)$. To explain
it more precisely, we remark that the unique invariant measure for
the process $\boldsymbol{y}_{\epsilon}(\cdot)$ is given by 
\begin{equation}
\mu_{\epsilon}(d\boldsymbol{x})\,=\,\frac{1}{Z_{\epsilon}}\,e^{-U(\bm{x})/\epsilon}\,d\boldsymbol{x}\;,\label{einv}
\end{equation}
where $Z_{\epsilon}$ is the constant given by 
\begin{equation}
Z_{\epsilon}\,=\,\int_{\mathbb{R}^{d}}\,e^{-U(\bm{x})/\epsilon}\,d\bm{x}\,<\,\infty\;,\label{epart}
\end{equation}
where we will impose suitable growth conditions for $U$ in Section
\ref{sec2} to guarantee the finiteness of $Z_{\epsilon}$. The measure
$\mu_{\epsilon}(\cdot)$ corresponds to the\textit{ Gibbs measure}
associated to the energy function $U$ and inverse temperature $\epsilon$
and hence the constant $Z_{\epsilon}$ denotes the associated partition
function. Therefore, we can regard the process $\boldsymbol{y}_{\epsilon}(\cdot)$
as a sampler of the Gibbs distribution $\mu_{\epsilon}(\cdot)$, which
is exponentially concentrated on the global minima of $U$. There
are two representative quantities for measuring this mixing property
of the sampler $\boldsymbol{y}_{\epsilon}(\cdot)$: the spectral gap
\cite{BEGK2} and the mean transition time of the process from one
local minimum to another \cite{BEGK1}. Thus, by estimating the latter
using the Eyring--Kramers formula, one can precisely measure the
mixing property of $\boldsymbol{y}_{\epsilon}(\cdot)$. 

\subsubsection*{Main contribution of this article}

In this article, we consider a variant of the classical overdamped
Langevin dynamics $\boldsymbol{y}_{\epsilon}(\cdot)$, which is obtained
by adding a vector field to the drift term of the SDE \eqref{e_SDEy}.
More precisely, we focus on the Eyring--Kramers formula for the diffusion
process given by an SDE of the form
\begin{equation}
d\bm{x}_{\epsilon}(t)\,=\,-(\nabla U+\boldsymbol{\ell})(\bm{x}_{\epsilon}(t))\,dt+\sqrt{2\epsilon}\,d\bm{w}_{t}\;,\label{e_SDEx}
\end{equation}
where $U$ is the smooth potential function as described above. Further,
$\boldsymbol{\ell}:\mathbb{R}^{d}\rightarrow\mathbb{R}^{d}$ is a
vector field that is orthogonal to the gradient field $\nabla U$,
i.e., 
\begin{equation}
\nabla U(\boldsymbol{x})\cdot\boldsymbol{\ell}(\boldsymbol{x})\,=\,0\;\;\text{ for all }\boldsymbol{x}\in\mathbb{R}^{d}\;,\label{econ_ell1}
\end{equation}
and it is incompressible:
\begin{equation}
(\nabla\cdot\boldsymbol{\ell})(\boldsymbol{x})\,=\,0\;\;\;\text{for all }\boldsymbol{x}\in\mathbb{R}^{d}\;.\label{econ_ell2}
\end{equation}
The condition \eqref{econ_ell1} guarantees that the quasi-potential
of the process $\boldsymbol{x}_{\epsilon}(\cdot)$ is $U$ (cf. \cite[Theorem 3.3.1]{FW}),
and the condition \eqref{econ_ell2} ensures that the invariant measure
of the process $\boldsymbol{x}_{\epsilon}(\cdot)$ is the Gibbs measure
$\mu_{\epsilon}(\cdot)$ (cf. Theorem \ref{t25}). In this sense,
the process $\boldsymbol{x}_{\epsilon}(\cdot)$ is another sampler
of the Gibbs measure $\mu_{\epsilon}(\cdot)$. Indeed, we prove in
Theorem \ref{t25} that the conditions \eqref{econ_ell1} and \eqref{econ_ell2}
are the necessary and sufficient conditions for the process $\boldsymbol{x}_{\epsilon}(\cdot)$
to have as an invariant measure the Gibbs distribution $\mu_{\epsilon}(\cdot)$
defined in \eqref{einv} for all $\epsilon>0$. For this reason, this
generalized model has been investigated in many studies from different
perspectives, e.g., \cite{DLP,HHS,HHS2,LNP,LM,ReS1,ReS2}.

The main contribution of the current article is the proof of the Eyring--Kramers
formula for the process $\boldsymbol{x}_{\epsilon}(\cdot)$ (Theorem
\ref{t33}). We verify in Theorem \ref{t22} that the stable points
of the process $\bm{x}_{\epsilon}(\cdot)$ are the local minima of
$U$ and hence identical to those of the process $\boldsymbol{y}_{\epsilon}(\cdot)$.
Hence, we can compare the Eyring--Kramers formula of $\boldsymbol{x}_{\epsilon}(\cdot)$
with that of $\boldsymbol{y}_{\epsilon}(\cdot)$, and this comparison
reveals that \textit{the mean transition time of the dynamics $\boldsymbol{x}_{\epsilon}(\cdot)$
from one local minimum of $U$ to another is always faster than that
of the overdamped Langevin dynamics $\boldsymbol{y}_{\epsilon}(\cdot)$}.
This implies that we can accelerate the stochastic gradient descent
algorithm by adding the incompressible field $\boldsymbol{\ell}$,
which is orthogonal to $\nabla U$. We remark that such an acceleration
has been observed for the model when the diffusivity $\epsilon$ is
kept constant (see \cite{DLP,HHS,HHS2,LNP,ReS1,ReS2} and references
therein). In particular, we refer to \cite{GMZ} for the explicit
relation with the stochastic gradient descent algorithm. 

We also remark that in a recent study \cite{LM}, the model considered
in this article was investigated in view of the low-lying spectra.
Sharp estimates were established for the exponentially small eigenvalues
of the generator associated with the process $\boldsymbol{x}_{\epsilon}(\cdot)$.
See Corollary \ref{cor ref23} to understand how our discovery is
related to the result presented in \cite{LM}.

\subsubsection*{General methodology of capacity estimation}

Another main result of our study is the establishment of a straightforward
and robust method for estimating a potential theoretic notion known
as the capacity. In the proof of Eyring--Kramers formula based on
the potential theoretic approach developed in \cite{BEGK1}, it is
crucial to estimate the capacity between metastable valleys. In all
the existing results based on this approach, such an estimation is
carried out via variational principles such as the Dirichlet principle
or the Thomson principle. 

For the reversible case, this approach is less complex as the Dirichlet
principle is an optimization problem over a space of functions. Hence,
by taking a suitable test function that approximates the known optimizer
of the variational principle, we can bound the capacity in a precise
manner. This strategy is the essence of the potential theoretic approach
to metastability. By contrast, for the non-reversible case, the variational
expression of the capacity is destined to involve both the function
and the so-called flow (cf. \cite[Theorems 3.2 and 3.3]{LS1}). Therefore,
one must construct both the test function and the test flow to estimate
the capacity precisely. Accordingly, when this approach is adopted
for the non-reversible model, the major technical difficulty arises
in the construction of the test flow. This problem has been resolved
in existing studies such as \cite{LMS,LS1,Seo} based on considerable
computations. 

In this article, we develop a robust methodology to estimate the capacity
without relying on these variational principles. We use only a test
function in the estimation of the capacity; \textit{no test flow is
used even in the non-reversible case}. Hence, our methodology significantly
reduces the complexity of the analysis of metastable non-reversible
processes to the level of the reversible models. Therefore, our methodology
is expected to present new possibilities for the analysis of non-reversible
metastable random processes. 

In summary, we develop a new methodology to estimate the capacity
and use it to establish the Eyring--Kramers formula for the non-reversible
and  metastable diffusions $\boldsymbol{x}_{\epsilon}(\cdot)$. 

\subsubsection*{Related question 1: Markov chain description of metastable behavior }

Now, we consider two important questions. The first deals with a more
comprehensive description of the metastable behavior of the process
$\boldsymbol{x}_{\epsilon}(\cdot)$. To view this problem in a concrete
form, suppose that $U(\boldsymbol{m}_{1})=U(\boldsymbol{m}_{2})$
in the double-well situation illustrated in Figure \ref{fig1}. The
Eyring--Kramers formula focuses on a single metastable transition.
However, this transition will occur repeatedly between the neighborhoods
of two metastable points $\boldsymbol{m}_{1}$ and $\boldsymbol{m}_{2}$,
and one might be interested in describing these repeated transitions
simultaneously. To this end, we can try to prove that a suitably time-rescaled
process converges in some sense to a Markov chain whose state space
consists of two valleys. By doing so, we can completely describe successive
metastable transitions as this Markov chain. We consider this problem
for the process $\boldsymbol{x}_{\epsilon}(\cdot)$ in our companion
paper \cite{LeeS2}. 

\subsubsection*{Related question 2: metastable behavior of the general model}

For a vector field $\boldsymbol{b}:\mathbb{R}^{d}\rightarrow\mathbb{R}^{d}$,
consider the dynamical system in $\mathbb{R}^{d}$ given by an ODE
of the form 
\begin{equation}
d\boldsymbol{z}(t)\,=\,-\boldsymbol{b}(\boldsymbol{z}(t))\,dt\;\;\;;\;t\ge0\;.\label{e_ODEz}
\end{equation}
Suppose that this dynamics has several stable equilibria. An open
problem in the study of metastability is to determine the Eyring--Kramers
formula for the following small random perturbation of \eqref{e_ODEz}:
\begin{equation}
d\boldsymbol{z}_{\epsilon}(t)\,=\,-\boldsymbol{b}(\boldsymbol{z}_{\epsilon}(t))\,dt+\sqrt{2\epsilon}\,d\bm{w}_{t}\;\;\;;\;t\ge0\;.\label{e_SDEz}
\end{equation}
We refer to \cite{BR,FW,LS3} for the study of various aspects of
this question. There are two sources of difficulties in this open
problem. The first one is the non-reversibility of the dynamics, and
the second one is the fact that the invariant measure cannot be written
in an explicit form in general. In the present article, we make a
significant step toward addressing this problem by completely overcoming
the former difficulty. However, since we considered only models with
a Gibbs invariant measure, the latter difficulty is not addressed
and remains to be resolved. 

\section{\label{sec2}Model}

In this section, we introduce the fundamental features of the model.
The results stated in this section regarding the process $\boldsymbol{x}_{\epsilon}(\cdot)$
constitute the essence of this field. However, we could not find a
suitable reference that provides detailed proofs. Hence, we decided
to develop the full details. 

\subsubsection*{Potential function $U$}

To introduce the model rigorously, we must explain the potential function
$U:\mathbb{R}^{d}\rightarrow\mathbb{R}$ in the SDE \eqref{e_SDEx}.
We assume that the potential function $U\in C^{3}(\mathbb{R}^{d})$
is a Morse function that satisfies the growth conditions
\begin{align}
 & \lim_{n\to\infty}\inf_{|\bm{x}|\geq n}\frac{U(\bm{x})}{|\bm{x}|}\,=\,\infty\;,\label{econ_U1}\\
 & \lim_{|\bm{x}|\to\infty}\frac{\bm{x}}{|\bm{x}|}\cdot\nabla U(\bm{x})\,=\,\infty\;,\;\text{and}\label{econ_U2}\\
 & \lim_{|\bm{x}|\to\infty}\left\{ |\nabla U(\bm{x})|-2\Delta U(\bm{x})\right\} \,=\,\infty\;,\label{econ_U3}
\end{align}
where $|\boldsymbol{x}|$ denotes the Euclidean distance in $\mathbb{R}^{d}$.
These conditions have been introduced in previous studies such as
\cite{BEGK1,LMS,RS} to guarantee the positive recurrence of the diffusion
process $\boldsymbol{y}_{\epsilon}(\cdot)$ given by \eqref{e_SDEy}
and the finiteness of $Z_{\epsilon}$ in \eqref{epart}. More precisely,
it is well known (cf. \cite{BEGK1}) that these conditions imply the
tightness condition
\begin{equation}
\int_{\{\bm{x}:U(\bm{x})\geq a\}}e^{-U(\bm{x})/\epsilon}d\bm{x}\,\leq\,C_{a}\,e^{-a/\epsilon}\text{\;\;for all }a\in\mathbb{R}\;,\label{etight_U}
\end{equation}
where $C_{a}$ is a constant that depends only on $a$, and hence
imply the finiteness of the partition function $Z_{\epsilon}$. Finally,
we remark that the metastability of the reversible process $\boldsymbol{y}_{\epsilon}(\cdot)$
has been analyzed in \cite{BEGK1} under the same set of assumptions. 

\subsubsection*{Deterministic dynamical system $\boldsymbol{x}(\cdot)$ }

To explain the metastable behavior of the process $\boldsymbol{x}_{\epsilon}(\cdot)$,
we first consider a deterministic dynamical system given by the ODE
\begin{equation}
d\bm{x}(t)\,=\,-(\nabla U+\boldsymbol{\ell})(\bm{x}(t))\,dt\;.\label{e_odeX}
\end{equation}
We can demonstrate that this dynamical system has essentially the
same phase portrait as $\boldsymbol{y}(\cdot)$ defined in \eqref{e_ODEy}. 
\begin{thm}
\label{t22}The following hold.
\begin{enumerate}
\item We have $\boldsymbol{\ell}(\boldsymbol{c})=0$ for all critical points
$\boldsymbol{c}\in\mathbb{R}^{d}$ of $U$. 
\item A point $\boldsymbol{c}\in\mathbb{R}^{d}$ is an equilibrium of the
dynamical system \eqref{e_odeX} if and only if $\boldsymbol{c}\in\mathbb{R}^{d}$
is a critical point of $U$.
\item An equilibrium $\boldsymbol{c}\in\mathbb{R}^{d}$ of the dynamical
system \eqref{e_odeX} is stable if and only if $\boldsymbol{c}$
is a local minimum of $U$. 
\end{enumerate}
\end{thm}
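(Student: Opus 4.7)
The plan is to dispatch parts (1) and (2) algebraically from the orthogonality identity \eqref{econ_ell1}, and to handle part (3) through a Lyapunov-function argument using $U$ itself. For part (1), I differentiate the identity $\nabla U(\boldsymbol{x})\cdot\boldsymbol{\ell}(\boldsymbol{x})\equiv 0$ at an arbitrary critical point $\boldsymbol{c}$ of $U$; the product-rule term containing $D\boldsymbol{\ell}(\boldsymbol{c})$ vanishes because $\nabla U(\boldsymbol{c})=0$, leaving the linear relation
$$\nabla^{2}U(\boldsymbol{c})\,\boldsymbol{\ell}(\boldsymbol{c})=0.$$
Since $U$ is Morse, the Hessian is invertible and we conclude $\boldsymbol{\ell}(\boldsymbol{c})=0$. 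Part (2) is then immediate: every critical point of $U$ is an equilibrium of \eqref{e_odeX} by (1), and for the reverse direction I take the inner product of $(\nabla U+\boldsymbol{\ell})(\boldsymbol{c})=0$ with $\nabla U(\boldsymbol{c})$ and invoke \eqref{econ_ell1} once more to deduce $|\nabla U(\boldsymbol{c})|^{2}=0$.

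For part (3), I use $V(\boldsymbol{x}):=U(\boldsymbol{x})-U(\boldsymbol{c})$ as the Lyapunov function. Along any solution of \eqref{e_odeX}, the orthogonality \eqref{econ_ell1} produces
$$\frac{d}{dt}V(\boldsymbol{x}(t))=-|\nabla U(\boldsymbol{x}(t))|^{2}-\nabla U(\boldsymbol{x}(t))\cdot\boldsymbol{\ell}(\boldsymbol{x}(t))=-|\nabla U(\boldsymbol{x}(t))|^{2}\le 0.$$
If $\boldsymbol{c}$ is a local minimum, Morse non-degeneracy furnishes a punctured neighborhood of $\boldsymbol{c}$ on which $V>0$ and $\nabla U\ne 0$, and the classical Lyapunov theorem then delivers asymptotic stability. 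For the converse, suppose $\boldsymbol{c}$ is a critical point that is not a local minimum; then $\nabla^{2}U(\boldsymbol{c})$ has a negative eigenvalue, so every neighborhood of $\boldsymbol{c}$ contains points $\boldsymbol{x}_{0}$ with $U(\boldsymbol{x}_{0})<U(\boldsymbol{c})$. I fix $\epsilon>0$ small enough that $\boldsymbol{c}$ is the only critical point of $U$ in $\overline{B(\boldsymbol{c},\epsilon)}$, and argue by contradiction: if $\boldsymbol{c}$ were stable, one could pick such an $\boldsymbol{x}_{0}$ whose forward orbit remains in $\overline{B(\boldsymbol{c},\epsilon)}$. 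Since $V$ is non-increasing along this orbit and bounded below on the compact set, LaSalle's invariance principle forces the $\omega$-limit set into $\{\nabla U=0\}\cap\overline{B(\boldsymbol{c},\epsilon)}=\{\boldsymbol{c}\}$; hence $V(\boldsymbol{x}(t))\to 0$, contradicting the strict upper bound $V(\boldsymbol{x}(t))\le V(\boldsymbol{x}_{0})<0$.

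The main obstacle is the instability half of part (3). A direct linearization approach would have to translate the indefiniteness of $\nabla^{2}U(\boldsymbol{c})$ into an eigenvalue of $-(\nabla^{2}U+D\boldsymbol{\ell})(\boldsymbol{c})$ with positive real part; since $D\boldsymbol{\ell}(\boldsymbol{c})$ need not be small, this is not a perturbation statement and would instead require an inertia-type theorem such as Taussky's, exploiting the antisymmetry of $\nabla^{2}U(\boldsymbol{c})\,D\boldsymbol{\ell}(\boldsymbol{c})$ obtained by differentiating \eqref{econ_ell1} a second time. The LaSalle-based argument sketched above bypasses that spectral analysis entirely by working with the nonlinear flow, and it uses only the orthogonality condition \eqref{econ_ell1}; the incompressibility condition \eqref{econ_ell2} plays no role in this theorem.
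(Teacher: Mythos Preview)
Your proofs of parts (1) and (2) are essentially identical to the paper's. Part (3), however, is handled quite differently. The paper proceeds by linearization: it first shows (Lemma~\ref{lem45}) that $\mathbb{H}^{\boldsymbol{c}}\mathbb{L}^{\boldsymbol{c}}$ is skew-symmetric by differentiating \eqref{econ_ell1} twice, and then proves a general matrix lemma (Lemma~\ref{lem41}) asserting that if $\mathbb{A}$ is symmetric positive definite and $\mathbb{A}\mathbb{B}$ is skew-symmetric, then every eigenvalue of $\mathbb{A}+\mathbb{B}$ has positive real part. This yields the forward implication; for the converse the paper takes the assumed stability to mean that the quadratic form of $\mathbb{H}^{\boldsymbol{c}}+\mathbb{L}^{\boldsymbol{c}}$ is positive, and derives a contradiction from a hypothetical negative eigenvalue of $\mathbb{H}^{\boldsymbol{c}}$ via the same skew-symmetry.

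Your Lyapunov/LaSalle argument is correct and genuinely different: it works directly with the nonlinear flow, uses only the monotonicity $\frac{d}{dt}U(\boldsymbol{x}(t))=-|\nabla U(\boldsymbol{x}(t))|^{2}$, and never touches the Jacobian $\mathbb{L}^{\boldsymbol{c}}$. This is more self-contained and sidesteps the matrix lemmas entirely. The trade-off is informational: the paper's route establishes precise spectral facts about $\mathbb{H}^{\boldsymbol{c}}+\mathbb{L}^{\boldsymbol{c}}$ (and the companion Lemma~\ref{lem44} for saddle points), and this machinery---particularly Lemma~\ref{lem45}---is reused throughout Sections~\ref{sec43} and \ref{sec8} in the Eyring--Kramers analysis. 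Your argument proves the theorem more economically but does not generate those reusable byproducts; you correctly anticipate this in your closing paragraph.
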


The proof is given in Section \ref{sec4}. We emphasize that the divergence-free
condition \eqref{econ_ell2} is not used in the proof of this theorem,
whereas the orthogonality condition \eqref{econ_ell1} plays a significant
role. In view of part (3) of the previous theorem, we can observe
that the process $\boldsymbol{x}_{\epsilon}(\cdot)$ is expected to
exhibit metastable behavior when $U$ has multiple local minima, and
this is the situation that we are going to discuss in the current
article. 

\subsubsection*{Diffusion process $\boldsymbol{x}_{\epsilon}(\cdot)$}

Now, we focus on the diffusion process $\boldsymbol{x}_{\epsilon}(\cdot)$.
Under the conditions \eqref{econ_U1}--\eqref{econ_U3} and condition
\eqref{econ_ell1}, we can prove the following property of the process
$\boldsymbol{x}_{\epsilon}(\cdot)$. Note again that the condition
\eqref{econ_ell2} is not used. 
\begin{thm}
\label{t23}The following hold.
\begin{enumerate}
\item There is no explosion for the diffusion process $\boldsymbol{x}_{\epsilon}(\cdot)$.
\item The diffusion process $\boldsymbol{x}_{\epsilon}(\cdot)$ is positive
recurrent. 
\end{enumerate}
\end{thm}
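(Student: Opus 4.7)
The plan is to prove both parts simultaneously by exhibiting a single Lyapunov function and invoking classical criteria. The natural candidate is
\[
V(\boldsymbol{x}) \,:=\, U(\boldsymbol{x}) - \inf_{\mathbb{R}^d} U + 1\,;
\]
by the coercivity condition \eqref{econ_U1}, $V$ is well defined, strictly positive, $C^{3}$-smooth, and proper (i.e.\ $V(\boldsymbol{x})\to\infty$ as $|\boldsymbol{x}|\to\infty$).

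The first step is to compute the action of the generator $\mathcal{L}_\epsilon = \epsilon\,\Delta - (\nabla U + \boldsymbol{\ell})\cdot\nabla$ on $V$. The key observation is that the orthogonality condition \eqref{econ_ell1} annihilates the contribution of $\boldsymbol{\ell}$, since $\boldsymbol{\ell}\cdot\nabla V = \boldsymbol{\ell}\cdot\nabla U = 0$, so one lands on the same expression that appears in the reversible analysis of \cite{BEGK1}:
\[
\mathcal{L}_\epsilon V(\boldsymbol{x}) \,=\, \epsilon\,\Delta U(\boldsymbol{x}) - |\nabla U(\boldsymbol{x})|^2.
\]
This is precisely why the divergence-free condition \eqref{econ_ell2} plays no role in the current theorem. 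Setting $\phi(\boldsymbol{x}) := |\nabla U(\boldsymbol{x})| - 2\Delta U(\boldsymbol{x})$, which tends to $+\infty$ by \eqref{econ_U3}, and combining with the elementary bound $\tfrac{\epsilon}{2}t - t^2 \le \tfrac{\epsilon^{2}}{16}$ applied to $t = |\nabla U(\boldsymbol{x})|$, a one-line manipulation yields
\[
\mathcal{L}_\epsilon V(\boldsymbol{x}) \,\le\, \tfrac{\epsilon^{2}}{16} - \tfrac{\epsilon}{2}\,\phi(\boldsymbol{x}),
\]
which tends to $-\infty$ as $|\boldsymbol{x}|\to\infty$. (Condition \eqref{econ_U2} would give an alternative route through $|\nabla U|\to\infty$, but \eqref{econ_U3} alone is enough.) In particular, for every $c > 0$ there is a compact set $K \subset \mathbb{R}^d$ on the complement of which $\mathcal{L}_\epsilon V \le -c$, and $\mathcal{L}_\epsilon V$ is globally bounded above on $\mathbb{R}^d$.

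With these bounds on a smooth proper function $V \ge 1$, the two conclusions follow from standard machinery. Part (1) is immediate from Khasminskii's non-explosion test: applying It\^o's formula to $V(\boldsymbol{x}_\epsilon(t\wedge\tau_n))$ for the exit times $\tau_n$ of balls of radius $n$, and using the global upper bound on $\mathcal{L}_\epsilon V$ together with $V\ge 1$, shows that $\mathbb{E}[V(\boldsymbol{x}_\epsilon(t\wedge\tau_n))]$ grows at most linearly in $t$, which forces $\tau_n\uparrow\infty$ almost surely. Part (2) follows from the Foster--Lyapunov drift criterion for diffusions: since $\mathcal{L}_\epsilon$ is uniformly elliptic with smooth coefficients, the process is a strong Feller irreducible $T$-process for which every compact set is petite, so the drift condition $\mathcal{L}_\epsilon V \le -1$ outside a compact set produces a unique invariant probability measure and thus positive recurrence.

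The main obstacle I anticipate is purely bookkeeping: verifying that the $C^{3}$-regularity of $U$ is sufficient to apply It\^o's formula rigorously to the unbounded function $V$ (handled by the localization on balls indicated above) and quoting the precise form of Khasminskii's and Foster's theorems in the $\mathbb{R}^d$-diffusion setting. The analytic heart of the argument -- the pointwise estimate for $\mathcal{L}_\epsilon V$ -- is, thanks to the orthogonality \eqref{econ_ell1}, identical to the reversible computation of \cite{BEGK1}, so no genuinely new inequality needs to be developed.
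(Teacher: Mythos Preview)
Your proposal is correct and follows essentially the same route as the paper: both use the shifted potential $U+\text{const}$ as a Lyapunov function, exploit the orthogonality \eqref{econ_ell1} to reduce $\mathscr{L}_\epsilon U$ to the reversible expression $\epsilon\,\Delta U - |\nabla U|^2$, show this tends to $-\infty$ at infinity, and then invoke standard non-explosion and recurrence criteria (the paper cites Varadhan and Pinsky where you cite Khasminskii and Foster--Lyapunov). Your algebraic manipulation is marginally cleaner in that it uses only \eqref{econ_U3}, whereas the paper's Lemma~\ref{lem51} invokes both \eqref{econ_U2} and \eqref{econ_U3}, but this is a cosmetic difference.
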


The proof of this result is given in Section \ref{sec5}.

\subsubsection*{Invariant measure}

Since the process $\boldsymbol{x}_{\epsilon}(\cdot)$ is positive
recurrent, we know that this process has an invariant measure. Now,
we prove that $\mu_{\epsilon}$ is the unique invariant measure for
the process $\boldsymbol{x}_{\epsilon}(\cdot)$. 

Before proceeding to the statement of this result, we first explain
the role of the conditions \eqref{econ_ell1} and \eqref{econ_ell2}.
Recall the general model $\boldsymbol{z}_{\epsilon}(\cdot)$ given
by the SDE \eqref{e_SDEz}. It is known from \cite[Theorem 3.3.1]{FW}
that if the quasi-potential $V$ associated with \eqref{e_SDEz} is
of class $C^{1}$, we can write $\boldsymbol{b}=\nabla V+\boldsymbol{\ell}$
where $\nabla V\cdot\boldsymbol{\ell}\equiv0$. Hence, the assumption
\eqref{econ_ell1} is nothing more than the regularity assumption
on the quasi-potential. The special assumption regarding the field
$\boldsymbol{\ell}$ is \eqref{econ_ell2}, and the role of this assumption
is summarized below. 
\begin{thm}
\label{t25}The following hold. 
\begin{enumerate}
\item If $\boldsymbol{\ell}$ satisfies the conditions \eqref{econ_ell1}
and \eqref{econ_ell2}, then the Gibbs measure $\mu_{\epsilon}(\cdot)$
is the unique invariant measure for the diffusion process $\boldsymbol{x}_{\epsilon}(\cdot)$. 
\item On the other hand, suppose that the Gibbs measure $\mu_{\epsilon}(\cdot)$
is the invariant measure for the diffusion process $\boldsymbol{z}_{\epsilon}(\cdot)$
defined in \eqref{e_SDEz} for all $\epsilon>0$. Then, the vector
field $\boldsymbol{b}$ can be written as $\boldsymbol{b}=\nabla U+\boldsymbol{\ell}$,
where $U$ and $\boldsymbol{\ell}$ satisfy \eqref{econ_ell1} and
\eqref{econ_ell2}. 
\end{enumerate}
\end{thm}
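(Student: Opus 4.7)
The plan is to reduce both parts of Theorem \ref{t25} to a single Fokker--Planck computation. For the SDE \eqref{e_SDEz} with drift $-\boldsymbol{b}$, the generator is $L_\epsilon f = -\boldsymbol{b}\cdot\nabla f + \epsilon \Delta f$, and a smooth positive density $\rho$ corresponds to an invariant measure if and only if its formal adjoint with respect to Lebesgue measure satisfies $L_\epsilon^* \rho := \nabla\cdot(\boldsymbol{b}\rho) + \epsilon \Delta\rho = 0$. Applying $L_\epsilon^*$ to $\rho = e^{-U/\epsilon}$ and expanding gives
\begin{equation*}
L_\epsilon^*\bigl(e^{-U/\epsilon}\bigr) \,=\, \Bigl[\,(\nabla\cdot\boldsymbol{b}) - \Delta U \,+\, \tfrac{1}{\epsilon}\bigl(|\nabla U|^2 - \boldsymbol{b}\cdot\nabla U\bigr)\,\Bigr]\,e^{-U/\epsilon}\;,
\end{equation*}
so invariance of $\mu_\epsilon$ reduces to the bracket vanishing identically on $\mathbb{R}^d$.

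For part (1), I will set $\boldsymbol{b} = \nabla U + \boldsymbol{\ell}$; then \eqref{econ_ell1} and \eqref{econ_ell2} make $\nabla\cdot\boldsymbol{b} - \Delta U = \nabla\cdot\boldsymbol{\ell} = 0$ and $|\nabla U|^2 - \boldsymbol{b}\cdot\nabla U = -\boldsymbol{\ell}\cdot\nabla U = 0$, so the bracket vanishes. To pass from this pointwise adjoint identity to actual invariance I test against $f \in C_c^\infty(\mathbb{R}^d)$ and integrate by parts, with no boundary terms thanks to compact support. Uniqueness then comes from the positive recurrence of $\boldsymbol{x}_\epsilon(\cdot)$ established in Theorem \ref{t23}, combined with the nondegeneracy of the diffusion coefficient, which together guarantee a unique invariant probability measure.

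For part (2), the hypothesis that $\mu_\epsilon$ is invariant under $\boldsymbol{z}_\epsilon(\cdot)$ \emph{for every} $\epsilon>0$ forces the bracket in the display above to vanish for every such $\epsilon$. Since its two coefficients do not depend on $\epsilon$, viewing the expression as an affine function of $1/\epsilon$ forces each coefficient to vanish separately, producing $\boldsymbol{b}\cdot\nabla U = |\nabla U|^2$ and $\nabla\cdot\boldsymbol{b} = \Delta U$. Defining $\boldsymbol{\ell} := \boldsymbol{b}-\nabla U$ then yields $\nabla U\cdot\boldsymbol{\ell} = 0$ and $\nabla\cdot\boldsymbol{\ell} = 0$, which is the desired decomposition.

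I do not expect a major obstacle, since this is essentially a Kolmogorov--Fokker--Planck exercise. The two delicate points are (i) justifying the equivalence between the distributional equation $L_\epsilon^*\mu_\epsilon = 0$ and genuine stochastic invariance of $\mu_\epsilon$ under $\boldsymbol{x}_\epsilon(\cdot)$, which follows from a standard hypoellipticity argument together with the growth conditions \eqref{econ_U1}--\eqref{econ_U3} already invoked in Theorem \ref{t23}; and (ii) observing that the ``for all $\epsilon>0$'' hypothesis in part (2) is essential, since at a single value of $\epsilon$ only one linear combination of $\nabla\cdot\boldsymbol{b}-\Delta U$ and $\boldsymbol{b}\cdot\nabla U - |\nabla U|^2$ is constrained.
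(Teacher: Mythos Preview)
Your proposal is correct and follows essentially the same route as the paper: both compute the Lebesgue-adjoint $L_\epsilon^*$ applied to $e^{-U/\epsilon}$, obtain the expression $e^{-U/\epsilon}\bigl[\nabla\cdot\boldsymbol{\ell}-\tfrac{1}{\epsilon}\nabla U\cdot\boldsymbol{\ell}\bigr]$ (after writing $\boldsymbol{b}=\nabla U+\boldsymbol{\ell}$), and then for part~(1) verify it vanishes and invoke non-explosion/positive recurrence (Theorem~\ref{t23}) plus standard results (the paper cites \cite{V}) for invariance and uniqueness, while for part~(2) use that the bracket must vanish for every $\epsilon>0$ to force each coefficient to be zero. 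The only cosmetic difference is that the paper organizes the computation via the divergence-form expression \eqref{egen_adj}, whereas you expand directly; the content is the same.
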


The proof of this theorem is given in Section \ref{sec5}. Therefore,
heuristically, the condition \eqref{econ_ell2} can be regarded as
a necessary and sufficient condition (up to the regularity of the
quasi-potential) for the diffusion process $\boldsymbol{z}_{\epsilon}(\cdot)$
has the Gibbs invariant measure. 

\subsubsection*{Construction of $\boldsymbol{\ell}$}

The result obtained in this article might be nearly useless if it
is extremely difficult to find a non-trivial $\boldsymbol{\ell}$
satisfying the conditions \eqref{econ_ell1} and \eqref{econ_ell2}
simultaneously. However, there is a simple way to generate a variety
of $\boldsymbol{\ell}$'s when the potential $U$ is given. Let $\mathcal{M}_{d\times d}(\mathbb{R})$
be a space of $d\times d$ real matrices and let $J:\mathbb{R}\rightarrow\mathcal{M}_{d\times d}(\mathbb{R})$
be a smooth function such that the range of $J$ consists of only
skew-symmetric matrices. Then, a vector field of the form $\boldsymbol{\ell}(\boldsymbol{x})=J(U(\boldsymbol{x}))\,\nabla U(\boldsymbol{x})$
satisfies the conditions \eqref{econ_ell1} and \eqref{econ_ell2}.
This has been observed in \cite[Section 1]{LM}. Moreover, unless
$J$ is a constant function, the model considered here is different
from the one considered in \cite{LMS}. 

\subsubsection*{Notations regarding $\boldsymbol{x}_{\epsilon}(\cdot)$}

We conclude this section by defining some notations regarding the
process $\boldsymbol{x}_{\epsilon}(\cdot)$. Let $\mathscr{L}_{\epsilon}$
denote the generator associated with the process $\boldsymbol{x}_{\epsilon}(\cdot)$.
Then, $\mathscr{L}_{\epsilon}$ acts on $f\in C^{2}(\mathbb{R}^{d})$
such that 
\begin{equation}
\mathscr{L}_{\epsilon}f\,=\,-(\nabla U+\boldsymbol{\ell})\cdot\nabla f+\epsilon\Delta f\;.\label{egenL1}
\end{equation}
Under the conditions \eqref{econ_ell1} and \eqref{econ_ell2} on
$\boldsymbol{\ell},$ we can rewrite this generator in the divergence
form as 
\begin{equation}
\mathscr{L}_{\epsilon}f\,=\,\epsilon e^{U/\epsilon}\nabla\cdot\,\Big[\,e^{-U/\epsilon}\Big(\,\nabla f-\frac{1}{\epsilon}f\,\boldsymbol{\ell}\,\Big)\,\Big]\;.\label{egenL2}
\end{equation}
Let $\mathbb{P}_{\boldsymbol{x}}^{\epsilon}$ denote the law of the
process $\boldsymbol{x}_{\epsilon}(\cdot)$ starting from $\boldsymbol{x}$,
and let $\mathbb{E}_{\boldsymbol{x}}^{\epsilon}$ denote the expectation
with respect to $\mathbb{P}_{\boldsymbol{x}}^{\epsilon}$.

\section{\label{sec3}Main Result}

In this section, we explain the Eyring--Kramers formula for the diffusion
process $\boldsymbol{x}_{\epsilon}(\cdot)$. The main result is stated
in Theorem \ref{t33} (and Corollary \ref{cor37} for the simple double-well
case). 

\subsection{\label{sec31}Structure of metastable valleys }

Let $\mathcal{M}$ denote the set of local minima of $U$. The starting
point $\boldsymbol{m}_{0}\in\mathcal{M}$ of the process $\boldsymbol{x}_{\epsilon}(\cdot)$
is fixed throughout the article. Note that $\boldsymbol{m}_{0}$ is
a stable equilibrium of $\boldsymbol{x}(\cdot)$ by Theorem \ref{t22}. 

Let us fix $H\in\mathbb{R}$ such that $U(\boldsymbol{m}_{0})<H$
and define $\Sigma$ as the set of saddle points of level $H$:

\[
\Sigma\,=\,\{\,\bm{\sigma}:U(\bm{\sigma})=H\text{ and }\boldsymbol{\sigma}\text{ is a }\text{saddle point of }U\,\}\ .
\]
We take $H$ such that $\Sigma\neq\emptyset$. We define 
\begin{equation}
\mathcal{H}\,=\,\{\,\bm{x}\in\mathbb{R}^{d}\,:\,U(\bm{x})<H\,\}\;,\label{eH}
\end{equation}
and we assume that $\mathcal{H}$ has multiple connected components;
hence, metastability occurs. 

We decompose $\mathcal{H}=\mathcal{H}_{0}\cup\mathcal{H}_{1}$, where
$\mathcal{H}_{0}$ is the connected component of $\mathcal{H}$ containing
$\boldsymbol{m}_{0}$ and $\mathcal{H}_{1}=\mathcal{H}\setminus\mathcal{H}_{0}.$
Note that $\mathcal{H}_{1}$ may not be connected. Let $\mathcal{M}_{0}$
and $\mathcal{M}_{1}$ denote the sets of local minima belonging to
$\mathcal{H}_{0}$ and $\mathcal{H}_{1}$, respectively. Let $\mathcal{D}_{r}(\bm{x})$
denote an open ball in $\mathbb{R}^{d}$ centered at $\boldsymbol{x}$
with radius $r$, and define 
\[
\mathcal{U}_{\epsilon}\,:=\,\bigcup_{\boldsymbol{m}\in\mathcal{M}_{1}}\mathcal{D}_{\epsilon}(\boldsymbol{m})\ .
\]
In this article, we focus on the sharp asymptotics of the mean of
the transition time from $\boldsymbol{m}_{0}$ to $\mathcal{U}_{\epsilon}$.
Figure \ref{fig2} illustrates the notations introduced above.

\begin{figure}
\includegraphics[scale=0.21]{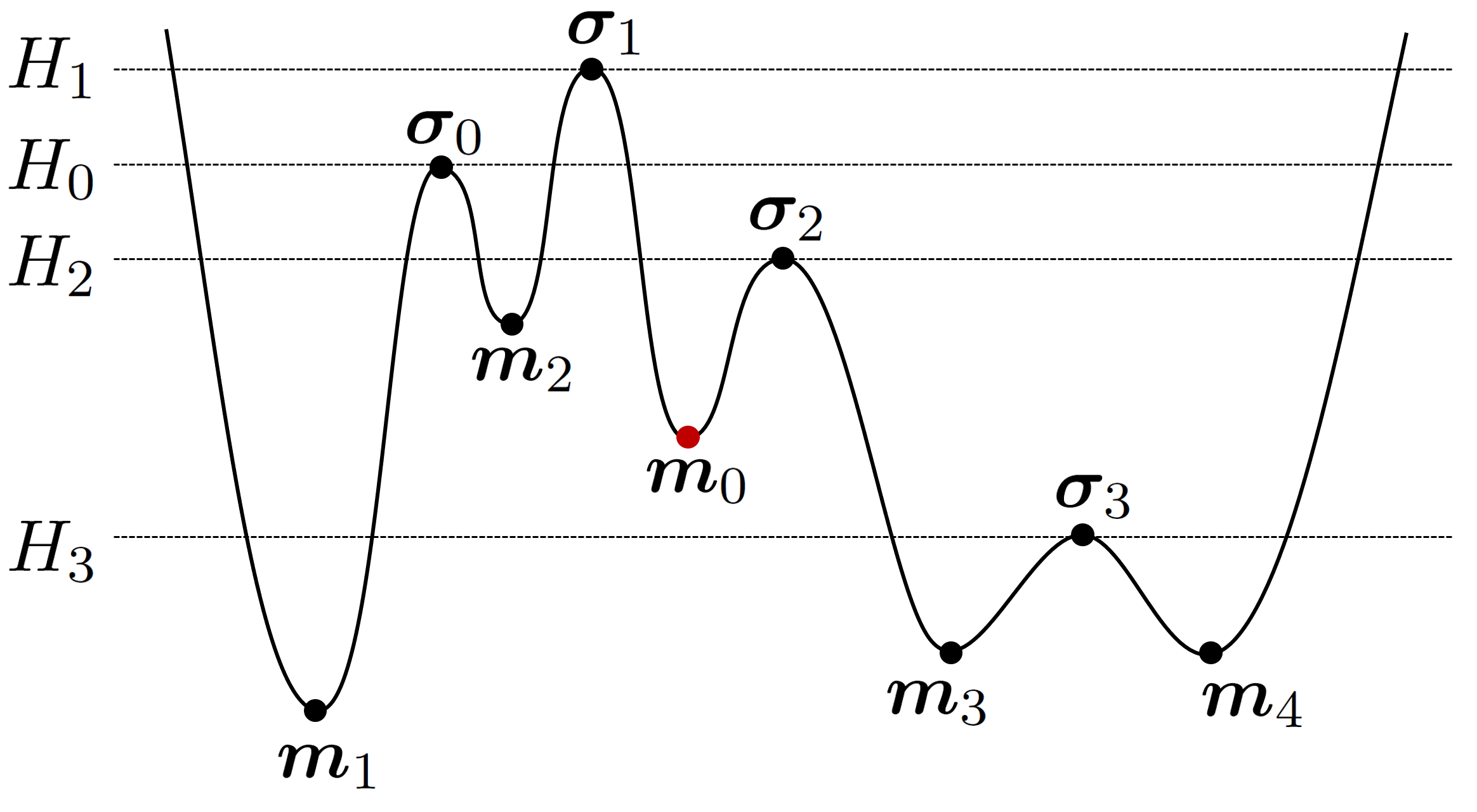}

\caption{\label{fig2}Example of landscape of the potential function $U$ with
five local minima $\{\boldsymbol{m}_{i}:0\le i\le4\}$ and four saddle
points $\{\boldsymbol{\sigma}_{i}:0\le i\le3\}$. We assume that $U(\boldsymbol{m}_{3})=U(\boldsymbol{m}_{4})$
and write $H_{i}=U(\boldsymbol{\sigma_{i}})$, $0\le i\le3$. Our
objective is to compute the transition time from the local minimum
$\boldsymbol{m}_{0}$ to other local minima. We can select the level
$H$ according to our detailed objective.\textbf{ }By taking $H=H_{1},$
we have $\mathcal{M}_{1}=\{\boldsymbol{m}_{1},\,\boldsymbol{m}_{2}\}$;
hence, we focus on the transition time from $\boldsymbol{m}_{0}$
to $\mathcal{D}_{\epsilon}(\boldsymbol{m}_{1})\cup\mathcal{D}_{\epsilon}(\boldsymbol{m}_{2})$.
This occurs at the level of $H_{1}$ since the process must pass through
$\boldsymbol{\sigma}_{1}$ to make such a transition. For this case,
we have $\mathcal{M}_{0}=\{\boldsymbol{m}_{0},\,\boldsymbol{m}_{3},\,\boldsymbol{m}_{4}\}$
and $\mathcal{M}_{0}^{\star}=\{\boldsymbol{m}_{3},\,\boldsymbol{m}_{4}\}$.
On the other hand, by taking $H=H_{2}$, we have $\mathcal{M}_{1}=\{\boldsymbol{m}_{1},\,\boldsymbol{m}_{2},\,\boldsymbol{m}_{3},\,\boldsymbol{m}_{4}\}$.
For this case, we compute the escape time from the metastable valley
around $\boldsymbol{m}_{0}$. The selection $H=H_{3}$ is not available
since the condition $U(\boldsymbol{m}_{0})<H$ is violated; hence
$\mathcal{H}$ does not contain $\boldsymbol{m}_{0}$. This level
is meaningful when we start from, e.g., $\boldsymbol{m}_{3}$. Finally,
the selection $H=H_{0}$ is not appropriate as $\Sigma_{0}$ becomes
an empty set. For this case, we refer to Remark \ref{rmk35} (4) for
further details.}
\end{figure}

\begin{notation}
\label{not31}Since the sets such as $\Sigma$ and $\mathcal{U}_{\epsilon}$
depend on $H$, we add the superscript $H$ to these notations, e.g.,
$\Sigma^{H}$, when we want to emphasize the dependency on $H$. 
\end{notation}

\subsection{\label{sec32}Eyring--Kramers constant for $\boldsymbol{x}_{\epsilon}(\cdot)$}

In the remainder of the article, we use the following notations. 
\begin{notation}
\label{not32}For each critical point $\boldsymbol{c}$ of $U$, let
$\mathbb{H}^{\boldsymbol{c}}=(\nabla^{2}U)(\boldsymbol{c})$ denote
the Hessian of $U$ at $\boldsymbol{c}$ and let $\mathbb{L}^{\boldsymbol{c}}=(D\boldsymbol{\ell})(\boldsymbol{c})$
denote the Jacobian of $\boldsymbol{\ell}$ at $\boldsymbol{c}$. 
\end{notation}

In this subsection, we fix $\boldsymbol{\sigma}\in\Sigma$ and suppose
that $\mathbb{H}^{\boldsymbol{\sigma}}$ has only one negative eigenvalue
$-\lambda^{\boldsymbol{\sigma}}$. In the Eyring--Kramers formula
for the reversible process $\boldsymbol{y}_{\epsilon}(\cdot)$ obtained
in \cite{BEGK1}, an important constant is the so-called \textit{Eyring--Kramers
constant} defined by 
\begin{equation}
\omega_{\textrm{rev}}^{\boldsymbol{\sigma}}\,=\,\frac{\lambda^{\bm{\sigma}}}{2\pi\sqrt{-\det\mathbb{H}^{\boldsymbol{\sigma}}}}\;.\label{eEKconst_rev}
\end{equation}
Now, we introduce the corresponding constant for the process $\boldsymbol{x}_{\epsilon}(\cdot)$.
To this end, we first introduce the following lemma. 
\begin{lem}
\label{lem32}For $\boldsymbol{\sigma}\in\Sigma$, suppose that $\mathbb{H}^{\boldsymbol{\sigma}}$
has only one negative eigenvalue. Then, the matrix $\mathbb{H}^{\boldsymbol{\sigma}}+\mathbb{L}^{\boldsymbol{\sigma}}$
has only one negative eigenvalue and is invertible. 
\end{lem}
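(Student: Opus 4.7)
The plan is to first extract from the orthogonality condition \eqref{econ_ell1} an algebraic relation tying $\mathbb{L}^{\boldsymbol{\sigma}}$ to $\mathbb{H}^{\boldsymbol{\sigma}}$ at the saddle, and then deploy a homotopy argument along the one-parameter family $A_{t}:=\mathbb{H}^{\boldsymbol{\sigma}}+t\,\mathbb{L}^{\boldsymbol{\sigma}}$, $t\in[0,1]$, to transport the explicit spectral information at the symmetric endpoint $A_{0}$ to the endpoint $A_{1}$ of interest. I will interpret ``one negative eigenvalue of $\mathbb{H}^{\boldsymbol{\sigma}}+\mathbb{L}^{\boldsymbol{\sigma}}$'' in the sense of ``exactly one eigenvalue with strictly negative real part'', which is the geometrically relevant meaning for the linearized saddle dynamics.

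For the algebraic step, I would use Theorem \ref{t22} to write $\boldsymbol{\ell}(\boldsymbol{\sigma})=0$ and $\nabla U(\boldsymbol{\sigma})=0$, so that Taylor expansion at $\boldsymbol{\sigma}$ gives $\nabla U(\boldsymbol{\sigma}+\boldsymbol{y})=\mathbb{H}^{\boldsymbol{\sigma}}\boldsymbol{y}+O(|\boldsymbol{y}|^{2})$ and $\boldsymbol{\ell}(\boldsymbol{\sigma}+\boldsymbol{y})=\mathbb{L}^{\boldsymbol{\sigma}}\boldsymbol{y}+O(|\boldsymbol{y}|^{2})$. Substituting into \eqref{econ_ell1} and extracting the quadratic-in-$\boldsymbol{y}$ terms yields $\langle\mathbb{H}^{\boldsymbol{\sigma}}\boldsymbol{y},\mathbb{L}^{\boldsymbol{\sigma}}\boldsymbol{y}\rangle=0$ for every $\boldsymbol{y}\in\mathbb{R}^{d}$; equivalently, the real matrix $\mathbb{H}^{\boldsymbol{\sigma}}\mathbb{L}^{\boldsymbol{\sigma}}$ is skew-symmetric. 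Note that the divergence-free condition \eqref{econ_ell2} is not used in this step.

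Next I would show that no $A_{t}$ has a pure imaginary eigenvalue. Observe that $\mathbb{H}^{\boldsymbol{\sigma}}$ is invertible by the Morse hypothesis together with the assumption on its signature. Suppose, toward a contradiction, $A_{t}v=i\beta v$ for some $t\in[0,1]$, $\beta\in\mathbb{R}$, and $v\in\mathbb{C}^{d}\setminus\{0\}$. The key manipulation is to left-multiply by $\mathbb{H}^{\boldsymbol{\sigma}}$ and then take the Hermitian inner product with $v$, producing
\begin{equation*}
\|\mathbb{H}^{\boldsymbol{\sigma}}v\|^{2}+t\,\langle v,\mathbb{H}^{\boldsymbol{\sigma}}\mathbb{L}^{\boldsymbol{\sigma}}v\rangle\,=\,i\beta\,\langle v,\mathbb{H}^{\boldsymbol{\sigma}}v\rangle.
\end{equation*}
Because $\mathbb{H}^{\boldsymbol{\sigma}}$ is real symmetric and $\mathbb{H}^{\boldsymbol{\sigma}}\mathbb{L}^{\boldsymbol{\sigma}}$ is real skew-symmetric, the right-hand side and the second term on the left are both purely imaginary, while $\|\mathbb{H}^{\boldsymbol{\sigma}}v\|^{2}$ is real; equating real parts forces $\mathbb{H}^{\boldsymbol{\sigma}}v=0$, contradicting invertibility. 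Continuous dependence of eigenvalues on the parameter $t$ then implies that the number of eigenvalues of $A_{t}$ (counted with algebraic multiplicity) of strictly negative real part is constant on $[0,1]$; since at $t=0$ this number equals $1$ by hypothesis, $A_{1}=\mathbb{H}^{\boldsymbol{\sigma}}+\mathbb{L}^{\boldsymbol{\sigma}}$ has exactly one eigenvalue with negative real part, and in particular $0$ is not an eigenvalue, so $A_{1}$ is invertible.

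The main obstacle I anticipate is identifying the correct weight in the Hermitian pairing: testing the eigenvalue equation directly against $v$ only produces the scalar $\langle v,\mathbb{L}^{\boldsymbol{\sigma}}v\rangle$, which has no usable reality structure. Inserting $\mathbb{H}^{\boldsymbol{\sigma}}$ as a weight is precisely what couples the symmetry of $\mathbb{H}^{\boldsymbol{\sigma}}$ with the skew-symmetry of $\mathbb{H}^{\boldsymbol{\sigma}}\mathbb{L}^{\boldsymbol{\sigma}}$ derived in Step 1, so that separating real and imaginary parts yields the decisive vanishing $\mathbb{H}^{\boldsymbol{\sigma}}v=0$. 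Spotting this algebraic trick, rather than the homotopy bookkeeping, is the crux of the argument.
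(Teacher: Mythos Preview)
Your proof is correct and takes a genuinely different route from the paper. Both arguments begin identically: the Taylor expansion at $\boldsymbol{\sigma}$ combined with \eqref{econ_ell1} yields that $\mathbb{H}^{\boldsymbol{\sigma}}\mathbb{L}^{\boldsymbol{\sigma}}$ is skew-symmetric (this is Lemma~\ref{lem45} in the paper). From there the paper proceeds by a direct computational argument (Lemmas~\ref{lem42} and~\ref{lem44}): after writing $\mathbb{H}^{\boldsymbol{\sigma}}+\mathbb{L}^{\boldsymbol{\sigma}}=(\mathbb{I}+\mathbb{L}^{\boldsymbol{\sigma}}(\mathbb{H}^{\boldsymbol{\sigma}})^{-1})\,\mathbb{H}^{\boldsymbol{\sigma}}$, the determinant is shown to be negative, and the impossibility of two distinct real negative eigenvalues (or of a two-dimensional negative eigenspace) is ruled out by an explicit quadratic-form contradiction. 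Your homotopy argument replaces this computation by the single observation that the family $A_{t}$ never touches the imaginary axis, which is both shorter and more conceptual; the weighted pairing $\langle v,\mathbb{H}^{\boldsymbol{\sigma}}\,\cdot\,\rangle$ you isolate is exactly the right device, and the same trick reappears in the paper in the proof of Lemma~\ref{lem35}. Your approach also yields algebraic multiplicity one for free, whereas the paper obtains only geometric multiplicity one in Lemma~\ref{lem42} and relegates the algebraic statement to a remark. Finally, your interpretation (``exactly one eigenvalue with negative real part'') in fact coincides with the paper's (``one negative real eigenvalue''): since $A_{1}$ is a real matrix, non-real eigenvalues come in conjugate pairs, so a single eigenvalue in the open left half-plane is necessarily real.
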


Although this has been verified already in \cite[Lemma 1.8]{LM},
we provide the proof of this Lemma in Section \ref{sec43} for the
completeness of the article. Let $-\mu^{\boldsymbol{\sigma}}$ denote
the unique negative eigenvalue obtained in this lemma and define the
Eyring--Kramers constant at $\boldsymbol{\sigma}$ by 
\begin{equation}
\omega^{\boldsymbol{\sigma}}\,=\,\frac{\mu^{\bm{\sigma}}}{2\pi\sqrt{-\det\mathbb{H}^{\boldsymbol{\sigma}}}}\;.\label{eEKconst}
\end{equation}
Then, we can prove the following comparison result for the Eyring--Kramers
constant. 
\begin{lem}
\label{lem35}We have $\mu^{\boldsymbol{\sigma}}\ge\lambda^{\boldsymbol{\sigma}}$;
therefore, $\omega^{\boldsymbol{\sigma}}\ge\omega_{\textrm{rev}}^{\boldsymbol{\sigma}}$. 
\end{lem}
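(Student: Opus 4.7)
The plan is to derive a pointwise algebraic identity between $\mathbb{H}^{\boldsymbol{\sigma}}$ and $\mathbb{L}^{\boldsymbol{\sigma}}$ from the orthogonality condition $\nabla U\cdot\boldsymbol{\ell}\equiv 0$, and then reduce the eigenvalue comparison to a Rayleigh-quotient calculation. Differentiating the scalar identity $\sum_{i}(\partial_{i}U)\,\ell_{i}\equiv 0$ twice at $\boldsymbol{\sigma}$ and using $\nabla U(\boldsymbol{\sigma})=0$ together with Theorem~\ref{t22}(1), which gives $\boldsymbol{\ell}(\boldsymbol{\sigma})=0$, kills every term except
\[
\mathbb{H}^{\boldsymbol{\sigma}}\mathbb{L}^{\boldsymbol{\sigma}}+(\mathbb{L}^{\boldsymbol{\sigma}})^{T}\mathbb{H}^{\boldsymbol{\sigma}}\,=\,0,
\]
so $\mathbb{H}^{\boldsymbol{\sigma}}\mathbb{L}^{\boldsymbol{\sigma}}$ is antisymmetric. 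Writing $\mathbb{M}=\mathbb{H}^{\boldsymbol{\sigma}}+\mathbb{L}^{\boldsymbol{\sigma}}$, this rewrites as the Lyapunov-type identity
\[
\mathbb{M}^{T}\mathbb{H}^{\boldsymbol{\sigma}}+\mathbb{H}^{\boldsymbol{\sigma}}\mathbb{M}\,=\,2\,(\mathbb{H}^{\boldsymbol{\sigma}})^{2}.
\]

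Since $-\mu^{\boldsymbol{\sigma}}$ is a real (and simple) eigenvalue of $\mathbb{M}$ by Lemma~\ref{lem32}, I would pick a real eigenvector $\boldsymbol{v}\neq 0$ with $\mathbb{M}\boldsymbol{v}=-\mu^{\boldsymbol{\sigma}}\boldsymbol{v}$ and test the Lyapunov identity in the quadratic form $\boldsymbol{v}^{T}(\cdot)\boldsymbol{v}$, obtaining $-\mu^{\boldsymbol{\sigma}}\,\boldsymbol{v}^{T}\mathbb{H}^{\boldsymbol{\sigma}}\boldsymbol{v}=|\mathbb{H}^{\boldsymbol{\sigma}}\boldsymbol{v}|^{2}$. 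Invertibility of $\mathbb{H}^{\boldsymbol{\sigma}}$ forces the right-hand side to be strictly positive, hence $\boldsymbol{v}^{T}\mathbb{H}^{\boldsymbol{\sigma}}\boldsymbol{v}<0$ and
\[
\mu^{\boldsymbol{\sigma}}\,=\,\frac{|\mathbb{H}^{\boldsymbol{\sigma}}\boldsymbol{v}|^{2}}{-\boldsymbol{v}^{T}\mathbb{H}^{\boldsymbol{\sigma}}\boldsymbol{v}}.
\]
Expanding $\boldsymbol{v}=c_{0}\boldsymbol{e}_{0}+\sum_{i\geq 1}c_{i}\boldsymbol{e}_{i}$ in an orthonormal eigenbasis of $\mathbb{H}^{\boldsymbol{\sigma}}$ with $\mathbb{H}^{\boldsymbol{\sigma}}\boldsymbol{e}_{0}=-\lambda^{\boldsymbol{\sigma}}\boldsymbol{e}_{0}$ and $\mathbb{H}^{\boldsymbol{\sigma}}\boldsymbol{e}_{i}=h_{i}\boldsymbol{e}_{i}$ with $h_{i}>0$ for $i\geq 1$ (using that $\boldsymbol{\sigma}$ has exactly one negative Hessian eigenvalue), the claimed bound $\mu^{\boldsymbol{\sigma}}\geq\lambda^{\boldsymbol{\sigma}}$ reduces, after clearing the positive denominator, to
\[
\sum_{i\geq 1}h_{i}\,(h_{i}+\lambda^{\boldsymbol{\sigma}})\,c_{i}^{2}\,\geq\,0,
\]
which is manifest since $\lambda^{\boldsymbol{\sigma}}>0$.

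The main obstacle is spotting the Lyapunov identity; once one notices that the orthogonality of $\boldsymbol{\ell}$ to $\nabla U$ forces $\mathbb{H}^{\boldsymbol{\sigma}}\mathbb{L}^{\boldsymbol{\sigma}}$ to be antisymmetric at the saddle, the remainder is a brief linear-algebra argument with no estimates. It is worth remarking that the divergence-free assumption \eqref{econ_ell2} plays no role in this step, so the comparison $\omega^{\boldsymbol{\sigma}}\geq\omega_{\textrm{rev}}^{\boldsymbol{\sigma}}$ rests on the orthogonality condition \eqref{econ_ell1} alone; equality moreover holds iff $\boldsymbol{v}$ is parallel to the unstable direction $\boldsymbol{e}_{0}$ of $\mathbb{H}^{\boldsymbol{\sigma}}$.
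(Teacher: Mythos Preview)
Your proof is correct and follows essentially the same route as the paper. The paper first records the skew-symmetry of $\mathbb{H}^{\boldsymbol{\sigma}}\mathbb{L}^{\boldsymbol{\sigma}}$ as a separate lemma (Lemma~\ref{lem45}), then uses it exactly as you do to obtain $|\mathbb{H}^{\boldsymbol{\sigma}}\boldsymbol{v}|^{2}=-\mu^{\boldsymbol{\sigma}}\,\boldsymbol{v}\cdot\mathbb{H}^{\boldsymbol{\sigma}}\boldsymbol{v}$ and finishes with the same eigenbasis expansion; your ``Lyapunov identity'' is simply a symmetric repackaging of that step.
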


The proof is also given in Section \ref{sec43}. In Corollary \ref{cor38},
we prove that the process $\boldsymbol{x}_{\epsilon}(\cdot)$ is faster
than $\boldsymbol{y}_{\epsilon}(\cdot)$ on the basis of this comparison
result. 

\subsection{\label{sec33}Eyring--Kramers formula for $\boldsymbol{x}_{\epsilon}(\cdot)$}

For $\mathcal{A}\subset\mathbb{R}^{d}$, let $\overline{\mathcal{A}}$
denote the closure of $\mathcal{A}$. Define 
\begin{equation}
\Sigma_{0}\,=\,\overline{\mathcal{H}_{0}}\cap\overline{\mathcal{H}{}_{1}}\,\subset\,\Sigma\;.\label{e_sigma0}
\end{equation}
We assume that $\Sigma_{0}\neq\emptyset$\footnote{The case $\Sigma_{0}=\emptyset$ may occur, for instance, if we take
$H=H_{0}$ in Figure \ref{fig2}.\textbf{ }We can deal with this situation
using our result by modifying $H$; see Remark \ref{rmk35}(4). }. For each $\boldsymbol{\sigma}\in\Sigma_{0}$, the Hessian $\mathbb{H}^{\boldsymbol{\sigma}}$
has only one negative eigenvalue as a consequence of the Morse lemma
(cf. \cite[Lemma 2.2]{Morse theory}); hence, the Eyring--Kramers
constant $\omega^{\boldsymbol{\sigma}}$ at $\boldsymbol{\sigma}\in\Sigma_{0}$
can be defined as in the previous subsection. Then, define
\begin{equation}
\omega_{0}\,=\,\sum_{\boldsymbol{\sigma}\in\Sigma_{0}}\omega^{\boldsymbol{\sigma}}\;.\label{eomega0}
\end{equation}
Let $h_{0}$ denote the minimum of $U$ on $\mathcal{H}_{0}$ and
let $\mathcal{M}_{0}^{\star}$ denote the set of the deepest minima
of $U$ on $\mathcal{H}_{0}$:
\begin{equation}
\mathcal{M}_{0}^{\star}\,=\,\{\boldsymbol{m}\in\mathcal{M}_{0}:U(\boldsymbol{m})=h_{0}\}\;.\label{e_m0star}
\end{equation}
Define
\begin{equation}
\nu_{0}\,=\,\sum_{\boldsymbol{m}\in\mathcal{M}_{0}^{\star}}\frac{1}{\sqrt{\det\mathbb{H}^{\boldsymbol{m}}}}\;.\label{enu0}
\end{equation}
Now, we are ready to state the Eyring--Kramers formula for the non-reversible
process $\boldsymbol{x}_{\epsilon}(\cdot)$, which is the main result
of the current article. For a sequence $(a_{\epsilon})_{\epsilon>0}$
of real numbers, we write $a_{\epsilon}=o_{\epsilon}(1)$ if $\lim_{\epsilon\rightarrow0}a_{\epsilon}=0$. 
\begin{thm}
\label{t33}We have
\begin{equation}
\begin{aligned}\mathbb{E}_{\bm{m}_{0}}^{\epsilon}[\,\tau_{\mathcal{U}_{\epsilon}}\,] & \,=\,[\,1+o_{\epsilon}(1)\,]\,\frac{\nu_{0}}{\omega_{0}}\,\exp\frac{H-h_{0}}{\epsilon}\;.\end{aligned}
\label{eEK_main}
\end{equation}
\end{thm}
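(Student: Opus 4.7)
The plan is to combine the standard potential-theoretic representation of the mean transition time with a sharp asymptotic for the associated capacity, the latter obtained through the authors' novel test-function-only method. Set $A=\mathcal{D}_{\epsilon}(\boldsymbol{m}_{0})$ and $B=\mathcal{U}_{\epsilon}$. A strong-Markov argument, exploiting the rapid equilibration of $\boldsymbol{x}_{\epsilon}(\cdot)$ inside the tight ball $A$, reduces \eqref{eEK_main} to the identity
\begin{equation*}
\mathbb{E}_{\boldsymbol{m}_{0}}^{\epsilon}[\,\tau_{B}\,]\,=\,[\,1+o_{\epsilon}(1)\,]\,\frac{\int h_{A,B}^{*}\,d\mu_{\epsilon}}{\mathrm{cap}(A,B)}\,,
\end{equation*}
where $h_{A,B}^{*}$ is the equilibrium potential with respect to the adjoint diffusion, whose drift is $-\nabla U+\boldsymbol{\ell}$ and whose invariant measure is still $\mu_{\epsilon}$ by Theorem \ref{t25}.

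The numerator is handled by Laplace's method. I would first show that $h_{A,B}^{*}$ is exponentially close to $1$ on compact subsets of $\mathcal{H}_{0}$ and to $0$ on compact subsets of $\mathcal{H}_{1}$ away from $\Sigma_{0}$, so the transition layer contributes only a subexponential correction and the $\mu_{\epsilon}$-mass is concentrated on $\mathcal{H}_{0}$. Expanding around the deepest minima in $\mathcal{M}_{0}^{\star}$ then yields
\begin{equation*}
\int h_{A,B}^{*}\,d\mu_{\epsilon}\,=\,[\,1+o_{\epsilon}(1)\,]\,\frac{(2\pi\epsilon)^{d/2}}{Z_{\epsilon}}\,\nu_{0}\,e^{-h_{0}/\epsilon}\,.
\end{equation*}

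The core of the argument is the capacity asymptotic
\begin{equation*}
\mathrm{cap}(A,B)\,=\,[\,1+o_{\epsilon}(1)\,]\,\frac{(2\pi\epsilon)^{d/2}}{Z_{\epsilon}}\,\omega_{0}\,e^{-H/\epsilon}\,,
\end{equation*}
which, combined with the previous step, delivers \eqref{eEK_main}. The idea is to express the capacity as a boundary flux of $h_{A,B}$ through $\partial A$ using the divergence form \eqref{egenL2}, to replace $h_{A,B}$ by an explicit test function $f$, and to evaluate the resulting integral by Laplace expansion. The construction of $f$ is local: near each $\boldsymbol{\sigma}\in\Sigma_{0}$, I would linearize the drift to obtain the non-self-adjoint matrix $\mathbb{H}^{\boldsymbol{\sigma}}+\mathbb{L}^{\boldsymbol{\sigma}}$ and, using Lemma \ref{lem32}, reduce the equilibrium-potential equation to a one-dimensional problem along its unique unstable direction. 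Its explicit solution is a Gaussian error function whose effective rate is $\mu^{\boldsymbol{\sigma}}$, and the Gaussian integration transverse to this direction produces exactly the factor $\omega^{\boldsymbol{\sigma}}$ of \eqref{eEKconst}; summing over $\Sigma_{0}$ gives $\omega_{0}$.

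The main obstacle, and the reason the authors emphasize their new methodology, is justifying that the test function $f$ is a sufficiently accurate proxy for $h_{A,B}$ in the boundary flux formula. In the reversible setting the Dirichlet principle automatically converts any test function into an upper bound on the capacity, and a complementary Thomson bound built from a test flow furnishes the matching lower bound. Neither tool is available here, so one must establish quantitative pointwise control on $h_{A,B}-f$ directly, presumably by means of a non-self-adjoint maximum principle or a Feynman--Kac representation adapted to $\mathscr{L}_{\epsilon}$, and then propagate these local estimates globally along the level sets of $U$. Carrying out this control cleanly, without any variational crutch, is precisely the technical heart of the paper and the step that makes the non-reversible case no harder than the reversible one.
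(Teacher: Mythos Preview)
Your three-part architecture---the potential-theoretic representation, the Laplace asymptotic for $\int h^{*}\,d\mu_{\epsilon}$, and the capacity asymptotic---matches the paper exactly. However, your account of the capacity step misidentifies the mechanism and, as written, proposes a harder route than the one the authors actually take.

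You describe the plan as ``replace $h_{A,B}$ by an explicit test function $f$'' in the boundary-flux formula and then justify this by establishing ``quantitative pointwise control on $h_{A,B}-f$ directly.'' That is \emph{not} what the paper does, and doing it your way would require gradient estimates on $h_{A,B}$ near $\partial A$ or uniform pointwise bounds on $h_{A,B}-f$ throughout the transition region---precisely the kind of hard analysis the authors' method is designed to avoid. The key device (Proposition~\ref{p62}) is the \emph{exact} identity
\[
\mathrm{cap}_{\epsilon}(A,B)\;=\;\epsilon\int_{\Omega}\bigl[\Phi_{f}\cdot\nabla h_{A,B}^{\epsilon}\bigr]\,d\mu_{\epsilon}
\]
valid for \emph{every} $f\in\mathscr{C}_{A,B}$, with $\Phi_{f}=\nabla f+\epsilon^{-1}f\,\boldsymbol{\ell}$. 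Nothing is being approximated here; the test function need not be close to $h_{A,B}$ at all. One then integrates by parts to obtain $-\int h_{\epsilon}\,\mathscr{L}_{\epsilon}^{*}g_{\epsilon}\,d\mu_{\epsilon}$ plus boundary terms. The test function $g_{\epsilon}$ is chosen to approximate the \emph{adjoint} equilibrium potential $h_{\epsilon}^{*}$ (not $h_{\epsilon}$), so that $\mathscr{L}_{\epsilon}^{*}g_{\epsilon}$ is negligible; the boundary terms over $\partial_{\pm}\mathcal{B}_{\epsilon}^{\boldsymbol{\sigma}}$ then carry the main contribution $\alpha_{\epsilon}\omega^{\boldsymbol{\sigma}}$. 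The only control on $h_{\epsilon}$ that is ever needed is the rough leveling estimate $h_{\epsilon}\approx 1$ on $\partial_{+}\mathcal{B}_{\epsilon}^{\boldsymbol{\sigma}}$ and $h_{\epsilon}\approx 0$ on $\partial_{-}\mathcal{B}_{\epsilon}^{\boldsymbol{\sigma}}$, obtained from capacity ratios (Proposition~\ref{p101}), not from any maximum-principle comparison with $f$.

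A second point you miss is the self-referential error term. Because $g_{\epsilon}$ must be mollified to be smooth, the computation produces
\[
\mathrm{cap}_{\epsilon}\;=\;[1+o_{\epsilon}(1)]\,\alpha_{\epsilon}\,\omega_{0}\;+\;o_{\epsilon}(1)\,[\alpha_{\epsilon}\,\mathrm{cap}_{\epsilon}]^{1/2},
\]
where the second term comes from Cauchy--Schwarz against the Dirichlet form of $h_{\epsilon}$, which equals $\mathrm{cap}_{\epsilon}$ itself. Dividing by $\alpha_{\epsilon}$ and solving the resulting quadratic in $r_{\epsilon}=(\mathrm{cap}_{\epsilon}/\alpha_{\epsilon})^{1/2}$ closes the estimate. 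This algebraic trick is what allows a single test function, without any matching lower bound from a test flow, to pin down $\mathrm{cap}_{\epsilon}$ sharply.
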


\begin{rem}
\label{rmk35}We state the following with regard to Theorem \ref{t33}.
\begin{enumerate}
\item Heuristically, the process $\boldsymbol{x}_{\epsilon}(\cdot)$ starting
at $\boldsymbol{m}_{0}$ first mixes among the neighborhoods of minima
of $\mathcal{M}_{0}^{\star}$, and then makes a transition to $\mathcal{U}_{\epsilon}$
by passing through a neighborhood of the saddle in $\Sigma_{0}$ according
to the Freidlin-Wentzell theory. This is the reason that the formula
\eqref{eEK_main} depends on the local properties of the potential
$U$ at $\mathcal{M}_{0}^{\star}$ and $\Sigma_{0}$. A remarkable
fact regarding the formula \eqref{eEK_main} is that the sub-exponential
prefactor is dominated only by these local properties. This is mainly
because the invariant measure is the Gibbs measure $\mu_{\epsilon}(\cdot).$
It is observed in \cite{BR} that an additional factor called ``non-Gibbsianness''
of the process should be introduced in the general case (i.e., in
the analysis of the metastable behavior of the process $\boldsymbol{z}_{\epsilon}(\cdot)$). 
\item Theorem \ref{t33} is a generalization of \cite[Theorem 3.2]{BEGK1},
as the reversible case is the special $\boldsymbol{\ell}=\boldsymbol{0}$
case of our model. Moreover, a careful reading of our arguments reveals
that the error term $o_{\epsilon}(1)$ is indeed $O(\epsilon^{1/2}\log\frac{1}{\epsilon})$
which is the one that appeared in \cite[Theorems 3.1 and 3.2]{BEGK1}.
\item The constants $\omega_{0}$, $\nu_{0}$, and $h_{0}$ and the set
$\mathcal{U}_{\epsilon}$ are not changed if we take a different starting
point $\boldsymbol{m}_{0}'\in\mathcal{M}_{0}$. In view of Theorem
\ref{t33}, this implies that all the transition times from a point
in $\mathcal{M}_{0}$ to $\mathcal{U}_{\epsilon}$ are asymptotically
the same. For instance, if we take $H=H_{1}$ in Figure \ref{fig2},
the expectation of the hitting time $\tau_{\mathcal{D}_{\epsilon}(\boldsymbol{m}_{1})\cup\mathcal{D}_{\epsilon}(\boldsymbol{m}_{2})}$
is asymptotically the same for the starting points $\boldsymbol{m}_{0},\,\boldsymbol{m}_{3}$
and $\boldsymbol{m}_{4}$. This is because the process $\boldsymbol{x}_{\epsilon}(\cdot)$
sufficiently mixes in the valley $\mathcal{H}_{0}$ before moving
to another valley.
\item Consider the case $H=H_{2}$, where the potential $U$ is given as
Figure \ref{fig2} so that we have $\mathcal{U}_{\epsilon}^{H_{2}}=\{\bm{m}_{1},\,\bm{m}_{2},\,\bm{m}_{3},\,\bm{m}_{4}\}$.
However, in time scale $\exp\left\{ \frac{H_{2}-h_{0}}{\epsilon}\right\} $,
the diffusion process cannot move to the neighborhoods of $\bm{m}_{1}$
and $\bm{m}_{2}$, since $\bm{\sigma}_{2}$ is the only saddle point
in $\Sigma_{0}^{H_{2}}$ and $\bm{m}_{3}$ and $\bm{m}_{4}$ are the
only minima in the connected components of $\mathcal{H}_{1}$ whose
boundary contains $\bm{\sigma}_{2}$. Our proof verifies this as well. 
\item We can tune $H$ such that $\boldsymbol{m}_{0}$ is the unique local
minimum of $\mathcal{H}_{0}$. For example, in Figure \ref{fig2},
we can achieve this by selecting $H=H_{2}$. Then, the formula \eqref{eEK_main}
becomes the asymptotics of the transition time from $\boldsymbol{m}_{0}$
to one of the other local minima, and this is the classic form of
the Eyring--Kramers formula. We remark that all the existing studies
\cite{BEGK1,LMS} on the Eyring--Kramers formula for metastable diffusion
processes have dealt with only this situation. On the other hand,
our result is more comprehensive in that we analyzed all the possible
levels by carefully investigating the equilibrium potential in Section
\ref{sec9}. Such a comprehensive result for a diffusion setting was
barely known previously, see \cite{Mich} where a similar setting
along with the possibility of degenerate critical points has been
discussed. 
\item We use Notation \ref{not31} and suppose that $\Sigma_{0}^{H_{0}}=\emptyset$.
Then, we have $\mathbb{E}_{\bm{m}_{0}}^{\epsilon}[\,\tau_{\mathcal{U}_{\epsilon}}\,]\gg\exp\left\{ \frac{H_{0}-h_{0}}{\epsilon}\right\} $
and the level $H_{0}$ is not appropriate to investigate this mean
transition time. Instead, we define
\[
H^{*}\,=\,\sup\,\{\,H:\mathcal{U}_{\epsilon}^{H}=\mathcal{U}_{\epsilon}^{H_{0}}\,\}
\]
so that at level $H^{*}$ the gate path from $\boldsymbol{m}_{0}$
to $\mathcal{U}_{\epsilon}^{H_{0}}$ firstly appears and hence $\Sigma_{0}^{H^{*}}\neq\emptyset$.
Thus, we can estimate $\mathbb{E}_{\bm{m}_{0}}^{\epsilon}[\,\tau_{\mathcal{U}_{\epsilon}^{H_{0}}}\,]$
by taking $H=H^{*}$. For instance, in Figure \ref{fig2}, we have
$\Sigma_{0}^{H_{0}}=\emptyset$ and $H^{*}=H_{1}$. 
\item By selecting $\boldsymbol{\ell}$ appropriately, we can make $\omega_{0}$
arbitrarily large. 
\end{enumerate}
\end{rem}

The proof of Theorem \ref{t33} is given in Section \ref{sec7}. 

\subsubsection*{Double-well case }

The Eyring--Kramers formula stated above has a simple form in the
double-well case. Recall the double-well situation illustrated in
Figure \ref{fig1}. For this case, the only meaningful selection of
$H$ is $U(\boldsymbol{\sigma})$, and $\Sigma_{0}=\{\boldsymbol{\sigma}\}$
for this choice. With this $H$, we can interpret Theorem \ref{t33}
as following corollary. 
\begin{cor}
\label{cor37}We have
\begin{equation}
\mathbb{E}_{\boldsymbol{m}_{1}}^{\epsilon}[\,\tau_{\mathcal{\mathcal{D}_{\epsilon}}(\boldsymbol{m}_{2})}\,]\,=\,[\,1+o_{\epsilon}(1)\,]\,\frac{2\pi}{\mu^{\boldsymbol{\sigma}}\,}\sqrt{\frac{-\det\mathbb{H}^{\boldsymbol{\sigma}}}{\det\mathbb{H}^{\boldsymbol{m}_{1}}}}\,\exp\frac{U(\boldsymbol{\sigma})-U(\boldsymbol{m}_{1})}{\epsilon}\;.\label{ecor37}
\end{equation}
\end{cor}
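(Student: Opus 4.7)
The plan is to read off Corollary \ref{cor37} as a direct specialization of Theorem \ref{t33} once we identify all the relevant objects in the double-well setting of Figure \ref{fig1}. Since this is a routine unpacking of definitions, no new ideas are required; the only task is to match notations carefully.

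First, I would set $\boldsymbol{m}_0=\boldsymbol{m}_1$ as the starting point and choose the level $H=U(\boldsymbol{\sigma})$, which is the unique meaningful choice: any $H<U(\boldsymbol{\sigma})$ would yield $\Sigma=\emptyset$, and for $H>U(\boldsymbol{\sigma})$ the sublevel set $\mathcal{H}$ becomes connected so that the metastability set-up of Section \ref{sec31} fails. For this choice, the sublevel set $\mathcal{H}=\{U<U(\boldsymbol{\sigma})\}$ decomposes into exactly two connected components, one containing each $\boldsymbol{m}_i$; hence $\mathcal{H}_0$ is the component around $\boldsymbol{m}_1$ and $\mathcal{H}_1$ the one around $\boldsymbol{m}_2$, giving $\mathcal{M}_0=\mathcal{M}_0^\star=\{\boldsymbol{m}_1\}$, $\mathcal{M}_1=\{\boldsymbol{m}_2\}$, and consequently $\mathcal{U}_\epsilon=\mathcal{D}_\epsilon(\boldsymbol{m}_2)$ and $h_0=U(\boldsymbol{m}_1)$.

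Next, I would check that $\Sigma_0=\overline{\mathcal{H}_0}\cap\overline{\mathcal{H}_1}=\{\boldsymbol{\sigma}\}$. In the double-well picture, $\boldsymbol{\sigma}$ is the unique saddle at level $H$ separating the two wells, so its Hessian $\mathbb{H}^{\boldsymbol{\sigma}}$ has a single negative eigenvalue by the Morse lemma, and Lemma \ref{lem32} provides the single negative eigenvalue $-\mu^{\boldsymbol{\sigma}}$ of $\mathbb{H}^{\boldsymbol{\sigma}}+\mathbb{L}^{\boldsymbol{\sigma}}$ used to define $\omega^{\boldsymbol{\sigma}}$. Plugging into the definitions \eqref{eomega0} and \eqref{enu0} gives
\begin{equation*}
\omega_0 \,=\, \omega^{\boldsymbol{\sigma}} \,=\, \frac{\mu^{\boldsymbol{\sigma}}}{2\pi\sqrt{-\det\mathbb{H}^{\boldsymbol{\sigma}}}}, \qquad \nu_0 \,=\, \frac{1}{\sqrt{\det\mathbb{H}^{\boldsymbol{m}_1}}}.
\end{equation*}

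Finally, substituting these into \eqref{eEK_main} yields
\begin{equation*}
\frac{\nu_0}{\omega_0}\,\exp\frac{H-h_0}{\epsilon} \,=\, \frac{2\pi}{\mu^{\boldsymbol{\sigma}}}\sqrt{\frac{-\det\mathbb{H}^{\boldsymbol{\sigma}}}{\det\mathbb{H}^{\boldsymbol{m}_1}}}\,\exp\frac{U(\boldsymbol{\sigma})-U(\boldsymbol{m}_1)}{\epsilon},
\end{equation*}
which is precisely \eqref{ecor37}. Since the corollary is a direct consequence of Theorem \ref{t33}, there is no genuine obstacle here; the only point meriting care is the verification that the level $H=U(\boldsymbol{\sigma})$ indeed produces $\Sigma_0\neq\emptyset$ (so that Theorem \ref{t33} is applicable) and that $\boldsymbol{m}_1$ is the unique deepest minimum of $\mathcal{H}_0$, both of which are immediate from the geometry of the double well.
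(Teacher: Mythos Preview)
Your proposal is correct and matches the paper's treatment: the paper presents Corollary~\ref{cor37} as the direct specialization of Theorem~\ref{t33} to the double-well case with $H=U(\boldsymbol{\sigma})$ and $\Sigma_0=\{\boldsymbol{\sigma}\}$, without writing out a separate proof. Your unpacking of $\mathcal{M}_0^\star=\{\boldsymbol{m}_1\}$, $\mathcal{U}_\epsilon=\mathcal{D}_\epsilon(\boldsymbol{m}_2)$, $\omega_0=\omega^{\boldsymbol{\sigma}}$, and $\nu_0=(\det\mathbb{H}^{\boldsymbol{m}_1})^{-1/2}$ is exactly what is implicit in the paper's one-sentence justification.
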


This is the classical form of the Eyring--Kramers formula. With this
simple case, we explain why this result is a refinement of the Freidlin--Wentzell
theory. By \cite[Theorem 3.3.1]{FW}, the quasi-potential $V(\cdot\,;\boldsymbol{m}_{1})$
of the process $\boldsymbol{x}_{\epsilon}(\cdot)$ with respect to
the local minimum $\boldsymbol{m}_{1}$ is given by $V(\boldsymbol{x};\boldsymbol{m}_{1})=U(\boldsymbol{x})-U(\boldsymbol{m}_{1})$
on the domain of attraction of $\boldsymbol{m}_{1}$ with respect
to the process $\boldsymbol{x}(\cdot)$. Hence, we can deduce the
following large-deviation type result from the Freidlin--Wentzell
theory:
\[
\lim_{\epsilon\rightarrow0}\epsilon\,\log\mathbb{E}_{\boldsymbol{m}_{1}}^{\epsilon}[\,\tau_{\mathcal{\mathcal{D}_{\epsilon}}(\boldsymbol{m}_{2})}\,]\,=\,U(\boldsymbol{\sigma})-U(\boldsymbol{m}_{1})\;.
\]
In the formula \eqref{ecor37}, we find the precise sub-exponential
pre-factor associated with this large-deviation estimate. 

We can also deduce from Corollary \ref{cor37} a precise relation
between the mean transition time and a low-lying spectrum of the generator
$\mathscr{L}_{\epsilon}$ for the double-well case. In \cite{LM},
the sharp asymptotics for the eigenvalue $\lambda_{\epsilon}$ of
$\mathscr{L}_{\epsilon}$ with the smallest real part was obtained.
Note that the generator $\mathscr{L}_{\epsilon}$ is not self-adjoint;
hence, the eigenvalue might be a complex number. 
\begin{cor}
\label{cor ref23}For the double-well situation, we suppose that $U(\bm{m}_{1})\ge U(\bm{m}_{2})$.
Let $\lambda_{\epsilon}$ denote the one with smallest real part among
the non-zero eigenvalues of $\mathscr{L}_{\epsilon}$. Then, the following
holds: 
\begin{equation}
\mathbb{E}_{\boldsymbol{m}_{1}}^{\epsilon}\,[\,\tau_{\mathcal{\mathcal{D}_{\epsilon}}(\boldsymbol{m}_{2})}\,]\,=\,\frac{1+o_{\epsilon}(1)}{\lambda_{\epsilon}}\;.\label{ec381}
\end{equation}
\end{cor}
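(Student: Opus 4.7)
The plan is to derive the corollary by combining two sharp asymptotic formulas: the mean transition time estimate of Corollary~\ref{cor37}, proved in this paper, and the sharp asymptotic for the eigenvalue $\lambda_\epsilon$ established in \cite{LM}. Both quantities admit explicit expressions depending only on the local geometry of $U$ and $\boldsymbol{\ell}$ at $\boldsymbol{m}_1$ and $\boldsymbol{\sigma}$, and the argument reduces to the algebraic observation that their product equals $1+o_\epsilon(1)$.

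Concretely, the first step is to transcribe from \cite{LM} the sharp asymptotic
$$
\lambda_\epsilon \;=\; [1 + o_\epsilon(1)]\,\frac{\mu^{\boldsymbol{\sigma}}}{2\pi}\,\sqrt{\frac{\det \mathbb{H}^{\boldsymbol{m}_1}}{-\det \mathbb{H}^{\boldsymbol{\sigma}}}}\,\exp\frac{-(U(\boldsymbol{\sigma})-U(\boldsymbol{m}_1))}{\epsilon}\;,
$$
which, under the assumption $U(\boldsymbol{m}_1)\ge U(\boldsymbol{m}_2)$, reflects the fact that the slow relaxation of $\boldsymbol{x}_\epsilon(\cdot)$ is governed by the escape rate out of the shallower (or equally deep) well $\boldsymbol{m}_1$ through the saddle $\boldsymbol{\sigma}$. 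The Eyring--Kramers constant $\mu^{\boldsymbol{\sigma}}$ entering this expression is precisely the one introduced via Lemma~\ref{lem32}.

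The second step is to invoke Corollary~\ref{cor37}, which provides the complementary asymptotics
$$
\mathbb{E}_{\boldsymbol{m}_1}^{\epsilon}[\,\tau_{\mathcal{D}_\epsilon(\boldsymbol{m}_2)}\,] \;=\; [1 + o_\epsilon(1)]\,\frac{2\pi}{\mu^{\boldsymbol{\sigma}}}\,\sqrt{\frac{-\det \mathbb{H}^{\boldsymbol{\sigma}}}{\det \mathbb{H}^{\boldsymbol{m}_1}}}\,\exp\frac{U(\boldsymbol{\sigma})-U(\boldsymbol{m}_1)}{\epsilon}\;.
$$
Multiplying the two displays, the sub-exponential prefactors cancel exactly, as do the exponential factors, yielding $\lambda_\epsilon\cdot \mathbb{E}_{\boldsymbol{m}_1}^{\epsilon}[\tau_{\mathcal{D}_\epsilon(\boldsymbol{m}_2)}]=1+o_\epsilon(1)$, which is equivalent to the claim \eqref{ec381}.

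The proof contains no genuine analytic obstacle: both key estimates are already available. The only point that demands attention is verifying that the normalization conventions match, i.e., that the Eyring--Kramers constant used in \cite{LM} for the non-reversible spectral asymptotics is indeed built from the same negative eigenvalue $-\mu^{\boldsymbol{\sigma}}$ of $\mathbb{H}^{\boldsymbol{\sigma}}+\mathbb{L}^{\boldsymbol{\sigma}}$ identified in Lemma~\ref{lem32} and from the same determinant $\det\mathbb{H}^{\boldsymbol{\sigma}}$, so that the prefactors really do cancel on the nose. This is essentially a dictionary check; once the conventions are aligned, the corollary follows directly, displaying the precise sense in which the Eyring--Kramers formula identifies the mean transition time with the reciprocal of the dominant exponentially small eigenvalue.
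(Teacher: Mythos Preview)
Your proposal is correct and follows the same approach the paper has in mind: the corollary is stated as a direct consequence of combining the Eyring--Kramers formula of Corollary~\ref{cor37} with the sharp spectral asymptotics for $\lambda_\epsilon$ established in \cite{LM}, and the paper does not supply any argument beyond this juxtaposition. Your care in noting that the hypothesis $U(\boldsymbol{m}_1)\ge U(\boldsymbol{m}_2)$ is what selects the well $\boldsymbol{m}_1$ in the spectral formula, and that the prefactor in \cite{LM} is built from the same $\mu^{\boldsymbol{\sigma}}$ as in Lemma~\ref{lem32}, is exactly the consistency check needed.
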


Note that $\lambda_{\epsilon}$ as well as the error term $o_{\epsilon}(1)$
in \eqref{ec381} is in general a non-real complex number. Suprisingly,
it is verified in \cite[Remark 1.10]{LM} that $\lambda_{\epsilon}$
is indeed a real number if $U$ is a double-well potential and $\epsilon$
is sufficiently small. We remark that the inverse relationship between
the low-lying spectrum and the mean transition time as in \eqref{ec381}
has been rigorously verified in \cite{BEGK1,BEGK2} for a wide class
of reversible models including $\boldsymbol{y}_{\epsilon}(\cdot)$. 

\subsubsection*{Comparison with reversible case}

The Eyring--Kramers formula for the reversible process $\boldsymbol{y}_{\epsilon}(\cdot)$
has been shown in \cite[Theorem 3.2]{BEGK1}. We can also recover\footnote{Indeed, our result with $\boldsymbol{\ell}=\boldsymbol{0}$ strictly
contains what has been established in \cite{BEGK1}. See Remark \ref{rmk35}-(3).} this result by inserting $\boldsymbol{\ell}=\boldsymbol{0}$. We
now explain this result using our terminology and we provide a comparison
between reversible and non-reversible cases. Write 
\[
\omega_{0,\,\text{rev}}\,=\,\sum_{\boldsymbol{\sigma}\in\Sigma_{0}}\omega_{\boldsymbol{\sigma},\,\textrm{rev}}\;,
\]
and let $\mathbb{E}_{\bm{x},\,\textup{rev}}^{\epsilon}$ denote the
expectation with respect to the reversible process $\boldsymbol{y}_{\epsilon}(\cdot)$
starting from $\boldsymbol{x}\in\mathbb{R}^{d}$. Then, as a consequence
of Theorem \ref{t33} with $\boldsymbol{\ell}=\boldsymbol{0}$, we
get the following corollary. 
\begin{cor}
\label{cor38}The following holds:
\[
\begin{aligned}\mathbb{E}_{\bm{m}_{0},\,\textup{rev}}^{\epsilon}[\,\tau_{\mathcal{U}_{\epsilon}}\,] & \,=\,[\,1+o_{\epsilon}(1)\,]\,\frac{\nu_{0}}{\omega_{0,\,\textup{rev}}}\,\exp\frac{H-h_{0}}{\epsilon}\;.\end{aligned}
\]
Therefore, we have $\mathbb{E}_{\bm{m}_{0}}^{\epsilon}[\,\tau_{\mathcal{U}_{\epsilon}}\,]\le\mathbb{E}_{\bm{m}_{0},\,\textup{rev}}^{\epsilon}[\,\tau_{\mathcal{U}_{\epsilon}}\,]$
for all small enough $\epsilon$. 
\end{cor}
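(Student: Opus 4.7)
The plan is to derive Corollary \ref{cor38} as an immediate consequence of two already-established results: Theorem \ref{t33} specialized to the reversible case $\boldsymbol{\ell} = \boldsymbol{0}$, together with the comparison Lemma \ref{lem35} between the non-reversible and reversible Eyring--Kramers constants at each saddle. No new technical argument is required; the proof is essentially bookkeeping once these two ingredients are in hand.

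For the first assertion, I would observe that $\boldsymbol{\ell} \equiv \boldsymbol{0}$ trivially satisfies the standing conditions \eqref{econ_ell1}--\eqref{econ_ell2}, so Theorem \ref{t33} applies, and in this case the SDE \eqref{e_SDEx} reduces exactly to the overdamped Langevin equation \eqref{e_SDEy}. Then $\mathbb{L}^{\boldsymbol{\sigma}} = 0$ for every $\boldsymbol{\sigma} \in \Sigma_0$, so the matrix $\mathbb{H}^{\boldsymbol{\sigma}} + \mathbb{L}^{\boldsymbol{\sigma}} = \mathbb{H}^{\boldsymbol{\sigma}}$ has unique negative eigenvalue $-\lambda^{\boldsymbol{\sigma}}$, giving $\mu^{\boldsymbol{\sigma}} = \lambda^{\boldsymbol{\sigma}}$. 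Comparing \eqref{eEKconst} with \eqref{eEKconst_rev} yields $\omega^{\boldsymbol{\sigma}} = \omega_{\textrm{rev}}^{\boldsymbol{\sigma}}$, and summation over $\Sigma_0$ gives $\omega_0 = \omega_{0,\,\textrm{rev}}$ in the reversible setting. Plugging this into \eqref{eEK_main} delivers the claimed formula for $\mathbb{E}_{\bm{m}_{0},\,\textup{rev}}^{\epsilon}[\tau_{\mathcal{U}_\epsilon}]$.

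For the comparison inequality, Lemma \ref{lem35} gives $\omega^{\boldsymbol{\sigma}} \ge \omega_{\textrm{rev}}^{\boldsymbol{\sigma}}$ at every $\boldsymbol{\sigma} \in \Sigma_0$, which upon summation yields $\omega_0 \ge \omega_{0,\,\textrm{rev}}$. Dividing the two asymptotic identities just obtained (applied respectively to $\boldsymbol{x}_\epsilon(\cdot)$ and $\boldsymbol{y}_\epsilon(\cdot)$), one gets
\[
\frac{\mathbb{E}_{\bm{m}_{0}}^{\epsilon}[\tau_{\mathcal{U}_\epsilon}]}{\mathbb{E}_{\bm{m}_{0},\,\textup{rev}}^{\epsilon}[\tau_{\mathcal{U}_\epsilon}]} \;=\; (1 + o_\epsilon(1))\,\frac{\omega_{0,\,\textrm{rev}}}{\omega_0} \;\le\; 1 + o_\epsilon(1),
\]
from which the asserted inequality follows for all sufficiently small $\epsilon > 0$. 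There is essentially no genuine obstacle in this argument; the only minor subtlety is the degenerate case $\omega_0 = \omega_{0,\,\textrm{rev}}$, in which the two expectations share the same leading-order asymptotics and the "$\le$" is naturally understood in the asymptotic sense rather than as a strict pointwise bound in $\epsilon$.
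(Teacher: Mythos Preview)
Your proposal is correct and follows essentially the same approach as the paper: apply Theorem \ref{t33} with $\boldsymbol{\ell}=\boldsymbol{0}$ to get the first assertion, then invoke Lemma \ref{lem35} to obtain $\omega_0 \ge \omega_{0,\,\textup{rev}}$ for the comparison. Your remark about the degenerate case $\omega_0 = \omega_{0,\,\textup{rev}}$ is a fair caveat that the paper itself does not address; in that borderline situation the two leading-order asymptotics coincide and the pointwise inequality for small $\epsilon$ indeed cannot be extracted from \eqref{eEK_main} alone.
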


\begin{proof}
The first assertion follows immediately from the fact that $\omega_{\textrm{rev}}^{\boldsymbol{\sigma}}$
defined in \eqref{eEKconst_rev} corresponds to $\omega^{\boldsymbol{\sigma}}$
with $\boldsymbol{\ell}=\boldsymbol{0}.$ The second assertion follows
from Lemma \ref{lem35} which implies that $\omega_{0}\ge\omega_{0,\,\text{rev}}$. 
\end{proof}
In view of the fact that the dynamics $\boldsymbol{y}_{\epsilon}(\cdot)$
plays a crucial role in the stochastic gradient descent algorithm,
we might be able to accelerate this algorithm by adding a suitable
orthogonal, incompressible vector field to the drift part. 

\section{\label{sec4}Dynamical System $\boldsymbol{x}(\cdot)$}

In this section, we prove the properties of the dynamical systems
$\boldsymbol{x}(\cdot)$ given by the ODE \eqref{e_odeX}. 

\subsection{\label{sec41}Preliminary results on matrix computations}

In this section, we present few technical lemmas. We remark that all
the vectors and matrices in this subsection are real. The first lemma
below will be used to investigate the stable equilibria of the dynamical
system $\boldsymbol{x}(\cdot)$. 
\begin{lem}
\label{lem41}Let $\mathbb{A},\,\mathbb{B}$ be square matrices of
the same size and suppose that $\mathbb{\mathbb{A}}$ is symmetric
positive definite and $\mathbb{\mathbb{AB}}$ is skew-symmetric. Then,
all the eigenvalues of matrix $\mathbb{\mathbb{A}}+\mathbb{B}$ are
either positive real or complex with a positive real part. In particular,
the matrix $\mathbb{A}+\mathbb{B}$ is invertible. 
\end{lem}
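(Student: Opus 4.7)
The plan is to reduce the problem to the standard fact that any real matrix whose symmetric part is positive definite has all eigenvalues in the open right half-plane. The reduction is accomplished by a similarity that leaves $\mathbb{A}$ fixed and turns $\mathbb{B}$ into a skew-symmetric matrix, using $\mathbb{A}^{1/2}$ as the conjugating element.

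First I would note that since $\mathbb{A}$ is symmetric positive definite it has a unique symmetric positive definite square root $\mathbb{A}^{1/2}$, and I would consider the similar matrix
\begin{equation*}
\mathbb{A}^{1/2}(\mathbb{A}+\mathbb{B})\mathbb{A}^{-1/2} \,=\, \mathbb{A} + \mathbb{A}^{1/2}\mathbb{B}\mathbb{A}^{-1/2} \,=\, \mathbb{A} + \mathbb{M},
\end{equation*}
where $\mathbb{M}:=\mathbb{A}^{1/2}\mathbb{B}\mathbb{A}^{-1/2}$. Since similar matrices have identical spectra, it suffices to analyze $\mathbb{A}+\mathbb{M}$. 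The key observation is that $\mathbb{M}$ is skew-symmetric. To see this, rewrite $\mathbb{M}=\mathbb{A}^{-1/2}(\mathbb{A}\mathbb{B})\mathbb{A}^{-1/2}$; since $\mathbb{A}^{-1/2}$ is symmetric and $\mathbb{A}\mathbb{B}$ is skew-symmetric by hypothesis, we get $\mathbb{M}^T=\mathbb{A}^{-1/2}(\mathbb{A}\mathbb{B})^T\mathbb{A}^{-1/2}=-\mathbb{A}^{-1/2}(\mathbb{A}\mathbb{B})\mathbb{A}^{-1/2}=-\mathbb{M}$.

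Finally I would deduce the eigenvalue statement. Let $\lambda\in\mathbb{C}$ be an eigenvalue of $\mathbb{A}+\mathbb{M}$ and let $v\in\mathbb{C}^d\setminus\{0\}$ be a corresponding eigenvector. Taking the Hermitian inner product with $v$ and then the real part yields
\begin{equation*}
\mathrm{Re}(\lambda)\,\|v\|^2 \,=\, \mathrm{Re}\bigl(v^{*}(\mathbb{A}+\mathbb{M})v\bigr) \,=\, v^{*}\mathbb{A}v,
\end{equation*}
because $v^{*}\mathbb{M}v$ is purely imaginary (the standard fact that real skew-symmetric matrices produce purely imaginary Hermitian forms, since $\overline{v^{*}\mathbb{M}v}=v^{*}\mathbb{M}^T v=-v^{*}\mathbb{M}v$) and $v^{*}\mathbb{A}v$ is real. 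Since $\mathbb{A}$ is real symmetric positive definite, $v^{*}\mathbb{A}v>0$, hence $\mathrm{Re}(\lambda)>0$. This gives the desired spectral dichotomy (real positive or complex with positive real part) and, in particular, that $0$ is not an eigenvalue, so $\mathbb{A}+\mathbb{B}$ is invertible.

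I do not anticipate a genuine obstacle; the only point that requires care is the verification that $\mathbb{A}^{1/2}\mathbb{B}\mathbb{A}^{-1/2}$ is skew-symmetric, which is a short consequence of rewriting it as a symmetric conjugation of the skew-symmetric matrix $\mathbb{A}\mathbb{B}$. Everything else is the textbook energy argument for matrices with positive definite symmetric part.
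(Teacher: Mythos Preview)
Your proof is correct, but it takes a different route from the paper's. The paper does not conjugate by $\mathbb{A}^{1/2}$; instead it works directly with real vectors and splits into two cases. For a real eigenvalue $\alpha$ with real eigenvector $\boldsymbol{u}$, it computes $|\mathbb{A}\boldsymbol{u}|^2=\mathbb{A}\boldsymbol{u}\cdot(\mathbb{A}+\mathbb{B})\boldsymbol{u}=\alpha(\mathbb{A}\boldsymbol{u}\cdot\boldsymbol{u})$, using that $\mathbb{A}\boldsymbol{u}\cdot\mathbb{B}\boldsymbol{u}=\boldsymbol{u}\cdot\mathbb{A}\mathbb{B}\boldsymbol{u}=0$ by skew-symmetry; positive definiteness of $\mathbb{A}$ then forces $\alpha>0$. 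For a non-real eigenvalue $a+ib$ with eigenvector $\boldsymbol{u}+i\boldsymbol{w}$, the paper separates real and imaginary parts and sums the two resulting identities to obtain $|\mathbb{A}\boldsymbol{u}|^2+|\mathbb{A}\boldsymbol{w}|^2=a(\mathbb{A}\boldsymbol{u}\cdot\boldsymbol{u}+\mathbb{A}\boldsymbol{w}\cdot\boldsymbol{w})$, whence $a>0$.

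Your similarity reduction to $\mathbb{A}+\mathbb{M}$ with $\mathbb{M}$ skew-symmetric is the cleaner argument: it handles real and complex eigenvalues in a single Hermitian energy estimate and makes transparent that the result is just the standard ``positive definite symmetric part $\Rightarrow$ spectrum in the open right half-plane'' fact. The paper's version stays entirely within real linear algebra and avoids introducing $\mathbb{A}^{1/2}$, at the cost of the case split and a slightly less transparent choice of test vector ($\mathbb{A}\boldsymbol{u}$ rather than $\boldsymbol{u}$). Both approaches are short and valid; yours is arguably more conceptual.
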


\begin{proof}
By a change of basis, we may assume that $\mathbb{A}=\text{diag}(\lambda_{1},\lambda_{2},\cdots,\lambda_{d})$
for some $\lambda_{1},\,\dots,\,\lambda_{d}>0$. Let $\alpha$ be
a real eigenvalue of $\mathbb{\mathbb{A}}+\mathbb{B}$ and let $\boldsymbol{u}$
be the corresponding non-zero eigenvector. Then, we have 
\[
0\,<\,|\mathbb{A}\boldsymbol{u}|^{2}\,=\,\mathbb{A}\boldsymbol{u}\cdot(\mathbb{\mathbb{A}+\mathbb{\mathbb{B}}})\boldsymbol{u}\,=\,\alpha(\mathbb{A}\boldsymbol{u}\cdot\boldsymbol{u})\ ,
\]
where the first identity holds since $\mathbb{\mathbb{AB}}$ is skew-symmetric.
This proves that $\alpha>0$ since $\mathbb{A}$ is positive definite. 

Next, let $z=a+ib$ be a complex eigenvalue of $\mathbb{A}+\mathbb{\mathbb{B}}$
and let $\boldsymbol{u}+i\boldsymbol{w}$ be the corresponding non-zero
eigenvector, where $\bm{u}$ and $\bm{w}$ are real vectors. Since
$\mathbb{A}$ and $\mathbb{B}$ are real, we have
\[
(\mathbb{\mathbb{A}+\mathbb{B}})\boldsymbol{u}\,=\,a\boldsymbol{u}-b\boldsymbol{w}\;\;\;\text{and\;\;\;}(\mathbb{\mathbb{A}+B})\boldsymbol{w}\,=\,b\boldsymbol{u}+a\boldsymbol{w}\;.
\]
Since $\mathbb{\mathbb{AB}}$ is skew-symmetric, we get 
\begin{align*}
 & |\mathbb{A}\boldsymbol{u}|^{2}\,=\,\mathbb{\mathbb{A}}\boldsymbol{u}\cdot(\mathbb{\mathbb{A}+B})\boldsymbol{u}\,=\,\mathbb{\mathbb{A}}\boldsymbol{u}\cdot(a\boldsymbol{u}-b\boldsymbol{w})\;,\\
 & |\mathbb{A}\boldsymbol{w}|^{2}\,=\,\mathbb{A}\boldsymbol{w}\cdot(\mathbb{A+\mathbb{B}})\boldsymbol{w}\,=\,\mathbb{A}\boldsymbol{w}\cdot(b\boldsymbol{u}+a\boldsymbol{w})\;.
\end{align*}
By adding these two identities, we get 
\[
|\mathbb{A}\boldsymbol{u}|^{2}+|\mathbb{A}\boldsymbol{w}|^{2}\,=\,a(\mathbb{A}\boldsymbol{u}\cdot\boldsymbol{u}+\mathbb{A}\boldsymbol{w}\cdot\boldsymbol{w})\;.
\]
Therefore, we get $a>0$ since $\mathbb{A}$ is positive definite. 
\end{proof}
The next lemma is used to analyze the saddle points of the dynamical
system \eqref{e_odeX}. For a square matrix $\mathbb{M}$, let $\mathbb{M^{\dagger}}$
denote its transpose, and we write $\mathbb{M}^{s}=\frac{1}{2}(\mathbb{M}+\mathbb{M}^{\dagger})$. 
\begin{lem}
\label{lem42}Let $\mathbb{A},\,\mathbb{B}$ be square matrices of
the same size and suppose that $\mathbb{A}^{s}$ is positive definite
and $\mathbb{B}$ is a non-singular, symmetric matrix that has only
one negative eigenvalue. Then, $\mathbb{AB}$ is invertible and has
only one negative eigenvalue with geometric multiplicity $1$.
\end{lem}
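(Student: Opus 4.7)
The plan is to prove the lemma by a homotopy argument that interpolates between the symmetric case (where the conclusion is classical) and the given matrix $\mathbb{A}$. First, I would set $\mathbb{A}_{t}=(1-t)\mathbb{A}^{s}+t\mathbb{A}$ for $t\in[0,1]$, so that $(\mathbb{A}_{t})^{s}=\mathbb{A}^{s}$ for every $t$; in particular, every $\mathbb{A}_{t}$ is invertible, since for any nonzero real $\boldsymbol{u}$ one has $\boldsymbol{u}\cdot\mathbb{A}_{t}\boldsymbol{u}=\boldsymbol{u}\cdot\mathbb{A}^{s}\boldsymbol{u}>0$. Consequently $\mathbb{M}_{t}:=\mathbb{A}_{t}\mathbb{B}$ is invertible for all $t\in[0,1]$, which already gives the invertibility of $\mathbb{AB}=\mathbb{M}_{1}$. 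At $t=0$, the matrix $\mathbb{M}_{0}=\mathbb{A}^{s}\mathbb{B}$ is similar (via conjugation by $(\mathbb{A}^{s})^{1/2}$) to $(\mathbb{A}^{s})^{1/2}\mathbb{B}(\mathbb{A}^{s})^{1/2}$, which is real symmetric and congruent to $\mathbb{B}$; by Sylvester's law of inertia it has the same signature as $\mathbb{B}$, hence exactly one simple negative eigenvalue and $d-1$ positive eigenvalues. The scheme is to transport this eigenvalue count along the homotopy to $t=1$.

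The main obstacle is to show that no eigenvalue of $\mathbb{M}_{t}$ crosses the imaginary axis as $t$ varies over $[0,1]$. Zero is already ruled out by the invertibility of $\mathbb{M}_{t}$. For a purely imaginary eigenvalue $i\omega$ with $\omega\in\mathbb{R}\setminus\{0\}$ and a nonzero eigenvector $\boldsymbol{u}\in\mathbb{C}^{d}$, I would set $\boldsymbol{v}:=\mathbb{B}\boldsymbol{u}$, which is nonzero by the invertibility of $\mathbb{B}$, and examine the Hermitian form $\boldsymbol{v}^{*}\mathbb{A}_{t}\boldsymbol{v}$ in two ways. Using $\mathbb{A}_{t}\boldsymbol{v}=i\omega\,\boldsymbol{u}$ together with the real symmetry of $\mathbb{B}$, a short computation yields $\boldsymbol{v}^{*}\mathbb{A}_{t}\boldsymbol{v}=i\omega\,\boldsymbol{u}^{*}\mathbb{B}\boldsymbol{u}$, which is purely imaginary since $\boldsymbol{u}^{*}\mathbb{B}\boldsymbol{u}\in\mathbb{R}$. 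On the other hand, $\mathrm{Re}\,(\boldsymbol{v}^{*}\mathbb{A}_{t}\boldsymbol{v})=\boldsymbol{v}^{*}\mathbb{A}^{s}\boldsymbol{v}>0$ by positive definiteness of $\mathbb{A}^{s}$. This contradiction rules out imaginary-axis crossings along the entire homotopy and is the key technical point of the argument.

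Granted this, the continuous dependence of eigenvalues on $t$ implies that the number of eigenvalues of $\mathbb{M}_{t}$ with negative real part, counted with algebraic multiplicity, is constant on $[0,1]$. By the base case this number equals $1$, so $\mathbb{AB}=\mathbb{M}_{1}$ has exactly one eigenvalue with negative real part. Since $\mathbb{AB}$ is a real matrix, its non-real eigenvalues come in complex conjugate pairs; hence the unique eigenvalue with negative real part must itself be real, giving a negative real eigenvalue of algebraic multiplicity $1$, and therefore of geometric multiplicity $1$ as well. This delivers both assertions of the lemma.
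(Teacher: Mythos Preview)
Your proof is correct and takes a genuinely different route from the paper. The paper works directly in an eigenbasis of $\mathbb{B}$: after observing $\det(\mathbb{AB})<0$ to get at least one negative eigenvalue, it supposes there are two negative eigenvalues (or a two-dimensional eigenspace) and derives a contradiction by evaluating the quadratic form $\boldsymbol{x}^{\dagger}\mathbb{B}\mathbb{A}\mathbb{B}\boldsymbol{x}$ on suitable combinations of the two eigenvectors, using that $\mathbb{A}^{s}$ is positive definite. Your argument instead deforms $\mathbb{A}$ to its symmetric part, invokes Sylvester's law of inertia at the endpoint $t=0$, and rules out imaginary-axis crossings by the clean identity $\mathrm{Re}(\boldsymbol{v}^{*}\mathbb{A}_{t}\boldsymbol{v})=\boldsymbol{v}^{*}\mathbb{A}^{s}\boldsymbol{v}>0$ versus $\boldsymbol{v}^{*}\mathbb{A}_{t}\boldsymbol{v}=i\omega\,\boldsymbol{u}^{*}\mathbb{B}\boldsymbol{u}\in i\mathbb{R}$.

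Your approach buys two things the paper's direct computation does not immediately give: it shows the negative eigenvalue has \emph{algebraic} multiplicity $1$ (the paper notes this separately in a remark, appealing to the Jordan form), and it shows there are no non-real eigenvalues in the open left half-plane. The paper's argument, on the other hand, is entirely elementary and self-contained, avoiding continuity-of-spectrum arguments and Sylvester's law. Both proofs hinge on the same underlying mechanism, namely that the Hermitian form associated to $\mathbb{A}^{s}$ controls the sign structure, but they exploit it in different ways.
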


\begin{proof}
By a change of basis, we may assume that $\mathbb{B}=\text{diag}(-\lambda_{1},\,\lambda_{2},\,\dots,\,\lambda_{d})$
for some $\lambda_{1},\,\dots,\,\lambda_{d}>0$. It is well known
that a matrix $\mathbb{A}$ such that $\mathbb{A}^{s}$ is positive
definite does not have a negative eigenvalue and $\det\mathbb{A}>0$.
Hence, we have $\det\mathbb{AB}<0$ so that $\mathbb{AB}$ is invertible
and has at least one negative eigenvalue.

First, assume that $\mathbb{AB}$ has two different negative eigenvalues,
$-a,\,-b$, and let $\bm{u}=(u_{1},\,\dots,\,u_{d})$, $\bm{w}=(w_{1},\,\dots,\,w_{d})$
be the corresponding eigenvectors. We claim that $u_{1},\,w_{1}\neq0$.
By contrast, suppose that $u_{1}=0$. Then, we have
\begin{equation}
\mathbb{B}\bm{u}\cdot\mathbb{A}^{s}\mathbb{B}\bm{u}\,=\,\mathbb{B}\bm{u}\cdot\mathbb{AB}\bm{u}\,=\,-a\mathbb{B}\bm{u}\cdot\bm{u}\,=\,-a\sum_{j=2}^{d}\lambda_{j}u_{j}^{2}\,<\,0\ ,\label{e42}
\end{equation}
which is a contradiction since $\mathbb{A}^{s}$ is positive definite.
By the same argument, we get $w_{1}\ne0$.

By the definition of $a,\,b$ and by the positive definiteness of
$\mathbb{A}^{s}$, for any $t\in\mathbb{R}$,
\[
(\bm{u}+t\bm{w})^{\dagger}\,\mathbb{B}\,(a\bm{u}+bt\bm{w})\,=\,-(\bm{u}+t\bm{w})^{\dagger}\,\mathbb{BAB}\,(\bm{u}+t\bm{w})\,<\,0\;.
\]
Let $p=-u_{1}/(bw_{1})$. By substituting $t$ with $ap$ in the previous
equation, the first coordinate of $a\bm{u}+bt\bm{w}=a(\bm{u}+bp\bm{w})$
is zero; thus, we have 
\begin{equation}
0\,>\,(\bm{u}+ap\bm{w})^{\dagger}\,\mathbb{B}\,(a\bm{u}+abp\bm{w})\,=\,a\sum_{j=2}^{d}\lambda_{j}\,(u_{j}+apw_{j})\,(u_{j}+bpw_{j})\;.\label{e43}
\end{equation}
Similarly, substituting $t$ with $bp$ makes the first coordinate
of $\bm{u}+bp\bm{w}$ zero, and we get 
\begin{equation}
0\,>\,(\bm{u}+bp\bm{w})^{\dagger}\,\mathbb{B}\,(a\bm{u}+b^{2}p\bm{w})\,=\,\sum_{j=2}^{d}\lambda_{j}\,(au_{j}+b^{2}pw_{j})\,(u_{j}+bpw_{j})\;.\label{e44}
\end{equation}
Computing $(b/a\,\times\,$\eqref{e43}$\,+\,$\eqref{e44}$)$ gives
\[
0\,>\,\sum_{j=2}^{d}\lambda_{j}\,(u_{j}+bpw_{j})(bu_{j}+abpw_{j}+au_{j}+b^{2}pw_{j})\,=\,(a+b)\sum_{j=2}^{d}\lambda_{j}(u_{j}+bpw_{j})^{2}\ ,
\]
which is a contradiction since we have assumed that $\lambda_{2},\,\dots,\,\lambda_{d}>0$.
Therefore, $\mathbb{AB}$ has only one negative eigenvalue $-a$.

Now, let us assume that there are two eigenvectors $\bm{u}$ and $\bm{w}$
corresponding to $-a$, which are linearly independent. Then, we can
repeat the same computation as that presented above to get a contradiction,
as we did not use the fact that $a\neq b$ in the computation. Hence,
the dimension of the eigenspace corresponding to the eigenvalue $-a$
is 1. 
\end{proof}
\begin{rem}
Indeed, we can show that the algebraic multiplicity of the unique
negative eigenvalue is also $1$ by considering the Jordan decomposition. 
\end{rem}

The following lemma is a direct consequence of the previous one. In
the application, we substitute $\mathbb{A}$ and $\mathbb{B}$ as
$\mathbb{H}^{\boldsymbol{\sigma}}$ and $\mathbb{L}^{\boldsymbol{\sigma}}$,
respectively, for some $\boldsymbol{\sigma}\in\Sigma_{0}$. 
\begin{lem}
\label{lem44}Let $\mathbb{A},\,\mathbb{B}$ be square matrices of
the same size and suppose that $\mathbb{A}$ is a symmetric non-singular
matrix with exactly one negative eigenvalue and $\mathbb{AB}$ is
a skew-symmetric matrix. Then, the matrix $\mathbb{A+B}$ is invertible
and has only one negative eigenvalue, and its geometric multiplicity
is $1$. 
\end{lem}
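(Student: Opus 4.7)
The plan is to deduce Lemma \ref{lem44} from Lemma \ref{lem42} by exhibiting a factorization of $\mathbb{A}+\mathbb{B}$ whose two factors satisfy the hypotheses of Lemma \ref{lem42}. Since $\mathbb{A}$ is non-singular by assumption, I would write
\[
\mathbb{A}+\mathbb{B}\,=\,(\mathbb{I}+\mathbb{B}\mathbb{A}^{-1})\,\mathbb{A}\;,
\]
so that the question reduces to showing that the first factor has positive-definite symmetric part, since the second factor is symmetric, non-singular, and has exactly one negative eigenvalue.

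The main observation — and the one place the hypotheses of the lemma really get used — is that $\mathbb{B}\mathbb{A}^{-1}$ is skew-symmetric. To see this, start from the skew-symmetry of $\mathbb{A}\mathbb{B}$, which gives $\mathbb{B}^{\dagger}\mathbb{A}^{\dagger}=-\mathbb{A}\mathbb{B}$. Because $\mathbb{A}$ is symmetric, this rearranges to $\mathbb{B}^{\dagger}=-\mathbb{A}\mathbb{B}\mathbb{A}^{-1}$, whence
\[
(\mathbb{B}\mathbb{A}^{-1})^{\dagger}\,=\,\mathbb{A}^{-1}\mathbb{B}^{\dagger}\,=\,-\mathbb{B}\mathbb{A}^{-1}\;.
\]
Consequently, the symmetric part of $\mathbb{M}:=\mathbb{I}+\mathbb{B}\mathbb{A}^{-1}$ is exactly $\mathbb{I}$, which is of course positive definite.

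With this factorization in hand, I would apply Lemma \ref{lem42} with its $\mathbb{A}$ replaced by $\mathbb{M}$ and its $\mathbb{B}$ replaced by the matrix $\mathbb{A}$ of the present lemma: the hypothesis $\mathbb{M}^{s}$ positive definite is exactly what was just verified, and $\mathbb{A}$ is symmetric, non-singular, and has a single negative eigenvalue by assumption. Lemma \ref{lem42} then yields that $\mathbb{M}\mathbb{A}=\mathbb{A}+\mathbb{B}$ is invertible and possesses exactly one negative eigenvalue, whose geometric multiplicity equals $1$.

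I do not expect a genuine obstacle here; the only nontrivial move is spotting the correct factorization and recognizing that the interaction of the symmetry of $\mathbb{A}$ with the skew-symmetry of $\mathbb{A}\mathbb{B}$ is precisely what forces $\mathbb{B}\mathbb{A}^{-1}$ to be skew-symmetric (and hence $\mathbb{M}^{s}=\mathbb{I}$). Once this is observed, the conclusion is just an invocation of the previous lemma.
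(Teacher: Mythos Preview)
Your proposal is correct and is essentially identical to the paper's own proof: the paper also factors $\mathbb{A}+\mathbb{B}=(\mathbb{I}+\mathbb{B}\mathbb{A}^{-1})\mathbb{A}$, verifies that $\mathbb{B}\mathbb{A}^{-1}$ is skew-symmetric from the symmetry of $\mathbb{A}$ and the skew-symmetry of $\mathbb{A}\mathbb{B}$, and then invokes Lemma~\ref{lem42}.
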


\begin{proof}
Since $\mathbb{A}$ is symmetric and $\mathbb{AB}$ is skew-symmetric,
we have $-\mathbb{AB}=(\mathbb{AB})^{\dagger}=\mathbb{B}^{\dagger}\mathbb{A}$.
Therefore, we get $\mathbb{B}\mathbb{A}^{-1}=-\mathbb{A}^{-1}\mathbb{B}^{\dagger}=-(\mathbb{B}\mathbb{A}^{-1})^{\dagger}$;
thus, the matrix $\mathbb{B}\mathbb{A}^{-1}$ is skew-symmetric. Let
$\mathbb{I}$ be the identity matrix with the same size as $\mathbb{A}$.
Then, by substituting $\mathbb{I}+\mathbb{B}\mathbb{A}^{-1}$ and
$\mathbb{A}$ for $\mathbb{A}$ and $\mathbb{B}$, respectively, in
Lemma \ref{lem42}, we conclude the proof since $\mathbb{A+B}=(\mathbb{I}+\mathbb{B}\mathbb{A}^{-1})\mathbb{A}$. 
\end{proof}

\subsection{\label{sec42}Equilibria of the dynamical system \eqref{e_odeX} }

In this subsection, we analyze the equilibria of the dynamical system
\eqref{e_odeX} by proving Theorem \ref{t22}. First, we prove part
(1) of the theorem. 
\begin{proof}[Proof of part (1) of Theorem \ref{t22}]
Let $\boldsymbol{c}\in\mathbb{R}^{d}$ be a critical point of $U$.
Since $\nabla U\cdot\boldsymbol{\ell}\equiv0$ by \eqref{econ_ell1},
we have 
\[
\boldsymbol{0}\,\equiv\,\nabla\,[\,\nabla U\cdot\boldsymbol{\ell}\,]\,=\,(\nabla^{2}U)\,\boldsymbol{\ell}+(D\boldsymbol{\ell})\,\nabla U\;.
\]
Thus, we have $(\nabla^{2}U)(\boldsymbol{c})\,\boldsymbol{\ell}(\boldsymbol{c})=\boldsymbol{0}$
as $\nabla U(\boldsymbol{c})=\boldsymbol{0}$. Since $(\nabla^{2}U)(\boldsymbol{c})$
is invertible as $U$ is a Morse function, we get $\boldsymbol{\ell}(\boldsymbol{c})=\boldsymbol{0}$. 
\end{proof}
Now, we present a lemma that is a consequence of the condition \eqref{econ_ell1}
and part (1) of Theorem \ref{t22} that we have just proved. We recall
the notations $\mathbb{H}^{\boldsymbol{c}}$ and $\mathbb{L}^{\boldsymbol{c}}$
from Notation \ref{not32}. 
\begin{lem}
\label{lem45}For any critical point $\boldsymbol{c}$ of $U$, the
matrix $\mathbb{H}^{\bm{c}}\mathbb{L}^{\bm{c}}$ is skew-symmetric. 
\end{lem}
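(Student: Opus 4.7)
The plan is to differentiate the orthogonality identity $\nabla U \cdot \boldsymbol{\ell} \equiv 0$ twice and evaluate at the critical point $\bm{c}$, taking advantage of the fact that both $\nabla U(\bm{c})$ and $\boldsymbol{\ell}(\bm{c})$ vanish at such a point (the latter being part (1) of Theorem \ref{t22}, which has just been proved). Writing $f := \nabla U \cdot \boldsymbol{\ell}$, so that $f \equiv 0$ on $\mathbb{R}^{d}$, we have
\begin{equation*}
\partial_{i}\partial_{j} f \;=\; \sum_{k}\Bigl[\partial_{i}\partial_{j}\partial_{k}U\cdot \ell_{k} + \partial_{j}\partial_{k}U\cdot \partial_{i}\ell_{k} + \partial_{i}\partial_{k}U\cdot \partial_{j}\ell_{k} + \partial_{k}U\cdot \partial_{i}\partial_{j}\ell_{k}\Bigr] \;\equiv\; 0.
\end{equation*}

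The next step is to evaluate this identity at $\bm{x}=\bm{c}$. Since $\nabla U(\bm{c}) = \bm{0}$, the fourth summand dies; since $\boldsymbol{\ell}(\bm{c}) = \bm{0}$ by Theorem \ref{t22}(1), the first summand dies as well. What remains is
\begin{equation*}
\sum_{k}\partial_{j}\partial_{k}U(\bm{c})\,\partial_{i}\ell_{k}(\bm{c}) + \sum_{k}\partial_{i}\partial_{k}U(\bm{c})\,\partial_{j}\ell_{k}(\bm{c}) \;=\; 0 \qquad\text{for all }i,j.
\end{equation*}

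Finally, I would recognize the two sums as matrix entries. With the convention $(\mathbb{L}^{\bm{c}})_{ki} = \partial_{i}\ell_{k}(\bm{c})$ and $(\mathbb{H}^{\bm{c}})_{jk} = \partial_{j}\partial_{k}U(\bm{c})$, the first sum is exactly $(\mathbb{H}^{\bm{c}}\mathbb{L}^{\bm{c}})_{ji}$ and the second is $(\mathbb{H}^{\bm{c}}\mathbb{L}^{\bm{c}})_{ij}$. The displayed relation therefore reads $(\mathbb{H}^{\bm{c}}\mathbb{L}^{\bm{c}})_{ij} + (\mathbb{H}^{\bm{c}}\mathbb{L}^{\bm{c}})_{ji} = 0$ for all $i,j$, which is precisely skew-symmetry of $\mathbb{H}^{\bm{c}}\mathbb{L}^{\bm{c}}$. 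There is no genuine obstacle here; the only thing to be careful about is the index bookkeeping in identifying the two surviving sums with $\mathbb{H}^{\bm{c}}\mathbb{L}^{\bm{c}}$ and its transpose, and the essential input making the calculation collapse is the simultaneous vanishing of $\nabla U$ and $\boldsymbol{\ell}$ at $\bm{c}$ established in Theorem \ref{t22}(1).
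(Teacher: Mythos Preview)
Your proof is correct and follows essentially the same idea as the paper: both extract the second-order information from the identity $\nabla U\cdot\boldsymbol{\ell}\equiv 0$ at the critical point, exploiting that $\nabla U(\bm{c})=\boldsymbol{\ell}(\bm{c})=\bm{0}$. The only cosmetic difference is that the paper phrases this via a Taylor expansion (showing $(\mathbb{H}^{\bm{c}}\bm{x})\cdot(\mathbb{L}^{\bm{c}}\bm{x})=0$ for all $\bm{x}$, hence $\bm{x}\cdot\mathbb{H}^{\bm{c}}\mathbb{L}^{\bm{c}}\bm{x}=0$), while you compute the second partial derivatives directly; these are the same computation in different clothing.
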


\begin{proof}
For small $\varepsilon>0$ and $\boldsymbol{x}\in\mathbb{R}^{d}$,
the Taylor expansion implies that\textbf{ }
\begin{gather*}
\nabla U(\bm{c}+\varepsilon\bm{x})\,=\,\varepsilon\,\mathbb{H}^{\bm{c}}\,\bm{x}+O(\varepsilon^{2})\;\;\;\text{and\;\;\;}\boldsymbol{\ell}(\bm{c}+\varepsilon\bm{x})\,=\,\varepsilon\,\mathbb{L}^{\bm{c}}\,\bm{x}+O(\varepsilon^{2})\;,
\end{gather*}
since we have $\nabla U(\bm{c})=\boldsymbol{\ell}(\boldsymbol{c})=0$
by part (1) of Theorem \ref{t22}. By \eqref{econ_ell1}, we have
\[
[\,\varepsilon\mathbb{H}^{\bm{c}}\,\bm{x}+O(\varepsilon^{2})\,]\cdot[\,\varepsilon\,\mathbb{L}^{\bm{c}}\,\bm{x}+O(\varepsilon^{2})\,]\,=\,0\;.
\]
Dividing both sides by $\varepsilon^{2}$ and letting $\varepsilon\rightarrow0$,
we get $(\mathbb{H}^{\bm{c}}\,\boldsymbol{x})\cdot(\mathbb{L}^{\bm{c}}\,\bm{x})=0$.
Since the Hessian $\mathbb{H}^{\bm{c}}$ is symmetric, we can deduce
that $\bm{x}\cdot\mathbb{H}^{\bm{c}}\,\mathbb{L}^{\bm{c}}\,\bm{x}=0$
for all $\bm{x}\in\mathbb{R}^{d}$. This completes the proof. 
\end{proof}
Now, we focus on parts (2) and (3) of Theorem \ref{t22}. 
\begin{proof}[Proof of parts (2) and (3) of Theorem \ref{t22}]
First, we focus on part (2). If $\boldsymbol{c}$ is a critical point
of $U$, we have $(\nabla U+\boldsymbol{\ell})(\boldsymbol{c})=\boldsymbol{0}$
by part (1); thus, $\boldsymbol{c}$ is an equilibrium of the dynamical
system \eqref{e_odeX}. On the other hand, suppose that $\boldsymbol{c}$
is an equilibrium, i.e., $(\nabla U+\boldsymbol{\ell})(\boldsymbol{c})=\boldsymbol{0}$.
Then, by \eqref{econ_ell1}, we have $0=(\nabla U\cdot\boldsymbol{\ell})(\boldsymbol{c})=-|\nabla U(\boldsymbol{c})|^{2}$;
thus, $\nabla U(\boldsymbol{c})=0$. 

For part (3), suppose that $\boldsymbol{c}$ is a local minimum of
$U$ such that the Hessian $\mathbb{H}^{\boldsymbol{c}}$ is positive
definite. Since $\mathbb{H}^{\boldsymbol{c}}\mathbb{L}^{\boldsymbol{c}}$
is skew-symmetric by Lemma \ref{lem45}, we can insert $\mathbb{A}:=\mathbb{H}^{\boldsymbol{c}}$
and $\mathbb{B}=\mathbb{L}^{\boldsymbol{c}}$ into Lemma \ref{lem41}
to conclude that all the eigenvalues of the matrix $\mathbb{H}^{\boldsymbol{c}}+\mathbb{L}^{\boldsymbol{c}}$
are either positive real or complex with a positive real part; hence,
$\boldsymbol{c}$ a is stable equilibrium of the dynamical system
$\boldsymbol{x}(\cdot)$ since $\mathbb{H}^{\boldsymbol{c}}+\mathbb{L}^{\boldsymbol{c}}$
is the Jacobian of the vector field $\nabla U+\boldsymbol{\ell}$
at $\boldsymbol{c}$.

For the other direction, suppose that $\bm{c}$ is a stable equilibrium
of the dynamical system \eqref{e_odeX}, i.e., the matrix $\mathbb{H}^{\boldsymbol{c}}+\mathbb{L}^{\boldsymbol{c}}$
is positive definite in the sense that
\begin{equation}
\boldsymbol{x}\cdot[\mathbb{H}^{\bm{c}}+\mathbb{L}^{\bm{c}}]\boldsymbol{x}>0\;\;\;\;\text{for all }\boldsymbol{x}\neq\boldsymbol{0}\;.\label{pdc}
\end{equation}
Suppose now that the symmetric matrix $\mathbb{H}^{\bm{c}}$ is not
positive definite so that there is a negative eigenvalue $-\lambda<0$.
Let $\bm{v}$ be the corresponding unit eigenvector. Since $\mathbb{H}^{\bm{c}}\mathbb{L}^{\bm{c}}$
is skew-symmetric by Lemma \ref{lem45} and $\mathbb{H}^{\bm{c}}$
is symmetric, we have
\[
2(\mathbb{H}^{\bm{c}})^{2}=\mathbb{H}^{\bm{c}}[\mathbb{H}^{\bm{c}}+\mathbb{L}^{\bm{c}}]+[\mathbb{H}^{\bm{c}}+(\mathbb{L}^{\bm{c}})^{\dagger}]\mathbb{H}^{\bm{c}}\ ,
\]
and thus we get 
\begin{align*}
2\lambda^{2} & =\boldsymbol{v}\cdot2(\mathbb{H}^{\bm{c}})^{2}\boldsymbol{v}=\mathbb{H}^{\bm{c}}\bm{v}\cdot[\mathbb{H}^{\bm{c}}+\mathbb{L}^{\bm{c}}]\bm{v}+\bm{v}\cdot[\mathbb{H}^{\bm{c}}+(\mathbb{L}^{\bm{c}})^{\dagger}]\mathbb{H}^{\bm{c}}\bm{v}\\
 & =-\lambda\bm{v}\cdot[\mathbb{H}^{\bm{c}}+\mathbb{L}^{\bm{c}}+\mathbb{H}^{\bm{c}}+(\mathbb{L}^{\bm{c}})^{\dagger}]\bm{v}=-2\lambda\bm{v}\cdot[\mathbb{H}^{\bm{c}}+\mathbb{L}^{\bm{c}}]\bm{v}\;.
\end{align*}
This contradicts with \eqref{pdc} and therefore $\mathbb{H}^{\boldsymbol{c}}+\mathbb{L}^{\boldsymbol{c}}$
must be positive definite. This completes the proof. 
\end{proof}

\subsection{\label{sec43}Saddle points of dynamical system $\boldsymbol{x}(\cdot)$}

Now, we focus on the saddle points. First, we prove that, for $\boldsymbol{\sigma}\in\Sigma^{0}$,
the matrix $\mathbb{H}^{\boldsymbol{\sigma}}+\mathbb{L}^{\boldsymbol{\sigma}}$
has only one negative eigenvalue as the matrix $\mathbb{H}^{\boldsymbol{\sigma}}$
has only one negative eigenvalue. 
\begin{proof}[Proof of Lemma \ref{lem32}]
Suppose that $\boldsymbol{\sigma}\in\Sigma^{0}$ such that $\mathbb{H}^{\boldsymbol{\sigma}}$
has exactly one negative eigenvalue by the Morse lemma. Then, we can
insert $\mathbb{A}:=\mathbb{H}^{\bm{\sigma}}$ and $\mathbb{B}:=\mathbb{L}^{\bm{\sigma}}$
into Lemma \ref{lem44} owing to Lemma \ref{lem45}, and we can conclude
that the matrix $\mathbb{H}^{\bm{\sigma}}+\mathbb{L}^{\bm{\sigma}}$
has only one negative eigenvalue and is invertible. 
\end{proof}
Next, we prove Lemma \ref{lem35}, which compares the unique (by Lemma
\ref{lem32}) negative eigenvalues of $\mathbb{H}^{\bm{\sigma}}$
and $\mathbb{H}^{\bm{\sigma}}+\mathbb{L}^{\boldsymbol{\sigma}}$ when
$\boldsymbol{\sigma}\in\Sigma^{0}.$
\begin{proof}[Proof of Lemma \ref{lem35}]
Denote by $-\lambda_{1},\,\lambda_{2},\,\dots,\,\lambda_{d}$ the
eigenvalues of the symmetric matrix $\mathbb{H}^{\bm{\sigma}}$, where
$\lambda_{1},\,\dots,\,\lambda_{d}>0$. Thus, $\lambda^{\boldsymbol{\sigma}}=\lambda_{1}$.
Let $\boldsymbol{u}_{i},\,\dots,\,\boldsymbol{u}_{d}$ denote the
normal eigenvectors of $\mathbb{H}^{\bm{\sigma}}$ corresponding to
the eigenvalues $-\lambda_{1},\,\dots,\,\lambda_{d}$, respectively.
Let $\boldsymbol{v}$ denote the unit eigenvector of $\mathbb{H}^{\bm{\sigma}}+\mathbb{L}^{\boldsymbol{\sigma}}$
corresponding to the unique negative eigenvalue $-\mu^{\boldsymbol{\sigma}}$
and write $\boldsymbol{v}=\sum_{i=1}^{d}a_{i}\boldsymbol{u}_{i}$.
Since $\mathbb{H}^{\boldsymbol{\sigma}}\mathbb{L}^{\boldsymbol{\sigma}}$
is skew-symmetric by Lemma \ref{lem45}, we have 
\[
|\mathbb{H}^{\bm{\sigma}}\boldsymbol{v}|^{2}\,=\,\boldsymbol{v}\cdot\mathbb{H}^{\bm{\sigma}}(\mathbb{H}^{\bm{\sigma}}+\mathbb{L}^{\boldsymbol{\sigma}})\boldsymbol{v}\,=\,-\mu^{\bm{\sigma}}\boldsymbol{v}\cdot\mathbb{H}^{\bm{\sigma}}\boldsymbol{v}\;.
\]
Using the above-mentioned notations, we can rewrite this identity
as 
\begin{equation}
\sum_{i=1}^{d}a_{i}^{2}\lambda_{i}^{2}\,=\,-\mu^{\bm{\sigma}}\,\Big[\,-a_{1}^{2}\lambda_{1}+\sum_{i=2}^{d}a_{i}^{2}\lambda_{i}\,\Big]\;.\label{e45}
\end{equation}
First, suppose that $a_{1}=0$. Then, we have $\sum_{i=2}^{d}a_{i}^{2}\lambda_{i}^{2}=-\mu^{\bm{\sigma}}\sum_{i=2}^{d}a_{i}^{2}\lambda_{i}$
and hence we get $a_{2}=\cdots=a_{d}=0$. This implies that $\boldsymbol{v}=0$,
which is a contradiction. Thus, $a_{1}\neq0$. By \eqref{e45}, we
have 
\[
a_{1}^{2}\lambda_{1}^{2}\,\le\,\sum_{i=1}^{d}a_{i}^{2}\lambda_{i}^{2}\,=\mu^{\bm{\sigma}}a_{1}^{2}\lambda_{1}-\mu^{\bm{\sigma}}\sum_{i=2}^{d}a_{i}^{2}\lambda_{i}\,\le\,\mu^{\boldsymbol{\sigma}}a_{1}^{2}\lambda_{1}\;.
\]
Since $a_{1}\neq0$, we get $\mu^{\boldsymbol{\sigma}}\ge\lambda_{1}=\lambda^{\boldsymbol{\sigma}}$.
\end{proof}

\section{\label{sec5}Properties of Diffusion Process $\boldsymbol{x}_{\epsilon}(\cdot)$}

In this section, we prove the basic properties of the diffusion process
$\boldsymbol{x}_{\epsilon}(\cdot)$. 

\subsection{\label{sec51}Positive recurrence and non-explosion}

First, we establish a technical lemma. 
\begin{lem}
\label{lem51}For all $\epsilon>0$, there exists $r_{0}=r_{0}(\epsilon)>0$
such that $(\mathscr{L}_{\epsilon}U)(\boldsymbol{x})\le-3$ for all
$\boldsymbol{x}\notin\mathcal{D}_{r_{0}}(\boldsymbol{0})$. 
\end{lem}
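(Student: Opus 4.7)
The plan is to compute $(\mathscr{L}_\epsilon U)(\boldsymbol{x})$ explicitly and show that it diverges to $-\infty$ as $|\boldsymbol{x}|\to\infty$, uniformly on the complement of some large ball. Applying the generator formula \eqref{egenL1} with $f=U$ gives
\[
(\mathscr{L}_\epsilon U)(\boldsymbol{x})\,=\,-|\nabla U(\boldsymbol{x})|^{2}-\boldsymbol{\ell}(\boldsymbol{x})\cdot\nabla U(\boldsymbol{x})+\epsilon\,\Delta U(\boldsymbol{x})\;,
\]
and the orthogonality condition \eqref{econ_ell1} kills the middle term, so that
\[
(\mathscr{L}_\epsilon U)(\boldsymbol{x})\,=\,-|\nabla U(\boldsymbol{x})|^{2}+\epsilon\,\Delta U(\boldsymbol{x})\;.
\]
Note that the incompressibility assumption \eqref{econ_ell2} is not required at this step, which is consistent with the fact that Theorem \ref{t23} is stated without it.

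Next I would bound the right-hand side using the growth hypotheses on $U$. The trivial estimate $(\boldsymbol{x}/|\boldsymbol{x}|)\cdot\nabla U(\boldsymbol{x})\le|\nabla U(\boldsymbol{x})|$ combined with \eqref{econ_U2} forces $|\nabla U(\boldsymbol{x})|\to\infty$ as $|\boldsymbol{x}|\to\infty$. Independently, \eqref{econ_U3} supplies, for every $M>0$, a radius $R_{M}>0$ such that $|\nabla U(\boldsymbol{x})|-2\Delta U(\boldsymbol{x})\ge M$ whenever $|\boldsymbol{x}|\ge R_{M}$, and therefore $\Delta U(\boldsymbol{x})\le\tfrac12\bigl(|\nabla U(\boldsymbol{x})|-M\bigr)$ on this region.

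Combining these two bounds yields, for $|\boldsymbol{x}|\ge R_{M}$,
\[
(\mathscr{L}_\epsilon U)(\boldsymbol{x})\,\le\,-|\nabla U(\boldsymbol{x})|^{2}+\frac{\epsilon}{2}\,|\nabla U(\boldsymbol{x})|-\frac{\epsilon M}{2}\;.
\]
Since the quadratic $t\mapsto-t^{2}+(\epsilon/2)\,t$ tends to $-\infty$ as $t\to\infty$ and $|\nabla U(\boldsymbol{x})|\to\infty$, it suffices to fix, say, $M=1$ and then enlarge $R_{M}$ if necessary so that $|\nabla U(\boldsymbol{x})|$ is large enough on $|\boldsymbol{x}|\ge R_{M}$ to force the right-hand side below $-3$; this produces the required threshold $r_{0}=r_{0}(\epsilon)$. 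I do not anticipate any real obstacle in this lemma: condition \eqref{econ_U3} is precisely calibrated so that the potentially diverging Laplacian contribution $\epsilon\,\Delta U$ is absorbed by the dominant $-|\nabla U|^{2}$ term, and the specific cutoff $-3$ (rather than any other fixed negative constant) is an arbitrary convenience chosen for the Lyapunov-type arguments later in Section \ref{sec5}.
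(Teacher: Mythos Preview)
Your proof is correct and follows essentially the same approach as the paper: compute $(\mathscr{L}_\epsilon U)=-|\nabla U|^{2}+\epsilon\,\Delta U$ via the orthogonality condition, then use \eqref{econ_U2} and \eqref{econ_U3} to absorb the $\epsilon\,\Delta U$ term into the dominant $-|\nabla U|^{2}$. The only cosmetic difference is that the paper chooses specific thresholds ($|\nabla U|>2$ and $|\nabla U|-2\Delta U>\epsilon/2$) to obtain the clean intermediate bound $(\mathscr{L}_\epsilon U)\le -\tfrac{3}{4}|\nabla U|^{2}$, whereas you argue via the quadratic $t\mapsto -t^{2}+(\epsilon/2)t$; both reach the same conclusion.
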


\begin{proof}
By \eqref{econ_U2} and \eqref{econ_U3}, we can take $r_{0}$ to
be sufficiently large such that 
\begin{equation}
|\,\nabla U(\bm{x})\,|-2\,\Delta U(\bm{x})\,>\,\frac{\epsilon}{2}\text{\;\;\;and\;\;\;}|\,\nabla U(\bm{x})\,|\,>\,2\label{e54}
\end{equation}
for all $\boldsymbol{x}\notin\mathcal{D}_{r_{0}}(\boldsymbol{0})$.
Then, for $\boldsymbol{x}\notin\mathcal{D}_{r_{0}}(\boldsymbol{0})$,
we have 
\[
\Delta U(\bm{x})\,\le\,-\frac{\epsilon}{4}+\frac{1}{2}\,|\,\nabla U(\bm{x})\,|\,\le\,\frac{1}{4\epsilon}\,|\,\nabla U(\bm{x})\,|^{2}\;.
\]
Therefore, 
\[
(\mathscr{L}_{\epsilon}U)(\bm{x})\,=\,-|\,\nabla U(\bm{x})\,|^{2}+\epsilon\,\Delta U(\bm{x})\,\le\,-\frac{3}{4}\,|\,\nabla U(\bm{x})\,|^{2}\,\le\,-3\;.
\]
The last inequality follows from the second condition of \eqref{e54}. 
\end{proof}
Now, we prove Theorem \ref{t23}
\begin{proof}[Proof of Theorem \ref{t23}]
First, we prove part (1), i.e., the non-explosion property. By \cite[Theorem at page 197]{V},
it suffices to check that there exists a smooth function $u:\mathbb{R}^{d}\rightarrow(0,\,\infty)$
such that 
\begin{align}
 & u(\boldsymbol{x})\rightarrow\infty\text{\;\;as\;\;}\boldsymbol{x}\rightarrow\infty\;\;\;\text{and}\;\;\;(\mathscr{L}_{\epsilon}u)(\boldsymbol{x})\,\le\,u(\boldsymbol{x})\;\;\text{for all }\boldsymbol{x}\in\mathbb{R}^{d}\ .\label{e555}
\end{align}
We claim that $u=U+k_{\epsilon}$ with a sufficiently large constant
$k_{\epsilon}$ satisfies all these conditions. First, we take $k_{\epsilon}$
to be sufficiently large such that $u>0$. The former condition of
\eqref{e555} is immediate from \eqref{econ_U1}. Now, it suffices
to check the second condition. By Lemma \ref{lem51}, the function
$\mathscr{L}_{\epsilon}u=\mathscr{L}_{\epsilon}U$ is bounded from
above. Denote this bound by $M_{\epsilon}$ and then take $k_{\epsilon}$
to be sufficiently large such that $u(\boldsymbol{x})>M_{\epsilon}$
for all $\boldsymbol{x}\in\mathbb{R}^{d}$. Then, the second condition
of \eqref{e555} follows. 

The positive recurrence of $\boldsymbol{x}_{\epsilon}(\cdot)$ follows
from Lemma \ref{lem51} and \cite[Theorem 6.1.3]{Pin}.
\end{proof}

\subsection{\label{sec52}Invariant measure}

By a slight abuse of notation, we write $\mu_{\epsilon}(\boldsymbol{x})=Z_{\epsilon}^{-1}e^{-U(\boldsymbol{x})/\epsilon}$
(cf. \eqref{einv}). Now, we prove Theorem \ref{t25}. We can observe
from the expression \eqref{egenL2} of the generator $\mathscr{L}_{\epsilon}$
that the adjoint generator $\mathscr{L}_{\epsilon}^{\textrm{a}}$
of $\mathscr{L}_{\epsilon}$ with respect to the Lebesgue measure
$d\boldsymbol{x}$ can be written as 
\begin{equation}
\mathscr{L}_{\epsilon}^{\textrm{a}}f\,=\,\epsilon\,\nabla\cdot[\,e^{-U/\epsilon}\nabla(e^{U/\epsilon}f)\,]+\boldsymbol{\ell}\cdot\nabla(e^{U/\epsilon}\,f)\;.\label{egen_adj}
\end{equation}

\begin{proof}[Proof of Theorem \ref{t25}]
First, we prove part (1). With the expression \eqref{egen_adj} and
the explicit form of $\mu_{\epsilon}(\boldsymbol{x})$, we can check
that $\mathscr{L}_{\epsilon}^{\textrm{a}}\mu_{\epsilon}=0$. Therefore,
by \cite[Theorem at page 254]{V} and part (1) of Theorem \ref{t23},
the measure $\mu_{\epsilon}(\cdot)$ is the invariant measure for
the process $\boldsymbol{x}_{\epsilon}(\cdot)$. The uniqueness follows
from \cite[Theorem at page 259 ]{V} and \cite[Theorem at page 260 ]{V}. 

For part (2), let us assume that $\mu_{\epsilon}(\cdot)$ is the invariant
measure for the dynamics $\boldsymbol{z}_{\epsilon}(\cdot)$ given
in \eqref{e_SDEz} for all $\epsilon>0$. Note that the generator
associated with the process $\boldsymbol{z}_{\epsilon}(\cdot)$ acts
on $f\in C^{2}(\mathbb{R}^{d})$ as 
\[
\mathscr{\widetilde{L}}_{\epsilon}f\,=\,-\boldsymbol{b}\cdot\nabla f+\epsilon\Delta f\;.
\]
Hence, its adjoint generator with respect to the Lebesgue measure
is given by 
\[
\mathscr{\widetilde{L}}_{\epsilon}^{\textrm{a}}f\,=\,\nabla\cdot[\,f\boldsymbol{b}\,]+\epsilon\Delta f\;.
\]
By \cite[Theorem at page 259 ]{V}, we must have $\mathscr{\widetilde{L}}_{\epsilon}^{\textrm{a}}\mu_{\epsilon}=0$.
By writing $\boldsymbol{\ell}=\boldsymbol{b}-\nabla U$, this equation
can be expressed as $e^{-U/\epsilon}\left[\frac{1}{\epsilon}\nabla U\cdot\boldsymbol{\ell}+\nabla\cdot\boldsymbol{\ell}\right]=0$.
Since this holds for all $\epsilon>0$, the vector field $\boldsymbol{\ell}$
must satisfy both \eqref{econ_ell1} and \eqref{econ_ell2}. 
\end{proof}

\section{\label{sec6}Potential Theory}

In this section, we introduce the potential theory related to the
process $\boldsymbol{x}_{\epsilon}(\cdot)$. As in the previous studies,
we prove the Eyring--Kramers formula based on the relation between
the mean transition time and the potential theoretic notions, and
this relation is recalled in Proposition \ref{p71}. The difficulty,
especially for the non-reversible process, in using this formula arises
from the estimation of the capacity term appearing in the formula.
In this article, as explained in the Introduction section, we develop
a novel and simple way to estimate the capacity. In this section,
we explain a formula given by Proposition \ref{p62} for the capacity
which plays a crucial role in our method. We remark that this formula
itself is not new; the method for handling this formula is the innovation
of the current study, and will be explained in the remainder of this
article. To explain this formula, we start by introducing the adjoint
process, equilibrium potential, and capacity. 

\subsection{\label{sec61}Adjoint process}

The adjoint operator $\mathscr{L}_{\epsilon}^{*}$ of $\mathscr{L}_{\epsilon}$
with respect to the invariant measure $\mu_{\epsilon}$ can be written
as 
\begin{align}
\mathscr{L}_{\epsilon}^{*}f & \,=\,\epsilon e^{U/\epsilon}\nabla\cdot\,\Big[\,e^{-U/\epsilon}\,\Big(\,\nabla f+\frac{1}{\epsilon}f\boldsymbol{\ell}\,\Big)\,\Big]\,=\,-(\nabla U-\boldsymbol{\ell})\cdot\nabla f+\epsilon\,\Delta f\;.\label{eL*}
\end{align}
Note that the generator $\mathscr{L}_{\epsilon}^{\textrm{a}}$ defined
in \eqref{egen_adj} is an adjoint with respect to the Lebesgue measure,
instead of $\mu_{\epsilon}$. The adjoint process $\boldsymbol{x}_{\epsilon}^{*}(\cdot)$
is the diffusion process associated with the generator $\mathscr{L}_{\epsilon}^{*}$;
hence, it is given by the SDE 
\[
d\boldsymbol{x}_{\epsilon}^{*}(t)\,=\,-(\nabla U-\boldsymbol{\ell})(\boldsymbol{x}_{\epsilon}^{*}(t))\,dt+\sqrt{2\epsilon}\,d\boldsymbol{w}_{t}\;.
\]
Let $\mathbb{P}_{\boldsymbol{x}}^{\epsilon,\,*}$ denote the law of
the process $\boldsymbol{x}_{\epsilon}^{*}(\cdot)$. We can prove
that the process $\boldsymbol{x}_{\epsilon}^{*}(\cdot)$ is positive
recurrent and has the unique invariant measure $\mu_{\epsilon}(\cdot)$
by an argument that is identical to that for $\boldsymbol{x}_{\epsilon}(\cdot)$. 

\subsection{\label{sec62}Equilibrium potentials and capacities}

In the remainder of this section, we fix two disjoint non-empty bounded
domains $\mathcal{A}$ and $\mathcal{B}$ of $\mathbb{R}^{d}$ with
$C^{2,\,\alpha}$-boundaries for some $\alpha\in(0,\,1)$ such that
the perimeters $\sigma(\mathcal{A})$ and $\sigma(\mathcal{B})$ are
finite, and $d(\mathcal{A},\,\mathcal{B})>0$. Now, we introduce the
equilibrium potential and capacity between the two sets $\mathcal{A}$
and $\mathcal{B}$. Write $\Omega=(\overline{\mathcal{A}}\cup\overline{\mathcal{B}})^{c}$
so that $\partial\Omega=\partial\mathcal{A}\cup\partial\mathcal{B}$. 

The equilibrium potentials $h_{\mathcal{A},\mathcal{\,B}}^{\epsilon}$,
$h_{\mathcal{A},\mathcal{\,B}}^{\epsilon,\,*}:\mathbb{R}^{d}\rightarrow\mathbb{R}$
between $\mathcal{A}$ and $\mathcal{B}$ with respect to the processes
$\boldsymbol{x}_{\epsilon}(\cdot)$ and $\boldsymbol{x}_{\epsilon}^{*}(\cdot)$
are given by 
\[
h_{\mathcal{A},\mathcal{\,B}}^{\epsilon}\,(\boldsymbol{x})\,=\,\mathbb{P}_{\boldsymbol{x}}^{\epsilon}\,[\,\tau_{\mathcal{A}}<\tau_{\mathcal{B}\:}]\;\;\;\text{and\;\;\;}h_{\mathcal{A},\mathcal{\,B}}^{\epsilon,\,*}\,(\boldsymbol{x})\,=\,\ensuremath{\mathbb{P}_{\boldsymbol{x}}^{\epsilon,\,*}}\,[\,\tau_{\mathcal{A}}<\tau_{\mathcal{B}}\,]
\]
for $\boldsymbol{x}\in\mathbb{R}^{d}$, respectively. 

The capacity between $\mathcal{A}$ and $\mathcal{B}$ with respect
to the processes $\boldsymbol{x}_{\epsilon}(\cdot)$ and $\boldsymbol{x}_{\epsilon}^{*}(\cdot)$
are respectively defined by 
\begin{align}
\textrm{cap}_{\epsilon}(\mathcal{A},\,\mathcal{B}) & \,=\,\epsilon\int_{\partial\mathcal{A}}(\nabla h_{\mathcal{A},\mathcal{\,B}}^{\epsilon}\cdot\bm{n}_{\Omega})\,\sigma(d\mu_{\epsilon})\;\label{ecap}\\
\textrm{cap}_{\epsilon}^{*}(\mathcal{A},\,\mathcal{B}) & \,=\,\epsilon\int_{\partial\mathcal{A}}(\nabla h_{\mathcal{A},\mathcal{\,B}}^{\epsilon,\,*}\cdot\bm{n}_{\Omega})\,\sigma(d\mu_{\epsilon})\;,\nonumber 
\end{align}
where $\bm{n}_{\Omega}(\bm{x})$ is the outward normal vector to $\Omega$
at $\boldsymbol{x}$; hence, $\bm{n}_{\Omega}(\bm{x})=-\bm{n}_{\mathcal{A}}(\bm{x})$
for $\boldsymbol{x}\in\partial\mathcal{A}$. Here, $\int_{\partial\mathcal{A}}f\,\sigma(d\mu_{\epsilon})$
is a shorthand of $\int_{\partial\mathcal{A}}f(\boldsymbol{x})\,\mu_{\epsilon}(\boldsymbol{x})\,\sigma(d\boldsymbol{x})$.
These capacities exhibit the following well-known properties. 
\begin{lem}
\label{lem61}The following properties hold.
\begin{enumerate}
\item We have
\[
\textup{cap}_{\epsilon}(\mathcal{A},\,\mathcal{B})\,=\,\textup{cap}_{\epsilon}^{*}(\mathcal{A},\,\mathcal{B})\,=\,\text{\textup{cap}}_{\epsilon}^{*}(\mathcal{B},\,\mathcal{A})\,=\,\textup{cap}_{\epsilon}(\mathcal{B},\,\mathcal{A})\ .
\]
\item We have
\[
\textup{cap}_{\epsilon}(\mathcal{A},\,\mathcal{B})\,=\,\epsilon\int_{\Omega}|\nabla h_{\mathcal{A},\,\mathcal{B}}^{\epsilon}|^{2}\,d\mu_{\epsilon}\,=\,\epsilon\int_{\Omega}|\nabla h_{\mathcal{A},\mathcal{\,B}}^{\epsilon,\,*}|^{2}\,d\mu_{\epsilon}\;.
\]
\end{enumerate}
\end{lem}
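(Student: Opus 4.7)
\textbf{Proof plan for Lemma \ref{lem61}.} The backbone of the proof is integration by parts against the Gibbs measure using the divergence-form expressions \eqref{egenL2} and \eqref{eL*} for $\mathscr{L}_\epsilon$ and $\mathscr{L}_\epsilon^*$, together with the elliptic equations $\mathscr{L}_\epsilon h^{\epsilon}_{\mathcal{A},\mathcal{B}} = 0$ and $\mathscr{L}_\epsilon^{*} h^{\epsilon,*}_{\mathcal{A},\mathcal{B}} = 0$ on $\Omega$ with boundary data $1$ on $\partial\mathcal{A}$ and $0$ on $\partial\mathcal{B}$. The crucial algebraic input is that the conditions \eqref{econ_ell1} and \eqref{econ_ell2} are exactly what makes $e^{-U/\epsilon}\,\boldsymbol{\ell}$ divergence-free:
\begin{equation*}
\nabla\cdot\bigl(e^{-U/\epsilon}\,\boldsymbol{\ell}\bigr) \,=\, e^{-U/\epsilon}\Bigl(-\tfrac{1}{\epsilon}\nabla U\cdot\boldsymbol{\ell} + \nabla\cdot\boldsymbol{\ell}\Bigr) \,=\, 0\;,
\end{equation*}
which, together with the divergence theorem applied on the bounded set $\mathcal{A}$, gives $\int_{\partial\mathcal{A}}\boldsymbol{\ell}\cdot\boldsymbol{n}_{\Omega}\,\sigma(d\mu_\epsilon)=0$. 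This identity will eat the unwanted boundary contributions produced by the non-reversible part.

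For part (2), I would expand $0=\int_{\Omega} h^{\epsilon}_{\mathcal{A},\mathcal{B}}\,\mathscr{L}_\epsilon h^{\epsilon}_{\mathcal{A},\mathcal{B}}\,d\mu_\epsilon$ using the divergence form \eqref{egenL2} and Green's formula. The interior terms split into $-\epsilon\int_\Omega |\nabla h^\epsilon|^2\,d\mu_\epsilon$ and a term $\int_\Omega h^\epsilon\,\nabla h^\epsilon\cdot\boldsymbol{\ell}\,d\mu_\epsilon=\tfrac{1}{2}\int_\Omega \nabla(h^\epsilon)^2\cdot\boldsymbol{\ell}\,d\mu_\epsilon$; integrating the latter by parts and using the divergence-free identity turns it into a pure boundary integral on $\partial\mathcal{A}$, which then exactly cancels the analogous boundary contribution coming from the $-\tfrac{1}{\epsilon}h^\epsilon\boldsymbol{\ell}$ piece in \eqref{egenL2}, up to the remaining term $\text{cap}_\epsilon(\mathcal{A},\mathcal{B})-\epsilon\int_\Omega|\nabla h^\epsilon|^2\,d\mu_\epsilon$. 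The equality for $h^{\epsilon,*}$ is identical with the sign of $\boldsymbol{\ell}$ flipped.

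For part (1), I would apply the same machinery to the symmetric bilinear quantity $\int_\Omega\bigl[h^{\epsilon,*}\,\mathscr{L}_\epsilon h^\epsilon-h^\epsilon\,\mathscr{L}_\epsilon^{*}h^{\epsilon,*}\bigr]\,d\mu_\epsilon=0$. A careful bookkeeping shows that the symmetric interior terms in $\nabla h^\epsilon\cdot\nabla h^{\epsilon,*}$ cancel, the cross $\boldsymbol{\ell}$-terms combine into $\int_\Omega\nabla(h^\epsilon h^{\epsilon,*})\cdot\boldsymbol{\ell}\,d\mu_\epsilon$ which becomes a boundary integral by the divergence-free property, and on $\partial\mathcal{A}$ the factor $h^\epsilon h^{\epsilon,*}=1$ while on $\partial\mathcal{B}$ it vanishes; altogether one obtains
\begin{equation*}
0 \,=\, \text{cap}_\epsilon(\mathcal{A},\mathcal{B}) - \text{cap}_\epsilon^{*}(\mathcal{A},\mathcal{B}) - \int_{\partial\mathcal{A}}\boldsymbol{\ell}\cdot\boldsymbol{n}_\Omega\,\sigma(d\mu_\epsilon)\;,
\end{equation*}
and the last integral is zero by the divergence-free argument above, yielding $\text{cap}_\epsilon=\text{cap}_\epsilon^{*}$. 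The remaining equality $\text{cap}_\epsilon^{*}(\mathcal{A},\mathcal{B})=\text{cap}_\epsilon^{*}(\mathcal{B},\mathcal{A})$ follows from part (2) applied to $h^{\epsilon,*}$ together with the observation that $h^{\epsilon,*}_{\mathcal{A},\mathcal{B}}+h^{\epsilon,*}_{\mathcal{B},\mathcal{A}}=1$ on $\Omega$ (since the adjoint process is positive recurrent, hence hits $\mathcal{A}\cup\mathcal{B}$ a.s.), so $|\nabla h^{\epsilon,*}_{\mathcal{A},\mathcal{B}}|^2=|\nabla h^{\epsilon,*}_{\mathcal{B},\mathcal{A}}|^2$ pointwise.

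The main obstacle, though essentially a bookkeeping one, is the careful tracking of the boundary fluxes from the $\boldsymbol{\ell}$-contributions in the divergence form of $\mathscr{L}_\epsilon$ and the verification that each of them is annihilated by the global identity $\int_{\partial\mathcal{A}}\boldsymbol{\ell}\cdot\boldsymbol{n}_\Omega\,\sigma(d\mu_\epsilon)=0$; I would also record the regularity hypotheses on $\partial\mathcal{A},\partial\mathcal{B}$ (the $C^{2,\alpha}$ assumption) that justify the use of Green's identity and the classical existence/regularity of the equilibrium potentials solving the corresponding Dirichlet problems.
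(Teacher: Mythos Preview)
The paper does not prove this lemma directly; it simply cites \cite[Lemmas 3.1 and 3.2]{LMS}. Your integration-by-parts scheme against the divergence forms \eqref{egenL2}, \eqref{eL*}, together with the key identity $\nabla\cdot(e^{-U/\epsilon}\boldsymbol{\ell})=0$, is precisely the computation carried out there, so your plan is on the right track and essentially reproduces the cited argument.

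Two small points to tighten. First, $\Omega=(\overline{\mathcal{A}}\cup\overline{\mathcal{B}})^{c}$ is unbounded, so every application of Green's formula needs a truncation on a large ball $\mathcal{D}_{R}(\boldsymbol{0})$ followed by $R\to\infty$; the vanishing of the spurious boundary term on $\partial\mathcal{D}_{R}(\boldsymbol{0})$ uses the exponential decay of $\mu_{\epsilon}$ from \eqref{etight_U} together with an elliptic bound on $|\nabla h_{\mathcal{A},\mathcal{B}}^{\epsilon}|$. You allude to regularity at the end but should make the decay at infinity explicit. Second, in part (2) the two $\boldsymbol{\ell}$-boundary contributions you describe carry coefficients $1$ and $\tfrac{1}{2}$ rather than cancelling pairwise; the residual $\tfrac{1}{2}\int_{\partial\mathcal{A}}\boldsymbol{\ell}\cdot\boldsymbol{n}_{\Omega}\,\sigma(d\mu_{\epsilon})$ is then killed by the divergence-free identity applied on $\mathcal{A}$, so the conclusion is unaffected, but the phrase ``exactly cancels'' is slightly misleading.
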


\begin{proof}
We refer to \cite[Lemmas 3.2 and 3.1]{LMS} for the proof of parts
(1) and (2), respectively. 
\end{proof}

\subsection{\label{sec63}Representation of capacity}

We keep the sets $\mathcal{A},\,\mathcal{B}$, and $\Omega$ from
the previous subsection. Then, for a function $f:\mathbb{R}^{d}\rightarrow\mathbb{R}$
that is differentiable at $\boldsymbol{x}\in\mathbb{R}^{d}$, we define
a vector field $\Phi_{f}$ at $\boldsymbol{x}$ as\textbf{\textcolor{red}{{} }}

\begin{equation}
\Phi_{f}(\boldsymbol{x})\,=\,\nabla f(\boldsymbol{x})+\frac{1}{\epsilon}f(\boldsymbol{x})\,\boldsymbol{\ell}(\boldsymbol{x})\;.\label{ePhi}
\end{equation}
Let $C_{0}^{\infty}(\mathbb{R}^{d})$ denote the class of smooth and
compactly supported functions on $\mathbb{R}^{d}$. Let 
\begin{equation}
\mathcal{\mathscr{C}}_{\mathcal{A},\,\mathcal{B}}\,=\,\{\,f\in C_{0}^{\infty}(\mathbb{R}^{d}):f\equiv1\;\text{on}\;\mathcal{A}\;,\;\;\;f\equiv0\;\text{on}\;\mathcal{B}\,\}.\label{eCab}
\end{equation}
Hence, for $f\in\mathcal{\mathscr{C}}_{\mathcal{A},\,\mathcal{B}}$,
the vector field $\Phi_{f}$ is defined on $\mathbb{R}^{d}$. The
following expression plays a crucial role in the estimation of the
capacity. 
\begin{prop}
\label{p62}For all $f\in\mathcal{\mathscr{C}}_{\mathcal{A},\,\mathcal{B}}$,
we have
\begin{equation}
\epsilon\int_{\Omega}\,[\,\Phi_{f}\cdot\nabla h_{\mathcal{A},\,\mathcal{B}}^{\epsilon}\,]\,d\mu_{\epsilon}\,=\,\textup{cap}_{\epsilon}(\mathcal{A},\,\mathcal{B})\;.\label{ep62}
\end{equation}
\end{prop}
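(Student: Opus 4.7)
The plan is to prove the identity by testing the harmonicity of $h^\epsilon_{\mathcal{A},\mathcal{B}}$ against $f$ via the divergence form \eqref{egenL2} of the generator, and then handling the resulting boundary and interior contributions using the incompressibility of the Gibbs-weighted field $e^{-U/\epsilon}\boldsymbol{\ell}$. Throughout I write $h = h^\epsilon_{\mathcal{A},\mathcal{B}}$ and observe that \eqref{econ_ell1}--\eqref{econ_ell2} yield the single identity $\nabla\cdot(e^{-U/\epsilon}\boldsymbol{\ell}) = e^{-U/\epsilon}[-\epsilon^{-1}\nabla U\cdot\boldsymbol{\ell} + \nabla\cdot\boldsymbol{\ell}] = 0$, which will be the workhorse throughout.

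\textbf{Step 1} (Green's identity from harmonicity). Since $\mathscr{L}_\epsilon h = 0$ on $\Omega$, \eqref{egenL2} gives $\nabla\cdot[e^{-U/\epsilon}(\nabla h - \frac{1}{\epsilon} h\boldsymbol{\ell})] = 0$ there. Multiplying by $\epsilon f / Z_\epsilon$ and applying the divergence theorem (contributions at infinity vanish thanks to the compact support of $f$) produces
\[
\epsilon\int_\Omega \nabla f \cdot \Big(\nabla h - \tfrac{1}{\epsilon} h\boldsymbol{\ell}\Big)\, d\mu_\epsilon \,=\, \epsilon\int_{\partial\Omega} f\Big(\nabla h - \tfrac{1}{\epsilon} h\boldsymbol{\ell}\Big)\cdot\boldsymbol{n}_\Omega\, \sigma(d\mu_\epsilon).
\]
The boundary values $f = h = 1$ on $\partial\mathcal{A}$ and $f = h = 0$ on $\partial\mathcal{B}$ reduce the right-hand side to $\epsilon\int_{\partial\mathcal{A}} \nabla h\cdot\boldsymbol{n}_\Omega\, \sigma(d\mu_\epsilon) - \int_{\partial\mathcal{A}} \boldsymbol{\ell}\cdot\boldsymbol{n}_\Omega\, \sigma(d\mu_\epsilon)$. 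The first term is $\textup{cap}_\epsilon(\mathcal{A},\mathcal{B})$ by \eqref{ecap}, and the second vanishes because the divergence theorem applied inside $\mathcal{A}$ to $e^{-U/\epsilon}\boldsymbol{\ell}$, together with the incompressibility above, yields $\int_{\partial\mathcal{A}} \boldsymbol{\ell}\cdot\boldsymbol{n}_\Omega\, \sigma(d\mu_\epsilon) = 0$. Thus
\[
\epsilon\int_\Omega \nabla f\cdot\nabla h\, d\mu_\epsilon \,-\, \int_\Omega h\boldsymbol{\ell}\cdot\nabla f\, d\mu_\epsilon \,=\, \textup{cap}_\epsilon(\mathcal{A},\mathcal{B}).
\]

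\textbf{Step 2} (Antisymmetry of the $\boldsymbol{\ell}$-pairing). Integrating $\int_\Omega \boldsymbol{\ell}\cdot\nabla(fh)\, d\mu_\epsilon$ by parts, the interior term vanishes by the incompressibility $\nabla\cdot(e^{-U/\epsilon}\boldsymbol{\ell}) = 0$, and the boundary term equals $\int_{\partial\mathcal{A}} \boldsymbol{\ell}\cdot\boldsymbol{n}_\Omega\, \sigma(d\mu_\epsilon) = 0$ by the same computation as above. Expanding $\nabla(fh) = f\nabla h + h\nabla f$ then gives
\[
\int_\Omega h\boldsymbol{\ell}\cdot\nabla f\, d\mu_\epsilon \,=\, -\int_\Omega f\boldsymbol{\ell}\cdot\nabla h\, d\mu_\epsilon.
\]
Substituting this into the identity from Step 1 produces $\epsilon\int_\Omega(\nabla f + \frac{1}{\epsilon} f\boldsymbol{\ell})\cdot\nabla h\, d\mu_\epsilon = \textup{cap}_\epsilon(\mathcal{A},\mathcal{B})$, which is precisely \eqref{ep62} by the definition \eqref{ePhi} of $\Phi_f$.

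The entire argument reduces to two applications of Green's identity; the only substantive input beyond boundary bookkeeping is the incompressibility $\nabla\cdot(e^{-U/\epsilon}\boldsymbol{\ell}) = 0$, which packages \eqref{econ_ell1} and \eqref{econ_ell2} into a single equation and is what forces $\Phi_f = \nabla f + \frac{1}{\epsilon}f\boldsymbol{\ell}$ rather than a different combination to be the correct test flux. I do not anticipate any real obstacle here: the boundary bookkeeping is routine given the $C^{2,\alpha}$-regularity of $\partial\mathcal{A}$ and $\partial\mathcal{B}$, and the smoothness of $h$ on $\Omega$ needed to justify the integrations by parts follows from standard elliptic regularity applied to the non-degenerate equation $\mathscr{L}_\epsilon h = 0$.
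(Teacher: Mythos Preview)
Your proof is correct and rests on the same ingredients as the paper's---the divergence theorem, the harmonicity $\mathscr{L}_\epsilon h=0$ on $\Omega$, and the incompressibility $\nabla\cdot(e^{-U/\epsilon}\boldsymbol{\ell})=0$---but the paper's organization is more economical. Instead of first obtaining the ``wrong'' pairing $\epsilon\int_\Omega\nabla f\cdot\nabla h\,d\mu_\epsilon-\int_\Omega h\,\boldsymbol{\ell}\cdot\nabla f\,d\mu_\epsilon$ and then invoking a separate antisymmetry step to flip $h\,\boldsymbol{\ell}\cdot\nabla f$ to $-f\,\boldsymbol{\ell}\cdot\nabla h$, the paper observes directly that
\[
\epsilon\,\Phi_f\cdot\nabla h\,\mu_\epsilon
=\epsilon\,\nabla f\cdot\nabla h\,\mu_\epsilon+f\,\boldsymbol{\ell}\cdot\nabla h\,\mu_\epsilon
=\nabla\cdot\bigl(\epsilon f\,\nabla h\,\mu_\epsilon\bigr)-f\,(\mathscr{L}_\epsilon h)\,\mu_\epsilon,
\]
so a single application of the divergence theorem gives $\epsilon\int_{\partial\Omega}f\,\nabla h\cdot\boldsymbol{n}_\Omega\,\sigma(d\mu_\epsilon)-\int_\Omega f\,\mathscr{L}_\epsilon h\,d\mu_\epsilon$; the interior term vanishes by harmonicity and the boundary term is exactly $\textup{cap}_\epsilon(\mathcal{A},\mathcal{B})$ since $f=\mathbf{1}_{\partial\mathcal{A}}$ on $\partial\Omega$. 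In particular no $\boldsymbol{\ell}$-term ever appears on the boundary, so the auxiliary computations $\int_{\partial\mathcal{A}}\boldsymbol{\ell}\cdot\boldsymbol{n}_\Omega\,\sigma(d\mu_\epsilon)=0$ and the antisymmetry identity of your Step~2 are never needed. Your route has the mild advantage of making the role of the incompressibility condition more explicit; the paper's buys brevity.
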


\begin{proof}
Since $f$ is compactly supported, we can apply the divergence theorem
to rewrite the left-hand side of \eqref{ep62} as 
\[
\epsilon\,\int_{\partial\Omega}\,f\,[\,\nabla h_{\mathcal{A},\mathcal{\,B}}^{\epsilon}\cdot\bm{n}_{\Omega}\,]\,\sigma(d\mu_{\epsilon})\,-\,\int_{\Omega}f\,(\mathscr{L}_{\epsilon}h_{\mathcal{A},\,\mathcal{B}}^{\epsilon})\,d\mu_{\epsilon}\;.
\]
Since $f=\mathbf{1}_{\partial\mathcal{A}}$ on $\partial\Omega$ by
the condition $f\in\mathcal{\mathscr{C}}_{\mathcal{A},\,\mathcal{B}}$,
the first term of the above-mentioned expression is equal to $\textrm{cap}_{\epsilon}(\mathcal{A},\,\mathcal{B})$
by \eqref{ecap}. On the other hand, the second integral is $0$ since
$\mathscr{L}_{\epsilon}h_{\mathcal{A},\mathcal{\,B}}^{\epsilon}\equiv0$
on $\Omega$ by the property of the equilibrium potential. 
\end{proof}

\section{\label{sec7}Proof of Eyring--Kramers Formula}

In this section, we prove the Eyring--Kramers formula stated in Theorem
\ref{t33} up to the construction of a test function and analysis
of the equilibrium potential. 

\subsection{\label{sec71}Proof of Theorem \ref{t33}}

For convenience of notation, we will use the following abbreviations
for the capacity and equilibrium potential between a small ball around
the minimum $\boldsymbol{m}_{0}$ and $\mathcal{U}_{\epsilon}$: 
\begin{align}
\text{cap}_{\epsilon} & \,=\,\textrm{cap}_{\epsilon}(\,\mathcal{D}_{\epsilon}(\bm{m}_{0}),\,\mathcal{U}_{\epsilon}\,)\;,\nonumber \\
h_{\epsilon}(\cdot) & \,=\,h_{\mathcal{D}_{\epsilon}(\bm{m}_{0}),\,\mathcal{U}_{\epsilon}}^{\epsilon}(\cdot)\;\;\;\;\text{and\;\;\;\;}h_{\epsilon}^{*}(\cdot)\,=\,h_{\mathcal{D}_{\epsilon}(\bm{m}_{0}),\,\mathcal{U}_{\epsilon}}^{\epsilon,\,*}(\cdot)\;.\label{epab}
\end{align}
The proof of the Eyring--Kramers formula relies on the following
formula regarding the mean transition time. 
\begin{prop}
\label{p71}We have 
\begin{equation}
\mathbb{E}_{\bm{m}_{0}}^{\epsilon}[\,\tau_{\mathcal{U}_{\epsilon}}\,]\,=\,[\,1+o_{\epsilon}(1)\,]\,\frac{1}{\textup{cap}_{\epsilon}}\,\int_{\mathbb{R}^{d}}\,h_{\epsilon}^{*}\,d\mu_{\epsilon}\;.\label{emagf}
\end{equation}
\end{prop}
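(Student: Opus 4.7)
The formula \eqref{emagf} is the non-reversible analogue of the classical Bovier--Eckhoff--Gayrard--Klein identity $\mathbb{E}_{\boldsymbol{m}_{0}}^{\epsilon}[\tau_{\mathcal{U}_{\epsilon}}] \approx \text{cap}_{\epsilon}^{-1} \int h_{\epsilon}\, d\mu_{\epsilon}$, with the only structural change being that the \emph{adjoint} equilibrium potential $h_{\epsilon}^{\ast}$ appears in the integrand. My plan is to deduce \eqref{emagf} from three ingredients: (i) an exact Green-type identity obtained by pairing $h_{\epsilon}^{\ast}$ against the mean-hitting-time function $w(\boldsymbol{x}) := \mathbb{E}_{\boldsymbol{x}}^{\epsilon}[\tau_{\mathcal{U}_{\epsilon}}]$ and using that $h_{\epsilon}^{\ast}$ is $\mathscr{L}_{\epsilon}^{\ast}$-harmonic off $\mathcal{D}_{\epsilon}(\boldsymbol{m}_{0}) \cup \mathcal{U}_{\epsilon}$, (ii) the near-constancy of $w$ on $\partial\mathcal{D}_{\epsilon}(\boldsymbol{m}_{0})$, and (iii) the identity $\text{cap}_{\epsilon} = \text{cap}_{\epsilon}^{\ast}$ from Lemma \ref{lem61}(1).

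\textbf{Step 1: exact boundary identity.} First I would derive the adjoint Green's identity: for any sufficiently smooth $f,g$ and nice domain $\Omega'$,
\[
\int_{\Omega'}\bigl[g\,\mathscr{L}_{\epsilon}f - f\,\mathscr{L}_{\epsilon}^{\ast}g\bigr]\,d\mu_{\epsilon} \,=\, \int_{\partial\Omega'} \bigl[\,\epsilon(g\nabla f - f\nabla g) - fg\,\boldsymbol{\ell}\,\bigr]\cdot \bm{n}_{\Omega'}\,d\mu_{\epsilon}^{\sigma}.
\]
The divergence form \eqref{egenL2} reduces this to elementary integration by parts; the two hypotheses \eqref{econ_ell1} and \eqref{econ_ell2} enter precisely through the key cancellation $\nabla\cdot(\boldsymbol{\ell}\,e^{-U/\epsilon}) = 0$, which kills the volume $\boldsymbol{\ell}$-contribution and leaves only the boundary $\boldsymbol{\ell}$-term above. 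Applying this with $f = w$, $g = h_{\epsilon}^{\ast}$ on $\Omega = (\overline{\mathcal{D}_{\epsilon}(\boldsymbol{m}_{0}) \cup \mathcal{U}_{\epsilon}})^{c}$, using $\mathscr{L}_{\epsilon}w = -1$ and $\mathscr{L}_{\epsilon}^{\ast}h_{\epsilon}^{\ast} = 0$ on $\Omega$, observing that $\partial\mathcal{U}_{\epsilon}$ contributes nothing (both $w$ and $h_{\epsilon}^{\ast}$ vanish there), and evaluating $\epsilon\int_{\partial \mathcal{D}_{\epsilon}(\boldsymbol{m}_{0})}\nabla w\cdot \bm{n}_{\Omega}\,d\mu_{\epsilon}^{\sigma}$ by applying the divergence theorem to $\mathcal{D}_{\epsilon}(\boldsymbol{m}_{0})$ (where again $\mathscr{L}_{\epsilon}w = -1$), the $\boldsymbol{\ell}$-boundary terms and the $\mu_{\epsilon}(\mathcal{D}_{\epsilon}(\boldsymbol{m}_{0}))$ terms cancel and I obtain the exact identity
\[
\int_{\mathbb{R}^{d}} h_{\epsilon}^{\ast}\,d\mu_{\epsilon} \,=\, \epsilon\int_{\partial \mathcal{D}_{\epsilon}(\boldsymbol{m}_{0})} w(\boldsymbol{x})\,[\nabla h_{\epsilon}^{\ast}\cdot \bm{n}_{\Omega}](\boldsymbol{x})\,\mu_{\epsilon}(\boldsymbol{x})\,\sigma(d\boldsymbol{x}).
\]

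\textbf{Step 2: near-constancy of $w$ on $\partial\mathcal{D}_{\epsilon}(\boldsymbol{m}_{0})$, and conclusion.} The remaining task is to show that
\[
\sup_{\boldsymbol{x}\in\partial\mathcal{D}_{\epsilon}(\boldsymbol{m}_{0})} \bigl|w(\boldsymbol{x}) - w(\boldsymbol{m}_{0})\bigr| \,=\, o_{\epsilon}(1)\cdot w(\boldsymbol{m}_{0}).
\]
The deterministic dynamics $\boldsymbol{x}(\cdot)$ contracts exponentially toward the stable equilibrium $\boldsymbol{m}_{0}$ (Theorem \ref{t22}), so for any $\boldsymbol{x}\in\partial\mathcal{D}_{\epsilon}(\boldsymbol{m}_{0})$ the process $\boldsymbol{x}_{\epsilon}(\cdot)$ hits $\{\boldsymbol{m}_{0}\}$ (or a smaller concentric ball) in time at most polynomial in $1/\epsilon$ with overwhelming probability; by the strong Markov property this gives a polynomial-in-$1/\epsilon$ bound for $|w(\boldsymbol{x}) - w(\boldsymbol{m}_{0})|$. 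On the other hand, the quasi-potential of $\boldsymbol{x}_{\epsilon}(\cdot)$ is $U$ by \eqref{econ_ell1}, so the Freidlin--Wentzell lower bound yields $w(\boldsymbol{m}_{0}) \geq \exp[(H-h_{0}-\delta)/\epsilon]$ for any fixed $\delta>0$ and all small enough $\epsilon$, which is exponentially larger than any polynomial in $1/\epsilon$. Factoring $(1+o_{\epsilon}(1))w(\boldsymbol{m}_{0})$ out of the identity in Step 1 and recognizing the remaining integral as $\text{cap}_{\epsilon}^{\ast}$ via \eqref{ecap}, then invoking $\text{cap}_{\epsilon}^{\ast} = \text{cap}_{\epsilon}$ (Lemma \ref{lem61}(1)) and rearranging, yields \eqref{emagf}.

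\textbf{Main obstacle.} The genuinely non-trivial point is the uniform comparison $w(\boldsymbol{x}) \approx w(\boldsymbol{m}_{0})$ on $\partial\mathcal{D}_{\epsilon}(\boldsymbol{m}_{0})$ in Step 2. The operator $\mathscr{L}_{\epsilon}$ is non-self-adjoint, so one cannot directly quote reversible Harnack-type bounds; either one develops a Harnack estimate tailored to $\mathscr{L}_{\epsilon}$ on a ball of radius $\epsilon$, or one gives a probabilistic coupling / strong-Markov argument controlling the maximum of $w$ over a tiny neighborhood of $\boldsymbol{m}_{0}$. The argument is manageable because one only needs crude polynomial upper bounds on the local mixing time, which are dwarfed by the exponentially large escape time $w(\boldsymbol{m}_{0})$.
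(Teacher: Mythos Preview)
Your proposal is correct and follows essentially the same route as the paper, which simply cites \cite[Lemma~9.2]{LMS} (the non-reversible extension of \cite[Proposition~6.1]{BEGK1}) and omits the details; that argument is precisely the Green-type identity you derive in Step~1 (expressing $\int h_{\epsilon}^{*}\,d\mu_{\epsilon}$ as an integral of $w$ against the adjoint equilibrium measure on $\partial\mathcal{D}_{\epsilon}(\boldsymbol{m}_{0})$), followed by a Harnack-type near-constancy estimate for $w$ on that small sphere. The only technical points you should make explicit when writing this out are (a) the justification of the integration by parts on the unbounded domain $\Omega$ (truncate by large balls and use the growth conditions \eqref{econ_U1}--\eqref{econ_U3} plus positive recurrence to kill the boundary-at-infinity terms), and (b) that the oscillation bound in Step~2 is most cleanly obtained from the elliptic Harnack inequality for $\mathscr{L}_{\epsilon}$ applied to $w$ on a ball of radius $O(\epsilon)$, rather than from a hitting-time argument toward the single point $\boldsymbol{m}_{0}$ (which is problematic for $d\ge 2$).
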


This remarkable relation between the mean transition time and the
potential theoretic notions was first observed in \cite[Proposition 6.1]{BEGK1}
for the reversible case. Then, it was extended to the general non-reversible
case in \cite[Lemma 9.2]{LMS}. Our proof is identical to that of
the latter case; hence, we omit the details. Now, the proof of Theorem
\ref{t33} is reduced to computing the right-hand side of \eqref{emagf}.
We shall estimate the capacity and integral terms separately. We emphasize
here that, even if we rely on the general formula \eqref{emagf},
the estimation of these two terms is carried out in a novel manner.
For simplicity of notation, hereafter, we write 
\begin{equation}
\alpha_{\epsilon}\,=\,Z_{\epsilon}^{-1}\,e^{-H/\epsilon}\,(2\pi\epsilon)^{d/2}\;.\label{ealphae}
\end{equation}
Our main innovation in the proof of the Eyring--Kramers formula is
the new strategy to prove the following proposition.
\begin{prop}
\label{p72}For $\omega_{0}$ defined in \eqref{eomega0}, we have
\begin{equation}
\textup{cap}_{\epsilon}\,=\,[\,1+o_{\epsilon}(1)\,]\,\alpha_{\epsilon}\,\omega_{0}\;.\label{ep72}
\end{equation}
\end{prop}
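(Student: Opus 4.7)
\textbf{Proof proposal for Proposition~\ref{p72}.}

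The plan is to apply the identity from Proposition~\ref{p62} with a test function $f\in\mathscr{C}_{\mathcal{D}_\epsilon(\boldsymbol{m}_0),\mathcal{U}_\epsilon}$ that is constant in the bulk of each well and transitions through a small ball of radius $\delta\asymp\sqrt{\epsilon\log\epsilon^{-1}}$ around each $\boldsymbol{\sigma}\in\Sigma_0$, and then to reduce the capacity to a sum of explicit Gaussian integrals, one per saddle. For each $\boldsymbol{\sigma}\in\Sigma_0$, write $A_{\boldsymbol{\sigma}}:=\mathbb{H}^{\boldsymbol{\sigma}}+\mathbb{L}^{\boldsymbol{\sigma}}$; by Lemma~\ref{lem32} this matrix has a unique negative eigenvalue $-\mu^{\boldsymbol{\sigma}}$ of geometric multiplicity one. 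Let $\boldsymbol{v}^{\boldsymbol{\sigma}}$ denote the eigenvector of the transpose $A_{\boldsymbol{\sigma}}^\dagger=\mathbb{H}^{\boldsymbol{\sigma}}+(\mathbb{L}^{\boldsymbol{\sigma}})^\dagger$ corresponding to $-\mu^{\boldsymbol{\sigma}}$; this is the direction picked out by the equation for $h_\epsilon$.

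The first ingredient is a sharp local approximation of $h_\epsilon$ near each saddle. Linearizing $\mathscr{L}_\epsilon h_\epsilon=0$ about $\boldsymbol{\sigma}$ and using the Gaussian scaling $\boldsymbol{y}=(\boldsymbol{x}-\boldsymbol{\sigma})/\sqrt{\epsilon}$, the limiting equation $-\langle A_{\boldsymbol{\sigma}}\boldsymbol{y},\nabla_{\boldsymbol{y}}u\rangle+\Delta_{\boldsymbol{y}}u=0$ admits, among bounded solutions interpolating between $0$ and $1$, a one-parameter family of error-function profiles depending only on the single variable $\langle\boldsymbol{v}^{\boldsymbol{\sigma}},\boldsymbol{y}\rangle$, with Gaussian scale $\sqrt{|\boldsymbol{v}^{\boldsymbol{\sigma}}|^2/\mu^{\boldsymbol{\sigma}}}$; matching to the far-field values ($h_\epsilon\to 1$ on the $\mathcal{H}_0$-side and $h_\epsilon\to 0$ on the $\mathcal{H}_1$-side) fixes the profile uniquely. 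Standard elliptic regularity applied to the rescaled equation on an expanding domain then promotes this heuristic to a quantitative $C^1$ approximation of $h_\epsilon$ on $B_\delta(\boldsymbol{\sigma})$. I would choose $f$ to be precisely this profile (or a smoothed truncation of it) on each saddle ball, and constant $1$ on the neighborhood of $\mathcal{H}_0$ and $0$ on the neighborhood of $\mathcal{H}_1$ outside the balls.

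With these ingredients, Proposition~\ref{p62} yields
\begin{equation*}
\text{cap}_\epsilon \,=\, \sum_{\boldsymbol{\sigma}\in\Sigma_0}\epsilon\int_{B_\delta(\boldsymbol{\sigma})}\Phi_f\cdot\nabla h_\epsilon\,d\mu_\epsilon \,+\, R_\epsilon,
\end{equation*}
where $R_\epsilon$ gathers the bulk regions in which $\nabla f=0$ and hence $\Phi_f\cdot\nabla h_\epsilon=\epsilon^{-1}f\,\boldsymbol{\ell}\cdot\nabla h_\epsilon$. Integrating by parts against $d\mu_\epsilon$ using the identity $\nabla\cdot(e^{-U/\epsilon}\boldsymbol{\ell})=0$, which follows from \eqref{econ_ell1}--\eqref{econ_ell2}, converts these bulk contributions into boundary integrals; the piece on $\partial\mathcal{D}_\epsilon(\boldsymbol{m}_0)$ vanishes by a further application of the same divergence identity inside $\mathcal{D}_\epsilon(\boldsymbol{m}_0)$ (since $h_\epsilon\equiv 1$ there), while the pieces on $\partial B_\delta(\boldsymbol{\sigma})$ can be absorbed into the saddle integrals by extending the integration domain. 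For each saddle, substituting the local Gaussian forms of $\mu_\epsilon$, $h_\epsilon$, and $f$ and rescaling $\boldsymbol{y}=(\boldsymbol{x}-\boldsymbol{\sigma})/\sqrt{\epsilon}$ reduces the contribution to an explicit multivariate Gaussian whose value, after collecting the prefactor $(2\pi\epsilon)^{d/2}/\sqrt{-\det\mathbb{H}^{\boldsymbol{\sigma}}}$ from the $(d-1)$ stable directions and the factor $\mu^{\boldsymbol{\sigma}}$ from the unstable direction, equals $\alpha_\epsilon\,\omega^{\boldsymbol{\sigma}}$; summation over $\Sigma_0$ yields \eqref{ep72}.

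The principal technical obstacle is the quantitative $C^1$ control of $h_\epsilon$ on $B_\delta(\boldsymbol{\sigma})$ for the non-self-adjoint operator $\mathscr{L}_\epsilon$, together with the verification that $R_\epsilon=o(\alpha_\epsilon)$. In the reversible case both would be absorbed into the Dirichlet principle and the exponential smallness of $\nabla h_\epsilon$ in the bulk; replacing them by direct elliptic estimates on the rescaled equation, while constructing no test flow, is exactly the innovation of the present approach and is presumably the content of Section~\ref{sec9}.
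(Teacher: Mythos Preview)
Your outline shares the starting identity (Proposition~\ref{p62}) and the localization to saddle neighborhoods with the paper, but it diverges from the paper's argument at the crucial step, and the step you flag as ``the principal technical obstacle'' is precisely the one the paper is designed \emph{not} to face.

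You propose to choose $f$ as an approximation of $h_\epsilon$ itself and then substitute a local $C^1$ expansion of $h_\epsilon$ into $\epsilon\int\Phi_f\cdot\nabla h_\epsilon\,d\mu_\epsilon$. This requires quantitative control of $\nabla h_\epsilon$ in the saddle region for the non-self-adjoint operator $\mathscr{L}_\epsilon$, and you conjecture that Section~\ref{sec9} supplies it. It does not: Section~\ref{sec9} (Proposition~\ref{p101}) gives only rough pointwise bounds of the form $h_\epsilon\approx 1$ on $\mathcal{H}_0$ and $h_\epsilon\approx 0$ on $\mathcal{H}_1$, with no gradient information. The paper's key idea is to integrate by parts \emph{the other way}: write
\[
\epsilon\int_{\Omega_\epsilon}\Phi_{g_\epsilon}\cdot\nabla h_\epsilon\,d\mu_\epsilon
\,=\,-\int_{\Omega_\epsilon} h_\epsilon\,(\mathscr{L}_\epsilon^{*}g_\epsilon)\,d\mu_\epsilon+(\text{boundary terms}),
\]
and then choose $g_\epsilon$ to approximate $h_\epsilon^{*}$ (not $h_\epsilon$), so that $\mathscr{L}_\epsilon^{*}g_\epsilon$ is small (Proposition~\ref{p86}). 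Now only the values of $h_\epsilon$, not its gradient, enter, and the $L^\infty$ bounds of Section~\ref{sec9} suffice to evaluate the boundary terms (Proposition~\ref{p87}, Lemmas~\ref{lem118}--\ref{lem119}). The eigenvector relevant to $g_\epsilon$ is therefore that of $\mathbb{H}^{\boldsymbol{\sigma}}-(\mathbb{L}^{\boldsymbol{\sigma}})^\dagger$, coming from the adjoint equation, not of $\mathbb{H}^{\boldsymbol{\sigma}}+(\mathbb{L}^{\boldsymbol{\sigma}})^\dagger$ as you write.

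There is a second structural point you miss. Because in the non-reversible case $\boldsymbol{v}^{\boldsymbol{\sigma}}$ is not parallel to the unstable eigenvector $\boldsymbol{e}_1^{\boldsymbol{\sigma}}$ of $\mathbb{H}^{\boldsymbol{\sigma}}$, the natural piecewise definition of the test function is \emph{discontinuous} across the flat faces $\partial_{\pm}\mathcal{B}_\epsilon^{\boldsymbol{\sigma}}$ (see Remark~\ref{rem101}). The paper therefore mollifies $f_\epsilon$ to obtain $g_\epsilon=f_\epsilon^{(\eta)}$, and the mollification error is controlled only in $L^2(\mu_\epsilon)$ (Proposition~\ref{p111}), which via Cauchy--Schwarz against $\nabla h_\epsilon$ produces the mixed term $o_\epsilon(1)[\alpha_\epsilon\,\textup{cap}_\epsilon]^{1/2}$ in Theorem~\ref{t74}. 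The actual proof of Proposition~\ref{p72} is then a two-line bootstrap: Proposition~\ref{p62} and Theorem~\ref{t74} give $\textup{cap}_\epsilon=[1+o_\epsilon(1)]\alpha_\epsilon\omega_0+o_\epsilon(1)[\alpha_\epsilon\,\textup{cap}_\epsilon]^{1/2}$, and solving this quadratic in $r_\epsilon=(\textup{cap}_\epsilon/\alpha_\epsilon)^{1/2}$ yields $r_\epsilon=[1+o_\epsilon(1)]\omega_0^{1/2}$. Your proposal has no analogue of this step because you aim for $R_\epsilon=o(\alpha_\epsilon)$ directly, which would indeed require the $C^1$ estimates you do not have.
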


We present our proof, up to the construction of a test function, in
the next subsection. Further, we need to estimate the integral term
in \eqref{emagf}. 
\begin{prop}
\label{p73}For $\nu_{0}$ defined in \eqref{enu0}, we have 
\begin{equation}
\int_{\mathbb{R}^{d}}\,h_{\epsilon}^{*}\,d\mu_{\epsilon}\,=\,[\,1+o_{\epsilon}(1)\,]\,Z_{\epsilon}^{-1}\,(2\pi\epsilon)^{d/2}\,e^{-h_{0}/\epsilon}\,\nu_{0}\;.\label{ep73}
\end{equation}
\end{prop}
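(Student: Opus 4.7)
The plan is to combine Laplace's method with sharp pointwise asymptotics of $h_\epsilon^*$ near the local minima of $U$. Heuristically, $h_\epsilon^*(\boldsymbol{x}) \to \mathbf{1}_{\mathcal{H}_0}(\boldsymbol{x})$ as $\epsilon\to 0$, and since the Gibbs measure $\mu_\epsilon$ concentrates exponentially on the deepest minima $\mathcal{M}_0^\star$ of $U$ on $\mathcal{H}_0$, the integral $\int h_\epsilon^*\, d\mu_\epsilon$ asymptotically reduces to a sum of Gaussian integrals around the points of $\mathcal{M}_0^\star$, producing the factor $\nu_0$.

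Fix a small $\delta>0$ (independent of $\epsilon$) and set $\mathcal{N}_{\boldsymbol{m}}:=\mathcal{D}_\delta(\boldsymbol{m})$ for each $\boldsymbol{m}\in\mathcal{M}$, with $\delta$ small enough that these balls are disjoint and the Morse quadratic approximation of $U$ is accurate on each $\mathcal{N}_{\boldsymbol{m}}$. Then split
\[
\int_{\mathbb{R}^d} h_\epsilon^*\,d\mu_\epsilon \;=\; \sum_{\boldsymbol{m}\in\mathcal{M}_0^\star} J_{\boldsymbol{m}} \;+\; \sum_{\boldsymbol{m}\in\mathcal{M}_0\setminus\mathcal{M}_0^\star} J_{\boldsymbol{m}} \;+\; \sum_{\boldsymbol{m}\in\mathcal{M}_1} J_{\boldsymbol{m}} \;+\; J_{\mathrm{rest}},
\]
where $J_{\boldsymbol{m}}:=\int_{\mathcal{N}_{\boldsymbol{m}}} h_\epsilon^*\,d\mu_\epsilon$ and $J_{\mathrm{rest}}$ is the integral over the complement of $\bigcup_{\boldsymbol{m}} \mathcal{N}_{\boldsymbol{m}}$. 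For the first sum I would invoke $h_\epsilon^*(\boldsymbol{x})=1-o_\epsilon(1)$ uniformly on each $\mathcal{N}_{\boldsymbol{m}}$ with $\boldsymbol{m}\in\mathcal{M}_0^\star$---heuristically valid because, starting from $\mathcal{N}_{\boldsymbol{m}}$, the adjoint process $\boldsymbol{x}_\epsilon^*(\cdot)$ reaches $\mathcal{D}_\epsilon(\boldsymbol{m}_0)$ by crossing only internal saddles of $\mathcal{H}_0$ (all strictly below level $H$) on a timescale much shorter than $e^{(H-h_0)/\epsilon}$, while reaching $\mathcal{U}_\epsilon$ demands a level-$H$ crossing. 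Combined with Laplace's method on $\mathcal{N}_{\boldsymbol{m}}$, this gives
\[
J_{\boldsymbol{m}} \;=\; [1+o_\epsilon(1)]\, Z_\epsilon^{-1}\,(2\pi\epsilon)^{d/2}\,e^{-h_0/\epsilon}/\sqrt{\det \mathbb{H}^{\boldsymbol{m}}},
\]
and summing over $\mathcal{M}_0^\star$ produces precisely the right-hand side of \eqref{ep73}.

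It remains to show that the three remaining contributions are negligible. For $\boldsymbol{m}\in\mathcal{M}_0\setminus\mathcal{M}_0^\star$, the crude bound $h_\epsilon^*\leq 1$ combined with Laplace's method yields $J_{\boldsymbol{m}}=O(Z_\epsilon^{-1}e^{-U(\boldsymbol{m})/\epsilon}\epsilon^{d/2})$, exponentially smaller than the main term since $U(\boldsymbol{m})>h_0$. For $\boldsymbol{m}\in\mathcal{M}_1$, I would use an exponentially small upper bound of the form $h_\epsilon^*(\boldsymbol{x})\leq C\exp\!\bigl(-(H-U(\boldsymbol{x})-\eta(\delta))/\epsilon\bigr)$ on $\mathcal{N}_{\boldsymbol{m}}$ with $\eta(\delta)\to 0$ as $\delta\to 0$, supplied by Freidlin--Wentzell-type estimates for $\boldsymbol{x}_\epsilon^*(\cdot)$; this produces $J_{\boldsymbol{m}}=O(Z_\epsilon^{-1}e^{-(H-\eta)/\epsilon}\epsilon^{d/2})$, again exponentially smaller than the main term since $H>h_0$. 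The term $J_{\mathrm{rest}}$ is controlled using the tightness estimate \eqref{etight_U} together with the fact that $U$ is bounded strictly above $h_0$ on the complement of $\bigcup_{\boldsymbol{m}} \mathcal{N}_{\boldsymbol{m}}$. The principal obstacle in the above plan is establishing the two sharp pointwise equilibrium potential bounds invoked above: $h_\epsilon^*=1-o_\epsilon(1)$ near $\mathcal{M}_0^\star$ and the exponentially small upper bound near $\mathcal{M}_1$. These are exactly the estimates to be developed in the detailed equilibrium potential analysis of Section \ref{sec9}, and once they are in hand, Proposition \ref{p73} reduces to Laplace's method combined with \eqref{etight_U}.
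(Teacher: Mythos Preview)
Your proposal is correct and follows essentially the same approach as the paper: reduce the integral to Laplace asymptotics by invoking the leveling estimates of Proposition~\ref{p101} ($h_\epsilon^*\approx 1$ on the $\mathcal{H}_0$ side, $h_\epsilon^*$ exponentially small on the $\mathcal{H}_1$ side), then control the remainder via the tightness bound \eqref{etight_U}. The only cosmetic difference is that the paper decomposes by the sublevel set $\mathcal{G}=\{U<H-\beta\}$ (split into $\mathcal{G}_0\subset\mathcal{H}_0$, $\mathcal{G}_1\subset\mathcal{H}_1$, and $\mathcal{G}^c$) rather than by balls around individual minima, which makes the application of Proposition~\ref{p101} slightly more direct since the saddle-height hypothesis $\mathfrak{H}_{\{\boldsymbol{y}\},\mathcal{D}_\epsilon(\boldsymbol{m}_0)}<H-\beta$ is automatic on $\mathcal{G}_0$.
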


We heuristically explain that the last proposition holds. Define $\mathcal{G=}\{\boldsymbol{x}:U(\boldsymbol{x})<H-\beta\}$
for small $\beta>0$ and let $\mathcal{G}_{i}=\mathcal{H}_{i}\cap\mathcal{G}$
for $i=0,\,1$. Since the process starting from a point in $\mathcal{G}_{0}$
may touch the set $\mathcal{D}_{\epsilon}(\boldsymbol{m}_{0})$ before
climbing to the saddle point at level $H$, we can expect that $h_{\epsilon}^{*}\simeq1$
on $\mathcal{G}_{0}$. By a similar logic, we have $h_{\epsilon}^{*}\simeq0$
on $\mathcal{G}_{1}$. Since $\mu_{\epsilon}(\mathcal{G}^{c})$ is
negligible by \eqref{etight_U}, we can conclude that the left-hand
side of \eqref{ep73} is approximately equal to $\mu_{\epsilon}(\mathcal{G}_{0})$,
whose asymptotics is given by the right-hand side of \eqref{ep73}.
We turn this into a rigorous argument in Section \ref{sec94} on the
basis of a delicate analysis of the equilibrium potential. 

Now, we formally conclude the proof of Eyring--Kramers formula. 
\begin{proof}[Proof of Theorem \ref{t33}]
The proof is completed by combining Propositions \ref{p71}, \ref{p72},
and \ref{p73}. 
\end{proof}

\subsection{\label{sec72}Strategy to prove Proposition \ref{p72}}

Instead of relying on the traditional approach, which uses the variational
expression of the capacity given by the Dirichlet principle or the
Thomson principle to estimate the capacity, we develop an alternative
strategy in this subsection. This strategy is suitable for non-reversible
cases in that neither the flow structure nor the test flow is used. 

In Section \ref{sec10}, we construct a smooth test function $g_{\epsilon}\in\mathscr{C}_{\mathcal{D}_{\epsilon}(\boldsymbol{m}_{0}),\,\mathcal{U}_{\epsilon}}$
(cf. \eqref{eCab}) satisfying the following property. 
\begin{thm}
\label{t74}We have
\begin{equation}
\epsilon\,\int_{\Omega_{\epsilon}}\,[\,\Phi_{g_{\epsilon}}\cdot\nabla h_{\epsilon}\,]\,d\mu_{\epsilon}\,=\,[\,1+o_{\epsilon}(1)\,]\,\alpha_{\epsilon}\,\omega_{0}+o_{\epsilon}(1)\,[\,\alpha_{\epsilon}\,\textup{cap}_{\epsilon}\,]^{1/2}\;,\label{et741}
\end{equation}
where $\Omega_{\epsilon}=(\,\overline{\mathcal{D}_{\epsilon}(\boldsymbol{m}_{0})}\cup\overline{\mathcal{U}_{\epsilon}}\,)^{c}$. 
\end{thm}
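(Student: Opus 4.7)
The plan is to compute the integral in \eqref{et741} by localizing to small neighborhoods of each saddle in $\Sigma_0$ and absorbing all remainders into a Cauchy--Schwarz estimate involving $\textup{cap}_\epsilon$. The test function $g_\epsilon \in \mathscr{C}_{\mathcal{D}_\epsilon(\boldsymbol{m}_0),\mathcal{U}_\epsilon}$ is constructed to be $1$ on the bulk part of $\Omega_\epsilon \cap \mathcal{H}_0$, $0$ on the bulk part of $\Omega_\epsilon \cap \mathcal{H}_1$, and inside a box $\mathcal{C}_\epsilon^{\boldsymbol{\sigma}}$ of diameter $\sim\sqrt{\epsilon\log(1/\epsilon)}$ around each $\boldsymbol{\sigma}\in\Sigma_0$, a one-dimensional Gaussian-error-function profile along the left eigenvector $\boldsymbol{v}^{\boldsymbol{\sigma}}$ of $\mathbb{H}^{\boldsymbol{\sigma}}+\mathbb{L}^{\boldsymbol{\sigma}}$ corresponding to the negative eigenvalue $-\mu^{\boldsymbol{\sigma}}$ furnished by Lemma~\ref{lem32}. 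Explicitly, I take $g_\epsilon = \varphi_\epsilon^{\boldsymbol{\sigma}}(\boldsymbol{v}^{\boldsymbol{\sigma}}\cdot(\boldsymbol{x}-\boldsymbol{\sigma}))$ on $\mathcal{C}_\epsilon^{\boldsymbol{\sigma}}$, where $(\varphi_\epsilon^{\boldsymbol{\sigma}})'(t) \propto e^{-\mu^{\boldsymbol{\sigma}} t^2/(2\epsilon|\boldsymbol{v}^{\boldsymbol{\sigma}}|^2)}$ is, up to normalization, the unique radial profile annihilating the linearized generator at $\boldsymbol{\sigma}$. This ensures that the leading-order contribution near the saddle captures the non-symmetric eigenvalue $\mu^{\boldsymbol{\sigma}}$ rather than $\lambda^{\boldsymbol{\sigma}}$.

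The integral then splits into a bulk piece and saddle pieces. On the bulk, $\nabla g_\epsilon = 0$, so only the term $\frac{1}{\epsilon}g_\epsilon\boldsymbol{\ell}$ of $\Phi_{g_\epsilon}$ survives; integration by parts together with the divergence-free identity $\nabla\cdot(\boldsymbol{\ell}e^{-U/\epsilon})=0$, which is exactly the content of conditions \eqref{econ_ell1}--\eqref{econ_ell2}, converts this into boundary fluxes on $\partial\mathcal{D}_\epsilon(\boldsymbol{m}_0)$, $\partial\mathcal{U}_\epsilon$, and $\partial\mathcal{C}_\epsilon^{\boldsymbol{\sigma}}$. The fluxes over $\partial\mathcal{D}_\epsilon(\boldsymbol{m}_0)$ and $\partial\mathcal{U}_\epsilon$ vanish by a further application of the divergence theorem inside those balls, using $\boldsymbol{\ell}(\boldsymbol{m})=\boldsymbol{0}$ at the minima from Theorem~\ref{t22}(1); the fluxes on $\partial\mathcal{C}_\epsilon^{\boldsymbol{\sigma}}$ combine naturally with the saddle-local analysis. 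On each saddle box I Taylor-expand $U$ to second order and $\boldsymbol{\ell}$ to first order about $\boldsymbol{\sigma}$, rescale $\boldsymbol{x}=\boldsymbol{\sigma}+\sqrt{\epsilon}\boldsymbol{y}$, and evaluate an explicit Gaussian integral. The denominator $\sqrt{-\det\mathbb{H}^{\boldsymbol{\sigma}}}$ emerges from the Hessian-weighted Gaussian in the transverse variables, while $\mu^{\boldsymbol{\sigma}}/(2\pi)$ emerges from the normalization of $\varphi_\epsilon^{\boldsymbol{\sigma}}$ paired against its generating Gaussian; summing over $\boldsymbol{\sigma}\in\Sigma_0$ yields the main term $[1+o_\epsilon(1)]\,\alpha_\epsilon\,\omega_0$.

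The remainder terms---Taylor-expansion errors inside each $\mathcal{C}_\epsilon^{\boldsymbol{\sigma}}$, cut-off errors in the transition zone $\partial\mathcal{C}_\epsilon^{\boldsymbol{\sigma}}$, and subleading residuals from the integration by parts---are all of the form $\epsilon\int F\cdot\nabla h_\epsilon\,d\mu_\epsilon$. I bound each by Cauchy--Schwarz and Lemma~\ref{lem61}(2), which gives $\epsilon\,\|\nabla h_\epsilon\|_{L^2(\mu_\epsilon)}^2 = \textup{cap}_\epsilon$:
\[
\epsilon\,\Big|\int F\cdot\nabla h_\epsilon\,d\mu_\epsilon\Big|\,\le\,\epsilon^{1/2}\,\|F\|_{L^2(\mu_\epsilon)}\,\sqrt{\textup{cap}_\epsilon}.
\]
A pointwise estimate on each $\mathcal{C}_\epsilon^{\boldsymbol{\sigma}}$ yielding $\|F\|_{L^2(\mu_\epsilon)}\le o_\epsilon(1)\cdot\epsilon^{-1/2}\sqrt{\alpha_\epsilon}$---which uses Gaussian concentration of $\mu_\epsilon$ inside the saddle box together with boundedness of $\varphi_\epsilon^{\boldsymbol{\sigma}}$ and smallness of the Taylor remainder on the rescaled region $|\boldsymbol{y}|\le\sqrt{\log(1/\epsilon)}$---then produces the claimed $o_\epsilon(1)\sqrt{\alpha_\epsilon\textup{cap}_\epsilon}$ remainder. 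The main technical hurdle is designing the profile so that the cross term $\frac{1}{\epsilon}g_\epsilon\boldsymbol{\ell}\cdot\nabla h_\epsilon$ combines with the symmetric $\nabla g_\epsilon\cdot\nabla h_\epsilon$ to produce the non-symmetric spectral quantity $\mu^{\boldsymbol{\sigma}}$ instead of $\lambda^{\boldsymbol{\sigma}}$, while leaving only $L^2(\mu_\epsilon)$-small residuals. This is where the novelty lies: no test flow is introduced; the entire non-reversibility is encoded in the choice of profile direction $\boldsymbol{v}^{\boldsymbol{\sigma}}$ and rate $\mu^{\boldsymbol{\sigma}}$.
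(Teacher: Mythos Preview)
Your outline captures the test-function architecture correctly, and the use of Cauchy--Schwarz against $\epsilon\|\nabla h_\epsilon\|_{L^2(\mu_\epsilon)}^2=\textup{cap}_\epsilon$ is exactly how the $o_\epsilon(1)\sqrt{\alpha_\epsilon\,\textup{cap}_\epsilon}$ remainder arises. However, there is a genuine gap in how you extract the \emph{main} term $[1+o_\epsilon(1)]\,\alpha_\epsilon\omega_0$.

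You write that on each saddle box you ``Taylor-expand\ldots and evaluate an explicit Gaussian integral.'' But the integrand on $\mathcal{C}_\epsilon^{\boldsymbol{\sigma}}$ is $\Phi_{g_\epsilon}\cdot\nabla h_\epsilon$, and $\nabla h_\epsilon$ is unknown; there is no explicit Gaussian to evaluate. Once you integrate by parts (as you do on the bulk and must also do on the box), the leading contribution lands in boundary integrals of the form
\[
\epsilon\int_{\partial_{\pm}\mathcal{C}_\epsilon^{\boldsymbol{\sigma}}}[\,\Phi_{g_\epsilon}\cdot\boldsymbol{n}\,]\,h_\epsilon\,\sigma(d\mu_\epsilon),
\]
which still carry the unknown $h_\epsilon$. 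These are \emph{not} of the form $\epsilon\int F\cdot\nabla h_\epsilon\,d\mu_\epsilon$ with $\|F\|_{L^2(\mu_\epsilon)}$ small; they are the $O(\alpha_\epsilon)$ main term. To turn them into $\alpha_\epsilon\omega^{\boldsymbol{\sigma}}$ you must know that $h_\epsilon\approx 1$ on $\partial_+\mathcal{C}_\epsilon^{\boldsymbol{\sigma}}$ and $h_\epsilon\approx 0$ on $\partial_-\mathcal{C}_\epsilon^{\boldsymbol{\sigma}}$, with quantitative errors. The paper devotes all of Section~\ref{sec9} to this: rough two-sided capacity bounds (Lemmas~\ref{lem102}--\ref{lem103}) feed into a leveling estimate for $h_\epsilon$ (Proposition~\ref{p101}, Lemma~\ref{lem118}), and only then does the boundary computation of Proposition~\ref{p87} yield $\alpha_\epsilon\omega^{\boldsymbol{\sigma}}$. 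Your proposal never invokes any pointwise information about $h_\epsilon$; Cauchy--Schwarz alone cannot manufacture the constant $\omega_0$. In the paper's structure, Cauchy--Schwarz is used \emph{only} to absorb the mollification error $\Phi_{g_\epsilon}-\Phi_{f_\epsilon}$ (Proposition~\ref{p111}); the identification of the main term (Proposition~\ref{p112}) proceeds via the equilibrium-potential estimates, not via Cauchy--Schwarz.

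A secondary point: the profile direction should be the unit eigenvector of $\mathbb{H}^{\boldsymbol{\sigma}}-(\mathbb{L}^{\boldsymbol{\sigma}})^\dagger$, not of $(\mathbb{H}^{\boldsymbol{\sigma}}+\mathbb{L}^{\boldsymbol{\sigma}})^\dagger$. Integration by parts of $\epsilon\int\Phi_{g_\epsilon}\cdot\nabla h_\epsilon\,d\mu_\epsilon$ produces $-\int h_\epsilon\,\mathscr{L}_\epsilon^* g_\epsilon\,d\mu_\epsilon$ (cf.\ \eqref{egenL2}, \eqref{eL*}), so the profile must annihilate the linearization of $\mathscr{L}_\epsilon^*$, whose drift is $-(\nabla U-\boldsymbol{\ell})$; this forces $(\mathbb{H}^{\boldsymbol{\sigma}}-(\mathbb{L}^{\boldsymbol{\sigma}})^\dagger)\boldsymbol{v}^{\boldsymbol{\sigma}}=-\mu^{\boldsymbol{\sigma}}\boldsymbol{v}^{\boldsymbol{\sigma}}$ as in the paper's \eqref{epeps}.
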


The left-hand side of \eqref{et741} corresponding to $\textrm{cap}_{\epsilon}$
by Proposition \ref{p62} is believed to be equal to the first term
at the right-hand side. Thus, the second error term is somewhat unwanted
and appears just because of a technical reason explained in more detail
at Remark \ref{rem75}. We can however absorb this second error term
to the first error term at the right-hand side of \eqref{et741} as
illustrated in the proof below of Proposition \ref{p72}. Note that
we assume Theorem \ref{t74} at this moment. 
\begin{proof}[Proof of Proposition \ref{p72}]
By Proposition \ref{p62} and Theorem \ref{t74}, we get 
\[
\textrm{cap}_{\epsilon}\,=\,[\,1+o_{\epsilon}(1)\,]\,\alpha_{\epsilon}\,\omega_{0}+o_{\epsilon}(1)\,[\,\alpha_{\epsilon}\,\textup{cap}_{\epsilon}\,]^{1/2}\;.
\]
By dividing both sides by $\alpha_{\epsilon}$ and substituting $r_{\epsilon}=[\,\textrm{cap}_{\epsilon}/\alpha_{\epsilon}\,]^{1/2}$,
we can rewrite the previous identity as 
\[
r_{\epsilon}^{2}\,=\,[\,1+o_{\epsilon}(1)\,]\,\omega_{0}+o_{\epsilon}(1)\,r_{\epsilon}\;.
\]
By solving this quadratic equation in $r_{\epsilon}$, we get $r_{\epsilon}\,=\,[\,1+o_{\epsilon}(1)\,]\,(\omega_{0})^{1/2}$.
Squaring this completes the proof. 
\end{proof}
Now we turn to Theorem \ref{t74}. The core of our strategy is to
find a suitable test function $g_{\epsilon}$ and to compute the left-hand
side of \eqref{et741}\textcolor{red}{. }Indeed, we construct $g_{\epsilon}$
as an approximation of the equilibrium potential $h_{\epsilon}^{*}(\cdot)$
for the adjoint process (cf. \eqref{epab}). The reason is that, by
the divergence theorem, we can write the left-hand side of \eqref{et741}
as
\begin{equation}
\epsilon\,\int_{\Omega_{\epsilon}}\,[\,\Phi_{g_{\epsilon}}\cdot\nabla h_{\epsilon}\,]\,d\mu_{\epsilon}\,=\,-\int_{\Omega_{\epsilon}}\,h_{\epsilon}\,\mathscr{L}_{\epsilon}^{*}g_{\epsilon}\,d\mu_{\epsilon}+(\text{boundary}\text{ terms)}\;.\label{edivf}
\end{equation}
To control the integration on the right-hand side, we try to make
$\mathscr{L}_{\epsilon}^{*}g_{\epsilon}$ as small as possible (cf.
Proposition \ref{p86}); hence, in view of the fact that $\mathcal{L}_{\epsilon}^{*}h_{\epsilon}^{*}\equiv0$
on $\Omega_{\epsilon}$ by the property of the equilibrium potential,
the test function $g_{\epsilon}$ should be an approximation of $h_{\epsilon}^{*}$.
The main contribution for the computation of the left-hand side of
\eqref{edivf} comes from the boundary terms, and relevant computations
are carried out in Proposition \ref{p87}. 

The construction of $g_{\epsilon}$ particularly focuses on the neighborhoods
of the saddle points of $\Sigma_{0}$ as the equilibrium potential
(and hence $g_{\epsilon}$, which is an approximation of the equilibrium
potential) drastically falls from $1$ to $0$ there. We carry out
this construction around the saddle point in Section \ref{sec8} on
the basis of a linearization procedure that is now routine in this
field, e.g., \cite{BEGK1,LMS}. Then, we extend these functions around
the saddle points of $\Sigma_{0}$ to a continuous function on $\mathbb{R}^{d}$
belonging to $\mathscr{C}_{\mathcal{D}_{\epsilon}(\boldsymbol{m}_{0}),\,\mathcal{U}_{\epsilon}}$.
This process will be performed in Section \ref{sec10}, and we finally
obtain $g_{\epsilon}$ in \eqref{egeps}. Then, we prove \eqref{et741}
on the basis of our analysis of the equilibrium potential carried
out in Section \ref{sec9}. 
\begin{rem}[(Comparison with reversible case)]
\textcolor{red}{\label{rem75}} Our strategy is relatively simple
when the underlying process is reversible. In order to get a continuous
test function $g_{\epsilon}$, we need a mollification procedure (cf.
Proposition \ref{p111}), and we must include an additional term $o_{\epsilon}(1)\left[\alpha_{\epsilon}\,\textrm{cap}_{\epsilon}\right]^{1/2}$
in \eqref{et741} to compensate for this additional procedure. However,
for the reversible case, we can get a continuous test function without
this mollification procedure (cf. Remark \ref{rem101}) and we can
prove that 
\[
\epsilon\,\int_{\Omega_{\epsilon}}\,[\,\Phi_{g_{\epsilon}}\cdot\nabla h_{\epsilon}\,]\,d\mu_{\epsilon}\,=\,[\,1+o_{\epsilon}(1)\,]\,\alpha_{\epsilon}\,\omega_{0}\;,
\]
instead of \eqref{et741}; hence, the proof of the Eyring--Kramers
formula is more straightforward. This is the only technical difference
between the reversible and non-reversible models in our methodology. 
\end{rem}

The remainder of this article is devoted to proving Theorem \ref{t74},
and in the course of the proof, Proposition \ref{p73} will also be
demonstrated in Section \ref{sec9}. 

\section{\label{sec8}Construction of Test Function Around Saddle Point}

We explain how we can construct the test function around a saddle
point $\boldsymbol{\sigma}\in\Sigma_{0}$. Section \ref{sec81} presents
a preliminary analysis of the geometry around the saddle point. We
acknowledge that several statements and proofs given in these sections
are similar to those given in \cite{LMS}; however, we try not to
omit the proofs of these results, as the details of the computations
are slightly different owing to the differences between the models.
Then, we construct the test function $p_{\epsilon}^{\boldsymbol{\sigma}}$
on a neighborhood of $\boldsymbol{\sigma}$ in Section \ref{sec83}.
Finally, we explain several computational properties of this test
function in Sections \ref{sec84}--\ref{sec86}. These properties
play crucial role in the proof of Theorem \ref{t74}. 

\subsubsection*{Setting}

In this section, we fix a saddle point $\bm{\sigma}\in\Sigma_{0}$
and simply write $\mathbb{H}=\mathbb{H}^{\boldsymbol{\sigma}}=(\nabla^{2}U)(\boldsymbol{\sigma})$
and $\mathbb{L}=\mathbb{L}^{\bm{\sigma}}=(D\bm{\ell})(\bm{\sigma})$.
Recall that $\mathbb{H}$ has only one negative eigenvalue because
of the Morse lemma. Let $-\lambda_{1},\,\lambda_{2},\,\cdots,\,\lambda_{d}$
denote the eigenvalues of $\mathbb{H}$, where $-\lambda_{1}=-\lambda_{1}^{\boldsymbol{\sigma}}$
denotes the unique negative eigenvalue. Let $\boldsymbol{e}_{k}=\boldsymbol{e}_{k}^{\boldsymbol{\sigma}}$
denote the eigenvector associated with the eigenvalue $\lambda_{k}$
($-\lambda_{k}$ if $k=1$). \textsl{In addition, we assume the direction
of $\bm{e}_{1}$ to be toward $\mathcal{H}_{0}$, i.e., for all sufficiently
small $r>0$, $\bm{\sigma}+r\bm{e}_{1}\in\mathcal{H}_{0}$. }

By Lemma \ref{lem32}, the matrix $\mathbb{H}+\mathbb{L}$ has a unique
negative eigenvalue $-\mu=-\mu^{\bm{\sigma}}$. We can readily observe
that the matrix $\mathbb{H}-\mathbb{L}^{\dagger}$ is similar to $\mathbb{H}+\mathbb{L}$.
To see this, first note that, since $\mathbb{H}\mathbb{L}$ is skew-symmetric
by Lemma \ref{lem45}, we have $\mathbb{H}\mathbb{L}=-(\mathbb{H}\mathbb{L})^{\dagger}=-\mathbb{L}^{\dagger}\mathbb{H}$.
Therefore, we can check the similarity as 
\begin{equation}
\mathbb{H}{}^{-1}\,(\mathbb{H}-\mathbb{L}{}^{\dagger})\,\mathbb{H}\,=\,\mathbb{H}{}^{-1}\,(\mathbb{H}{}^{2}+\mathbb{H}\mathbb{L})\,=\,\mathbb{H}+\mathbb{L}\;.\label{esim}
\end{equation}
Hence, the matrix $\mathbb{H}-\mathbb{L}{}^{\dagger}$ also has a
unique negative eigenvalue $-\mu$, and let $\bm{v}=\boldsymbol{v}^{\boldsymbol{\sigma}}$
denote the unit eigenvector of this matrix associated with the eigenvalue
$-\mu$. \textit{Finally, we assume without loss of generality that
$\bm{v}\cdot\boldsymbol{e}_{1}\ge0$}. Indeed, this cannot be $0$
because of the following lemma, which implies that $(\boldsymbol{v}\cdot\boldsymbol{e}_{1})^{2}>0$. 
\begin{lem}
\label{lem82}We have 
\[
\bm{v}\cdot\mathbb{H}^{-1}\bm{v}\,=\,-\frac{(\bm{v}\cdot\boldsymbol{e}_{1})^{2}}{\lambda_{1}}+\sum_{k=2}^{d}\frac{(\bm{v}\cdot\boldsymbol{e}_{k})^{2}}{\lambda_{k}}\,=\,-\frac{1}{\mu}\,<\,0\;.
\]
\end{lem}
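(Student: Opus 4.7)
The plan is to prove the two asserted equalities in sequence, then observe the strict negativity as an immediate corollary. The first equality is a routine spectral expansion. Since $\mathbb{H}$ is symmetric with eigenvalues $-\lambda_{1},\lambda_{2},\dots,\lambda_{d}$ and orthonormal eigenvectors $\boldsymbol{e}_{1},\dots,\boldsymbol{e}_{d}$, the vector $\boldsymbol{v}$ decomposes as $\boldsymbol{v}=\sum_{k=1}^{d}(\boldsymbol{v}\cdot\boldsymbol{e}_{k})\boldsymbol{e}_{k}$, and applying $\mathbb{H}^{-1}$ termwise and taking inner products yields the first equality immediately.

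For the nontrivial second equality, I would start from the eigenvalue relation $(\mathbb{H}-\mathbb{L}^{\dagger})\boldsymbol{v}=-\mu\boldsymbol{v}$ and rewrite it as $\boldsymbol{v}=-\tfrac{1}{\mu}\mathbb{H}^{-1}(\mathbb{H}-\mathbb{L}^{\dagger})\boldsymbol{v}+\tfrac{1}{\mu}\mathbb{L}^{\dagger}\boldsymbol{v}\cdot 0$, or more cleanly pair with $\boldsymbol{v}$ after multiplying by $\mathbb{H}^{-1}$:
\[
\boldsymbol{v}\cdot\mathbb{H}^{-1}\boldsymbol{v}\,=\,-\tfrac{1}{\mu}\bigl[\,|\boldsymbol{v}|^{2}-\boldsymbol{v}\cdot\mathbb{H}^{-1}\mathbb{L}^{\dagger}\boldsymbol{v}\,\bigr].
\]
The heart of the argument is then to show that the cross term $\boldsymbol{v}\cdot\mathbb{H}^{-1}\mathbb{L}^{\dagger}\boldsymbol{v}$ vanishes. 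The key identity, already used in the paper to establish \eqref{esim}, is that $\mathbb{H}\mathbb{L}$ is skew-symmetric (Lemma~\ref{lem45}), hence $\mathbb{H}\mathbb{L}=-\mathbb{L}^{\dagger}\mathbb{H}$, which after multiplying by $\mathbb{H}^{-1}$ on both sides gives $\mathbb{H}^{-1}\mathbb{L}^{\dagger}=-\mathbb{L}\mathbb{H}^{-1}$. On the other hand, the symmetry of $\mathbb{H}^{-1}$ lets me transpose: $\boldsymbol{v}\cdot\mathbb{H}^{-1}\mathbb{L}^{\dagger}\boldsymbol{v}=\mathbb{L}\mathbb{H}^{-1}\boldsymbol{v}\cdot\boldsymbol{v}=\boldsymbol{v}\cdot\mathbb{L}\mathbb{H}^{-1}\boldsymbol{v}$. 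Combining the two expressions forces $\boldsymbol{v}\cdot\mathbb{L}\mathbb{H}^{-1}\boldsymbol{v}=-\boldsymbol{v}\cdot\mathbb{L}\mathbb{H}^{-1}\boldsymbol{v}$, so both quantities are zero. Since $|\boldsymbol{v}|=1$, this yields $\boldsymbol{v}\cdot\mathbb{H}^{-1}\boldsymbol{v}=-1/\mu$.

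The strict inequality $-1/\mu<0$ is then immediate because $\mu=\mu^{\boldsymbol{\sigma}}>0$ by definition (it is minus a negative eigenvalue). The main obstacle, and really the only substantive step, is recognizing that the skew-symmetry of $\mathbb{H}\mathbb{L}$ gives precisely the commutation relation needed to force the cross term to vanish; once this is in hand, the computation is a two-line manipulation. No difficulty is expected elsewhere since the spectral expansion in the eigenbasis of $\mathbb{H}$ is standard and the strict negativity is trivial.
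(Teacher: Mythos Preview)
Your proposal is correct and follows essentially the same route as the paper: both start from the eigenvalue relation $(\mathbb{H}-\mathbb{L}^{\dagger})\boldsymbol{v}=-\mu\boldsymbol{v}$, multiply by $\mathbb{H}^{-1}$, pair with $\boldsymbol{v}$, and use the skew-symmetry of $\mathbb{H}\mathbb{L}$ (Lemma~\ref{lem45}) to kill the cross term. The only cosmetic difference is that the paper observes $\boldsymbol{v}\cdot\mathbb{H}^{-1}\mathbb{L}^{\dagger}\boldsymbol{v}=(\mathbb{H}^{-1}\boldsymbol{v})\cdot(\mathbb{L}^{\dagger}\mathbb{H})(\mathbb{H}^{-1}\boldsymbol{v})=0$ directly from the skew-symmetry of $\mathbb{L}^{\dagger}\mathbb{H}$, whereas you equivalently deduce that $\mathbb{H}^{-1}\mathbb{L}^{\dagger}$ is itself skew-symmetric.
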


\begin{proof}
The first equality is obvious if we write $\boldsymbol{v}=\sum_{i=1}^{d}a_{i}\boldsymbol{e}_{i}$.
Now, we focus on the second equality. Note that $\mathbb{H}-\mathbb{L}^{\dagger}$
is invertible by Lemma \ref{lem41} and \eqref{esim}. Hence, we can
compute
\begin{align*}
\bm{v}\cdot\mathbb{H}^{-1}\bm{v} & \,=\,\bm{v}\cdot\mathbb{H}^{-1}(\mathbb{H}-\mathbb{L}^{\dagger})(\mathbb{H}-\mathbb{L}^{\dagger})^{-1}\bm{v}\,=\,-\frac{1}{\mu}\bm{v}\cdot\mathbb{H}^{-1}(\mathbb{H}-\mathbb{L}^{\dagger})\bm{v}\\
 & \,=\,-\frac{1}{\mu}\bm{v}\cdot\bm{v}+\frac{1}{\mu}\bm{v}\cdot\mathbb{H}^{-1}\mathbb{L}^{\dagger}\mathbb{H}\mathbb{H}^{-1}\bm{v}\;.
\end{align*}
Since $|\bm{v}|^{2}=1$, the first term in the last line is $-\frac{1}{\mu}$.
On the other hand, since $\mathbb{L}^{\dagger}\mathbb{H}=-(\mathbb{H}\mathbb{L})^{\dagger}$
is skew-symmetric and $\mathbb{H}^{-1}$ is symmetric, the second
term in the last line is $0$. This completes the proof. 
\end{proof}
For two vectors $\boldsymbol{u},\,\boldsymbol{w}\in\mathbb{R}^{d}$,
let $\boldsymbol{u}\otimes\boldsymbol{w}\in\mathbb{R}^{d\times d}$
denote their tensor product, i.e., $(\boldsymbol{u}\otimes\boldsymbol{w})_{ij}=u_{i}w_{j}$,
where $u_{i}$ and $w_{j}$ are the $i$th and $j$th elements of
$\boldsymbol{u}$ and $\boldsymbol{w}$, respectively. The following
Lemma is a consequence of the previous lemma and is similar to \cite[Lemmas 4.1 and 4.2]{LS1}. 
\begin{lem}
\label{lem83}The following hold. 
\begin{enumerate}
\item The matrix $\mathbb{H}+2\mu\,\bm{v}\otimes\bm{v}$ is symmetric positive
definite and $\det\,(\mathbb{H}+2\mu\,\bm{v}\otimes\bm{v})=-\det\mathbb{H}$. 
\item The matrix $\mathbb{H}+\mu\,\bm{v}\otimes\bm{v}$ is symmetric non-negative
definite and $\det\,(\mathbb{H}+\mu\,\bm{v}\otimes\bm{v})=0$. The
null space of the matrix $\mathbb{H}+\mu\,\bm{v}\otimes\bm{v}$ is
one-dimensional and spanned by the vector $\mathbb{H}^{-1}\bm{v}$. 
\end{enumerate}
\end{lem}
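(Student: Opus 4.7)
The plan is to derive both assertions from two standard tools: the matrix determinant lemma (Sylvester's rank-one determinant identity) and the Cauchy interlacing theorem for rank-one positive perturbations of symmetric matrices. Symmetry of $\mathbb{H}+\alpha\,\bm{v}\otimes\bm{v}$ for $\alpha\in\{\mu,2\mu\}$ is immediate because $\bm{v}\otimes\bm{v}$ is symmetric. The central input is the identity $\bm{v}\cdot\mathbb{H}^{-1}\bm{v}=-1/\mu$ obtained in Lemma \ref{lem82}.

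First, I would apply the matrix determinant lemma to obtain, for $\alpha\in\mathbb{R}$,
\[
\det(\mathbb{H}+\alpha\,\bm{v}\otimes\bm{v})\,=\,(\det\mathbb{H})\,(1+\alpha\,\bm{v}\cdot\mathbb{H}^{-1}\bm{v})\,=\,(\det\mathbb{H})\,(1-\alpha/\mu)\;.
\]
Substituting $\alpha=2\mu$ gives $\det(\mathbb{H}+2\mu\,\bm{v}\otimes\bm{v})=-\det\mathbb{H}$, and $\alpha=\mu$ gives $\det(\mathbb{H}+\mu\,\bm{v}\otimes\bm{v})=0$. This yields the determinantal statements in both parts.

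Second, for the positive-definiteness in part (1), order the eigenvalues of $\mathbb{H}$ as $\alpha_1=-\lambda_1<0<\alpha_2\le\cdots\le\alpha_d$ (using that the Morse saddle has exactly one negative eigenvalue). The Cauchy interlacing theorem for rank-one positive perturbations gives that the eigenvalues $\beta_1\le\cdots\le\beta_d$ of $\mathbb{H}+2\mu\,\bm{v}\otimes\bm{v}$ satisfy $\beta_i\ge\alpha_i$, hence $\beta_2,\dots,\beta_d>0$. Combined with $\det(\mathbb{H}+2\mu\,\bm{v}\otimes\bm{v})=-\det\mathbb{H}>0$, we conclude $\beta_1>0$, so the matrix is positive definite.

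Finally, for part (2), the same interlacing applied to $\mathbb{H}+\mu\,\bm{v}\otimes\bm{v}$ gives eigenvalues $\beta_1'\le\cdots\le\beta_d'$ with $\beta_i'\ge\alpha_i$, so $\beta_2',\dots,\beta_d'>0$. Since the determinant is zero, we must have $\beta_1'=0$ exactly; hence the matrix is non-negative definite and its null space is one-dimensional. To identify the null vector, I would just compute directly, using $\bm{v}\cdot\mathbb{H}^{-1}\bm{v}=-1/\mu$,
\[
(\mathbb{H}+\mu\,\bm{v}\otimes\bm{v})\,\mathbb{H}^{-1}\bm{v}\,=\,\bm{v}+\mu\,(\bm{v}\cdot\mathbb{H}^{-1}\bm{v})\,\bm{v}\,=\,\bm{v}-\bm{v}\,=\,\bm{0}\;,
\]
so $\mathbb{H}^{-1}\bm{v}$ spans the one-dimensional null space. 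There is no real obstacle here; the only point that requires care is the interlacing step, which has to be combined with the determinantal identity to upgrade nonnegativity of $\beta_1$ (resp.\ $\beta_1'$) to the precise conclusion (positivity in part (1), vanishing with multiplicity one in part (2)).
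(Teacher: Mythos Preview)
Your proof is correct. Both your argument and the paper's proof use the matrix determinant lemma together with Lemma~\ref{lem82} to compute the two determinants, and both verify the null vector $\mathbb{H}^{-1}\bm{v}$ by direct substitution. The difference lies in the definiteness step. The paper diagonalizes $\mathbb{H}$, expands the quadratic form $\bm{x}\cdot(\mathbb{H}+\mu\,\bm{v}\otimes\bm{v})\bm{x}$ explicitly, minimizes over $x_1$, and invokes Cauchy--Schwarz to obtain nonnegativity of $\mathbb{H}+\mu\,\bm{v}\otimes\bm{v}$; it then observes that $\mathbb{H}+2\mu\,\bm{v}\otimes\bm{v}$ dominates this and uses the determinant to upgrade to strict positivity. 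You replace this computation by the Cauchy interlacing theorem for rank-one positive perturbations, which immediately forces $\beta_2,\dots,\beta_d>0$ (since $\alpha_2>0$), after which the determinantal identities pin down $\beta_1$. Your route is slightly cleaner and coordinate-free; the paper's route is more hands-on but avoids quoting interlacing as a black box.
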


\begin{proof}
By a change of coordinate, we can assume that $\boldsymbol{e}_{i}$
is the $i$th standard unit vector of $\mathbb{R}^{d}$ such that
$\mathbb{H}=\text{diag}(-\lambda_{1},\,\lambda_{2},\,\dots,\,\lambda_{d})$.
First, we show that $\mathbb{H}+\mu\,\bm{v}\otimes\bm{v}$ is non-negative
definite. If $v_{2}=\cdots=v_{d}=0$, then, we have $v_{1}^{2}=\mu/\lambda_{1}$
by Lemma \ref{lem82}; thus, $\mathbb{H}+\mu\,\bm{v}\otimes\bm{v}=\text{diag}(0,\lambda_{2},\dots,\lambda_{d})$
is non-negative definite. Otherwise, for $\bm{x}=\sum_{i=1}^{d}x_{i}\bm{e}_{i}\in\mathbb{R}^{d}$,
we can compute 
\[
\boldsymbol{x}\cdot[\,\mathbb{H}+\mu\,\bm{v}\otimes\bm{v}\,]\,\boldsymbol{x}\,=\,-\lambda_{1}x_{1}^{2}+\sum_{k=2}^{d}\lambda_{k}x_{k}^{2}+\mu\,\Big(\,\sum_{i=1}^{d}x_{i}v_{i}\,\Big)^{2}\;.
\]
By minimizing the right-hand side over $x_{1}$ and using Lemma \ref{lem82},
we get 
\[
\sum_{k=2}^{d}\lambda_{k}x_{k}^{2}-\frac{(\,\sum_{k=2}^{d}x_{k}v_{k}\,)^{2}}{\sum_{k=2}^{d}v_{k}^{2}/\lambda_{k}}\ ,
\]
which is non-negative by Cauchy--Schwarz inequality. This proves
that $\mathbb{H}+\mu\,\bm{v}\otimes\bm{v}$ is non-negative definite.
Then, the matrix $\mathbb{H}+2\mu\,\bm{v}\otimes\bm{v}$ is non-negative
definite as well. By the well-known formula 
\begin{equation}
\det\,(\mathbb{A}+\bm{x}\otimes\bm{y})\,=\,(1+\bm{y}^{\dagger}\mathbb{A}^{-1}\bm{x})\det\mathbb{A}\;,\label{edetm}
\end{equation}
along with Lemma \ref{lem82}, we can check that $\det\,(\mathbb{H}+2\mu\,\bm{v}\otimes\bm{v})=-\det\mathbb{H}>0$,
and thus, $\mathbb{H}+2\mu\,\bm{v}\otimes\bm{v}$ is indeed positive
definite. Finally, we investigate the null space of $\mathbb{H}+\mu\bm{v}\otimes\bm{v}$.
Suppose that $\bm{w}\in\mathbb{R}^{d}$ satisfies $(\mathbb{H}+\mu\bm{v}\otimes\bm{v})\bm{w}=0$.
Since $\mathbb{H}$ is invertible, we can rewrite this equation as
$\bm{w}=-\mu(\bm{v}\cdot\bm{w})\mathbb{H}^{-1}\bm{v}$. Hence, the
null space is a subspace of $\langle\,\mathbb{H}^{-1}\bm{v}\,\rangle$.
On the other hand, if $\bm{w}=a\mathbb{H}^{-1}\bm{v}$ for some $a\in\mathbb{R}$,
we can readily check that $(\mathbb{H}+\mu\bm{v}\otimes\bm{v})\bm{w}=\boldsymbol{0}$,
and hence, $\langle\,\mathbb{H}^{-1}\bm{v}\,\rangle$ is indeed the
null space. 
\end{proof}

\subsection{\label{sec81}Neighborhood of saddle points}

In this subsection, we specify the geometry around each saddle point
$\boldsymbol{\sigma}$. Figure \ref{fig3} illustrates the sets appearing
in this section.

\begin{figure}
\begin{centering}
\includegraphics[scale=0.17]{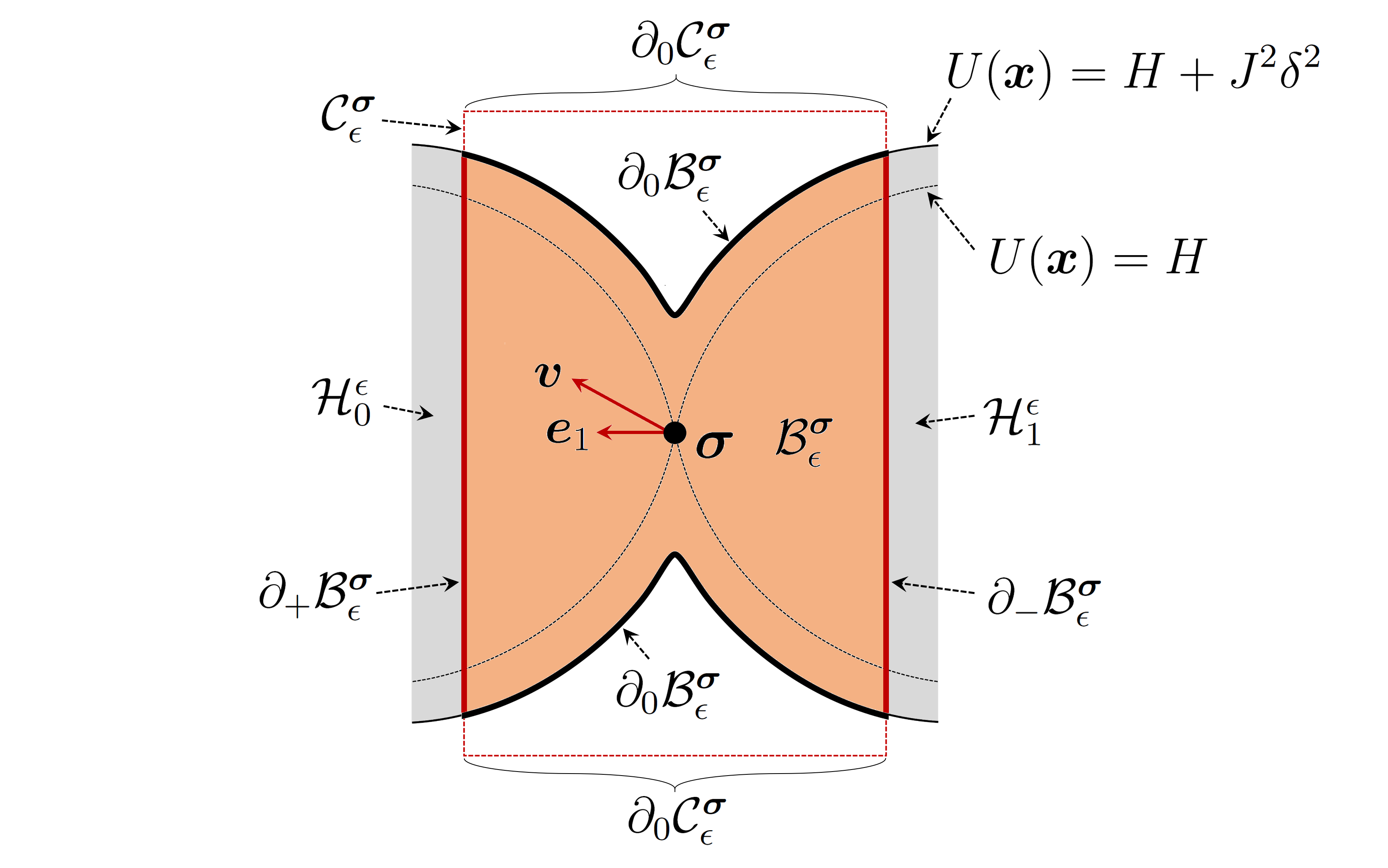}
\par\end{centering}
\caption{\label{fig3}Illustration of the neighborhood structure around a saddle
point $\boldsymbol{\sigma}$.}
\end{figure}

We focus on a neighborhood of $\boldsymbol{\sigma}$ with size of
order $\delta$, which is defined by 
\begin{equation}
\delta\,=\,\delta(\epsilon)\,:=\,\big(\,\epsilon\log\frac{1}{\epsilon}\,\big)^{1/2}\;.\label{edelta}
\end{equation}
Let $J$ be a sufficiently large constant that is independent of $\epsilon$.
There will be several class, e.g., Lemma \ref{lem113}, that require
$J$ to be sufficiently large; we suppose that $J$ satisfies all
such requirements. Define a box $\mathcal{C}_{\epsilon}^{\bm{\sigma}}$
centered at $\bm{\sigma}$ as 
\begin{align*}
\mathcal{C}_{\epsilon}^{\bm{\sigma}}\,=\,\bigg\{\,\bm{\sigma}+\sum_{i=1}^{d}\alpha_{i}\bm{e}_{i}^{\boldsymbol{\sigma}}\in\mathbb{R}^{d}: & -\frac{J\delta}{\lambda_{1}^{1/2}}\leq\alpha_{1}\leq\frac{J\delta}{\lambda_{1}^{1/2}}\\
 & \;\text{and}\,-\frac{2J\delta}{\lambda_{j}^{1/2}}\leq\alpha_{j}\leq\frac{2J\delta}{\lambda_{j}^{1/2}}\,\text{ for }\,2\leq j\leq d\,\bigg\}\;.
\end{align*}
Now, decompose the boundary $\partial\mathcal{C}_{\epsilon}^{\bm{\sigma}}$
into $\partial_{+}\mathcal{C}_{\epsilon}^{\bm{\sigma}}$, $\partial_{-}\mathcal{C}_{\epsilon}^{\bm{\sigma}}$,
and $\partial_{0}\mathcal{C}_{\epsilon}^{\bm{\sigma}}$ such that
\begin{align}
\partial_{\pm}\mathcal{C}_{\epsilon}^{\bm{\sigma}} & \,=\,\Big\{\,\bm{\sigma}+\sum_{i=1}^{d}\alpha_{i}\bm{e}_{i}^{\boldsymbol{\sigma}}\in\mathbb{R}^{d}:\alpha_{1}=\pm\frac{J\delta}{\lambda_{1}^{1/2}}\,\Big\}\;,\label{ebd_C}\\
\partial_{0}\mathcal{C}_{\epsilon}^{\bm{\sigma}} & \,=\,\partial\mathcal{C}_{\epsilon}^{\bm{\sigma}}\setminus(\partial_{+}\mathcal{C}_{\epsilon}^{\bm{\sigma}}\cup\partial_{-}\mathcal{C}_{\epsilon}^{\bm{\sigma}})\;.\nonumber 
\end{align}

\begin{lem}
\label{lem84}For $\bm{x}\in\partial_{0}\mathcal{C}_{\epsilon}^{\bm{\sigma}}$,
we have $U(\bm{x})\geq H+\frac{5}{4}J^{2}\delta^{2}$ for all sufficiently
small $\epsilon>0$. 
\end{lem}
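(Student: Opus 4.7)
The plan is to perform a second-order Taylor expansion of $U$ around the saddle point $\boldsymbol{\sigma}$ in the basis $\{\boldsymbol{e}_1^{\boldsymbol{\sigma}},\ldots,\boldsymbol{e}_d^{\boldsymbol{\sigma}}\}$ and exploit the defining constraints on the faces making up $\partial_0\mathcal{C}_\epsilon^{\boldsymbol{\sigma}}$. Since $\nabla U(\boldsymbol{\sigma})=\boldsymbol{0}$ and $\mathbb{H}=\mathbb{H}^{\boldsymbol{\sigma}}$ is diagonal in this eigenbasis with eigenvalues $-\lambda_1,\lambda_2,\ldots,\lambda_d$, for any $\bm{x}=\boldsymbol{\sigma}+\sum_{i=1}^d\alpha_i\boldsymbol{e}_i^{\boldsymbol{\sigma}}$ in $\mathcal{C}_\epsilon^{\boldsymbol{\sigma}}$ we can write
\[
U(\bm{x})\,=\,H+\tfrac{1}{2}\Big(-\lambda_1\alpha_1^2+\sum_{j=2}^d\lambda_j\alpha_j^2\Big)+R(\bm{x})\;,
\]
where, since $U\in C^3(\mathbb{R}^d)$, the remainder satisfies $|R(\bm{x})|\le C|\bm{x}-\boldsymbol{\sigma}|^3$ for a constant $C$ depending only on $U$ and $\boldsymbol{\sigma}$.

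Next I would control the two parts of the quadratic form on $\partial_0\mathcal{C}_\epsilon^{\boldsymbol{\sigma}}$. For any $\bm{x}\in\mathcal{C}_\epsilon^{\boldsymbol{\sigma}}$, the bound $|\alpha_1|\le J\delta/\lambda_1^{1/2}$ yields $-\lambda_1\alpha_1^2\ge -J^2\delta^2$. By definition of $\partial_0\mathcal{C}_\epsilon^{\boldsymbol{\sigma}}$, there is at least one index $j^\star\in\{2,\ldots,d\}$ with $|\alpha_{j^\star}|=2J\delta/\lambda_{j^\star}^{1/2}$, hence $\lambda_{j^\star}\alpha_{j^\star}^2=4J^2\delta^2$; dropping the remaining non-negative terms $\lambda_j\alpha_j^2$ gives $\sum_{j=2}^d\lambda_j\alpha_j^2\ge 4J^2\delta^2$. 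Combining, the quadratic part of the expansion is at least $\tfrac{1}{2}(-J^2\delta^2+4J^2\delta^2)=\tfrac{3}{2}J^2\delta^2$.

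To close the argument I need to absorb the cubic remainder. The diameter of $\mathcal{C}_\epsilon^{\boldsymbol{\sigma}}$ is $O(\delta)$, so $|R(\bm{x})|\le C'\delta^3$ for a constant $C'$ independent of $\epsilon$ (but depending on $J$ and the $\lambda_i$'s). Since $\delta=(\epsilon\log\tfrac{1}{\epsilon})^{1/2}\to 0$ as $\epsilon\to 0$, for all sufficiently small $\epsilon$ one has $C'\delta^3\le\tfrac{1}{4}J^2\delta^2$, and therefore
\[
U(\bm{x})\,\ge\,H+\tfrac{3}{2}J^2\delta^2-\tfrac{1}{4}J^2\delta^2\,=\,H+\tfrac{5}{4}J^2\delta^2\;,
\]
which is the desired estimate. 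No step is genuinely difficult; the only thing to be careful about is that the gap between $\tfrac{3}{2}J^2\delta^2$ and the target $\tfrac{5}{4}J^2\delta^2$ must comfortably dominate the cubic error, which is automatic because the error is of lower order in $\delta$. This is also the sole reason for choosing the long side of the box ($2J\delta/\lambda_j^{1/2}$) to be twice the short side ($J\delta/\lambda_1^{1/2}$): the factor $2$ produces the extra $3J^2\delta^2$ of headroom after the Hessian estimate.
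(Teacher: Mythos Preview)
Your proof is correct and follows essentially the same approach as the paper: a second-order Taylor expansion of $U$ at $\boldsymbol{\sigma}$, the bound $-\lambda_1\alpha_1^2\ge -J^2\delta^2$, the observation that some $j^\star\ge 2$ has $\lambda_{j^\star}\alpha_{j^\star}^2=4J^2\delta^2$, and absorption of the $O(\delta^3)$ remainder into the gap between $\tfrac{3}{2}J^2\delta^2$ and $\tfrac{5}{4}J^2\delta^2$. If anything, you spell out the remainder absorption more explicitly than the paper does.
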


\begin{proof}
For $\bm{x}\in\mathcal{C}_{\epsilon}^{\bm{\sigma}}$, by the Taylor
expansion of $U$ at $\bm{\sigma}$,
\begin{equation}
U(\bm{x})\,=\,H+\frac{1}{2}\,\Big[\,-\lambda_{1}x_{1}^{2}+\sum_{j=2}^{d}\lambda_{j}x_{j}^{2}\,\Big]\,+\,O(\delta^{3})\;.\label{e831}
\end{equation}
For $\bm{x}\in\partial_{0}\mathcal{C}_{\epsilon}^{\bm{\sigma}}$,
$x_{i}=\pm2J\delta/\sqrt{\lambda_{i}}$ for some $2\le i\le d$. Therefore,
\[
-\lambda_{1}x_{1}^{2}+\sum_{j=2}^{d}\lambda_{j}x_{j}^{2}\,\ge\,-J^{2}\delta^{2}+\lambda_{i}\,\Big(\,\frac{2J\delta}{\lambda_{i}^{1/2}}\,\Big)^{2}\,=\,3J^{2}\delta^{2}\;.
\]
Inserting this to \eqref{e831} completes the proof. 
\end{proof}
Hereafter, we assume that $\epsilon>0$ is sufficiently small such
that Lemma \ref{lem84} holds. Define, for $\epsilon>0$, 
\begin{align}
\mathcal{K}_{\epsilon} & \,=\,\{\,\bm{x}\in\mathbb{R}^{d}:U(\bm{x})<H+J^{2}\delta^{2}\,\}\;\;\text{\;and}\;\;\;\mathcal{K}\,=\,\{\,\bm{x}\in\mathbb{R}^{d}:U(\bm{x})<H+J^{2}\,\}\label{emck}
\end{align}
so that $\mathcal{H}\subset\mathcal{K}_{\epsilon}\subset\mathcal{K}$
holds. 

By Lemma \ref{lem84}, the boundary $\partial_{0}\mathcal{C}_{\epsilon}^{\bm{\sigma}}$
does not belong to $\mathcal{K}_{\epsilon}$. The neighborhood of
$\boldsymbol{\sigma}$ in which we focus on the construction is the
set $\mathcal{B}_{\epsilon}^{\bm{\sigma}}=\mathcal{C}_{\epsilon}^{\bm{\sigma}}\cap\mathcal{K}_{\epsilon}$.
Now, we decompose the boundary $\partial\mathcal{B}_{\epsilon}^{\bm{\sigma}}$
into $\partial_{+}\mathcal{B}_{\epsilon}^{\bm{\sigma}}$, $\partial_{-}\mathcal{B}_{\epsilon}^{\bm{\sigma}}$,
and $\partial_{0}\mathcal{B}_{\epsilon}^{\bm{\sigma}}$ such that
\[
\partial_{\pm}\mathcal{B}_{\epsilon}^{\bm{\sigma}}\,=\,\partial_{\pm}\mathcal{C}_{\epsilon}^{\bm{\sigma}}\cap\mathcal{B}_{\epsilon}^{\bm{\sigma}}\;\;\;\text{and}\;\;\;\partial_{0}\mathcal{B}_{\epsilon}^{\bm{\sigma}}\,=\,\partial\mathcal{B}_{\epsilon}\setminus(\,\partial_{+}\mathcal{B}_{\epsilon}^{\bm{\sigma}}\cup\partial_{-}\mathcal{B}_{\epsilon}^{\bm{\sigma}}\,)
\]
so that we have $U(\bm{x})=H+J^{2}\delta^{2}$ for all $\bm{x}\in\partial_{0}\mathcal{B}_{\epsilon}^{\bm{\sigma}}$
by Lemma \ref{lem84}. 

Now, the set $\mathcal{K}_{\epsilon}\setminus\cup_{\bm{\sigma}\in\Sigma_{0}}\mathcal{B}_{\epsilon}^{\bm{\sigma}}$
consists of several connected components. Let $\mathcal{H}_{0}^{\epsilon}$
denote one such component containing $\mathcal{M}_{0}$ and let $\mathcal{H}_{1}^{\epsilon}$
denote the union of the other components such that $\mathcal{M}_{1}\subset\mathcal{H}_{1}^{\epsilon}$.
By our convention on the direction of the vector $\boldsymbol{e}_{1}=\boldsymbol{e}_{1}^{\boldsymbol{\sigma}}$
mentioned earlier in the current section, we have 
\begin{equation}
\partial_{+}\mathcal{B}_{\epsilon}^{\bm{\sigma}}\,\subset\,\partial\mathcal{H}_{0}^{\epsilon}\;\;\;\text{and}\;\;\;\partial_{-}\mathcal{B}_{\epsilon}^{\bm{\sigma}}\,\subset\,\partial\mathcal{H}_{1}^{\epsilon}\;.\label{ebdryis}
\end{equation}
This is illustrated in Figure \ref{fig3}.

\subsection{\label{sec83}Construction of test function around $\boldsymbol{\sigma}$
via linearization procedure}

We construct a function $p_{\epsilon}^{\bm{\sigma}}:\mathbb{R}^{d}\rightarrow\mathbb{R}$
on $\mathcal{B}_{\epsilon}^{\boldsymbol{\sigma}}$, which acts as
a building block for the global construction carried out in the following
sections. As mentioned in Section \ref{sec72}, we would like to build
a function approximating the equilibrium potential $h_{\epsilon}^{*}$
between $\mathcal{D}_{\epsilon}(\boldsymbol{m}_{0})$ and $\mathcal{U}_{\epsilon}$.
Thus, we expect $p_{\epsilon}^{\bm{\sigma}}$ to satisfy $\mathscr{L}_{\epsilon}^{*}p_{\epsilon}^{\bm{\sigma}}\simeq0$,
where $\mathscr{L}_{\epsilon}^{*}$ is defined in \textcolor{red}{\eqref{eL*}}.
To find this function, we linearize the generator $\mathscr{L}_{\epsilon}^{*}$
around $\boldsymbol{\sigma}$ by the first-order Taylor expansion
such that, for smooth $f$, 
\[
\widetilde{\mathscr{L}}_{\epsilon}^{*}f\,=\,\epsilon\,\Delta f(\boldsymbol{x})-\nabla f(\boldsymbol{x})\cdot(\mathbb{H}-\mathbb{L})(\boldsymbol{x})\ ,
\]
and we solve the linearized equation $\mathscr{\widetilde{L}}_{\epsilon}^{*}p_{\epsilon}^{\bm{\sigma}}=0$.
This equation can be explicitly solved using the separation of variables
method. Note that in view of \eqref{ebdryis}, we would like to impose
boundary conditions of the form $p_{\epsilon}^{\bm{\sigma}}\simeq1$
on $\partial_{+}\mathcal{B}_{\epsilon}^{\boldsymbol{\sigma}}$ and
$p_{\epsilon}^{\bm{\sigma}}\simeq0$ on $\partial_{-}\mathcal{B}_{\epsilon}^{\boldsymbol{\sigma}}$.
A test function satisfying all these requirements is given by 
\begin{equation}
p_{\epsilon}^{\bm{\sigma}}(\bm{x})\,=\,\frac{1}{c_{\epsilon}}\int_{-\infty}^{(\bm{x}-\bm{\sigma})\cdot\bm{v}}\,e^{-\frac{\mu}{2\epsilon}t^{2}}\,dt\;\;\;\;;\;\boldsymbol{x}\in\overline{\mathcal{B}_{\epsilon}^{\boldsymbol{\sigma}}}\;,\label{epeps}
\end{equation}
where 
\begin{equation}
c_{\epsilon}\,=\,\int_{-\infty}^{\infty}e^{-\frac{\mu}{2\epsilon}t^{2}}\,dt\,=\,\sqrt{\frac{2\pi\epsilon}{\mu}}\;.\label{ecesig}
\end{equation}
Note that $\bm{v}$ and $\mu$ are defined at the beginning of the
current section. The crucial technical difficulty arises from the
fact that the function $p_{\epsilon}^{\bm{\sigma}}$ is not constant
along the boundary $\partial_{\pm}\mathcal{B}_{\epsilon}^{\boldsymbol{\sigma}}$
unless the dynamics is reversible since $\bm{e}_{1}^{\bm{\sigma}}$
and $\bm{v}$ are linearly independent if $\bm{\ell}\ne0$. This makes
it difficult to patch these functions together. This issue will be
thoroughly investigated in Section \ref{sec10}. 

Since $p_{\epsilon}^{\boldsymbol{\sigma}}$ is smooth on $\mathcal{B}_{\epsilon}^{\boldsymbol{\sigma}}$,
we can define $\Phi_{p_{\epsilon}^{\boldsymbol{\sigma}}}$ on $\mathcal{B}_{\epsilon}^{\boldsymbol{\sigma}}$.
Next, we must investigate the properties of $p_{\epsilon}^{\boldsymbol{\sigma}}$
and $\Phi_{p_{\epsilon}^{\boldsymbol{\sigma}}}$. \textit{For the
simplicity of notation, we assume that $\boldsymbol{\sigma}=\boldsymbol{0}$
in the remainder of the current section. }

\subsection{\label{sec84}Negligibility of $\mathscr{L}_{\epsilon}^{*}p_{\epsilon}^{\boldsymbol{\sigma}}$
on $\mathcal{B}_{\epsilon}^{\boldsymbol{\sigma}}$}

Our construction of $p_{\epsilon}^{\bm{\sigma}}$ suggests that $\mathscr{L}_{\epsilon}^{*}p_{\epsilon}^{\bm{\sigma}}$
is small on $\mathcal{B}_{\epsilon}^{\boldsymbol{\sigma}}$. The next
lemma precisely quantifies this heuristic observation. 
\begin{notation}
Let $C>0$ denote a positive constant independent of $\epsilon$ and
$\boldsymbol{x}$. Different appearances of $C$ may express different
values. 
\end{notation}

\begin{prop}
\label{p86}We have $\int_{\mathcal{B}_{\epsilon}^{\boldsymbol{\sigma}}}\,|\mathscr{L}_{\epsilon}^{*}p_{\epsilon}^{\boldsymbol{\sigma}}|\,d\mu_{\epsilon}\,=\,o_{\epsilon}(1)\,\alpha_{\epsilon}$.
\end{prop}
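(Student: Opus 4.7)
The plan is to exploit the fact that $p_\epsilon^{\boldsymbol{\sigma}}$ was designed to satisfy the \emph{linearized} adjoint equation exactly, which reduces the estimate to controlling a second-order Taylor remainder integrated against a Gaussian. First I would introduce the linearization $\widetilde{\mathscr{L}}_\epsilon^* f := -(\mathbb{H}-\mathbb{L})\boldsymbol{x}\cdot\nabla f + \epsilon\Delta f$ of $\mathscr{L}_\epsilon^*$ at $\boldsymbol{\sigma}=\boldsymbol{0}$ and verify by direct differentiation that
\[
\nabla p_\epsilon^{\boldsymbol{\sigma}}(\boldsymbol{x}) \,=\, \frac{1}{c_\epsilon}\, e^{-\frac{\mu}{2\epsilon}(\boldsymbol{x}\cdot\boldsymbol{v})^2}\,\boldsymbol{v}, \qquad \Delta p_\epsilon^{\boldsymbol{\sigma}}(\boldsymbol{x}) \,=\, -\frac{\mu}{\epsilon\, c_\epsilon}\, (\boldsymbol{x}\cdot\boldsymbol{v})\, e^{-\frac{\mu}{2\epsilon}(\boldsymbol{x}\cdot\boldsymbol{v})^2}.
\]
Using that $\mathbb{H}$ is symmetric and the defining property $(\mathbb{H}-\mathbb{L}^\dagger)\boldsymbol{v} = -\mu\boldsymbol{v}$, the identity $(\mathbb{H}-\mathbb{L})\boldsymbol{x}\cdot\boldsymbol{v} = \boldsymbol{x}\cdot(\mathbb{H}-\mathbb{L}^\dagger)\boldsymbol{v} = -\mu(\boldsymbol{x}\cdot\boldsymbol{v})$ forces the two contributions to cancel, so $\widetilde{\mathscr{L}}_\epsilon^* p_\epsilon^{\boldsymbol{\sigma}} \equiv 0$. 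Subtracting the linearized equation from the exact one gives
\[
\mathscr{L}_\epsilon^* p_\epsilon^{\boldsymbol{\sigma}}(\boldsymbol{x}) \,=\, -\bigl[\nabla U(\boldsymbol{x})-\mathbb{H}\boldsymbol{x}\bigr]\cdot\nabla p_\epsilon^{\boldsymbol{\sigma}}(\boldsymbol{x}) + \bigl[\boldsymbol{\ell}(\boldsymbol{x})-\mathbb{L}\boldsymbol{x}\bigr]\cdot\nabla p_\epsilon^{\boldsymbol{\sigma}}(\boldsymbol{x}),
\]
i.e., $\mathscr{L}_\epsilon^* p_\epsilon^{\boldsymbol{\sigma}}$ is exactly the second-order Taylor remainder paired with $\nabla p_\epsilon^{\boldsymbol{\sigma}}$.

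Since $U\in C^3$ and both $\nabla U$ and $\boldsymbol{\ell}$ vanish at $\boldsymbol{\sigma}$, Taylor's theorem controls the bracketed remainders by $C|\boldsymbol{x}|^2 \leq C\delta^2$ on $\mathcal{B}_\epsilon^{\boldsymbol{\sigma}}$, yielding the pointwise bound $|\mathscr{L}_\epsilon^* p_\epsilon^{\boldsymbol{\sigma}}(\boldsymbol{x})| \leq C\delta^2 c_\epsilon^{-1} e^{-\mu(\boldsymbol{x}\cdot\boldsymbol{v})^2/(2\epsilon)}$. Inserting the Morse expansion $U(\boldsymbol{x}) = H + \tfrac{1}{2}\boldsymbol{x}^\dagger\mathbb{H}\boldsymbol{x} + O(\delta^3)$ on $\mathcal{B}_\epsilon^{\boldsymbol{\sigma}}$, and noting that $\delta^3/\epsilon = \sqrt{\epsilon}\,(\log(1/\epsilon))^{3/2} = o_\epsilon(1)$ so the cubic term contributes only a $1+o_\epsilon(1)$ factor to the exponential, the integral to be bounded reduces to
\[
\int_{\mathcal{B}_\epsilon^{\boldsymbol{\sigma}}} |\mathscr{L}_\epsilon^* p_\epsilon^{\boldsymbol{\sigma}}|\,d\mu_\epsilon \,\leq\, \frac{C\delta^2\, e^{-H/\epsilon}\,(1+o_\epsilon(1))}{c_\epsilon\, Z_\epsilon}\int_{\mathcal{B}_\epsilon^{\boldsymbol{\sigma}}} e^{-\frac{1}{2\epsilon}\boldsymbol{x}^\dagger(\mathbb{H}+\mu\boldsymbol{v}\otimes\boldsymbol{v})\boldsymbol{x}}\,d\boldsymbol{x}.
\]

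To finish, I would invoke Lemma~\ref{lem83}(2): the symmetric matrix $\mathbb{H}+\mu\,\boldsymbol{v}\otimes\boldsymbol{v}$ is non-negative definite with one-dimensional null space $\langle \mathbb{H}^{-1}\boldsymbol{v}\rangle$. After the rescaling $\boldsymbol{x} = \sqrt{\epsilon}\boldsymbol{y}$, the Gaussian is integrable with a finite transverse integral in the $d-1$ positive eigen-directions, while along the null direction the integrand is identically $1$; using that the rescaled region $\mathcal{B}_\epsilon^{\boldsymbol{\sigma}}/\sqrt{\epsilon}$ has length $O(\delta/\sqrt{\epsilon}) = O(\sqrt{\log(1/\epsilon)})$ in that direction, the last integral is at most $C\epsilon^{d/2}\sqrt{\log(1/\epsilon)}$. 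Substituting $c_\epsilon = \sqrt{2\pi\epsilon/\mu}$ and recognizing $\alpha_\epsilon = Z_\epsilon^{-1}e^{-H/\epsilon}(2\pi\epsilon)^{d/2}$, the total estimate becomes $C\sqrt{\epsilon}\,(\log(1/\epsilon))^{3/2}\,\alpha_\epsilon = o_\epsilon(1)\,\alpha_\epsilon$, as required. The main subtle point — the only genuinely non-reversible feature in this estimate — is precisely the degeneracy of $\mathbb{H}+\mu\boldsymbol{v}\otimes\boldsymbol{v}$: the Gaussian does not decay along $\mathbb{H}^{-1}\boldsymbol{v}$, so closing the estimate requires exploiting the bounded diameter of $\mathcal{B}_\epsilon^{\boldsymbol{\sigma}}$ in that direction and absorbing the resulting $\sqrt{\log(1/\epsilon)}$ factor into the Taylor prefactor $\delta^2 = \epsilon\log(1/\epsilon)$.
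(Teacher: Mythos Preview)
Your argument is correct and follows essentially the same route as the paper: both compute $\mathscr{L}_\epsilon^* p_\epsilon^{\boldsymbol{\sigma}}$ by subtracting the linearized operator (where the eigenvector property $(\mathbb{H}-\mathbb{L}^\dagger)\boldsymbol{v}=-\mu\boldsymbol{v}$ forces exact cancellation), bound the Taylor remainder by $C\delta^2 c_\epsilon^{-1}e^{-\mu(\boldsymbol{x}\cdot\boldsymbol{v})^2/(2\epsilon)}$, and then evaluate the resulting degenerate Gaussian integral using Lemma~\ref{lem83}(2) together with the $O(\delta)$ diameter of $\mathcal{B}_\epsilon^{\boldsymbol{\sigma}}$ in the null direction $\mathbb{H}^{-1}\boldsymbol{v}$, arriving at the same rate $\delta^3\epsilon^{-1}\alpha_\epsilon$. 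One small editorial remark: the degeneracy of $\mathbb{H}+\mu\boldsymbol{v}\otimes\boldsymbol{v}$ is not a specifically non-reversible feature --- in the reversible case $\mathbb{L}=0$ one has $\mu=\lambda_1$, $\boldsymbol{v}=\boldsymbol{e}_1$, and the matrix is still $\mathrm{diag}(0,\lambda_2,\dots,\lambda_d)$ --- so the same argument is needed there as well.
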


\begin{proof}
By inserting the explicit formula \eqref{epeps}, we get 
\begin{align*}
(\mathscr{L}_{\epsilon}^{*}p_{\epsilon}^{\boldsymbol{\sigma}})(\bm{x}) & \,=\,c_{\epsilon}^{-1}\,e^{-\frac{\mu}{2\epsilon}(\bm{x}\cdot\bm{v})^{2}}\,\Big[\,-(\nabla U-\boldsymbol{\ell})(\bm{x})\cdot\bm{v}-\mu(\bm{x}\cdot\bm{v})\,\Big]\;.
\end{align*}
Now, by applying the Taylor expansion of $\nabla U$ and $\ell$ around
$\boldsymbol{\sigma}$, for $\boldsymbol{x}\in\mathcal{B}_{\epsilon}^{\boldsymbol{\sigma}}$,
\begin{align*}
(\mathscr{L}_{\epsilon}^{*}p_{\epsilon}^{\boldsymbol{\sigma}})(\bm{x}) & \,=\,-c_{\epsilon}^{-1}\,e^{-\frac{\mu}{2\epsilon}(\bm{x}\cdot\bm{v})^{2}}\,\Big[\,\{\,(\mathbb{H}-\mathbb{L})\bm{x}+O(\delta^{2})\,\}\cdot\bm{v}+\mu(\bm{x}\cdot\bm{v})\,\Big]\\
 & \,=\,-c_{\epsilon}^{-1}\,e^{-\frac{\mu}{2\epsilon}(\bm{x}\cdot\bm{v})^{2}}\,\Big[\,\bm{x}\cdot(-\mu\bm{v})+\mu(\bm{x}\cdot\bm{v})+O(\delta^{2})\,\Big]\;,
\end{align*}
where the last line follows from the fact that $\bm{v}$ is an eigenvector
of $(\mathbb{H}-\mathbb{L})^{\dagger}=\mathbb{H}-\mathbb{L}^{\dagger}$
associated with the eigenvalue $-\mu$. Now, recall $c_{\epsilon}$
from \eqref{ecesig} to deduce that, for some constant $C>0$, 
\[
|\,(\mathscr{L}_{\epsilon}^{*}p_{\epsilon}^{\boldsymbol{\sigma}})(\bm{x})\,|\,\leq\,\frac{C\,\delta^{2}}{\epsilon^{1/2}}\,e^{-\frac{\mu}{2\epsilon}(\bm{x}\cdot\bm{v})^{2}}\;.
\]
By the second-order Taylor expansion, we can write 
\[
U(\bm{x})\,=\,H+\frac{1}{2}\bm{x}\cdot\mathbb{H}\bm{x}+O(\delta^{3})\;\;\;\text{for\;}\bm{x}\in\mathcal{B}_{\epsilon}^{\boldsymbol{\sigma}}\;.
\]
This expansion will be repeatedly used in the subsequent computation.
Since $e^{-O(\delta^{3})/\epsilon}=1+o_{\epsilon}(1)$ by the definition
\eqref{edelta} of $\delta$, we can conclude that 
\begin{equation}
\int_{\mathcal{B}_{\epsilon}^{\boldsymbol{\sigma}}}\,|\,\mathscr{L}_{\epsilon}^{*}p_{\epsilon}^{\boldsymbol{\sigma}}\,|\,d\mu_{\epsilon}\,\leq\,C\frac{\delta^{2}}{Z_{\epsilon}\epsilon^{1/2}}e^{-H/\epsilon}\int_{\mathcal{B}_{\epsilon}^{\boldsymbol{\sigma}}}\,e^{-\frac{1}{2\epsilon}\bm{x}\cdot(\mathbb{H}+\mu\bm{v}\otimes\bm{v})\bm{x}}\,d\bm{x}\;.\label{e95}
\end{equation}
Now, the estimation of the last integral remains. This part is similar
to \cite[Lemma 8.7]{LMS}; however, we repeat the argument here for
the completeness of the proof. By part (2) of Lemma \ref{lem83},
let $\rho_{1}=0$ and $\rho_{2}\,,\dots,\,\rho_{d}>0$ denote the
eigenvalues of $\mathbb{H}+\mu\bm{v}\otimes\bm{v}$ and let $\bm{u}_{1},\,\dots,\,\boldsymbol{u}_{d}$
denote the corresponding unit eigenvectors. Let $\langle\bm{u}_{2},\cdots,\bm{u}_{d}\rangle$
denote the subspace of $\mathbb{R}^{d}$ spanned by vectors $\bm{u}_{2},\cdots,\bm{u}_{d}$.
Since $\mathcal{B}_{\epsilon}^{\boldsymbol{\sigma}}\subset\mathcal{C}_{\epsilon}^{\boldsymbol{\sigma}}$,
there exists $M>0$ such that 
\[
\mathcal{B}_{\epsilon}^{\boldsymbol{\sigma}}\,\subset\,\bigcup_{a:|a|\leq M\delta}(\,a\bm{u}_{1}+\langle\,\bm{u}_{2},\,\dots,\bm{u}_{d}\,\rangle\,)\;.
\]
Hence, along with the change of variables $\bm{x}=\sum y_{i}\bm{u}_{i}$,
we can bound the last integral in \eqref{e95} by 
\[
\int_{-M\delta}^{M\delta}\,\Big[\,\int_{\mathbb{R}^{d-1}}\exp\,\Big\{\,-\frac{1}{2\epsilon}\sum_{k=2}^{d}\rho_{k}y_{k}^{2}\,\Big\}\,dy_{2}\cdots\,dy_{d}\,\Big]\,dy_{1}\,=\,C\,\delta\,\epsilon{}^{(d-1)/2}\;.
\]
By inserting this into \eqref{e95}, we get $\int_{\mathcal{B}_{\epsilon}^{\bm{\sigma}}}\,|\,\mathscr{L}_{\epsilon}^{*}p_{\epsilon}^{\boldsymbol{\sigma}}\,|\,d\mu_{\epsilon}\leq C\,\delta^{3}\,\epsilon^{-1}\,\alpha_{\epsilon}.$
Since $\delta^{3}\,\epsilon^{-1}=o_{\epsilon}(1)$, the proof is completed. 
\end{proof}

\subsection{\label{sec85}Property of $\Phi_{p_{\epsilon}^{\boldsymbol{\sigma}}}$
at the boundary of $\mathcal{B}_{\epsilon}^{\boldsymbol{\sigma}}$}

Next, we prove the following property of the vector field $\Phi_{p_{\epsilon}^{\boldsymbol{\sigma}}}$.
Recall $\omega^{\boldsymbol{\sigma}}$ from \eqref{eEKconst}. 
\begin{prop}
\label{p87}We have 
\begin{equation}
\epsilon\,\int_{\partial_{+}\mathcal{B}_{\epsilon}^{\boldsymbol{\sigma}}}\,\Big[\,\Big(\,\Phi_{p_{\epsilon}^{\boldsymbol{\sigma}}}-\frac{1}{\epsilon}\bm{\ell}\,\Big)\cdot\boldsymbol{e}_{1}\,\Big]\,\sigma(d\mu_{\epsilon})\,=\,[\,1+o_{\epsilon}(1)\,]\,\alpha_{\epsilon}\,\omega^{\boldsymbol{\sigma}}\;.\label{ep861}
\end{equation}
\end{prop}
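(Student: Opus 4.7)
The plan is to split the integrand in \eqref{ep861} as
\begin{equation*}
\epsilon\Big(\Phi_{p_\epsilon^{\boldsymbol{\sigma}}} - \tfrac{1}{\epsilon}\boldsymbol{\ell}\Big)\cdot\boldsymbol{e}_{1} \;=\; \epsilon\,(\nabla p_\epsilon^{\boldsymbol{\sigma}})\cdot\boldsymbol{e}_{1} \;+\; (p_\epsilon^{\boldsymbol{\sigma}}-1)\,\boldsymbol{\ell}\cdot\boldsymbol{e}_{1},
\end{equation*}
and to compute the corresponding integrals $I_{1}$ and $I_{2}$ separately. Throughout I parametrize $\partial_{+}\mathcal{B}_\epsilon^{\boldsymbol{\sigma}}$ in the eigenbasis of $\mathbb{H}=\mathbb{H}^{\boldsymbol{\sigma}}$ by $\boldsymbol{x}=\alpha_{1}\boldsymbol{e}_{1}+\boldsymbol{y}$ with $\alpha_{1}=J\delta/\lambda_{1}^{1/2}$ fixed and $\boldsymbol{y}\in\mathrm{span}(\boldsymbol{e}_{2},\dots,\boldsymbol{e}_{d})$, and use the Taylor expansions $U=H+\tfrac12\boldsymbol{x}\cdot\mathbb{H}\boldsymbol{x}+O(\delta^{3})$ and $\boldsymbol{\ell}=\mathbb{L}\boldsymbol{x}+O(\delta^{2})$ near $\boldsymbol{\sigma}=\boldsymbol{0}$. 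Since $\delta^{3}/\epsilon=o(1)$, the cubic remainder only contributes a $1+o_\epsilon(1)$ multiplicative factor.

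For $I_{1}$, the Gaussian from $\nabla p_\epsilon^{\boldsymbol{\sigma}}$ and the Gaussian from $\mu_\epsilon$ combine to produce $e^{-H/\epsilon}\exp\{-\tfrac{1}{2\epsilon}\boldsymbol{x}\cdot(\mathbb{H}+\mu\,\boldsymbol{v}\otimes\boldsymbol{v})\boldsymbol{x}\}$. By Lemma~\ref{lem83} this quadratic form is non-negative definite with one-dimensional null space spanned by $\mathbb{H}^{-1}\boldsymbol{v}$; denoting by $\boldsymbol{y}_{0}$ the unique shift so that $\alpha_{1}\boldsymbol{e}_{1}+\boldsymbol{y}_{0}$ lies on that null line, the form reduces on the face to $(\boldsymbol{y}-\boldsymbol{y}_{0})^{\dagger}A(\boldsymbol{y}-\boldsymbol{y}_{0})$, where $A=\mathbb{H}_{\perp}+\mu\,\boldsymbol{v}_{\perp}\otimes\boldsymbol{v}_{\perp}$ is positive definite on the orthogonal complement of $\boldsymbol{e}_{1}$. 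A Sherman--Morrison computation using Lemma~\ref{lem82} gives $\det A = \mu v_{1}^{2}(-\det\mathbb{H})/\lambda_{1}^{2}$, where $v_{1}=\boldsymbol{v}\cdot\boldsymbol{e}_{1}>0$. Because $\boldsymbol{y}_{0}$ lies well inside the face for $J$ chosen sufficiently large and the Gaussian has width $O(\sqrt{\epsilon})\ll\delta$, extending the integral to $\mathbb{R}^{d-1}$ produces $I_{1} = [\,1+o_\epsilon(1)\,]\,\alpha_\epsilon\,\lambda_{1}/(2\pi\sqrt{-\det\mathbb{H}})$, which is the reversible Eyring--Kramers contribution.

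The subtle piece is $I_{2}$: naively one might hope that $1-p_\epsilon^{\boldsymbol{\sigma}}$ is exponentially small near the concentration of $\mu_\epsilon|_{\partial_{+}\mathcal{B}_\epsilon^{\boldsymbol{\sigma}}}$ (which sits at $\boldsymbol{y}\approx 0$, where $\boldsymbol{x}\cdot\boldsymbol{v}\sim\delta\gg\sqrt{\epsilon}$), making $I_{2}$ negligible. This is false, and $I_{2}$ in fact contributes at order $\alpha_\epsilon$. To extract it, I expand $\boldsymbol{\ell}\cdot\boldsymbol{e}_{1}=\boldsymbol{y}\cdot\mathbb{L}^{\dagger}\boldsymbol{e}_{1}+O(\delta^{2})$, in which the would-be contribution $\alpha_{1}\boldsymbol{e}_{1}\cdot\mathbb{L}^{\dagger}\boldsymbol{e}_{1}$ vanishes because $\boldsymbol{e}_{1}\cdot\mathbb{L}\boldsymbol{e}_{1}=0$ by the skew-symmetry of $\mathbb{H}\mathbb{L}$ (Lemma~\ref{lem45}) applied to the eigenvector $\boldsymbol{e}_{1}$ of $\mathbb{H}$. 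Substituting the representation $1-p_\epsilon^{\boldsymbol{\sigma}}(\boldsymbol{x})=c_\epsilon^{-1}\int_{\boldsymbol{x}\cdot\boldsymbol{v}}^{\infty}e^{-\mu t^{2}/(2\epsilon)}\,dt$ and swapping the order of integration, the inner $\boldsymbol{y}$-integral becomes a half-space Gaussian against a linear functional; after the change of variables $\boldsymbol{z}=\mathbb{H}_{\perp}^{1/2}\boldsymbol{y}$ and the decomposition of $\boldsymbol{z}$ into the direction of $\mathbb{H}_{\perp}^{-1/2}\boldsymbol{v}_{\perp}$ and its orthogonal complement, the identity $\int_{-\infty}^{c}z\,e^{-z^{2}/(2\epsilon)}\,dz = -\epsilon\,e^{-c^{2}/(2\epsilon)}$ collapses it to a one-dimensional Gaussian integral in $t$. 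Completing the square in $t$ (and using Lemma~\ref{lem82} in the form $\boldsymbol{v}_{\perp}\cdot\mathbb{H}_{\perp}^{-1}\boldsymbol{v}_{\perp}=v_{1}^{2}/\lambda_{1}-1/\mu$) yields a constant whose exponential cancels exactly the prefactor $e^{J^{2}\delta^{2}/(2\epsilon)}$ coming from $\mu_\epsilon|_{\partial_{+}\mathcal{B}_\epsilon^{\boldsymbol{\sigma}}}$, leaving a clean Gaussian. Finally the identity $\mathbb{L}^{\dagger}\boldsymbol{v}=(\mathbb{H}+\mu\,\mathbb{I})\boldsymbol{v}$ (immediate from the eigenvalue equation) gives $\boldsymbol{v}\cdot\mathbb{H}^{-1}\mathbb{L}^{\dagger}\boldsymbol{e}_{1}=(\mu-\lambda_{1})v_{1}/\lambda_{1}$, producing $I_{2} = [\,1+o_\epsilon(1)\,]\,\alpha_\epsilon\,(\mu-\lambda_{1})/(2\pi\sqrt{-\det\mathbb{H}})$.

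Summing, $I_{1}+I_{2}=[1+o_\epsilon(1)]\,\alpha_\epsilon\,\mu/(2\pi\sqrt{-\det\mathbb{H}})=[1+o_\epsilon(1)]\,\alpha_\epsilon\,\omega^{\boldsymbol{\sigma}}$, which is the claim. The main obstacle is the cancellation at play in $I_{2}$: the factor $e^{J^{2}\delta^{2}/(2\epsilon)}=\epsilon^{-J^{2}/2}$ from $\mu_\epsilon$ restricted to $\partial_{+}\mathcal{B}_\epsilon^{\boldsymbol{\sigma}}$ blows up super-polynomially, so the remaining one-dimensional Gaussian in $t$ must produce \emph{exactly} the right constant to cancel it; this is enforced by the combined use of Lemmas~\ref{lem45}, \ref{lem82}, and \ref{lem83}, and yields the precise coefficient $(\mu-\lambda_{1})$. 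Everything else is routine Gaussian bookkeeping together with checking that the boundary restrictions on $\boldsymbol{y}$ (which cut the Gaussian far into its tails) contribute only negligible corrections.
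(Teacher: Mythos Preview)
Your proof is correct and arrives at the same result as the paper, but the treatment of the cross term $I_{2}$ is genuinely different and in some ways cleaner.

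Both you and the paper begin with the same decomposition into $I_{1}$ and $I_{2}$, and the computation of $I_{1}$ is essentially identical (the paper packages the affine shift as the map $\Pi_{\epsilon}$ and records the determinant as Lemma~\ref{lem810}, but the content is the same Sherman--Morrison identity you use). One small imprecision: the fact that the center $\boldsymbol{y}_{0}$ lies inside $\partial_{+}\mathcal{B}_{\epsilon}^{\boldsymbol{\sigma}}$ is not a matter of taking $J$ large---it follows from Lemma~\ref{lem82}, which forces $v_{k}^{2}/\lambda_{k}<v_{1}^{2}/\lambda_{1}$ for $k\ge 2$ and hence $|(\boldsymbol{y}_{0})_{k}|<J\delta/\lambda_{k}^{1/2}$ regardless of $J$; the paper checks this explicitly in \eqref{e881}.

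For $I_{2}$ the approaches diverge. The paper first uses the Gaussian tail asymptotic $1-p_{\epsilon}^{\boldsymbol{\sigma}}\sim (2\pi\mu)^{-1/2}\epsilon^{1/2}(\boldsymbol{x}\cdot\boldsymbol{v})^{-1}\exp\{-\tfrac{\mu}{2\epsilon}(\boldsymbol{x}\cdot\boldsymbol{v})^{2}\}$, which is only valid where $\boldsymbol{x}\cdot\boldsymbol{v}\gg\sqrt{\epsilon}$. This forces a decomposition of $\partial_{+}\mathcal{B}_{\epsilon}^{\boldsymbol{\sigma}}$ into a region where that asymptotic applies and a complementary region where $U\ge H+aJ^{2}\delta^{2}$ (Lemma~\ref{lem811}), and then a bespoke Gaussian lemma (Lemma~\ref{lem812}) to handle the resulting integral of a rational function $\tfrac{\boldsymbol{z}\cdot\boldsymbol{u}_{2}+\zeta}{\boldsymbol{z}\cdot\boldsymbol{u}_{1}+\zeta}$ against a Gaussian. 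Your route---keeping the exact integral representation of $1-p_{\epsilon}^{\boldsymbol{\sigma}}$, swapping the $t$- and $\boldsymbol{y}$-integrals, and using $\int_{-\infty}^{c} z\,e^{-z^{2}/(2\epsilon)}\,dz=-\epsilon\,e^{-c^{2}/(2\epsilon)}$ to collapse the half-space integral---avoids both auxiliary lemmas and makes the cancellation of the super-polynomial factor $e^{J^{2}\delta^{2}/(2\epsilon)}$ completely transparent via the identity $|a|^{2}\mu+1=\mu v_{1}^{2}/\lambda_{1}$ (which is Lemma~\ref{lem82} again). The algebraic endpoint $\boldsymbol{v}\cdot\mathbb{H}^{-1}\mathbb{L}^{\dagger}\boldsymbol{e}_{1}=(\mu-\lambda_{1})v_{1}/\lambda_{1}$ is the same in both proofs; the paper reaches it through \eqref{e836}. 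The trade-off is that your method requires controlling the error from extending the swapped $\boldsymbol{y}$-integral from $\partial_{+}\mathcal{B}_{\epsilon}^{\boldsymbol{\sigma}}$ to the full half-space uniformly in $t$, which is indeed routine (the tail in $\boldsymbol{y}$ is governed by the positive-definite $\mathbb{H}_{\perp}$) but should be stated.
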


This estimate is indeed the key estimate in the proof of Theorem \ref{t74}.
The left-hand side of \eqref{ep861} corresponds to the boundary term
in \eqref{edivf}. The proof of this proposition is slightly complicated.
Hence, we first establish some technical lemmas. For simplicity of
notation, we assume in this subsection that $\boldsymbol{e}_{i}$
is the $i$th standard normal vector of $\mathbb{R}^{d}$; hence,
we can write
\begin{align*}
\mathbb{H} & \,=\,\textrm{diag}(\,-\lambda_{1},\,\lambda_{2},\,\dots,\,\lambda_{d}\,)\;\;\;\text{and\;\;\;}\boldsymbol{v}\,=\,(\,v_{1},\,\dots,\,v_{d}\,)\;.
\end{align*}

\subsubsection*{Change of coordinate on $\partial_{+}\mathcal{B}_{\epsilon}^{\boldsymbol{\sigma}}$}

First, we introduce a change of coordinate that maps $\partial_{+}\mathcal{B}_{\epsilon}^{\boldsymbol{\sigma}}$
to a subset of $\mathbb{R}^{d-1}$ to simplify the integration in
\eqref{ep861}

For $\mathbb{A}\in\mathbb{R}^{d\times d}$ and $\boldsymbol{u}=(\,u_{1},\,\dots,\,u_{d}\,)\in\mathbb{R}^{d}$,
define $\widetilde{\mathbb{A}}\in\mathbb{R}^{(d-1)\times(d-1)}$ and
$\widetilde{\boldsymbol{u}}\in\mathbb{R}^{d-1}$ as 
\begin{equation}
\widetilde{\mathbb{A}}\,=\,(\mathbb{A}_{i,\,j})_{2\leq i,\,j\leq d}\;\;\;\text{and\;\;\;}\widetilde{\boldsymbol{u}}\,=\,(\,u_{2},\dots,u_{d}\,)\;,\label{etilde}
\end{equation}
respectively. It is important to select a point of $\partial_{+}\mathcal{B}_{\epsilon}^{\boldsymbol{\sigma}}$
corresponding to the origin of $\mathbb{R}^{d-1}$ to simplify our
computation. To this end, define $\boldsymbol{\gamma}=(\gamma_{2},\,\dots,\,\gamma_{d})\in\mathbb{R}^{d-1}$
as
\begin{equation}
\gamma_{k}\,=\,\frac{\lambda_{1}^{1/2}}{v_{1}}\cdot\frac{v_{k}}{\lambda_{k}}J\delta\;\;\;\;;\;k=2,\,\dots,\,d\;.\label{egamma}
\end{equation}
Note that $v_{1}\neq0$ by Lemma \ref{lem82}. Define a map $\Pi_{\epsilon}:\partial_{+}\mathcal{B}_{\epsilon}^{\boldsymbol{\sigma}}\rightarrow\mathbb{R}^{d-1}$
that represents the change of coordinate as 
\begin{equation}
\Pi_{\epsilon}(\boldsymbol{x})\,=\,\widetilde{\boldsymbol{x}}+\boldsymbol{\gamma}\;.\label{eProj}
\end{equation}
Our careful selection of $\boldsymbol{\gamma}$ ensures that this
map simplifies the computation of the crucial quadratic form. 
\begin{lem}
\label{lem88}For all $\boldsymbol{x}\in\partial_{+}\mathcal{B}_{\epsilon}^{\boldsymbol{\sigma}}$,
we have 
\[
\bm{x}\cdot(\,\mathbb{H}+\mu\,\bm{v}\otimes\bm{v}\,)\bm{x}\,=\,\Pi_{\epsilon}(\boldsymbol{x})\cdot(\,\widetilde{\mathbb{H}}+\mu\,\widetilde{\boldsymbol{v}}\otimes\widetilde{\boldsymbol{v}}\,)\,\Pi_{\epsilon}(\boldsymbol{x})\;.
\]
\end{lem}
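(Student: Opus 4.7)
The plan is to expand both quadratic forms explicitly in the eigenbasis $\{\bm{e}_{1},\dots,\bm{e}_{d}\}$, substitute the boundary constraint $x_{1}=J\delta/\lambda_{1}^{1/2}$ that defines $\partial_{+}\mathcal{B}_{\epsilon}^{\bm{\sigma}}$, and then match coefficients using Lemma \ref{lem82}. The choice of $\bm{\gamma}$ in \eqref{egamma} is precisely engineered to make this identity hold, so once the expansions are written out, the computation reduces to verifying two elementary algebraic identities.

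For the left-hand side, write $\bm{x}=x_{1}\bm{e}_{1}+\sum_{k=2}^{d}x_{k}\bm{e}_{k}$ with $x_{1}=J\delta/\lambda_{1}^{1/2}$. Then
\[
\bm{x}\cdot\mathbb{H}\bm{x}\,=\,-J^{2}\delta^{2}+\widetilde{\bm{x}}\cdot\widetilde{\mathbb{H}}\widetilde{\bm{x}}\;,\qquad \bm{v}\cdot\bm{x}\,=\,\tfrac{v_{1}}{\lambda_{1}^{1/2}}J\delta+\widetilde{\bm{v}}\cdot\widetilde{\bm{x}}\;,
\]
so the left-hand side becomes a quadratic polynomial in the variable $\widetilde{\bm{v}}\cdot\widetilde{\bm{x}}$ with coefficients involving $J\delta$. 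For the right-hand side, use $\Pi_{\epsilon}(\bm{x})=\widetilde{\bm{x}}+\bm{\gamma}$ and the fact that $(\widetilde{\mathbb{H}}\bm{\gamma})_{k}=\lambda_{k}\gamma_{k}=\tfrac{\lambda_{1}^{1/2}}{v_{1}}v_{k}J\delta$ by \eqref{egamma}. This gives
\[
\Pi_{\epsilon}(\bm{x})\cdot\widetilde{\mathbb{H}}\Pi_{\epsilon}(\bm{x})\,=\,\widetilde{\bm{x}}\cdot\widetilde{\mathbb{H}}\widetilde{\bm{x}}+\tfrac{2\lambda_{1}^{1/2}}{v_{1}}J\delta(\widetilde{\bm{v}}\cdot\widetilde{\bm{x}})+\tfrac{\lambda_{1}}{v_{1}^{2}}J^{2}\delta^{2}S\;,
\]
where $S:=\sum_{k=2}^{d}v_{k}^{2}/\lambda_{k}$. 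Similarly $\widetilde{\bm{v}}\cdot\bm{\gamma}=\tfrac{\lambda_{1}^{1/2}}{v_{1}}J\delta\,S$, so the cross term contributes $\mu\big(\widetilde{\bm{v}}\cdot\widetilde{\bm{x}}+\tfrac{\lambda_{1}^{1/2}}{v_{1}}J\delta\,S\big)^{2}$.

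Comparing the two sides term by term, the coefficient of $(\widetilde{\bm{v}}\cdot\widetilde{\bm{x}})^{2}$ is $\mu$ on both sides, and the coefficient of $\widetilde{\bm{x}}\cdot\widetilde{\mathbb{H}}\widetilde{\bm{x}}$ is $1$ on both sides. The remaining verification reduces to checking
\[
\tfrac{\mu v_{1}}{\lambda_{1}^{1/2}}\,=\,\tfrac{\lambda_{1}^{1/2}}{v_{1}}(1+\mu S)\qquad\text{and}\qquad \tfrac{\mu v_{1}^{2}}{\lambda_{1}}-1\,=\,\tfrac{\lambda_{1}}{v_{1}^{2}}S(1+\mu S)\;,
\]
which match the coefficients of $(\widetilde{\bm{v}}\cdot\widetilde{\bm{x}})J\delta$ and $J^{2}\delta^{2}$ respectively. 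Both identities follow immediately from Lemma \ref{lem82}, which in the present coordinates reads $-v_{1}^{2}/\lambda_{1}+S=-1/\mu$, i.e.\ $\mu S=\mu v_{1}^{2}/\lambda_{1}-1$.

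I do not anticipate any serious obstacle: the lemma is a purely algebraic identity engineered by the definition \eqref{egamma}, and the only nontrivial input is the quadratic relation supplied by Lemma \ref{lem82}. The mild subtlety is bookkeeping, namely ensuring that the linear-in-$\widetilde{\bm{x}}$ terms arising from completing the square on the left cancel with those coming from the shift by $\bm{\gamma}$ on the right; this is exactly where the factor $\lambda_{1}^{1/2}/v_{1}$ in \eqref{egamma} plays its role, and the identity $1+\mu S=\mu v_{1}^{2}/\lambda_{1}$ closes the computation cleanly.
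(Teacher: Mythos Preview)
Your proposal is correct and follows essentially the same approach as the paper: expand both sides in the eigenbasis with $x_{1}=J\delta/\lambda_{1}^{1/2}$ fixed and use Lemma~\ref{lem82} to close the algebra. The paper organizes the computation slightly more compactly by first applying Lemma~\ref{lem82} to simplify $\bm{x}\cdot\bm{v}=\bm{y}\cdot\widetilde{\bm{v}}+J\delta\lambda_{1}^{1/2}/(\mu v_{1})$ (with $\bm{y}=\Pi_{\epsilon}(\bm{x})$) before squaring, which makes the cancellation of linear and constant terms immediate rather than requiring your explicit coefficient match; but the content is identical.
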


\begin{proof}
Fix $\boldsymbol{x=}\big(\,\frac{J\delta}{\lambda_{1}^{1/2}},\,x_{2},\,\dots,\,x_{d}\,\big)\in\partial_{+}\mathcal{B}_{\epsilon}^{\boldsymbol{\sigma}}$
and write $\Pi_{\epsilon}(\boldsymbol{x})=\boldsymbol{y}=(y_{2},\,\dots,\,y_{d})$.
Then, by Lemma \ref{lem82}, we can write 
\[
\bm{x}\cdot\bm{v}\,=\,\frac{J\,\delta}{\lambda_{1}^{1/2}}v_{1}+\sum_{k=2}^{d}(y_{k}-\gamma_{k})v_{k}\,=\,\bm{y}\cdot\widetilde{\boldsymbol{v}}+\frac{J\,\delta\,\lambda_{1}^{1/2}}{\mu\,v_{1}}\;.
\]
Thus, we can write $\bm{x}\cdot(\mathbb{H}+\mu\bm{v}\otimes\bm{v})\bm{x}$
as
\[
-\lambda_{1}x_{1}^{2}+\sum_{k=2}^{d}\lambda_{k}x_{k}^{2}+\mu\,\Big(\,\bm{y}\cdot\widetilde{\boldsymbol{v}}+\frac{J\delta\lambda_{1}^{1/2}}{\mu v_{1}}\,\Big)^{2}\,=\,\bm{y}\cdot(\widetilde{\mathbb{H}}+\mu\widetilde{\boldsymbol{v}}\otimes\widetilde{\boldsymbol{v}})\bm{y}.
\]
The correction vector $\boldsymbol{\gamma}$ is designed to clear
the linear terms and constant term here. 
\end{proof}
We can now show that the image of $\Pi_{\epsilon}(\partial_{+}\mathcal{B}_{\epsilon}^{\boldsymbol{\sigma}})$
is comparable with a ball centered at the origin with a radius of
order $\delta$. 
\begin{lem}
\label{lem89}There exist constants $r,\,R>0$ such that 
\begin{equation}
\mathcal{D}_{r\delta}^{(d-1)}(\boldsymbol{0})\,\subset\,\Pi_{\epsilon}(\partial_{+}\mathcal{B}_{\epsilon}^{\boldsymbol{\sigma}})\,\subset\,\mathcal{D}_{R\delta}^{(d-1)}(\boldsymbol{0})\ ,\label{e910}
\end{equation}
where $\mathcal{D}_{a}^{(d-1)}(\boldsymbol{0})$ denotes a sphere
on $\mathbb{R}^{d-1}$ centered at the origin with radius $a$. 
\end{lem}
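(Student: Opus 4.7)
My plan is to verify the two inclusions in \eqref{e910} separately. Since $\Pi_\epsilon$ is an affine isomorphism between the affine hyperplane $\{x_1 = J\delta/\sqrt{\lambda_1}\}$ and $\mathbb{R}^{d-1}$, both inclusions reduce to locating the subset of $\partial_+\mathcal{C}_\epsilon^{\boldsymbol{\sigma}}$ cut out by the energy constraint $U < H + J^2\delta^2$ relative to an affine ball. The main nontrivial point will be that the preimage of the origin lies \emph{well inside} $\partial_+\mathcal{B}_\epsilon^{\boldsymbol{\sigma}}$, and for this I shall invoke Lemmas \ref{lem82} and \ref{lem88}.

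For the upper bound $\Pi_\epsilon(\partial_+\mathcal{B}_\epsilon^{\boldsymbol{\sigma}}) \subset \mathcal{D}_{R\delta}^{(d-1)}(\boldsymbol{0})$, any $\boldsymbol{x} \in \partial_+\mathcal{B}_\epsilon^{\boldsymbol{\sigma}}$ satisfies $|x_j| \leq 2J\delta/\sqrt{\lambda_j}$ for $j \geq 2$ by the definition of $\mathcal{C}_\epsilon^{\boldsymbol{\sigma}}$, and $|\boldsymbol{\gamma}| = O(\delta)$ is immediate from \eqref{egamma}. The triangle inequality then yields $|\Pi_\epsilon(\boldsymbol{x})| = |\widetilde{\boldsymbol{x}} + \boldsymbol{\gamma}| \leq R\delta$ for some constant $R$ independent of $\epsilon$.

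For the lower bound, consider the point $\boldsymbol{x}^* := (J\delta/\sqrt{\lambda_1})\boldsymbol{e}_1 - \sum_{j\geq 2}\gamma_j\boldsymbol{e}_j$, the unique preimage of $\boldsymbol{0}$ under $\Pi_\epsilon$ lying in $\partial_+\mathcal{C}_\epsilon^{\boldsymbol{\sigma}}$. I claim that $\boldsymbol{x}^*$ is strictly interior to $\partial_+\mathcal{B}_\epsilon^{\boldsymbol{\sigma}}$, with the following quantitative slack. From Lemma \ref{lem82} one has $v_j^2/\lambda_j < v_1^2/\lambda_1$ for each $j\geq 2$, so $\sqrt{\lambda_j}\,|\gamma_j| = J\delta\sqrt{\lambda_1}|v_j|/(|v_1|\sqrt{\lambda_j}) < J\delta < 2J\delta$, giving room of order $\delta$ in the box constraint. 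For the energy constraint, applying Lemma \ref{lem88} at $\boldsymbol{x}^*$ (where $\Pi_\epsilon(\boldsymbol{x}^*)=\boldsymbol{0}$) yields $\boldsymbol{x}^*\cdot\mathbb{H}\boldsymbol{x}^* = -\mu(\boldsymbol{x}^*\cdot\boldsymbol{v})^2 = -J^2\delta^2\lambda_1/(\mu v_1^2)$, where the second equality uses the identity $\boldsymbol{x}^*\cdot\boldsymbol{v} = J\delta\sqrt{\lambda_1}/(\mu v_1)$ already isolated in the proof of Lemma \ref{lem88}. By the Taylor expansion \eqref{e831} this gives $U(\boldsymbol{x}^*) = H - J^2\delta^2\lambda_1/(2\mu v_1^2) + O(\delta^3)$, which sits below $H+J^2\delta^2$ with slack of order $\delta^2$. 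Consequently, for a sufficiently small constant $r > 0$, every $\boldsymbol{y} \in \mathcal{D}_{r\delta}^{(d-1)}(\boldsymbol{0})$ has preimage $\boldsymbol{x} := \boldsymbol{x}^* + \sum_{j\geq 2}y_j\boldsymbol{e}_j$ still lying in $\partial_+\mathcal{B}_\epsilon^{\boldsymbol{\sigma}}$, since the perturbation is of size $O(r\delta)$ coordinatewise (absorbed by the $O(\delta)$ box slack) and shifts the quadratic part of $U$ by $O(r\delta^2)$ (absorbed by the $O(\delta^2)$ energy slack).

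The main obstacle is establishing the quantitative interiority of $\boldsymbol{x}^*$; both the box slack and the energy slack ultimately rely on the strict inequality $\boldsymbol{v}\cdot\mathbb{H}^{-1}\boldsymbol{v} = -1/\mu < 0$ from Lemma \ref{lem82}, which is the algebraic heart of the construction of $\boldsymbol{\gamma}$. Once this is in hand, the rest of the argument is a routine Taylor-expansion verification combined with the triangle inequality.
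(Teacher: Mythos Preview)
Your proof is correct and follows essentially the same route as the paper: both identify the preimage $\boldsymbol{x}^*=\overline{\boldsymbol{\gamma}}$ of the origin, compute $U(\boldsymbol{x}^*)=H-\frac{\lambda_1}{2\mu v_1^2}J^2\delta^2+O(\delta^3)$ via Lemma~\ref{lem82}, and conclude that a $\delta$-ball around it stays inside $\partial_+\mathcal{B}_\epsilon^{\boldsymbol{\sigma}}$. You are slightly more careful than the paper in explicitly verifying the box constraint $|\gamma_j|<J\delta/\sqrt{\lambda_j}$; your only cosmetic slip is invoking Lemma~\ref{lem88} at $\boldsymbol{x}^*$ before establishing $\boldsymbol{x}^*\in\partial_+\mathcal{B}_\epsilon^{\boldsymbol{\sigma}}$, but the identity there only uses $x_1=J\delta/\sqrt{\lambda_1}$, so no circularity arises.
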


\begin{proof}
Since $\partial_{+}\mathcal{B}_{\epsilon}^{\boldsymbol{\sigma}}\subset\mathcal{D}_{C\delta}^{(d-1)}(\boldsymbol{0})$
for sufficiently large $C>0$ and $|\boldsymbol{\gamma}|=O(\delta)$,
the existence of $R$ is immediate from the definition of $\Pi_{\epsilon}$. 

Now we focus on the first inclusion of \eqref{e910}. For $\bm{\gamma}\in\mathbb{R}^{d-1}$
defined in \eqref{egamma}, we write 
\begin{align*}
\mathcal{P}_{\delta} & \,=\,\Big\{\,\bm{x}\in\mathbb{R}^{d}:x_{1}=\frac{J\delta}{\lambda_{1}^{1/2}}\,\Big\}\,\subset\mathbb{R}^{d}\;\;\;\text{and\;\;\;}\overline{\boldsymbol{\gamma}}\,=\,\Big(\,\frac{J\delta}{\lambda_{1}^{1/2}},\,-\gamma_{2},\,\dots,\,-\gamma_{d}\,\Big)\in\mathcal{P}_{\delta}\;.
\end{align*}
Then, by the Taylor expansion and Lemma \ref{lem82}, we can check
that 
\begin{equation}
U(\overline{\boldsymbol{\gamma}})\,=\,H-\frac{\lambda_{1}}{2\,\mu\,v_{1}^{2}}\,J^{2}\,\delta^{2}+O(\delta^{3})\,<\,H-c_{0}J^{2}\delta^{2}\label{e881}
\end{equation}
for all sufficiently small $\epsilon>0$, provided that we take $c_{0}$
to be sufficiently small. Therefore, there exists $r>0$ such that
$\mathcal{D}_{r\delta}(\overline{\boldsymbol{\gamma}})\cap\mathcal{P}_{\delta}\subset\partial_{+}\mathcal{B}_{\epsilon}^{\boldsymbol{\sigma}}$.
Since $\Pi_{\epsilon}(\overline{\boldsymbol{\gamma}})=\bm{0}$, we
have $\mathcal{D}_{r\delta}^{(d-1)}(\boldsymbol{0})=\Pi_{\epsilon}(\mathcal{D}_{r\delta}^{(d-1)}(\overline{\boldsymbol{\gamma}})\cap\mathcal{P}_{\delta})$.
This completes the proof. 
\end{proof}
Now, we present three auxiliary lemmas (Lemmas \ref{lem810}, \ref{lem811},
and \ref{lem812}) that will be used in several instances including
the proof of Proposition \ref{p87}. The proofs of these technical
results are deferred to the next subsection. 
\begin{lem}
\label{lem810}The matrix $\widetilde{\mathbb{H}}+\mu\,\widetilde{\bm{v}}\otimes\widetilde{\bm{v}}$
is positive definite and \textbf{
\[
\det\,(\,\widetilde{\mathbb{H}}+\mu\widetilde{\bm{v}}\otimes\widetilde{\bm{v}}\,)\,=\,\mu\,\frac{v_{1}^{2}}{\lambda_{1}}\,\prod_{k=2}^{d}\lambda_{k}\;.
\]
}
\end{lem}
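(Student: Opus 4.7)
The plan is to reduce the statement to two ingredients already in hand: the positivity/invertibility of $\widetilde{\mathbb{H}}$, which is immediate since in our coordinates $\widetilde{\mathbb{H}}=\mathrm{diag}(\lambda_{2},\dots,\lambda_{d})$ with $\lambda_{k}>0$, and the identity $\bm{v}\cdot\mathbb{H}^{-1}\bm{v}=-1/\mu$ from Lemma \ref{lem82}. Note also that $\mu>0$ by the definition following Lemma \ref{lem32}, so $\mu\,\widetilde{\bm{v}}\otimes\widetilde{\bm{v}}$ is positive semi-definite. Hence $\widetilde{\mathbb{H}}+\mu\,\widetilde{\bm{v}}\otimes\widetilde{\bm{v}}$ is the sum of a positive-definite and a positive-semidefinite matrix and is therefore positive definite. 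This disposes of the first assertion with essentially no work.

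For the determinant, I would apply the matrix determinant lemma \eqref{edetm} with $\mathbb{A}=\widetilde{\mathbb{H}}$ and $\bm{x}=\mu\,\widetilde{\bm{v}}$, $\bm{y}=\widetilde{\bm{v}}$, giving
\[
\det\bigl(\widetilde{\mathbb{H}}+\mu\,\widetilde{\bm{v}}\otimes\widetilde{\bm{v}}\bigr)\,=\,\Bigl(1+\mu\,\widetilde{\bm{v}}\cdot\widetilde{\mathbb{H}}^{-1}\widetilde{\bm{v}}\Bigr)\det\widetilde{\mathbb{H}}\,=\,\Bigl(1+\mu\sum_{k=2}^{d}\frac{v_{k}^{2}}{\lambda_{k}}\Bigr)\prod_{k=2}^{d}\lambda_{k}.
\]
Now I would use Lemma \ref{lem82}, which in the present diagonalizing coordinates reads $-v_{1}^{2}/\lambda_{1}+\sum_{k=2}^{d}v_{k}^{2}/\lambda_{k}=-1/\mu$, to substitute $\sum_{k=2}^{d}v_{k}^{2}/\lambda_{k}=v_{1}^{2}/\lambda_{1}-1/\mu$. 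The prefactor collapses to $1+\mu\bigl(v_{1}^{2}/\lambda_{1}-1/\mu\bigr)=\mu v_{1}^{2}/\lambda_{1}$, which yields the claimed formula.

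I do not foresee a real obstacle here: the only nontrivial input is the eigenvalue identity $\bm{v}\cdot\mathbb{H}^{-1}\bm{v}=-1/\mu$, which is exactly Lemma \ref{lem82} and, crucially, encodes the non-reversible perturbation $\mathbb{L}$ implicitly through $\mu$ and $\bm{v}$. The computation is a one-line application of the rank-one determinant formula \eqref{edetm}, so the whole proof amounts to a short algebraic verification.
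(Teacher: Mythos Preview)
Your proof is correct and follows essentially the same route as the paper: both apply the matrix determinant lemma \eqref{edetm} to $\widetilde{\mathbb{H}}$ and then invoke Lemma~\ref{lem82} to simplify the prefactor $1+\mu\,\widetilde{\bm{v}}^{\dagger}\widetilde{\mathbb{H}}^{-1}\widetilde{\bm{v}}$ to $\mu v_{1}^{2}/\lambda_{1}$. You are in fact slightly more explicit than the paper, which omits the (trivial) positive-definiteness argument you spell out.
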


\begin{proof}
By \eqref{edetm} and Lemma \ref{lem82}, 
\[
\det\,(\,\widetilde{\mathbb{H}}+\mu\widetilde{\bm{v}}\otimes\widetilde{\bm{v}}\,)\,=\,(1+\mu\widetilde{\bm{v}}^{\dagger}\,\widetilde{\mathbb{H}}^{-1}\,\widetilde{\bm{v}})\,\det\widetilde{\mathbb{H}}\,=\,\frac{\mu v_{1}^{2}}{\lambda_{1}}\,\det\widetilde{\mathbb{H}}\;.
\]
\end{proof}
Recall $\partial_{+}\mathcal{C}_{\epsilon}^{\boldsymbol{\sigma}}$
from \eqref{ebd_C} and define, for $a>0$,
\begin{align}
 & \partial_{+}^{1,\,a}\mathcal{C}_{\epsilon}^{\boldsymbol{\sigma}}\,=\,\left\{ \boldsymbol{x}\in\partial_{+}\mathcal{C}_{\epsilon}^{\boldsymbol{\sigma}}:\bm{x}\cdot\bm{v}\ge aJ\delta\right\} \ ,\label{ebd_C1}\\
 & \partial_{+}^{2,\,a}\mathcal{C}_{\epsilon}^{\boldsymbol{\sigma}}\,=\,\left\{ \boldsymbol{x}\in\partial_{+}\mathcal{C}_{\epsilon}^{\boldsymbol{\sigma}}:U(\boldsymbol{x})\geq H+aJ^{2}\delta^{2}\right\} \ .\label{ebd_C2}
\end{align}

\begin{lem}
\label{lem811}There exists $a_{0}>0$ such that, for all $a\in(0,\,a_{0})$,
\[
\partial_{+}^{1,\,a}\mathcal{C}_{\epsilon}^{\boldsymbol{\sigma}}\cup\partial_{+}^{2,\,a}\mathcal{C}_{\epsilon}^{\boldsymbol{\sigma}}\,=\,\partial_{+}\mathcal{C}_{\epsilon}^{\bm{\sigma}}\;.
\]
\end{lem}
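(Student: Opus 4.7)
The plan is to prove the contrapositive: any $\boldsymbol{x}\in\partial_{+}\mathcal{C}_{\epsilon}^{\bm{\sigma}}$ with $\boldsymbol{x}\cdot\boldsymbol{v}<aJ\delta$ must satisfy $U(\boldsymbol{x})\geq H+aJ^{2}\delta^{2}$. Writing $\boldsymbol{x}=(J\delta/\lambda_{1}^{1/2})\boldsymbol{e}_{1}+\widetilde{\boldsymbol{x}}$ in the eigenbasis of $\mathbb{H}$ (with $\boldsymbol{\sigma}=\boldsymbol{0}$ as in the rest of the section), the second-order Taylor expansion of $U$ at $\boldsymbol{\sigma}$ gives
\[
U(\boldsymbol{x})\,=\,H-\tfrac{1}{2}J^{2}\delta^{2}+\tfrac{1}{2}\widetilde{\boldsymbol{x}}\cdot\widetilde{\mathbb{H}}\widetilde{\boldsymbol{x}}+O(\delta^{3})\;,
\]
and the hypothesis translates into the affine constraint $\widetilde{\boldsymbol{x}}\cdot\widetilde{\boldsymbol{v}}<(a-\alpha)J\delta$, where $\alpha:=v_{1}/\lambda_{1}^{1/2}>0$. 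Thus the problem reduces to a sharp lower bound on the positive definite quadratic form $\widetilde{\boldsymbol{x}}\cdot\widetilde{\mathbb{H}}\widetilde{\boldsymbol{x}}$ over this half-space.

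Since $\widetilde{\mathbb{H}}=\textup{diag}(\lambda_{2},\dots,\lambda_{d})$ is positive definite, a standard Lagrange multiplier computation yields
\[
\min_{\widetilde{\boldsymbol{x}}\cdot\widetilde{\boldsymbol{v}}=s}\,\widetilde{\boldsymbol{x}}\cdot\widetilde{\mathbb{H}}\widetilde{\boldsymbol{x}}\,=\,\frac{s^{2}}{\widetilde{\boldsymbol{v}}\cdot\widetilde{\mathbb{H}}^{-1}\widetilde{\boldsymbol{v}}}\;,
\]
and Lemma \ref{lem82} evaluates the denominator as $\widetilde{\boldsymbol{v}}\cdot\widetilde{\mathbb{H}}^{-1}\widetilde{\boldsymbol{v}}=\sum_{k=2}^{d}v_{k}^{2}/\lambda_{k}=\alpha^{2}-1/\mu$. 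In the generic case $\widetilde{\boldsymbol{v}}\ne\boldsymbol{0}$, this denominator is strictly positive. Moreover, for $a<\alpha$, the constraint half-line $s<(a-\alpha)J\delta$ lies entirely in $\{s<0\}$, so taking the infimum over $s$ in this range gives
\[
\widetilde{\boldsymbol{x}}\cdot\widetilde{\mathbb{H}}\widetilde{\boldsymbol{x}}\,\geq\,\frac{(\alpha-a)^{2}J^{2}\delta^{2}}{\alpha^{2}-1/\mu}\;.
\]

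Substituting into the Taylor expansion, the desired inequality $U(\boldsymbol{x})\geq H+aJ^{2}\delta^{2}$ reduces to the purely algebraic statement
\[
-\tfrac{1}{2}+\frac{(\alpha-a)^{2}}{2(\alpha^{2}-1/\mu)}\,\geq\,a+o_{\epsilon}(1)\;.
\]
At $a=0$ the left-hand side equals $\frac{1}{2(\mu\alpha^{2}-1)}>0$, which strictly exceeds the right-hand side; continuity in $a$ then provides $a_{0}>0$ with a uniform positive gap throughout $a\in(0,a_{0})$, and this gap absorbs the $O(\delta^{3})=o(\delta^{2})$ error for small $\epsilon$. The degenerate case $\widetilde{\boldsymbol{v}}=\boldsymbol{0}$ (equivalently $\mu\alpha^{2}=1$, which is the reversible-like situation) is handled separately: there $\boldsymbol{x}\cdot\boldsymbol{v}=v_{1}x_{1}=\alpha J\delta$ is constant on $\partial_{+}\mathcal{C}_{\epsilon}^{\bm{\sigma}}$, so the set $\partial_{+}\mathcal{C}_{\epsilon}^{\bm{\sigma}}\setminus\partial_{+}^{1,a}\mathcal{C}_{\epsilon}^{\bm{\sigma}}$ is empty whenever $a<\alpha$ and the claim is trivial.

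The main subtlety is the \emph{one-sided} nature of the hypothesis $\boldsymbol{x}\cdot\boldsymbol{v}<aJ\delta$: a direct appeal to the positive definiteness of $\mathbb{H}+2\mu\,\boldsymbol{v}\otimes\boldsymbol{v}$ from Lemma \ref{lem83} is not enough, since $(\boldsymbol{x}\cdot\boldsymbol{v})^{2}$ can be arbitrarily large on the allowed half-space. Exploiting the sign via the constrained Lagrangian minimization above, and converting $\widetilde{\boldsymbol{v}}\cdot\widetilde{\mathbb{H}}^{-1}\widetilde{\boldsymbol{v}}$ into the explicit quantity $\alpha^{2}-1/\mu$ through Lemma \ref{lem82}, is the key technical input that makes the constants transparent and the algebraic inequality at $a=0$ hold strictly.
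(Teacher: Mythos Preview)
Your proof is correct and follows essentially the same route as the paper's. Both arguments hinge on the identity $\sum_{k\ge 2}v_k^2/\lambda_k=v_1^2/\lambda_1-1/\mu$ from Lemma~\ref{lem82}; the paper derives a contradiction by applying the Cauchy--Schwarz inequality $(\sum_{k\ge 2}v_kx_k)^2\le(\sum_{k\ge 2}\lambda_kx_k^2)(\sum_{k\ge 2}v_k^2/\lambda_k)$ directly, whereas you phrase the same inequality as the Lagrange--multiplier minimum $\min_{\widetilde{\boldsymbol{x}}\cdot\widetilde{\boldsymbol{v}}=s}\widetilde{\boldsymbol{x}}\cdot\widetilde{\mathbb{H}}\widetilde{\boldsymbol{x}}=s^2/(\widetilde{\boldsymbol{v}}\cdot\widetilde{\mathbb{H}}^{-1}\widetilde{\boldsymbol{v}})$ and then use continuity in $a$ rather than an explicit choice of $\varepsilon_0$---these are dual presentations of one computation.
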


Hereafter, the constant $a_{0}$ always refers to the one in the previous
lemma. For $a>0$, we write 
\begin{align}
 & \partial_{+}^{1,\,a}\mathcal{B}_{\epsilon}^{\boldsymbol{\sigma}}\,=\,\partial_{+}\mathcal{B}_{\epsilon}^{\boldsymbol{\sigma}}\cap\partial_{+}^{1,\,a}\mathcal{C}_{\epsilon}^{\boldsymbol{\sigma}}\,=\,\left\{ \boldsymbol{x}\in\partial_{+}\mathcal{B}_{\epsilon}^{\boldsymbol{\sigma}}:\bm{x}\cdot\bm{v}\ge aJ\delta\right\} \;,\label{ebd_B1}\\
 & \partial_{+}^{2,\,a}\mathcal{B}_{\epsilon}^{\boldsymbol{\sigma}}\,=\,\partial_{+}\mathcal{B}_{\epsilon}^{\boldsymbol{\sigma}}\cap\partial_{+}^{2,\,a}\mathcal{C}_{\epsilon}^{\boldsymbol{\sigma}}\,=\,\left\{ \boldsymbol{x}\in\partial_{+}\mathcal{B}_{\epsilon}^{\boldsymbol{\sigma}}:U(\boldsymbol{x})\geq H+aJ^{2}\delta^{2}\right\} ;\label{ebd_B2}
\end{align}
hence, we have 
\begin{equation}
\partial_{+}\mathcal{B}_{\epsilon}^{\bm{\sigma}}\,=\,\partial_{+}^{1,\,a}\mathcal{B}_{\epsilon}^{\boldsymbol{\sigma}}\cup\partial_{+}^{2,\,a}\mathcal{B}_{\epsilon}^{\boldsymbol{\sigma}}\label{decbdrb}
\end{equation}
for all $a\in(0,\,a_{0})$ by the previous lemma. Now, we introduce
the last lemma.
\begin{lem}
\label{lem812}Let $\mathbb{D}$ be a positive-definite $(d-1)\times(d-1)$
matrix, Then, for all $\boldsymbol{u}_{1},\,\boldsymbol{u}_{2}\in\mathbb{R}^{d-1}$
and $c\in(0,\,1)$, we have 
\[
\int_{\Pi_{\epsilon}(\partial_{+}\mathcal{B}_{\epsilon}^{\bm{\sigma}})\cap\{\bm{y}\in\mathbb{R}^{d-1}:\bm{y}\cdot\boldsymbol{u}_{1}\geq-c\delta\}}\frac{\bm{y}\cdot\boldsymbol{u}_{2}+\delta}{\bm{y}\cdot\boldsymbol{u}_{1}+\delta}\,e^{-1/(2\epsilon)\,\boldsymbol{y}\cdot\mathbb{D}\bm{y}}\,d\bm{y}\,=\,[\,1+o_{\epsilon}(1)\,]\,\frac{(2\pi\epsilon)^{(d-1)/2}}{\sqrt{\det(\mathbb{D})}}\;.
\]
\end{lem}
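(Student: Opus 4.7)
The plan rests on a separation of scales: the Gaussian density $e^{-\bm{y}\cdot\mathbb{D}\bm{y}/(2\epsilon)}$ concentrates on a ball of radius $O(\sqrt{\epsilon})$, whereas the domain $\Pi_{\epsilon}(\partial_{+}\mathcal{B}_{\epsilon}^{\bm{\sigma}})$ lives at the much larger scale $\delta=(\epsilon\log(1/\epsilon))^{1/2}$. My plan is to fix an intermediate scale $R_{\epsilon}:=\epsilon^{1/2}(\log(1/\epsilon))^{1/4}$, so that $\sqrt{\epsilon}\ll R_{\epsilon}\ll\delta$, and split the integration region into an inner part $\mathcal{I}_{\epsilon}=\{\bm{y}:|\bm{y}|\le R_{\epsilon}\}$ and its complement in the original domain. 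I will then show that the inner part reproduces the claimed asymptotics while the complement contributes only $o(\epsilon^{(d-1)/2})$.

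On $\mathcal{I}_{\epsilon}$, Lemma \ref{lem89} gives $\mathcal{D}_{r\delta}^{(d-1)}(\bm{0})\subset\Pi_{\epsilon}(\partial_{+}\mathcal{B}_{\epsilon}^{\bm{\sigma}})$, so $\mathcal{I}_{\epsilon}$ is entirely inside the integration domain for small $\epsilon$. Since $|\bm{y}\cdot\bm{u}_{i}|\le R_{\epsilon}|\bm{u}_{i}|=o(\delta)$ there, a direct Taylor estimate gives
\[
\frac{\bm{y}\cdot\bm{u}_{2}+\delta}{\bm{y}\cdot\bm{u}_{1}+\delta}\,=\,1+o_{\epsilon}(1)
\]
uniformly on $\mathcal{I}_{\epsilon}$. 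The Gaussian integral over $\mathcal{I}_{\epsilon}$ differs from the full integral over $\mathbb{R}^{d-1}$ by the tail
\[
\int_{|\bm{y}|\geq R_{\epsilon}}e^{-\bm{y}\cdot\mathbb{D}\bm{y}/(2\epsilon)}\,d\bm{y}\,\le\,C\,\epsilon^{(d-1)/2}\exp\bigl(-c(\log(1/\epsilon))^{1/2}\bigr)\,=\,o(\epsilon^{(d-1)/2}),
\]
so the bulk contribution is $[1+o_{\epsilon}(1)](2\pi\epsilon)^{(d-1)/2}/\sqrt{\det\mathbb{D}}$, as claimed.

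For the outer region $\{\bm{y}\in\Pi_{\epsilon}(\partial_{+}\mathcal{B}_{\epsilon}^{\bm{\sigma}}):|\bm{y}|\ge R_{\epsilon},\ \bm{y}\cdot\bm{u}_{1}\ge-c\delta\}$, the key point is that the hypothesis $c<1$ yields $\bm{y}\cdot\bm{u}_{1}+\delta\ge(1-c)\delta>0$, which prevents the rational factor from blowing up. Combining this with the upper bound $|\bm{y}|\le R\delta$ from Lemma \ref{lem89}, the rational factor is bounded by a constant depending only on $c,\bm{u}_{1},\bm{u}_{2},R$ throughout the outer region. Hence the outer contribution is bounded by a constant multiple of $\int_{|\bm{y}|\geq R_{\epsilon}}e^{-\bm{y}\cdot\mathbb{D}\bm{y}/(2\epsilon)}\,d\bm{y}$, which is $o(\epsilon^{(d-1)/2})$ by the same tail estimate. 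Combining the two parts yields the lemma.

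The only delicate point --- and the raison d'\^etre of the hypothesis $c<1$ --- is keeping the denominator uniformly bounded below so that the rational factor does not produce any singular behaviour. Once this is secured, the rest is a routine Gaussian concentration argument exploiting $\delta^{2}/\epsilon=\log(1/\epsilon)\to\infty$.
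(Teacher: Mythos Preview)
Your argument is correct and follows essentially the same approach as the paper's proof: the paper performs the rescaling $\bm{z}=\epsilon^{-1/2}\bm{y}$ and then splits the domain at radius $\zeta^{\alpha}$ with $\zeta=\sqrt{\log(1/\epsilon)}$ and $0<\alpha<1$, which in your coordinates is exactly an intermediate scale $\epsilon^{1/2}(\log(1/\epsilon))^{\alpha/2}$ (your choice $R_{\epsilon}$ corresponds to $\alpha=1/2$). The treatment of the inner and outer parts, including the use of Lemma~\ref{lem89} for both inclusions and the lower bound $(1-c)\delta$ on the denominator in the outer region, is identical.
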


Now, we are ready to prove Proposition \ref{p87}. 
\begin{proof}[Proof of Proposition \ref{p87}]
In view of the definition of $\Phi_{p_{\epsilon}^{\boldsymbol{\sigma}}}$
given in \eqref{ePhi}, we can write 
\begin{align}
\epsilon\,\int_{\partial_{+}\mathcal{B}_{\epsilon}^{\bm{\sigma}}}\,\Big[\,\Phi_{p_{\epsilon}^{\boldsymbol{\sigma}}}-\frac{1}{\epsilon}\bm{\ell}\,\Big]\,\cdot\boldsymbol{e}_{1}\,\sigma(d\mu_{\epsilon}) & \,=\,I_{1}-I_{2}\;,\label{e917}
\end{align}
where 
\begin{align*}
I_{1} & \,=\,\epsilon\,\int_{\partial_{+}\mathcal{B}_{\epsilon}^{\bm{\sigma}}}\nabla p_{\epsilon}^{\bm{\sigma}}(\bm{x})\cdot\boldsymbol{e}_{1}\,\sigma(d\mu_{\epsilon})\;\;\;\text{and\;\;\;}I_{2}\,=\,\int_{\partial_{+}\mathcal{B}_{\epsilon}^{\bm{\sigma}}}\,(1-p_{\epsilon}^{\bm{\sigma}})\,(\boldsymbol{\ell}\cdot\boldsymbol{e}_{1})\,\sigma(d\mu_{\epsilon})\;.
\end{align*}
First, we compute $I_{1}$. By the explicit form of $p_{\epsilon}^{\boldsymbol{\sigma}}$
and the Taylor expansion of $U$, we can write 
\begin{equation}
I_{1}\,=\,[\,1+o_{\epsilon}(1)\,]\,v_{1}\,\frac{\epsilon}{Z_{\epsilon}}\sqrt{\frac{\mu}{2\pi\epsilon}}e^{-\frac{H}{\epsilon}}\int_{\partial_{+}\mathcal{B}_{\epsilon}}e^{-\frac{1}{2\epsilon}\bm{x}\cdot(\mathbb{H}+\mu\bm{v}\otimes\bm{v})\bm{x}}\sigma(d\bm{x})\;.\label{eI1_1}
\end{equation}
By the change of variables $\boldsymbol{y}=\Pi_{\epsilon}(\boldsymbol{x})$,
the last integral can be expressed as 
\[
\int_{\Pi_{\epsilon}(\partial_{+}\mathcal{B}_{\epsilon})}e^{-\frac{1}{2\epsilon}\bm{y}\cdot(\widetilde{\mathbb{H}}+\mu\widetilde{\bm{v}}\otimes\widetilde{\bm{v}})\bm{y}}\,d\bm{y}\,=\,[\,1+o_{\epsilon}(1)\,]\,\frac{(2\pi\epsilon)^{(d-1)/2}}{\sqrt{\textrm{det\,}(\,\widetilde{\mathbb{H}}+\mu\widetilde{\bm{v}}\otimes\widetilde{\bm{v}}\,)}}\;,
\]
where the equality follows from the change of variables $\boldsymbol{z}=\epsilon^{-1/2}\boldsymbol{y}$
and Lemma \ref{lem89}. Summing up, we get 
\begin{equation}
I_{1}\,=\,[\,1+o_{\epsilon}(1)\,]\,\frac{v_{1}\,\mu^{1/2}\,\alpha_{\epsilon}}{2\pi\sqrt{\textrm{det\,}(\,\widetilde{\mathbb{H}}+\mu\widetilde{\bm{v}}\otimes\widetilde{\bm{v}}\,)}}\;.\label{eI1_2}
\end{equation}

Next, we consider $I_{2}$. Let us take $a\in(0,\,a_{0})$, where
$a_{0}$ is the constant in Lemma \ref{lem811}, and decompose
\begin{equation}
I_{2}\,=\,I_{2,\,1}+I_{2,\,2}\;,\label{edecI2}
\end{equation}
where 
\[
I_{2,\,1}\,=\,\int_{\partial_{+}^{1,\,a}\mathcal{B}_{\epsilon}^{\boldsymbol{\sigma}}}\,\,(1-p_{\epsilon}^{\bm{\sigma}})\,(\boldsymbol{\ell}\cdot\boldsymbol{e}_{1})\,\sigma(d\mu_{\epsilon})\;,\;\;\;I_{2,\,2}\,=\,\int_{\partial_{+}\mathcal{B}_{\epsilon}^{\boldsymbol{\sigma}}\setminus\partial_{+}^{1,\,a}\mathcal{B}_{\epsilon}^{\boldsymbol{\sigma}}}\,\,(1-p_{\epsilon}^{\bm{\sigma}})\,(\boldsymbol{\ell}\cdot\boldsymbol{e}_{1})\,\sigma(d\mu_{\epsilon})\;.
\]
First, we compute $I_{2,\,1}$. Recall the elementary inequality 
\begin{equation}
\frac{b}{b^{2}+1}e^{-b^{2}/2}\,\leq\,\int_{b}^{\infty}e^{-t^{2}/2}\,dt\,\leq\,\frac{1}{b}e^{-b^{2}/2}\;\;\;\text{for }b>0\;.\label{e_eleeq}
\end{equation}
Now, for $\boldsymbol{x}\in\partial_{+}^{1,\,a}\mathcal{B}_{\epsilon}^{\boldsymbol{\sigma}}$,
since we have $\sqrt{\frac{\mu}{\epsilon}}(\bm{x}\cdot\bm{v})\to\infty$
as $\epsilon\to0$, we obtain from the definition of $p_{\epsilon}^{\bm{\sigma}}$
and \eqref{e_eleeq} that 
\begin{equation}
1-p_{\epsilon}^{\bm{\sigma}}(\bm{x})\,=\,[\,1+o_{\epsilon}(1)\,]\,\frac{\epsilon^{1/2}}{(2\pi\mu)^{1/2\,}(\bm{x}\cdot\bm{v})}\,\exp\left\{ -\frac{\mu}{2\epsilon}(\bm{x}\cdot\bm{v})^{2}\right\} .\label{e1-p}
\end{equation}
By the Taylor expansion of $\boldsymbol{\ell}$, we have 
\begin{equation}
\boldsymbol{\ell}(\bm{x})\cdot\bm{e}_{1}\,=\,\mathbb{L}\bm{x}\cdot\bm{e}_{1}+O(\delta^{2})\;.\label{e_Tayell}
\end{equation}
Our plan is to insert \eqref{e1-p} and \eqref{e_Tayell} into $I_{2,\,1}$
to complete the proof. To this end, we first explain that we can ignore
the $O(\delta^{2})$ term in \eqref{e_Tayell}. By \eqref{e1-p},
the Taylor expansion of $U$, and Lemma \ref{lem83}, we have 
\begin{align}
 & \Big|\,\delta^{2}\,\int_{\partial_{+}^{1,\,a}\mathcal{B}_{\epsilon}^{\boldsymbol{\sigma}}}\,(1-p_{\epsilon}^{\bm{\sigma}})\,\sigma(d\mu_{\epsilon})\,\Big|\nonumber \\
\le\; & C\,\frac{\delta\,\epsilon^{1/2}}{Z_{\epsilon}}\,e^{-H/\epsilon}\int_{\partial_{+}^{1,\,a}\mathcal{B}_{\epsilon}^{\boldsymbol{\sigma}}}\exp\,\Big\{\,-\frac{\mu}{2\epsilon}\bm{x}\cdot(\mathbb{H}+\mu\bm{v}\otimes\bm{v})\bm{x}\,\Big\}\,\sigma(d\bm{x})\nonumber \\
\le\; & C\,\frac{\delta\,\epsilon^{1/2}}{Z_{\epsilon}}\,e^{-H/\epsilon}\,\sigma(\partial_{+}\mathcal{B}_{\epsilon}^{\boldsymbol{\sigma}})\,=\,C\frac{\delta^{d}\,\epsilon^{1/2}}{Z_{\epsilon}}\,e^{-H/\epsilon}\,=\,o_{\epsilon}(1)\,\alpha_{\epsilon}\;.\label{e924}
\end{align}
Hence, by combining \eqref{e1-p}, \eqref{e_Tayell}, and \eqref{e924},
we can write 
\begin{equation}
I_{2,\,1}\,=\,o_{\epsilon}(1)\,\alpha_{\epsilon}+\frac{[\,1+o_{\epsilon}(1)\,]\,\alpha_{\epsilon}\,\epsilon}{(2\pi\epsilon)^{(d+1)/2}\,\mu^{1/2}}\,\int_{\partial_{+}^{1,\,a}\mathcal{B}_{\epsilon}^{\boldsymbol{\sigma}}}\,e^{-\frac{1}{2\epsilon}\,\bm{x}\cdot[\,\mathbb{H}+\mu\bm{v}\otimes\bm{v}\,]\bm{x}}\,\frac{\mathbb{L}\bm{x}\cdot\bm{e}_{1}}{\bm{x}\cdot\bm{v}}\,\sigma(d\bm{x})\;.\label{eI21}
\end{equation}
By the change of variables $\boldsymbol{y}=\Pi_{\epsilon}(\boldsymbol{x})$
and Lemma \ref{lem88}, we can write the last integral as 
\begin{align*}
 & \int_{\Pi(\partial_{+}\mathcal{B}_{\epsilon})\cap\{\bm{y}:\bm{y}\cdot\widetilde{\boldsymbol{v}}\geq c'J\delta\}}e^{-\frac{1}{2\epsilon}\bm{y}\cdot[\widetilde{\mathbb{H}}+\mu\widetilde{\boldsymbol{v}}\otimes\widetilde{\boldsymbol{v}}]\bm{y}}\,\frac{\bm{y}\cdot\widetilde{\mathbb{L}}^{\dagger}\widetilde{\boldsymbol{v}}-\frac{J\delta\lambda_{1}}{v_{1}}\sum_{k=2}^{d}\mathbb{L}_{1k}\frac{v_{k}}{\lambda_{k}}}{\bm{y}\cdot\widetilde{\boldsymbol{v}}+\frac{J\delta\lambda_{1}}{\mu v_{1}}}d\bm{y}\\
=\; & (-\mu\mathbb{L}\mathbb{H}^{-1}\bm{v})\int_{\Pi(\partial_{+}\mathcal{B}_{\epsilon})\cap\{\bm{y}:\bm{y}\cdot\widetilde{\boldsymbol{v}}\geq c'J\delta\}}e^{-\frac{1}{2\epsilon}\bm{y}\cdot[\widetilde{\mathbb{H}}+\mu\widetilde{\boldsymbol{v}}\otimes\widetilde{\boldsymbol{v}}]\bm{y}}\,\frac{\bm{y}\cdot\boldsymbol{w}+\frac{J\delta\lambda_{1}}{\mu v_{1}}}{\bm{y}\cdot\widetilde{\boldsymbol{v}}+\frac{J\delta\lambda_{1}}{\mu v_{1}}}d\bm{y}
\end{align*}
for some $\boldsymbol{w}\in\mathbb{R}^{d-1}$ and $c'=a-\frac{\lambda_{1}}{\mu v_{1}}$.
Take $a\in(0,\,a_{0})$ to be sufficiently small such that $c'<0$
(which is possible by the statement of Lemma \ref{lem811}). Evaluating
the last integral via Lemmas \ref{lem89} and \ref{lem812} and inserting
the result into \eqref{eI21}, we conclude that 
\begin{align}
I_{2,\,1} & \,=\,o_{\epsilon}(1)\,\alpha_{\epsilon}+[\,1+o_{\epsilon}(1)\,]\,\alpha_{\epsilon}\,\frac{\mu^{1/2}\,(-\mathbb{L}\mathbb{H}^{-1}\bm{v})\cdot\bm{e}_{1}}{2\pi\sqrt{\textrm{det\,}(\widetilde{\mathbb{H}}+\mu\widetilde{\bm{v}}\otimes\widetilde{\bm{v}})}}\;.\label{ei211}
\end{align}

Next, we consider $I_{2,\,2}$. By Lemma \ref{lem811}, we have $\partial_{+}\mathcal{B}_{\epsilon}^{\boldsymbol{\sigma}}\setminus\partial_{+}^{1,\,a}\mathcal{B}_{\epsilon}^{\boldsymbol{\sigma}}\subset\partial_{+}^{2,\,a}\mathcal{B}_{\epsilon}^{\boldsymbol{\sigma}};$
hence,
\begin{equation}
|\,I_{2,\,2}\,|\,\le\,\frac{C}{Z_{\epsilon}}\,\int_{\partial_{+}^{1,\,a}\mathcal{B}_{\epsilon}^{\boldsymbol{\sigma}}}e^{-U(\boldsymbol{x})/\epsilon}\,\sigma(d\bm{x})\,\le\,\frac{C}{Z_{\epsilon}}\,e^{-H/\epsilon}\,e^{-cJ^{2}\delta^{2}/\epsilon}\,\sigma(\partial_{+}\mathcal{B}_{\epsilon}^{\boldsymbol{\sigma}})\;,\label{eI22}
\end{equation}
where we applied trivial bounds\footnote{Since $\partial_{+}\mathcal{B}_{\epsilon}^{\boldsymbol{\sigma}}\subset\mathcal{K}$
where $\mathcal{K}$ is defined in \eqref{emck} we can bound $\boldsymbol{\ell}$
by the $L^{\infty}(\mathcal{K})$ norm of $\boldsymbol{\ell}.$ This
argument will be used repeatedly in the remainder of the article without
further mention.} for $|1-p_{\epsilon}^{\bm{\sigma}}(\bm{x})|$ and $\boldsymbol{\ell}$
in the first inequality, while we used the condition $U(\boldsymbol{x})\ge H+aJ^{2}\delta^{2}$
for $\boldsymbol{x}\in\partial_{+}^{2,\,a}\mathcal{B}_{\epsilon}$
in the second one. Since $\sigma(\partial_{+}\mathcal{B}_{\epsilon})=O(\delta^{d-1})$,
we get 
\begin{equation}
|\,I_{2,\,2}\,|\,\le\,\frac{C\,\delta^{d-1}}{Z_{\epsilon}}\,\epsilon^{cJ^{2}/2}=o_{\epsilon}(1)\,\alpha_{\epsilon}\label{ei22}
\end{equation}
for sufficiently large $J$. Hence, $I_{2,\,2}$ is negligible. By
combining \eqref{edecI2}, \eqref{ei211}, and \eqref{ei22}, we get
\begin{equation}
I_{2}\,=\,o_{\epsilon}(1)\,\alpha_{\epsilon}\,+[\,1+o_{\epsilon}(1)\,]\,\alpha_{\epsilon}\,\frac{\mu^{1/2}\,(-\mathbb{L}\mathbb{H}^{-1}\bm{v})\cdot\bm{e}_{1}}{2\pi\sqrt{\textrm{det\,}(\,\widetilde{\mathbb{H}}+\mu\widetilde{\bm{v}}\otimes\widetilde{\bm{v}}\,)}}\;.\label{eI2}
\end{equation}
By \eqref{eI1_2} and \eqref{eI2}, we obtain
\begin{equation}
I_{1}-I_{2}\,=\,[\,1+o_{\epsilon}(1)\,]\,\alpha_{\epsilon}\,\frac{\mu^{1/2}\,(\bm{v}+\mathbb{L}\mathbb{H}^{-1}\bm{v})\cdot\bm{e}_{1}}{2\pi\sqrt{\textrm{det\,}(\,\widetilde{\mathbb{H}}+\mu\widetilde{\bm{v}}\otimes\widetilde{\bm{v}}\,)}}\;.\label{eI1-I2}
\end{equation}
Since $\mathbb{H}\mathbb{L}=-\mathbb{L}^{\dagger}\mathbb{H}$ by the
skew-symmetry of $\mathbb{HL}$, we have $\mathbb{L}\mathbb{H}^{-1}=-\mathbb{H}^{-1}\mathbb{L}^{\dagger}$.
Hence, 
\begin{multline}
(\bm{v}+\mathbb{L}\mathbb{H}^{-1}\bm{v})\cdot\bm{e}_{1}\,=\,(\mathbb{I}-\mathbb{H}^{-1}\mathbb{L}^{\dagger})\bm{v}\cdot\bm{e}_{1}\,=\,\mathbb{H}^{-1}(\mathbb{H}-\mathbb{L}^{\dagger})\bm{v}\cdot\bm{e}_{1}\\
\,=\,-\mu\,\mathbb{H}^{-1}\bm{v}\cdot\bm{e}_{1}\,=\,\frac{\mu}{\lambda_{1}}\,\bm{v}\cdot\bm{e}_{1}\,=\,\frac{\mu\,v_{1}}{\lambda_{1}}\label{e836}
\end{multline}
since $-\mu$ is an eigenvalue of $\mathbb{H}-\mathbb{L}^{\dagger}$
associated with the eigenvector $\bm{v}$ and $\mathbb{H}^{-1}=\text{diag}(-1/\lambda_{1},1/\lambda_{2},\cdots,1/\lambda_{d})$.
Inserting this computation and Lemma \ref{lem810} into \eqref{eI1-I2},
we get
\[
I_{1}-I_{2}\,=\,[\,1+o_{\epsilon}(1)\,]\,\alpha_{\epsilon}\,\frac{\mu}{2\pi\sqrt{\prod_{k=1}^{d}\lambda_{k}}}\,=\,[1+o_{\epsilon}(1)]\,\alpha_{\epsilon}\,\omega^{\boldsymbol{\sigma}}\;.
\]
This completes the proof. 
\end{proof}

\subsection{\label{sec86}Proof of Lemmas \ref{lem811} and \ref{lem812}}

\begin{proof}[Proof of Lemma \ref{lem811}]
By Lemma \ref{lem82}, we have $\lambda_{1}\sum_{k=2}^{d}v_{k}^{2}/\lambda_{k}<v_{1}^{2}$.
Thus, there exists $\varepsilon_{0}\in(0,\,v_{1})$ such that 
\begin{equation}
(\lambda_{1}+\varepsilon_{0})\,\sum_{k=2}^{d}\frac{v_{k}^{2}}{\lambda_{k}}\,<\,(v_{1}-\varepsilon_{0})^{2}\;.\label{evare0}
\end{equation}
Let $a_{0}=\varepsilon_{0}\min\{1,\,\lambda_{1}^{-1/2},\,\lambda_{1}^{-1}\}$,
and we claim that this constant $a_{0}$ satisfies the requirement
of the lemma. 

Fix $a\in(0,\,a_{0})$, $\boldsymbol{x}\in\partial_{+}\mathcal{C}_{\epsilon}$
and suppose, on the other hand, that 
\begin{equation}
\bm{x}\cdot\bm{v}\,<\,aJ\delta\,\le\,\varepsilon_{0}\frac{J\delta}{\lambda_{1}^{1/2}}\;\;\;\text{and\;\;\;}U(\bm{x})-H\,<\,aJ^{2}\delta^{2}\,\le\,\varepsilon_{0}\frac{J^{2}\delta^{2}}{\lambda_{1}}\;.\label{ecla}
\end{equation}
Since $U(\boldsymbol{x})-H=\frac{1}{2}\bm{x}\cdot\mathbb{H}\bm{x}+O(\delta^{3})$
by the Taylor expansion, the latter condition implies that $\bm{x}\cdot\mathbb{H}\bm{x}<\varepsilon_{0}\frac{J^{2}\delta^{2}}{\lambda_{1}}$
for all sufficiently small $\epsilon>0$. 

Write $\boldsymbol{x}\in\partial_{+}\mathcal{C}_{\epsilon}$ as $\bm{x}=\frac{J\delta}{\lambda_{1}^{1/2}}\,\big(\,\bm{e}_{1}+\sum_{k=2}^{d}x_{k}\bm{e}_{k}\,\big)$
such that we can rewrite the two conditions of \eqref{ecla} respectively
as 
\[
0\,<\,\begin{aligned}v_{1}-\varepsilon_{0} & \,<\,-\sum_{k=2}^{d}v_{k}x_{k}\;\;\;\;\text{and\;\;\;\;}\sum_{k=2}^{d}\lambda_{k}x_{k}^{2}\,<\,\lambda_{1}+\varepsilon_{0}\;.\end{aligned}
\]
By these two inequalities and \eqref{evare0}, we have 
\[
\sum_{j=2}^{d}\lambda_{j}x_{j}^{2}\,\sum_{k=2}^{d}\frac{v_{k}^{2}}{\lambda_{k}}\,<\,(\lambda_{1}+\varepsilon_{0})\,\sum_{k=2}^{d}\frac{v_{k}^{2}}{\lambda_{k}}\,<\,(v_{1}-\varepsilon_{0})^{2}\,<\,\Big(\,\sum_{k=2}^{d}x_{k}v_{k}\,\Big)^{2}\ ,
\]
which contradicts the Cauchy--Schwarz inequality; hence, the claim
is proven. 
\end{proof}
\begin{proof}[Proof of Lemma \ref{lem812}]
Write $\zeta=\zeta(\epsilon)=\sqrt{\log\frac{1}{\epsilon}}$ and
let $\mathcal{Q_{\epsilon}}=\Pi_{\epsilon}(\partial_{+}\mathcal{B}_{\epsilon})$.
Then, by the change of variables $\boldsymbol{z}=\epsilon^{-1/2}\boldsymbol{y}$,
we can write the integral in the statement of the lemma as 
\[
\epsilon^{(d-1)/2}\,\int_{\epsilon^{-1/2}\mathcal{Q_{\epsilon}}\cap\{\bm{z}\in\mathbb{R}^{d-1}:\bm{z}\cdot\boldsymbol{u}_{1}\geq-c\zeta\}}\,\frac{\bm{z}\cdot\boldsymbol{u}_{2}+\zeta}{\bm{z}\cdot\boldsymbol{u}_{1}+\zeta}\,e^{-(1/2)\bm{z}\cdot\mathbb{D}\bm{z}}d\bm{z}\;.
\]
Fix $0<\alpha<1$. Then, since $\zeta\rightarrow\infty$ as $\epsilon\rightarrow0$,
by Lemma \ref{lem89}, 
\[
\mathcal{D}_{r\zeta^{\alpha}}^{(d-1)}(\boldsymbol{0})\,\subset\,\epsilon^{-1/2}\mathcal{Q}_{\epsilon}\cap\{\bm{z}\in\mathbb{R}^{d-1}:\bm{z}\cdot\boldsymbol{u}_{1}\ge-c\zeta\}
\]
for all sufficiently small $\epsilon>0$. Now we decompose the integral
into 
\begin{equation}
\Big[\,\int_{\mathcal{D}_{r\zeta^{\alpha}}^{(d-1)}(\boldsymbol{0})}+\int_{\{\epsilon^{-1/2}\mathcal{Q}_{\epsilon}\setminus\mathcal{D}_{r\zeta^{\alpha}}^{(d-1)}(\boldsymbol{0})\}\cap\{\bm{z}\in\mathbb{R}^{d-1}:\bm{z}\cdot\boldsymbol{u}_{1}\ge-c\zeta\}}\,\Big]\,\frac{\bm{z}\cdot\boldsymbol{u}_{2}+\zeta}{\bm{z}\cdot\boldsymbol{u}_{1}+\zeta}\,e^{-(1/2)\bm{z}\cdot\mathbb{D}\bm{z}}d\bm{z}\;.\label{e932}
\end{equation}

Let us consider the first integral. Note that 
\[
\sup_{\boldsymbol{z}\in\mathcal{D}_{r\zeta^{\alpha}}^{(d-1)}(\boldsymbol{0})}\Big|\,\frac{\bm{z}\cdot\boldsymbol{u}_{2}+\zeta}{\bm{z}\cdot\boldsymbol{u}_{1}+\zeta}-1\,\Big|\,=\,o_{\epsilon}(1)\;.
\]
Thus, the first integral is 
\begin{equation}
[\,1+o_{\epsilon}(1)\,]\,\int_{\mathcal{D}_{r\zeta^{\alpha}}^{(d-1)}(\boldsymbol{0})}\,e^{-(1/2)\bm{z}\cdot\mathbb{D}\bm{z}}d\bm{z}\,=\,[\,1+o_{\epsilon}(1)\,]\,\frac{(2\pi)^{(d-1)/2}}{\sqrt{\det(\mathbb{D})}}\;,\label{e933}
\end{equation}
since $\mathcal{D}_{r\zeta^{\alpha}}^{(d-1)}(\boldsymbol{0})\uparrow\mathbb{R}^{d-1}$
as $\epsilon\rightarrow0$. 

Now, we focus on the second integral. Since $\epsilon^{-1/2}\mathcal{Q}_{\epsilon}\subset\mathcal{D}_{R\zeta}^{(d-1)}(\boldsymbol{0})$
by Lemma \ref{lem89}, and since $\bm{z}\cdot\boldsymbol{u}_{1}\geq-c\zeta$
for $c\in(0,\,1)$ by the statement of the lemma, there exists $C>0$
such that 
\[
\sup_{\bm{z}\in\epsilon^{-1/2}\mathcal{Q}_{\epsilon}}\,\Big|\,\frac{\bm{z}\cdot\boldsymbol{u}_{2}+\zeta}{\bm{z}\cdot\boldsymbol{u}_{1}+\zeta}\,\Big|\,\le\,C\;.
\]
Hence, the absolute value of the second integral in \eqref{e932}
is bounded from above by 
\begin{equation}
C\,\int_{\mathcal{D}_{R\zeta}^{(d-1)}(\boldsymbol{0})\,\setminus\,\mathcal{D}_{r\zeta^{\alpha}}^{(d-1)}(\boldsymbol{0})}\,e^{-(1/2)\bm{z}\cdot\mathbb{D}\bm{z}}\,d\bm{z}\,=\,o_{\epsilon}(1)\;.\label{e934}
\end{equation}
By combining \eqref{e932}, \eqref{e933}, and \eqref{e934}, we complete
the proof. 
\end{proof}

\section{\label{sec9}Analysis of Equilibrium Potential }

In this section, we establish a bound on the equilibrium potential
$h_{\epsilon}$ and $h_{\epsilon}^{*}$ in Proposition \ref{p101}.
On the basis of this bound, we prove Proposition \ref{p73} in Section
\ref{sec94}. Further, we remark that this bound plays an important
role in the proof of Theorem \ref{t74} (cf. Section \ref{sec104}). 

For two disjoint non-empty sets $\mathcal{A},\,\mathcal{B}\subset\mathbb{R}^{d}$,
let $\Gamma_{\mathcal{A},\mathcal{\,B}}$ be a set of all $C^{1}$-paths
$\boldsymbol{\gamma}:[0,1]\rightarrow\mathbb{R}^{d}$ such that $\boldsymbol{\gamma}(0)\in\mathcal{A}$
and $\boldsymbol{\gamma}(1)\in\mathcal{B}$. Then, let $\mathfrak{H}_{\mathcal{A},\mathcal{\,B}}$
denote the height of the saddle points between $\mathcal{A}$ and
$\mathcal{B}$: 
\[
\mathfrak{H}_{\mathcal{A},\,\mathcal{B}}\,\coloneqq\,\inf_{\gamma\in\Gamma_{\mathcal{A},\,\mathcal{B}}}\,\sup_{t\in[0,\,1]}\,U(\,\boldsymbol{\gamma}(t)\,)\;.
\]

\subsection{Estimates of equilibrium potentials $h_{\epsilon}$ and $h_{\epsilon}^{*}$}

In this subsection, we prove the following proposition regarding the
so-called leveling property of the equilibrium potential. 
\begin{prop}
\label{p101}We can find a constant $C>0$ satisfying the following
bounds. 
\begin{enumerate}
\item For all $\bm{y}\in\mathcal{H}_{0}$, the following holds: 
\[
h_{\epsilon}(\bm{y}),\,h_{\epsilon}^{*}(\bm{y})\,\ge\,1-C\,\epsilon^{-d}\,\exp\frac{\mathfrak{H}_{\{\bm{y}\},\,\mathcal{D}_{\epsilon}(\bm{m}_{0})}-H}{\epsilon}\;.
\]
\item For all $\bm{y}\in\mathcal{H}_{1}$, the following holds:
\[
h_{\epsilon}(\bm{y}),\,h_{\epsilon}^{*}(\bm{y})\,\le\,C\,\epsilon^{-d}\,\exp\frac{U(\boldsymbol{y})-H}{\epsilon}\;.
\]
\end{enumerate}
\end{prop}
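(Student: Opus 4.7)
The argument has two building blocks: (a) a crude exponential upper bound on capacities governed by the barrier height, and (b) a Markov-type pointwise inequality that controls $h_\epsilon$ by a ratio of such capacities. Since Lemma \ref{lem61}(1) guarantees $\textup{cap}_\epsilon=\textup{cap}_\epsilon^{*}$, any bound established for $h_\epsilon$ transfers to $h_\epsilon^{*}$ by running the entire argument for the adjoint process $\bm{x}_\epsilon^{*}$.

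For (a), I claim that for any pair of disjoint domains $\mathcal{A},\mathcal{B}$ with barrier height $\mathfrak{H}_{\mathcal{A},\mathcal{B}}$,
$$\textup{cap}_\epsilon(\mathcal{A},\mathcal{B}) \,\le\, C\epsilon^{-d}Z_\epsilon^{-1}e^{-\mathfrak{H}_{\mathcal{A},\mathcal{B}}/\epsilon}.$$
This is obtained by plugging into Proposition \ref{p62} a cut-off test function $g(\bm{x})=\chi(U(\bm{x}))$, where $\chi$ interpolates smoothly between $1$ on the sub-level component containing $\mathcal{A}$ and $0$ on the component containing $\mathcal{B}$ across the critical level $\mathfrak{H}_{\mathcal{A},\mathcal{B}}$; the integrand of \eqref{ep62} concentrates on a thin strip around $\{U=\mathfrak{H}_{\mathcal{A},\mathcal{B}}\}$, and a Laplace-type estimate absorbing polynomial factors into $\epsilon^{-d}$ yields the claim.

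For (b), by decomposing the event $\{\tau_{\mathcal{D}_\epsilon(\bm{m}_0)}<\tau_{\mathcal{U}_\epsilon}\}$ along successive returns of the process to $\partial\mathcal{D}_\rho(\bm{y})$ and applying the strong Markov property, I would derive
$$h_\epsilon(\bm{y}) \,\le\, C\,\frac{\textup{cap}_\epsilon(\mathcal{D}_\rho(\bm{y}),\mathcal{D}_\epsilon(\bm{m}_0))}{\mu_\epsilon(\mathcal{D}_\rho(\bm{y}))}$$
for small $\rho>0$ with $\mathcal{D}_\rho(\bm{y})$ disjoint from $\mathcal{D}_\epsilon(\bm{m}_0)\cup\mathcal{U}_\epsilon$; this is essentially the potential-theoretic formula underlying Proposition \ref{p71}, combined with the trivial bound $h_\epsilon\le 1$. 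Part (2) then follows at once: for $\bm{y}\in\mathcal{H}_1$ the barrier between $\mathcal{D}_\epsilon(\bm{y})$ and $\mathcal{D}_\epsilon(\bm{m}_0)$ is $\ge H$, while Taylor expansion gives $\mu_\epsilon(\mathcal{D}_\epsilon(\bm{y}))\ge c\epsilon^{d} Z_\epsilon^{-1}e^{-U(\bm{y})/\epsilon}$, and the ratio produces the stated factor $C\epsilon^{-d}e^{(U(\bm{y})-H)/\epsilon}$ after relabeling $C$. For part (1) apply the same inequality to $1-h_\epsilon(\bm{y})=\mathbb{P}_{\bm{y}}^\epsilon[\tau_{\mathcal{U}_\epsilon}<\tau_{\mathcal{D}_\epsilon(\bm{m}_0)}]$: the numerator involves $\textup{cap}_\epsilon(\mathcal{D}_\rho(\bm{y}),\mathcal{U}_\epsilon)\le C\epsilon^{-d}Z_\epsilon^{-1}e^{-H/\epsilon}$ (barrier at least $H$), while a matching lower bound $\textup{cap}_\epsilon(\mathcal{D}_\rho(\bm{y}),\mathcal{D}_\epsilon(\bm{m}_0))\ge c\epsilon^{d} Z_\epsilon^{-1}e^{-\mathfrak{H}_{\{\bm{y}\},\mathcal{D}_\epsilon(\bm{m}_0)}/\epsilon}$ is obtained by confining the process inside a tube of low potential along an optimal path from $\bm{y}$ to $\bm{m}_0$; the ratio yields the stated exponent $\mathfrak{H}_{\{\bm{y}\},\mathcal{D}_\epsilon(\bm{m}_0)}-H$.

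The main obstacle is step (b): in the reversible case the Dirichlet principle delivers the pointwise ratio inequality painlessly, whereas here the renewal-type argument must be carried out directly, carefully tracking that the capacity appearing in the ratio is the non-reversible $\textup{cap}_\epsilon$ rather than some symmetrized version. A secondary technical issue is the capacity \emph{lower} bound needed for part (1), which cannot be read off from Proposition \ref{p62} and instead requires an ad hoc path-based construction; fortunately only a crude estimate is needed at this level of precision, so the delicate constructions of Sections \ref{sec8} and \ref{sec10} are not required here.
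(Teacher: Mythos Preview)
Your overall architecture coincides with the paper's: bound $1-h_\epsilon$ (resp.\ $h_\epsilon$) by a ratio of capacities centred at $\bm{y}$, then feed in crude exponential upper and lower capacity estimates. The paper invokes
\[
h_{\mathcal{A},\mathcal{B}}(\bm{x})\;\le\;C\,\frac{\textup{cap}_\epsilon(\mathcal{D}_\epsilon(\bm{x}),\mathcal{A})}{\textup{cap}_\epsilon(\mathcal{D}_\epsilon(\bm{x}),\mathcal{B})}
\]
from \cite[Proposition~7.9]{LMS}, combines it with Lemmas~\ref{lem102} and~\ref{lem103}, and for part~(2) uses $\mathfrak{H}_{\{\bm{y}\},\mathcal{U}_\epsilon}=U(\bm{y})$ when $\bm{y}\in\mathcal{H}_1$.

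There is, however, a concrete error in your step~(b) for part~(2). The displayed inequality with $\mu_\epsilon(\mathcal{D}_\rho(\bm{y}))$ in the denominator is false in general and does not follow from a renewal argument: in a three-well landscape with $\bm{y}$ at a local minimum, $\mathcal{A}$ behind a moderate barrier, and $\mathcal{B}$ behind a higher one, the left side is $\approx 1$ while your right side is exponentially small. You have conflated the mean-hitting-time identity behind Proposition~\ref{p71} with an equilibrium-potential bound. A renewal decomposition of $\{\tau_{\mathcal{A}}<\tau_{\mathcal{B}}\}$ actually produces $h_\epsilon(\bm{y})\le p/(p+q)$ with both $p$ and $q$ escape probabilities, hence both governed by capacities; the correct denominator is $\textup{cap}_\epsilon(\mathcal{D}_\epsilon(\bm{y}),\mathcal{U}_\epsilon)$, which is exactly what you tacitly switch to for part~(1).

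Your upper-bound sketch~(a) via Proposition~\ref{p62} also glosses over the essential difficulty. The field $\Phi_g=\nabla g+\epsilon^{-1}g\,\boldsymbol{\ell}$ is \emph{not} supported on the thin strip (the second term lives wherever $g\neq 0$), and the integrand of \eqref{ep62} contains the unknown $\nabla h_\epsilon$, so no direct Laplace estimate is available. The paper's Lemma~\ref{lem103} first removes the $g\boldsymbol{\ell}$ contribution by the divergence theorem together with \eqref{econ_ell1}--\eqref{econ_ell2}, then applies Cauchy--Schwarz and Lemma~\ref{lem61}(2) to obtain an inequality of the form $\textup{cap}_\epsilon\le A+(B\,\textup{cap}_\epsilon)^{1/2}$, which is solved for $\textup{cap}_\epsilon$. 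Your tube-based lower bound is correct in spirit; the paper streamlines it via the comparison $\textup{cap}_\epsilon\ge\textup{cap}_\epsilon^s$ with the reversible capacity, reducing to a standard symmetric estimate along the tube.
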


The proof of Proposition \ref{p101} relies on the following two bounds
on the capacity. 
\begin{lem}
\label{lem102}There exists $C>0$ such that for all $\bm{y}\in\mathcal{W}_{0}$
and $\bm{m}\in\mathcal{M}_{0}$, 
\[
\textup{cap}_{\epsilon}(\mathcal{D}_{\epsilon}(\bm{y}),\,\mathcal{D}_{\epsilon}(\bm{m}))\,\geq\,C\,\epsilon^{d}\,Z_{\epsilon}^{-1}\,e^{-\mathfrak{H}_{\{\bm{y}\},\mathcal{D}_{\epsilon}(\bm{m})}/\epsilon}\;.
\]
\end{lem}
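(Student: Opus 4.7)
The plan is to reduce the claim to the analogous bound for the reversible overdamped Langevin dynamics $\boldsymbol{y}_\epsilon(\cdot)$, and then invoke a classical tube-path argument. The key reduction step rests on the Dirichlet-form identity in Lemma \ref{lem61}(2):
$$\textup{cap}_\epsilon(\mathcal{D}_\epsilon(\boldsymbol{y}), \mathcal{D}_\epsilon(\boldsymbol{m})) \,=\, \epsilon \int_\Omega |\nabla h_\epsilon|^2 \, d\mu_\epsilon,$$
where $h_\epsilon$ denotes the non-reversible equilibrium potential. Since $h_\epsilon$ attains the boundary values required of elements of $\mathscr{C}_{\mathcal{D}_\epsilon(\boldsymbol{y}), \mathcal{D}_\epsilon(\boldsymbol{m})}$ (up to a standard smooth approximation), the classical Dirichlet principle for the symmetric quadratic form on the right-hand side yields
$$\textup{cap}_\epsilon(\mathcal{D}_\epsilon(\boldsymbol{y}), \mathcal{D}_\epsilon(\boldsymbol{m})) \,\ge\, \textup{cap}_\epsilon^{\textup{rev}}(\mathcal{D}_\epsilon(\boldsymbol{y}), \mathcal{D}_\epsilon(\boldsymbol{m})),$$
where the right-hand side is the capacity for the reversible dynamics. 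The problem thereby reduces to a crude reversible capacity lower bound.

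For the reversible bound I would apply a standard tube argument. Choose a smooth curve $\boldsymbol{\gamma}: [0,1] \to \mathbb{R}^d$ from $\partial \mathcal{D}_\epsilon(\boldsymbol{y})$ to $\partial \mathcal{D}_\epsilon(\boldsymbol{m})$ attaining $\max_t U(\boldsymbol{\gamma}(t)) = \mathfrak{H}_{\{\boldsymbol{y}\}, \mathcal{D}_\epsilon(\boldsymbol{m})}$ (routing through a minimizing saddle of $U$), and thicken it into a tube $\mathcal{T}_\epsilon$ of transverse radius of order $\epsilon$, arranged so that every axial fiber originates inside $\mathcal{D}_\epsilon(\boldsymbol{y})$ and terminates inside $\mathcal{D}_\epsilon(\boldsymbol{m})$. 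In tubular coordinates $(s, \boldsymbol{\eta})$, fiberwise Cauchy--Schwarz gives, for any $f \in \mathscr{C}_{\mathcal{D}_\epsilon(\boldsymbol{y}), \mathcal{D}_\epsilon(\boldsymbol{m})}$,
$$1 \,\le\, \Bigl(\int_0^1 |\partial_s f|^2 \, e^{-U/\epsilon}\, ds\Bigr) \cdot \Bigl(\int_0^1 e^{U/\epsilon}\, ds\Bigr) \,\le\, C\, e^{\mathfrak{H}/\epsilon} \int_0^1 |\partial_s f|^2 \, e^{-U/\epsilon}\, ds,$$
using the fact that on $\mathcal{T}_\epsilon$ the potential $U$ exceeds $\mathfrak{H}$ by at most a bounded constant in the exponent (the transverse displacement contributes $|\nabla U|\cdot |\boldsymbol{\eta}| = O(\epsilon)$). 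Integrating over the transverse cross-section of volume of order $\epsilon^{d-1}$ against the bounded Jacobian of the tubular coordinate change, and then multiplying by $\epsilon Z_\epsilon^{-1}$ from the capacity formula, yields $\textup{cap}_\epsilon^{\textup{rev}} \ge C' \epsilon^d Z_\epsilon^{-1} \exp(-\mathfrak{H}_{\{\boldsymbol{y}\},\mathcal{D}_\epsilon(\boldsymbol{m})}/\epsilon)$, as required.

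The main technical obstacle I anticipate is the geometric construction of the tube $\mathcal{T}_\epsilon$ reconciling two competing constraints: its transverse width must be of order $\epsilon$ so that each axial fiber crosses both endpoint balls, yet the tube must remain a diffeomorphic image of a cylinder with uniformly bounded Jacobian so that the fiberwise estimates combine via Fubini. Both requirements are standard but somewhat technical, and are handled in the classical reversible metastability literature (cf.\ \cite{BEGK1, LMS}).
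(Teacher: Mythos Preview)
Your proposal is correct and follows essentially the same strategy as the paper: reduce to the symmetrized (reversible) capacity via $\textup{cap}_\epsilon \ge \textup{cap}_\epsilon^{s}$ (the paper cites \cite[Lemma~2.5]{GL}, which is exactly the Dirichlet-principle argument you sketch from Lemma~\ref{lem61}(2)), and then obtain the reversible lower bound by a tube argument with fiberwise one-dimensional optimization. The paper carries out the tube construction explicitly via the tubular neighborhood theorem (Lemma~\ref{lem tb nbhd} and Proposition~\ref{prop tb nbhd}), which is precisely the ``technical obstacle'' you anticipate and defer to \cite{BEGK1,LMS}.
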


\begin{lem}
\label{lem103}There exists $C>0$ such that for all $\bm{y}\in\mathcal{H}_{0}$,
\[
\textup{cap}_{\epsilon}(\mathcal{D}_{\epsilon}(\bm{y}),\,\mathcal{U}_{\epsilon})\,\leq\,C\,Z_{\epsilon}^{-1}\,e^{-H/\epsilon}\;.
\]
\end{lem}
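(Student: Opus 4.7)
The plan is to combine the test-function identity from Proposition~\ref{p62} with an integration by parts that puts all derivatives onto the test function, reducing the capacity upper bound to an $L^{1}$-estimate on $\mathscr{L}_{\epsilon}^{*}f$ against $\mu_{\epsilon}$. For any $f\in\mathscr{C}_{\mathcal{D}_{\epsilon}(\bm{y}),\mathcal{U}_{\epsilon}}$, the divergence form dual to \eqref{egenL2} gives $\mathscr{L}_{\epsilon}^{*}f=\epsilon\,e^{U/\epsilon}\nabla\cdot[e^{-U/\epsilon}\Phi_{f}]$. Integrating against $h_{\epsilon}$ on $\mathbb{R}^{d}$ and invoking Proposition~\ref{p62}, the boundary contribution on $\partial\mathcal{D}_{\epsilon}(\bm{y})$ collapses (using $h_{\epsilon}=1$ and $\nabla f=0$ there) to $\int_{\partial\mathcal{D}_{\epsilon}(\bm{y})}\bm{\ell}\cdot\bm{n}_{\Omega}\,\sigma(d\mu_{\epsilon})$, which vanishes by the divergence theorem applied to the field $\bm{\ell}\,e^{-U/\epsilon}$ (divergence-free by \eqref{econ_ell1}--\eqref{econ_ell2}). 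Combined with the pointwise bound $0\le h_{\epsilon}\le1$, this yields
\[
\textup{cap}_{\epsilon}(\mathcal{D}_{\epsilon}(\bm{y}),\mathcal{U}_{\epsilon})\;=\;-\int_{\mathbb{R}^{d}}h_{\epsilon}\,\mathscr{L}_{\epsilon}^{*}f\,d\mu_{\epsilon}\;\le\;\int_{\mathbb{R}^{d}}|\mathscr{L}_{\epsilon}^{*}f|\,d\mu_{\epsilon}\;.
\]

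Next I would construct $f$ that localizes the support of $\mathscr{L}_{\epsilon}^{*}f$ to thin transition layers and arranges for the $\bm{\ell}$-dependent term to vanish almost everywhere by orthogonality. Fix $r>0$ small so that the balls $\{B_{r}(\bm{\sigma}):\bm{\sigma}\in\Sigma_{0}\}$ are pairwise disjoint. Outside $\bigcup_{\bm{\sigma}\in\Sigma_{0}}B_{r}(\bm{\sigma})$, take $f=\phi(U)$ with $\phi:\mathbb{R}\to[0,1]$ smoothly decreasing from $1$ to $0$ over a window of width $O(\epsilon)$ around $u=H$; for small $\epsilon$ this gives $f\equiv1$ on $\mathcal{D}_{\epsilon}(\bm{y})$ and $f\equiv0$ on $\mathcal{U}_{\epsilon}$. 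Away from the saddles, $\nabla f$ is parallel to $\nabla U$, so $\bm{\ell}\cdot\nabla f\equiv0$ by \eqref{econ_ell1}, and therefore $\mathscr{L}_{\epsilon}^{*}f$ reduces to the reversible expression $-\nabla U\cdot\nabla f+\epsilon\Delta f$; a standard co-area and Laplace-method computation gives a contribution to $\int|\mathscr{L}_{\epsilon}^{*}f|\,d\mu_{\epsilon}$ of order $Z_{\epsilon}^{-1}e^{-H/\epsilon}$. Inside each $B_{r}(\bm{\sigma})$, where $\{U<H\}$ splits into the two wells, I replace the level-set cutoff by a saddle-adapted transition depending on the unstable coordinate $(\bm{x}-\bm{\sigma})\cdot\bm{e}_{1}^{\bm{\sigma}}$, patched smoothly to the outer $\phi(U)$ on the shell $B_{r}(\bm{\sigma})\setminus B_{r/2}(\bm{\sigma})$. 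There $\bm{\ell}\cdot\nabla f$ is no longer zero, but the Taylor bound $|\bm{\ell}(\bm{x})|=O(|\bm{x}-\bm{\sigma}|)$ (which uses $\bm{\ell}(\bm{\sigma})=\bm{0}$, Theorem~\ref{t22}(1)) together with the local Laplace estimate $\int_{B_{r}(\bm{\sigma})}e^{-U/\epsilon}\,d\bm{x}\le Ce^{-H/\epsilon}\epsilon^{d/2}$ bounds the saddle contribution by $CZ_{\epsilon}^{-1}e^{-H/\epsilon}$ as well.

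The main obstacle will be the smooth patching of the level-set cutoff and the saddle-coordinate cutoff on the annular shells $B_{r}(\bm{\sigma})\setminus B_{r/2}(\bm{\sigma})$ and the associated bookkeeping to verify that each individual term in $\mathscr{L}_{\epsilon}^{*}f$ integrates to at most $CZ_{\epsilon}^{-1}e^{-H/\epsilon}$; this is routine but tedious Laplace-type analysis, and crucially it is logically independent of the refined construction of $g_{\epsilon}$ in Section~\ref{sec10}, so no circularity is introduced into Proposition~\ref{p101}. Uniformity of the constant $C$ over $\bm{y}\in\mathcal{H}_{0}$ follows since the same $\bm{y}$-independent $f$ works as long as $\mathcal{D}_{\epsilon}(\bm{y})\subset\{f\equiv1\}$, which holds for all $\bm{y}$ in any fixed compact subset of $\mathcal{H}_{0}$; the edge case where $U(\bm{y})$ approaches $H$ is dispatched by monotonicity of the capacity in the first argument, replacing $\mathcal{D}_{\epsilon}(\bm{y})$ by a slightly larger $\bm{y}$-independent set still strictly inside $\mathcal{H}_{0}$.
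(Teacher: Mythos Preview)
Your first paragraph is correct and in fact slightly cleaner than the paper's argument: once you integrate by parts and kill the boundary term on $\partial\mathcal{D}_{\epsilon}(\bm{y})$ via \eqref{econ_ell1}--\eqref{econ_ell2}, the inequality $\textup{cap}_{\epsilon}\le\int|\mathscr{L}_{\epsilon}^{*}f|\,d\mu_{\epsilon}$ is exactly what is needed. (The paper instead uses Cauchy--Schwarz against $\nabla h_{\epsilon}$ and then solves a quadratic self-referencing inequality; your route avoids that detour.)

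The gap is in the construction of $f$. A function of the form $\phi(U)$ cannot separate $\mathcal{H}_{0}$ from $\mathcal{H}_{1}$: both are contained in $\{U<H\}$, so $\phi(U)\equiv1$ on \emph{all} of $\mathcal{H}$, and in particular $\phi(U)\equiv1$ on $\mathcal{U}_{\epsilon}\subset\mathcal{H}_{1}$. Your claim that ``for small $\epsilon$ this gives $f\equiv0$ on $\mathcal{U}_{\epsilon}$'' is therefore false. The saddle-adapted piece cannot repair this: on the shell $B_{r}(\bm{\sigma})\setminus B_{r/2}(\bm{\sigma})$ it must match $\phi(U)$, which equals $1$ on the portions of \emph{both} wells that intersect the shell; hence the saddle transition is forced to be $1$ on both sides and carries no separation across to $\mathcal{H}_{1}$. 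The topological obstruction is that level sets of $U$ do not distinguish the two components of $\{U<H\}$.

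The remedy is to replace $\phi(U)$ by a spatial cutoff of $\mathcal{H}_{0}$ itself. The paper takes $q_{\epsilon}$ equal to $1$ on $\mathcal{H}_{0}^{[\epsilon]}$ and $0$ outside $\mathcal{H}_{0}^{[2\epsilon]}$ with $|\nabla q_{\epsilon}|\le C/\epsilon$; then $\mathscr{L}_{\epsilon}^{*}q_{\epsilon}$ is supported on a shell of width $\epsilon$ where $U=H+O(\epsilon)$, and your $L^{1}$ bound $\int|\mathscr{L}_{\epsilon}^{*}q_{\epsilon}|\,d\mu_{\epsilon}\le CZ_{\epsilon}^{-1}e^{-H/\epsilon}$ follows immediately. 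You lose the nice orthogonality $\bm{\ell}\cdot\nabla f=0$, but that is harmless here since $\bm{\ell}$ is bounded on the compact shell. Note also that this single $q_{\epsilon}$ works uniformly for every $\bm{y}\in\mathcal{H}_{0}$ (since $\mathcal{D}_{\epsilon}(\bm{y})\subset\mathcal{H}_{0}^{[\epsilon]}$), so no separate edge-case argument is needed; your proposed monotonicity fix is in any case misstated, as a $\bm{y}$-independent superset of all $\mathcal{D}_{\epsilon}(\bm{y})$ cannot lie strictly inside $\mathcal{H}_{0}$.
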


We prove Lemmas \ref{lem102} and \ref{lem103} in Sections \ref{sec92}
and \ref{sec93}, respectively. Now, we prove Proposition \ref{p101}
\begin{proof}[Proof of Proposition \ref{p101}]
Since the proofs for $h_{\epsilon}$ and $h_{\epsilon}^{*}$ are
identical, we consider only $h_{\epsilon}.$ In \cite[Proposition 7.9]{LMS},
it has been shown that there exists $C>0$ such that 
\begin{equation}
h_{\mathcal{A},\,\mathcal{B}}(\bm{x})\,\leq\,C\,\frac{\textup{cap}_{\epsilon}(\mathcal{D}_{\epsilon}(\bm{x}),\,\mathcal{A})}{\textup{cap}_{\epsilon}(\mathcal{D}_{\epsilon}(\bm{x}),\,\mathcal{B})}\;,\label{ebdh}
\end{equation}
provided that $\mathcal{A}$ and $\mathcal{B}$ are disjoint domains
of sufficiently smooth bounds. For part (1), we can use this bound
to get 
\[
1-h_{\epsilon}(\boldsymbol{y})\,=\,h_{\mathcal{U}_{\epsilon},\,\mathcal{D}_{\epsilon}(\boldsymbol{m}_{0})}(\boldsymbol{y})\,\le\,C\,\frac{\textup{cap}_{\epsilon}(\mathcal{D}_{\epsilon}(\bm{y}),\,\mathcal{U}_{\epsilon})}{\textup{cap}_{\epsilon}(\mathcal{D}_{\epsilon}(\bm{y}),\,\mathcal{D}_{\epsilon}(\boldsymbol{m}_{0}))}\;.
\]
Now, by applying Lemmas \ref{lem102} and \ref{lem103}, we complete
the proof of part (1). 

For part (2), we fix $\boldsymbol{y}\in\mathcal{H}_{1}$. Then, again
by \eqref{ebdh}, 
\[
h_{\epsilon}(\boldsymbol{y})\,=\,h_{\mathcal{D}_{\epsilon}(\boldsymbol{m}_{0}),\,\mathcal{U}_{\epsilon}}(\boldsymbol{y})\,\le\,C\,\frac{\textup{cap}_{\epsilon}(\mathcal{D}_{\epsilon}(\bm{y}),\,\mathcal{D}_{\epsilon}(\boldsymbol{m}_{0}))}{\textup{cap}_{\epsilon}(\mathcal{D}_{\epsilon}(\bm{y}),\,\mathcal{U}_{\epsilon})}\;.
\]
By the same logic with the proofs of Lemmas \ref{lem102} and \ref{lem103},
we get 
\[
\textup{cap}_{\epsilon}(\mathcal{D}_{\epsilon}(\bm{y}),\,\mathcal{D}_{\epsilon}(\boldsymbol{m}_{0}))\,\le\,\frac{Ce^{-H/\epsilon}e^{-H/\epsilon}}{Z_{\epsilon}}\;\;\;\text{and}\;\;\;\textup{cap}_{\epsilon}(\mathcal{D}_{\epsilon}(\bm{y}),\,\mathcal{U}_{\epsilon})\,\ge\,\frac{C\epsilon^{d}}{Z_{\epsilon}}\,e^{-\mathfrak{H}_{\{\bm{y}\},\,\mathcal{U}_{\epsilon}}/\epsilon}\;.
\]
Since $\mathcal{U}_{\epsilon}$ contains all the local minima of $\mathcal{M}_{1}$
and $\mathcal{H}_{1}$ is a subset of the domain of attraction of
$\mathcal{M}_{1}$, we have $\mathfrak{H}_{\{\bm{y}\},\,\mathcal{U}_{\epsilon}}=U(\boldsymbol{y})$
and the proof is completed. 
\end{proof}

\subsection{\label{sec92}Proof of Lemma \ref{lem102}}

For the lower bound case, the proof is a consequence of the existing
estimate for the reversible case. Let $\textrm{cap}_{\epsilon}^{s}(\cdot,\,\cdot)$
denote the capacity with respect to the reversible process $\boldsymbol{z}_{\epsilon}(\cdot)$
given in \eqref{e_SDEy}, whose generator is $(1/2)(\mathscr{L}_{\epsilon}+\mathscr{L}_{\epsilon}^{*})$.
Then, it is well known that\textbf{ }(cf. \cite[Lemma 2.5]{GL}) for
any two disjoint non-empty domains $\mathcal{A},\,\mathcal{B}\subset\mathbb{R}^{d}$
with smooth boundaries, we have the following equation: 
\begin{equation}
\textrm{cap}_{\epsilon}(\mathcal{A},\,\mathcal{B})\,\ge\,\textrm{cap}_{\epsilon}^{s}(\mathcal{A},\,\mathcal{B})\;.\label{ecapcom}
\end{equation}
Therefore, it suffices to show the inequality for $\textrm{cap}_{\epsilon}^{s}(\mathcal{D}_{\epsilon}(\bm{y}),\,\mathcal{D}_{\epsilon}(\bm{m}))$,
instead. The lower bound for this capacity can be obtained by optimizing
the integration on the tube connecting $\mathcal{D}_{\epsilon}(\bm{y})$
and $\mathcal{D}_{\epsilon}(\bm{m})$. This is rigorously achieved
by a parametrization of this tube. When we parametrize the tube successfully,
we can use the idea of \cite[Proposition 4.7]{BEGK1} to complete
the proof. 

Let $\bm{\omega}:[0,L]\to\mathbb{R}^{d}$ be a smooth path such that
$|\dot{\bm{\omega}}(t)|=1$ for all $t\in[0,\,L]$. For $r>0$, define
$A_{r}(0),\,A_{r}(L)$ by
\begin{align*}
A_{r}(0) & \,=\,\{\,\bm{x}\in\mathbb{R}^{d}:\bm{x}\cdot\dot{\boldsymbol{\omega}}(0)<0,\,|\bm{x}-\boldsymbol{\omega}(0)|<r\,\}\\
A_{r}(L) & \,=\,\{\,\bm{x}\in\mathbb{R}^{d}:\bm{x}\cdot\dot{\boldsymbol{\omega}}(L)>0,\,|\bm{x}-\boldsymbol{\omega}(L)|<r\,\}
\end{align*}
and define the\textit{ tubular neighborhood of $\boldsymbol{\omega}$
of radius $r$} by
\[
\boldsymbol{\omega}_{r}\,=\,\{\bm{x}\in\mathbb{R}^{d}:|\bm{x}-\boldsymbol{\omega}(t)|<r\;\,\text{for some }t\in[0,L]\}\setminus(\,A_{r}(0)\cup A_{r}(L)\,).
\]
For $\rho>0$, let $\mathcal{D}_{\rho}^{(d-1)}$ be a $(d-1)$-dimensional
sphere of radius $\rho$ centered at the origin. 
\begin{lem}
\label{lem tb nbhd}There exists $r_{0}>0$ such that $[0,L]\times\mathcal{D}_{r_{0}}^{(d-1)}$
is diffeomorphic to $\boldsymbol{\omega}_{r_{0}}$. Furthermore, we
can find a diffeomorphism $\varphi:[0,L]\times\mathcal{D}_{r_{0}}^{(d-1)}\to\boldsymbol{\omega}_{r_{0}}$
of the form
\begin{equation}
\varphi(t,\,\bm{z})\,=\,\boldsymbol{\omega}(t)+\mathbb{A}(t)\bm{z}\label{vaph}
\end{equation}
for some smooth $d\times(d-1)$ matrix-valued function $\mathbb{A}(\cdot)$
of rank $d-1$, and it satisfies 
\begin{equation}
\Big|\,\det\frac{\partial\varphi}{\partial(t,\,\bm{z})}\,\Big|\,\ge\,\frac{1}{2}\;\;\;\;\text{on}\;[0,\,L]\times\mathcal{D}_{r_{0}}^{(d-1)}\;.\label{bdvaph}
\end{equation}
\end{lem}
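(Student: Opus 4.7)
The plan is to build $\varphi$ explicitly from a smooth orthonormal frame of the normal bundle along $\boldsymbol{\omega}$, verify the Jacobian bound via continuity and compactness of $[0,L]$, and upgrade the resulting local diffeomorphism to a global one by a standard compactness argument. Since the lemma will be applied to a curve joining two distinct discs in Lemma \ref{lem102}, I implicitly take $\boldsymbol{\omega}$ to be an embedded (simple) curve; this is needed for the global injectivity of the tube.

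First, I would construct a smooth orthonormal frame $\boldsymbol{n}_{1}(t),\ldots,\boldsymbol{n}_{d-1}(t)$ of $\dot{\boldsymbol{\omega}}(t)^{\perp}$. Since $\dot{\boldsymbol{\omega}}\colon[0,L]\to S^{d-1}$ is smooth, the orthogonal complements form a smooth rank-$(d-1)$ subbundle of the trivial bundle $[0,L]\times\mathbb{R}^{d}$; contractibility of $[0,L]$ makes this subbundle trivial, so a smooth global orthonormal frame exists. Concretely one can choose an orthonormal basis of $\dot{\boldsymbol{\omega}}(0)^{\perp}$ at $t=0$ and propagate by the linear ODE $\dot{\boldsymbol{n}}_{i}=-(\boldsymbol{n}_{i}\cdot\ddot{\boldsymbol{\omega}})\,\dot{\boldsymbol{\omega}}$, which preserves orthonormality as well as the orthogonality to $\dot{\boldsymbol{\omega}}$. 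Setting $\mathbb{A}(t)=[\boldsymbol{n}_{1}(t)\mid\cdots\mid\boldsymbol{n}_{d-1}(t)]$ then yields a smooth $d\times(d-1)$ matrix of rank $d-1$, and $\varphi$ is defined by \eqref{vaph}.

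Next, for the Jacobian bound \eqref{bdvaph}: direct differentiation gives $\partial_{t}\varphi=\dot{\boldsymbol{\omega}}(t)+\dot{\mathbb{A}}(t)\boldsymbol{z}$ and $\partial_{z_{i}}\varphi=\boldsymbol{n}_{i}(t)$, so at $\boldsymbol{z}=\boldsymbol{0}$ the Jacobian columns are the orthonormal basis $(\dot{\boldsymbol{\omega}}(t),\boldsymbol{n}_{1}(t),\ldots,\boldsymbol{n}_{d-1}(t))$ and the determinant is $\pm 1$. Since the Jacobian depends jointly smoothly on $(t,\boldsymbol{z})$ and $[0,L]$ is compact, there exists $r_{0}>0$ such that $|\det\partial\varphi/\partial(t,\boldsymbol{z})|\geq 1/2$ on $[0,L]\times\mathcal{D}_{r_{0}}^{(d-1)}$; in particular, by the inverse function theorem, $\varphi$ is a local diffeomorphism at every point of its domain.

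The main obstacle is upgrading ``local diffeomorphism'' to ``global diffeomorphism onto $\boldsymbol{\omega}_{r_{0}}$''. For injectivity I plan a standard compactness argument: if injectivity fails for every $r_{0}>0$, pick $r_{0}^{(k)}\downarrow 0$ and pairs $(t_{k},\boldsymbol{z}_{k})\neq(s_{k},\boldsymbol{w}_{k})$ in $[0,L]\times\mathcal{D}_{r_{0}^{(k)}}^{(d-1)}$ with $\varphi(t_{k},\boldsymbol{z}_{k})=\varphi(s_{k},\boldsymbol{w}_{k})$; passing to subsequences with $t_{k}\to t^{*}$ and $s_{k}\to s^{*}$ gives $\boldsymbol{\omega}(t^{*})=\boldsymbol{\omega}(s^{*})$, hence $t^{*}=s^{*}$ by simplicity of $\boldsymbol{\omega}$, so eventually both pairs lie in a neighborhood of $(t^{*},\boldsymbol{0})$ on which $\varphi$ is injective---a contradiction. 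Surjectivity onto $\boldsymbol{\omega}_{r_{0}}$ follows from the orthogonal decomposition: the exclusion of the end caps $A_{r_{0}}(0)\cup A_{r_{0}}(L)$ in the definition of $\boldsymbol{\omega}_{r_{0}}$ is designed precisely so that every $\boldsymbol{x}\in\boldsymbol{\omega}_{r_{0}}$ has a nearest point $\boldsymbol{\omega}(s)$ on the curve satisfying $\boldsymbol{x}-\boldsymbol{\omega}(s)\perp\dot{\boldsymbol{\omega}}(s)$, and then $\boldsymbol{z}:=\mathbb{A}(s)^{\dagger}(\boldsymbol{x}-\boldsymbol{\omega}(s))\in\mathcal{D}_{r_{0}}^{(d-1)}$ satisfies $\varphi(s,\boldsymbol{z})=\boldsymbol{x}$.
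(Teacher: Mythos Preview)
Your argument is correct and essentially equivalent to the paper's in content, though it is packaged differently. The paper invokes the tubular neighborhood theorem (citing \cite[Theorem~6.24]{JMLee}) to get the diffeomorphism onto $\boldsymbol{\omega}_{r_0}$ in one stroke, then uses triviality of the normal bundle over the contractible base $\boldsymbol{\omega}$ together with Gram--Schmidt on a global frame of sections to obtain the matrix $\mathbb{A}(t)$; the Jacobian bound at $\boldsymbol{z}=\boldsymbol{0}$ and the shrinking of $r_0$ are then exactly as you do them. You instead build the orthonormal normal frame explicitly by the parallel-transport ODE $\dot{\boldsymbol{n}}_i=-(\boldsymbol{n}_i\cdot\ddot{\boldsymbol{\omega}})\,\dot{\boldsymbol{\omega}}$ (which indeed preserves both orthonormality and orthogonality to $\dot{\boldsymbol{\omega}}$), and you prove injectivity and surjectivity by hand via compactness and the nearest-point projection rather than citing the tubular neighborhood theorem. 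Your route is more self-contained and elementary; the paper's is shorter by delegating the global diffeomorphism and the existence of a smooth frame to standard references. Your observation that simplicity of $\boldsymbol{\omega}$ is needed for global injectivity is correct and is implicit in the paper's appeal to the tubular neighborhood theorem (where $\boldsymbol{\omega}$ is treated as an embedded $1$-manifold).
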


\begin{proof}
The proof needs to recall several notions and results from differential
geometry. We refer to \cite{JMLee} for a reference.\textcolor{red}{{}
}We regard $\boldsymbol{\omega}=\boldsymbol{\omega}(\,[\,0,\,L\,]\,)$
as a one-dimensional compact manifold. Let $N\boldsymbol{\omega}\subset\mathbb{R}^{d}\times\mathbb{R}^{d}$
denote the normal bundle of $\boldsymbol{\omega}$. By the tubular
neighborhood theorem (cf. \cite[Theorem 6.24]{JMLee}), there exists
$r_{0}>0$ such that $\boldsymbol{\omega}_{r_{0}}$ is diffeomorphic
to $N\boldsymbol{\omega}_{r_{0}}=\{\,(\bm{p},\,\bm{v})\in N\boldsymbol{\omega}:|\bm{v}|<r_{0}\,\}$.
The diffeomorphism $E:N\boldsymbol{\omega}_{r_{0}}\to\boldsymbol{\omega}_{r_{0}}$
is given by $E(\bm{p},\,\bm{v})=\bm{p}+\bm{v}$. Since $\boldsymbol{\omega}$
is contractible, the vector bundle of $\boldsymbol{\omega}$ is trivial;
thus, $N\boldsymbol{\omega}$ is diffeomorphic to $\boldsymbol{\omega}\times\mathbb{R}^{d-1}$.
Let $\phi:\boldsymbol{\omega}\times\mathbb{R}^{d-1}\to N\boldsymbol{\omega}$
denote the corresponding diffeomorphism. Since this diffeomorphism
preserves the vector space structure, the function $\phi(\bm{p},\,\bm{z})$
is linear in $\bm{z}$ and satisfies $|\pi_{2}(\phi(\bm{p},\,\bm{z}))|=|\bm{z}|$
where $\pi_{2}:\mathbb{R}^{d}\times\mathbb{R}^{d}\rightarrow\mathbb{R}^{d}$
is the projection function for the second coordinate.

Since $\boldsymbol{\omega}\times\mathbb{R}^{d-1}$ is a trivial bundle
of rank $d-1$, there are $d-1$ smooth sections $\sigma_{j}:\boldsymbol{\omega}\to\mathbb{R}^{d-1}$
which are linearly independent. By the Gram--Schmidt operation, we
may assume that they are pointwise orthonormal, i.e., $\sigma_{i}(\bm{p})\cdot\sigma_{j}(\bm{p)}=\delta_{i,\,j}$
for all $i,\,j$ and $\bm{p}\in\boldsymbol{\omega}$. Define a $d\times(d-1)$
matrix $\mathbb{B}(\bm{p})=[\,\mathbb{B}_{1}(\bm{p}),\,\dots,\,\mathbb{B}_{d-1}(\bm{p})\,]$
by $\mathbb{B}_{i}(\bm{p})\,=\,\pi_{2}(\phi(\bm{p},\,\sigma_{i}(\bm{p})))$
for $j=1,\,\dots,\,d-1$. By the smoothness of $\phi$ and $\sigma_{j}$,
we can observe that all the elements of $\mathbb{B}(\cdot)$ are smooth.
Then, the diffeomorphism $\varphi:[0,L]\times\mathcal{D}_{r_{0}}^{(d-1)}\to\boldsymbol{\omega}_{r_{0}}$
can be written as 
\[
\varphi(t,\,\bm{z})\,=\,\phi(\boldsymbol{\omega}(t),\,\bm{z})\,=\,\boldsymbol{\omega}(t)+\mathbb{B}(\boldsymbol{\omega}(t))\bm{z}\;.
\]
We can now take $\mathbb{A}=\mathbb{B}\circ\omega$ to get \eqref{vaph}.
Now we consider \eqref{bdvaph}. We can write
\[
\frac{\partial\varphi}{\partial(t,\,\bm{z})}(t,\,\boldsymbol{0})\,=\,[\,\dot{\bm{\omega}}(t),\,\mathbb{A}(t)\,]\;.
\]
Since all the column vectors in the matrix on the right-hand sides
are normal and orthogonal to each other, we have $\big|\det\frac{\partial\varphi}{\partial(t,\,\bm{z})}(t,\,\bm{0})\big|=1$.
Hence, by taking $r_{0}$ to be sufficiently small, we get \eqref{bdvaph}. 
\end{proof}
\begin{prop}
\label{prop tb nbhd}Let $\bm{\omega}:[0,\,L]\rightarrow\mathbb{R}^{d}$
be a $C^{1}$-path connecting $\bm{y}$ and $\bm{m}$ such that $U(\bm{\omega}(t))\leq M$
and $|\dot{\bm{\omega}}(t)|=1$ for all $t$. Moreover, let $f$ be
a smooth function such that $f\equiv1$ on $\mathcal{D}_{\epsilon}(\bm{y})$
and $f\equiv0$ on $\mathcal{D}_{\epsilon}(\boldsymbol{m})$\textbf{.}
Then, there exists a constant $C>0$ such that 
\[
\epsilon\,\int_{\boldsymbol{\omega}_{r_{0}}}\,|\,\nabla f\,|^{2}\,d\mu_{\epsilon}\,\ge\,C\,L^{-1}\,\epsilon^{d}\,Z_{\epsilon}^{-1}\,e^{-M/\epsilon}\;,
\]
where $r_{0}$ is the constant obtained in Lemma \ref{lem tb nbhd}
for the path $\boldsymbol{\omega}$. 
\end{prop}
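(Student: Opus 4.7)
The plan is to parametrize the tubular neighborhood $\bm{\omega}_{r_{0}}$ using the diffeomorphism $\varphi$ from Lemma \ref{lem tb nbhd} and reduce the Dirichlet integral to a one-dimensional problem on each slice $\{\varphi(t,\bm{z}):t\in[0,L]\}$ along the direction of $\bm{\omega}$. Once this is in place, the claim follows from a standard weighted Cauchy--Schwarz estimate, which is the classical way to lower-bound capacity-type quantities along a channel.

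Concretely, I would change variables $\bm{x}=\varphi(t,\bm{z})$. By \eqref{bdvaph} the Jacobian factor is bounded below by $1/2$, and since $\partial_{t}\varphi(t,\bm{z})=\dot{\bm{\omega}}(t)+\dot{\mathbb{A}}(t)\bm{z}$ has norm bounded by some constant $K$ uniformly on $[0,L]\times\mathcal{D}_{r_{0}}^{(d-1)}$, the chain rule combined with Cauchy--Schwarz yields
\begin{equation*}
|\nabla f(\varphi(t,\bm{z}))|^{2}\,\ge\,K^{-2}\,|\partial_{t}(f\circ\varphi)(t,\bm{z})|^{2}.
\end{equation*}
Next, I would restrict the $\bm{z}$-integration to a small ball $\mathcal{D}_{c\epsilon}^{(d-1)}(\boldsymbol{0})$ with $c>0$ chosen so small that $\varphi(0,\bm{z})\in\mathcal{D}_{\epsilon}(\bm{y})$ and $\varphi(L,\bm{z})\in\mathcal{D}_{\epsilon}(\bm{m})$ for all $|\bm{z}|<c\epsilon$ (this is possible because $\|\mathbb{A}(0)\|$ and $\|\mathbb{A}(L)\|$ are bounded). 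For such $\bm{z}$, the slice function $g_{\bm{z}}(t):=f(\varphi(t,\bm{z}))$ satisfies $g_{\bm{z}}(0)=1$ and $g_{\bm{z}}(L)=0$, and the weighted Cauchy--Schwarz inequality applied to $1=|\int_{0}^{L}\partial_{t}g_{\bm{z}}(t)\,dt|^{2}$ with weight $e^{-U(\varphi(t,\bm{z}))/\epsilon}$ gives
\begin{equation*}
\int_{0}^{L}(\partial_{t}g_{\bm{z}})^{2}\,e^{-U(\varphi(t,\bm{z}))/\epsilon}\,dt\,\ge\,\Big(\int_{0}^{L}e^{U(\varphi(t,\bm{z}))/\epsilon}\,dt\Big)^{-1}\,\ge\,\frac{C'}{L}\,e^{-M/\epsilon},
\end{equation*}
where the last inequality uses $|U(\varphi(t,\bm{z}))-U(\bm{\omega}(t))|\le C|\bm{z}|\le Cc\epsilon$ (by smoothness of $U$) together with the hypothesis $U(\bm{\omega}(t))\le M$.

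Integrating this one-dimensional estimate over $\bm{z}\in\mathcal{D}_{c\epsilon}^{(d-1)}(\boldsymbol{0})$, whose $(d-1)$-dimensional volume is of order $\epsilon^{d-1}$, and accounting for the prefactor $\epsilon/Z_{\epsilon}$ produces exactly the claimed bound $CL^{-1}\epsilon^{d}Z_{\epsilon}^{-1}e^{-M/\epsilon}$. The one technical point that requires care, and is the main obstacle, is verifying that the slice endpoints $\varphi(0,\bm{z})$ and $\varphi(L,\bm{z})$ genuinely lie in the prescribed balls of radius $\epsilon$ for $\bm{z}$ in a set of $(d-1)$-dimensional volume of order $\epsilon^{d-1}$; this is what generates exactly the factor $\epsilon^{d}$ (one factor of $\epsilon$ being the explicit prefactor, and $d-1$ factors coming from the transverse slab). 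Everything else, including the uniform bound on $\partial_{t}\varphi$ and the smoothness-based control of $U$ across each slice, is routine.
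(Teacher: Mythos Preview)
Your proposal is correct and follows essentially the same route as the paper: parametrize the tube via $\varphi$, restrict the transverse variable to a ball of radius of order $\epsilon$, control $|\partial_{t}\varphi|$ uniformly, and reduce to a one-dimensional weighted estimate on each slice. The only cosmetic difference is that the paper writes down the explicit minimizer of the one-dimensional Dirichlet integral (in the spirit of \cite[Proposition~4.7]{BEGK1}) rather than invoking weighted Cauchy--Schwarz, but these two arguments are equivalent and yield the same lower bound $\big(\int_{0}^{L}e^{U(\varphi(t,\bm{z}))/\epsilon}\,dt\big)^{-1}$. Your concern about the endpoints is legitimate but easily resolved: the construction in Lemma~\ref{lem tb nbhd} gives $\mathbb{A}(t)$ with orthonormal columns, so $|\mathbb{A}(0)\bm{z}|=|\bm{z}|$ and hence $\varphi(0,\bm{z})\in\mathcal{D}_{\epsilon}(\bm{y})$ whenever $|\bm{z}|<\epsilon$, which lets you take $c=1$.
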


\begin{proof}
By Lemma \ref{lem tb nbhd}, we have 
\[
\begin{aligned}\epsilon\,\int_{\bm{\omega}_{r_{0}}}\,|\,\nabla f\,|^{2}d\mu_{\epsilon} & \,\ge\,\frac{\epsilon}{2Z_{\epsilon}}\,\int_{\mathcal{D}_{\epsilon}^{(d-1)}}\int_{0}^{L}\,|\,\nabla f(\bm{\omega}(t)+\mathbb{A}(t)\bm{z})\,|^{2}\,e^{-U(\bm{\omega}(t)+\mathbb{A}(t)\bm{z})/\epsilon}\,dt\,d\bm{z}\end{aligned}
\]
for $\epsilon\in(0,r_{0})$, where the factor of $2$ appears because
\eqref{bdvaph} is used for bounding the Jacobian of the change of
variables from below. For $(t,\bm{z})\in[0,\,L]\times\mathcal{D}_{\epsilon}^{(d-1)}$,
we have 
\[
\begin{aligned}\frac{d}{dt}f(\bm{\omega}(t)+\mathbb{A}(t)\bm{z}) & \,=\,\nabla f(\bm{\omega}(t)+\mathbb{A}(t)\bm{z})\cdot(\dot{\bm{\omega}}(t)+\dot{\mathbb{A}}(t)\bm{z})\,\le\,2|\,\nabla f(\bm{\omega}(t)+\mathbb{A}(t)\bm{z})\,|\;,\end{aligned}
\]
where the last inequality holds for sufficiently small $\epsilon$
since $|\dot{\omega}(t)|=1$ and $|\boldsymbol{z}|\le\epsilon$. Summing
up, we can write 
\begin{equation}
\epsilon\,\int_{\bm{\omega}_{r_{0}}}\,|\,\nabla f\,|^{2}\,d\mu_{\epsilon}\,\ge\,\frac{\epsilon}{4Z_{\epsilon}}\int_{\mathcal{D}_{\epsilon}^{(d-1)}}\int_{0}^{L}\,\Big|\,\frac{d}{dt}f(\bm{\omega}(t)+\mathbb{A}(t)\bm{z})\,\Big|^{2}\,e^{-U(\bm{w}(t)+\mathbb{A}(t)\bm{z})/\epsilon}\,dt\,d\bm{z}\;.\label{ebb1}
\end{equation}
Now, we can apply the idea of \cite[Proposition 4.7]{BEGK1}. Indeed,
we can fix $\boldsymbol{z}\in\mathcal{D}_{\epsilon}^{(d-1)}$ and
write $f_{\bm{z}}(t)=f(\bm{\omega}(t)+\mathbb{A}(t)\bm{z}).$ Then,
we can obtain the minimizer of the integral $\int_{0}^{L}\left|\frac{d}{dt}f_{\bm{z}}(t)\right|^{2}e^{-U(\bm{w}(t)+\mathbb{A}(t)\bm{z})/\epsilon}dt$
explicitly as 
\[
f_{\bm{z}}(t)\,=\,\frac{\int_{t}^{L}\,e^{U(\boldsymbol{\omega}(s)+\mathbb{A}(s)\bm{z})/\epsilon}\,ds}{\int_{0}^{L}\,e^{U(\boldsymbol{\omega}(s)+\mathbb{A}(s)\bm{z})/\epsilon}\,ds}\;.
\]
Inserting this solution into \eqref{ebb1} gives
\[
\epsilon\,\int_{\bm{\omega}_{r_{0}}}\,|\,\nabla f\,|^{2}d\mu_{\epsilon}\,\ge\,\frac{\epsilon}{4Z_{\epsilon}}\,\int_{\mathcal{D}_{\epsilon}^{(d-1)}}\,\Big[\,\int_{0}^{L}e^{U(\omega(t)+\mathbb{A}(t)\bm{z})/\epsilon}dt\,\Big]^{-1}\,d\bm{z}\;.
\]
Since $|\boldsymbol{z}|\le\epsilon$, we have $U(\omega(t)+\mathbb{A}(t)\bm{z})\le M+C\,\epsilon$
for some constant $C>0$, and the proof is completed. 
\end{proof}
Now, we are ready to prove Lemma \ref{lem102}.
\begin{proof}[Proof of Lemma \ref{lem102}]
Fix $\bm{y}\in\mathcal{H}_{0}$ and for some $L=L(\boldsymbol{y})$,
let $\boldsymbol{\omega}:[0,\,L]\rightarrow\mathbb{R}^{d}$ be a $C^{1}$-path
connecting $\bm{y}$ to $\mathcal{D}_{\epsilon}(\bm{m})$ such that
$U(\boldsymbol{\omega}(t))\leq\mathfrak{H}_{\{\bm{y}\},\,\mathcal{D}_{\epsilon}(\bm{m})}$
and $|\dot{\boldsymbol{\omega}}(t)|=1$ for all $t\in[0,\,L]$. Since
$\mathcal{H}_{0}$ is bounded, we can find $L_{0}$ such that $L(\boldsymbol{y})<L_{0}$
for all $\boldsymbol{y}\in\mathcal{H}_{0}$. Then, recall the diffeomorphism
$\varphi:[0,\,L]\times\mathcal{D}_{r_{0}}^{(d-1)}\to\bm{\omega}_{r_{0}}$
constructed in Lemma \ref{lem tb nbhd}. Then, 
\[
\text{cap}_{\epsilon}^{s}(\,\mathcal{D}_{\epsilon}(\bm{y}),\,\mathcal{D}_{\epsilon}(\bm{m})\,)\,\ge\,\epsilon\,\int_{\boldsymbol{\omega}_{r_{0}}}\,|\,\nabla h_{\mathcal{D}_{\epsilon}(\bm{y}),\,\mathcal{D}_{\epsilon}(\bm{m})}^{\epsilon,\,s}\,|^{2}\,d\mu_{\epsilon}\ ,
\]
where $h_{\mathcal{D}_{\epsilon}(\bm{y}),\,\mathcal{D}_{\epsilon}(\bm{m})}^{\epsilon,\,s}(\cdot)$
is the equilibrium potential between $\mathcal{D}_{\epsilon}(\bm{y})$
and $\mathcal{D}_{\epsilon}(\bm{m})$ with respect to the reversible
process $\boldsymbol{y}_{\epsilon}(\cdot)$. Hence, by Proposition
\ref{prop tb nbhd} and the fact that we can take $L(\boldsymbol{y})$
to be uniformly bounded by $L_{0}$, the proof is completed. 
\end{proof}

\subsection{\label{sec93}Proof of Lemma \ref{lem103}}

The upper bound cannot be proven by a comparison with reversible dynamics
as in the lower bound case unless the dynamics satisfies the so-called
sector condition, and that is exactly what has been used in \cite{LMS}.
However, the dynamics $\boldsymbol{x}_{\epsilon}(\cdot)$ does not
necessarily satisfy the sector condition; hence, we must develop a
new argument. We believe that our argument presented below is sufficiently
robust to treat a wide class of models. 
\begin{proof}[Proof of Lemma \ref{lem103}]
For each set $\mathcal{A}\subset\mathbb{R}^{d}$ and $r>0$, define
\begin{equation}
\mathcal{A}^{[r]}\,=\,\{\,\boldsymbol{x}\in\mathbb{R}^{d}:|\boldsymbol{x}-\boldsymbol{y}|\le r\text{ for some \ensuremath{\boldsymbol{y}\in}\ensuremath{\mathcal{A}\,\}}\;.}\label{eAr}
\end{equation}
Suppose that $\epsilon$ is sufficiently small such that $\mathcal{\mathcal{H}}_{0}^{[2\epsilon]}$
is disjoint from $\mathcal{U}_{\epsilon}$ and $\mathcal{\mathcal{H}}_{0}^{[2\epsilon]}\subset\mathcal{K}$
(cf. \eqref{emck}). Take a smooth function $q_{\epsilon}:\mathbb{R}^{d}\rightarrow\mathbb{R}$
such that, for some constant $C>0$, 
\begin{equation}
q_{\epsilon}\,\equiv\,1\text{ on }\mathcal{\mathcal{H}}_{0}^{[\epsilon]}\;\;,\;\;\;q_{\epsilon}\,\equiv\,0\text{ }\text{on }\mathbb{R}^{d}\setminus\mathcal{\mathcal{H}}_{0}^{[2\epsilon]}\;\;,\text{ and}\;\;\;|\nabla q_{\epsilon}|\,\le\,\frac{C}{\epsilon}\mathbf{1}_{\mathcal{H}_{0}^{[2\epsilon]}\setminus\mathcal{H}_{0}^{[\epsilon]}}\;.\label{econfe}
\end{equation}
Since $q_{\epsilon}\in\mathcal{\mathscr{C}}_{\mathcal{D}_{\epsilon}(\boldsymbol{y}),\,\mathcal{U}_{\epsilon}}$
(cf. \eqref{eCab}), we can deduce from Proposition \ref{p62} that
\begin{equation}
\textrm{cap}_{\epsilon}(\mathcal{D}_{\epsilon}(\boldsymbol{y}),\,\mathcal{U}_{\epsilon})\,=\,\epsilon\,\int_{\Omega_{\epsilon}}\,\Big[\,\nabla q_{\epsilon}\cdot\nabla h_{\epsilon}+\frac{1}{\epsilon}q_{\epsilon}\boldsymbol{\ell}\cdot\nabla h_{\epsilon}\,\Big]\,d\mu_{\epsilon}\;.\label{ecDU}
\end{equation}
By the divergence theorem and \eqref{econ_ell2}, the second term
on the right-hand side can be rewritten as 
\begin{equation}
\int_{\partial\Omega_{\epsilon}}h_{\epsilon}\,q_{\epsilon}\,[\,\boldsymbol{\ell}\cdot\bm{n}_{\Omega_{\epsilon}}\,]\,\sigma(d\mu_{\epsilon})\,-\,\int_{\Omega_{\epsilon}}h_{\epsilon}\,[\,\nabla q_{\epsilon}\cdot\boldsymbol{\ell}\,]\,d\mu_{\epsilon}\;.\label{e107}
\end{equation}
Since $h_{\epsilon}=\mathbf{1}_{\partial\mathcal{D}_{\epsilon}(\boldsymbol{y})}$
on $\partial\Omega_{\epsilon}=\partial\,\mathcal{U}_{\epsilon}\cup\partial\mathcal{D}_{\epsilon}(\boldsymbol{y})$,
$q_{\epsilon}\equiv1$ on $\partial\mathcal{D}_{\epsilon}(\boldsymbol{y})$,
and $\bm{n}_{\Omega_{\epsilon}}=-\boldsymbol{n}_{\mathcal{D}_{\epsilon}(\boldsymbol{y})}$,
the first integral of \eqref{e107} becomes 
\begin{equation}
-\int_{\partial\mathcal{D}_{\epsilon}(\boldsymbol{y})}\,[\,\boldsymbol{\ell}\cdot\boldsymbol{n}_{\mathcal{D}_{\epsilon}(\boldsymbol{y})}\,]\,\sigma(d\mu_{\epsilon})\,=\,\int_{\mathcal{D}_{\epsilon}(\boldsymbol{y})}(\,\nabla\cdot\boldsymbol{\ell\,})\,d\mu_{\epsilon}+\int_{\mathcal{D}_{\epsilon}(\boldsymbol{y})}\,[\,\boldsymbol{\ell}\cdot\nabla\mu_{\epsilon}\,](\boldsymbol{x})\,d\boldsymbol{x}\label{e108}
\end{equation}
by the divergence theorem again. Note that the last two integrals
are $0$ by \eqref{econ_ell2} and \eqref{econ_ell1}, respectively.
Hence the first integral of \eqref{e107} vanishes. For the second
integral of \eqref{e107}, by the trivial bound $|h_{\epsilon}|\leq1$
and the last condition of \eqref{econfe}, we have 
\begin{equation}
\Big|\,\int_{\Omega_{\epsilon}}h_{\epsilon}\,[\nabla q_{\epsilon}\cdot\boldsymbol{\ell}]\,d\mu_{\epsilon}\,\Big|\,\le\,\frac{C}{\epsilon Z_{\epsilon}}\,\int_{\mathcal{H}_{0}^{[2\epsilon]}\setminus\mathcal{H}_{0}^{[\epsilon]}}\,e^{-U(\bm{x})/\epsilon}\,d\boldsymbol{x}\,\le\,\frac{C}{Z_{\epsilon}}\,e^{-H/\epsilon}\;,\label{e109}
\end{equation}
where the second inequality follows from the fact that $U(\bm{x})=H+O(\epsilon)$
on $\mathcal{H}_{0}^{[2\epsilon]}\setminus\mathcal{H}_{0}^{[\epsilon]}$
and that $\text{vol}\,(\mathcal{H}_{0}^{[2\epsilon]}\setminus\mathcal{H}_{0}^{[\epsilon]})=O(\epsilon)$.
Summing up, we obtain from \eqref{ecDU} that 
\begin{equation}
\textrm{cap}_{\epsilon}(\mathcal{D}_{\epsilon}(\bm{y}),\,\mathcal{U}_{\epsilon})\,\le\,\epsilon\,\int_{\Omega_{\epsilon}}\,[\nabla q_{\epsilon}\cdot\nabla h^{\epsilon}]\,d\mu_{\epsilon}+\frac{C}{Z_{\epsilon}}\,e^{-H/\epsilon}\;.\label{e1010}
\end{equation}
By the Cauchy--Schwarz inequality and part (2) of Lemma \ref{lem61},
the integral on the right-hand side is bounded from above by the square
root of 
\[
\epsilon\,\int_{\Omega_{\epsilon}}\,|\,\nabla q_{\epsilon}\,|^{2}\,d\mu_{\epsilon}\times\textrm{cap}_{\epsilon}(\mathcal{D}_{\epsilon}(\bm{y}),\,\mathcal{U}_{\epsilon})\;.
\]
By a computation similar to \eqref{e109}, we get 
\[
\epsilon\,\int_{\Omega_{\epsilon}}\,|\nabla q_{\epsilon}|^{2}\,d\mu_{\epsilon}\,\le\,\frac{C}{\epsilon Z_{\epsilon}}\,\int_{\mathcal{H}_{0}^{[2\epsilon]}\setminus\mathcal{H}_{0}^{[\epsilon]}}\,e^{-U(\bm{x})/\epsilon}\,d\boldsymbol{x}\,\le\,\frac{C}{Z_{\epsilon}}\,e^{-H/\epsilon}\;.
\]
Therefore, we can bound the integral on the right-hand side of \eqref{e1010}
by 
\[
\Big[\,\frac{C}{Z_{\epsilon}}\,e^{-H/\epsilon}\,\textrm{cap}_{\epsilon}(\mathcal{D}_{\epsilon}(\bm{y}),\,\mathcal{U}_{\epsilon})\,\Big]^{1/2}\,\le\,\frac{1}{2}\,\Big[\,\frac{C}{Z_{\epsilon}}\,e^{-H/\epsilon}+\textrm{cap}_{\epsilon}(\mathcal{D}_{\epsilon}(\bm{y}),\,\mathcal{U}_{\epsilon})\,\Big]\;.
\]
Inserting this into \eqref{e1010} completes the proof. 
\end{proof}

\subsection{\label{sec94}Proof of Proposition \ref{p73}}

Now, we are ready to prove Proposition \ref{p73}, which is a crucial
step in the proof of the Eyring--Kramers formula. 
\begin{proof}[Proof of Proposition \ref{p73}]
Take $\beta>0$ to be sufficiently small such that there is no critical
point $\boldsymbol{c}$ of $U$ such that $U(\boldsymbol{c})\in[H-\beta,\,H)$.
Then, we can decompose $\mathcal{G=}\{\boldsymbol{x}:U(\boldsymbol{x})<H-\beta\}$
into $\mathcal{G}_{0},\,\mathcal{G}_{1}$, where $\mathcal{G}_{0}\subset\mathcal{H}_{0}$
and $\mathcal{G}_{1}\subset\mathcal{H}_{1}$. Write 
\begin{equation}
\int_{\mathbb{R}^{d}}\,h_{\epsilon}^{*}\,d\mu_{\epsilon}\,=\,\Big[\,\int_{\mathcal{G}_{0}}+\int_{\mathcal{G}_{1}}+\int_{\mathcal{G}^{c}}\,\Big]\,h_{\epsilon}^{*}\,d\mu_{\epsilon}\label{edechd}
\end{equation}
and consider the three integrals separately. First, for $\boldsymbol{y}\in\mathcal{G}_{0}$,
we have $\mathfrak{H}_{\{\bm{y}\},\mathcal{D}_{\epsilon}(\bm{m}_{0})}<H-\beta$;
thus, by part (1) of Proposition \ref{p101}, we have $|\,h_{\epsilon}^{*}(\bm{y})-1\,|\le C\,\epsilon^{-d}\,e^{-\beta/\epsilon}=o_{\epsilon}(1)$.
This bound ensures that 
\begin{equation}
\int_{\mathcal{G}_{0}}\,h_{\epsilon}^{*}\,d\mu_{\epsilon}\,=\,[\,1+o_{\epsilon}(1)\,]\,\mu_{\epsilon}(\mathcal{G}_{0})\,=\,[\,1+o_{\epsilon}(1)\,]\,Z_{\epsilon}^{-1}\,(2\pi\epsilon)^{d/2}\,e^{-h_{0}/\epsilon}\,\nu_{0}\;,\label{ehmu1}
\end{equation}
where the second identity follows from the Laplace asymptotics for
the function $e^{-U/\epsilon}$. 

For the second integral, by part (2) of Proposition \ref{p101}, 
\begin{equation}
\int_{\mathcal{G}_{1}}h_{\epsilon}^{*}\,d\mu_{\epsilon}\,\le\,\frac{C}{Z_{\epsilon}\,\epsilon^{d}}\,\int_{\mathcal{G}_{1}}\,e^{[U(\boldsymbol{x})-H]/\epsilon}\,e^{-U(\boldsymbol{x})/\epsilon}\,d\boldsymbol{x}\,=\,o_{\epsilon}(1)\,\,Z_{\epsilon}^{-1}\,(2\pi\epsilon)^{d/2}\,e^{-h_{0}/\epsilon}\,\nu_{0}\ ,\label{ehmu2}
\end{equation}
where the last line follows from $H>h_{0}$. Finally, for the last
integral, by the bound $|h_{\epsilon}^{*}|\le1$ and \eqref{etight_U},
\begin{equation}
\int_{\mathcal{G}^{c}}h_{\epsilon}^{*}\,d\mu_{\epsilon}\,\le\,\mu_{\epsilon}(\mathcal{G}^{c})\,\le\,Z_{\epsilon}^{-1}\,e^{-(H-\beta)/\epsilon}\,=\,o_{\epsilon}(1)\,Z_{\epsilon}^{-1}\,(2\pi\epsilon)^{d/2}\,e^{-h_{0}/\epsilon}\,\nu_{0}\;.\label{ehmu3}
\end{equation}
By inserting \eqref{ehmu1}, \eqref{ehmu2}, and \eqref{ehmu3} into
\eqref{edechd}, the proof is completed. 
\end{proof}

\section{\label{sec10}Construction of Test Function and Proof of Theorem
\ref{t74}}

In this section, we finally construct the test function $g_{\epsilon}\in\mathscr{C}_{\mathcal{D}_{\epsilon}(\boldsymbol{m}_{0}),\,\mathcal{U}_{\epsilon}}$
satisfying Theorem \ref{t74}.

\subsection{\label{sec101}Construction of $g_{\epsilon}$ and proof of Theorem
\ref{t74}}

\textcolor{black}{Recall $\mathcal{H}_{0}^{\epsilon}$ and $p_{\epsilon}^{\bm{\sigma}}$
from Section \ref{sec81} and \eqref{epeps}, respectively,} and define
$f_{\epsilon}:\mathbb{R}^{d}\rightarrow\mathbb{R}$ as 
\[
f_{\epsilon}(\boldsymbol{x})\,=\,\begin{cases}
p_{\epsilon}^{\boldsymbol{\sigma}}(\boldsymbol{x}) & \boldsymbol{x}\in\mathcal{B}_{\epsilon}^{\boldsymbol{\sigma}}\text{ for some }\boldsymbol{\sigma}\in\Sigma_{0}\;,\\
\mathbf{1}_{\mathcal{H}_{0}^{\epsilon}}(\boldsymbol{x}) & \text{otherwise}\;.
\end{cases}
\]
The function $f_{\epsilon}$ is not continuous on $\mathcal{K}_{\epsilon}$
in general; instead, it is discontinuous along the boundaries $\partial_{\pm}\mathcal{B}_{\epsilon}^{\boldsymbol{\sigma}}$
and $\partial\mathcal{K}_{\epsilon}$.
\begin{rem}
\label{rem101}It can be readily checked that the function $f_{\epsilon}$
is continuous on $\mathcal{K}_{\epsilon}$ if we consider the reversible
case, i.e., $\boldsymbol{\ell}\equiv\boldsymbol{0}$. 
\end{rem}

For convenience, we formally define $\nabla f_{\epsilon}(\boldsymbol{x})$
as 
\begin{equation}
\nabla f_{\epsilon}(\boldsymbol{x})\,=\,\begin{cases}
\nabla p_{\epsilon}^{\boldsymbol{\sigma}}(\boldsymbol{x}) & \boldsymbol{x}\in\mathcal{B}_{\epsilon}^{\boldsymbol{\sigma}}\text{ for some }\boldsymbol{\sigma}\in\Sigma_{0}\;,\\
0 & \text{otherwise}\;.
\end{cases}\label{e: nabla fe}
\end{equation}
Note that this is not a weak derivative of $f_{\epsilon}$; hence,
elementary theorems such as the divergence theorem cannot be applied
to this gradient. With this formal gradient, we can define $\Phi_{f_{\epsilon}}$
formally as 
\[
\Phi_{f_{\epsilon}}(\boldsymbol{x})\,=\,\nabla f_{\epsilon}(\bm{x})+\frac{1}{\epsilon}\,f_{\epsilon}(\bm{x})\,\boldsymbol{\ell}(\bm{x})\,=\,\begin{cases}
\epsilon^{-1}\,\boldsymbol{\ell}(\boldsymbol{x}) & \boldsymbol{x}\in\mathcal{H}_{0}^{\epsilon}\;,\\
\Phi_{p_{\epsilon}^{\boldsymbol{\sigma}}}(\boldsymbol{x}) & \boldsymbol{x}\in\mathcal{B}_{\epsilon}^{\boldsymbol{\sigma}}\text{ for some }\boldsymbol{\sigma}\in\Sigma_{0}\;,\\
\boldsymbol{0} & \text{otherwise}\;.
\end{cases}
\]
Note that this is a formal definition, and Proposition \ref{p62}
is not applicable to $\Phi_{f_{\epsilon}}$. 

Now, we mollify the function $f_{\epsilon}$ as in \cite{LMS} to
get the genuine test function $g_{\epsilon}$. To this end, consider
a smooth, positive, and symmetric function $\phi:\mathbb{R}^{d}\rightarrow\mathbb{R}$
that is supported on the unit sphere of $\mathbb{R}^{d}$ and satisfies
$\int_{\mathbb{R}^{d}}\phi(\boldsymbol{x})\,d\boldsymbol{x}=1$. Then,
for $r>0$, define $\phi_{r}(\boldsymbol{x})=r^{-d}\phi(r^{-1}\boldsymbol{x})$.
For the function $f:\mathbb{R}^{d}\rightarrow\mathbb{R}$ and vector
field $\boldsymbol{V}:\mathbb{R}^{d}\rightarrow\mathbb{R}^{d}$, we
write 
\[
f^{(r)}\,=\,f*\phi_{r}\;\;\;\text{and\;\;\;\ensuremath{\boldsymbol{V}^{(r)}\,=\,\boldsymbol{V}*\phi_{r}}}\ ,
\]
where $*$ represents the usual convolution. In the remaining subsections,
we prove the following two propositions. Hereafter, we write $\eta=\epsilon^{2}$.
The first one asserts that we can approximate $\Phi_{f_{\epsilon}^{(\eta)}}$
by $\Phi_{f_{\epsilon}}$. 
\begin{prop}
\label{p111}We have 
\[
\epsilon\int_{\mathbb{R}^{d}}\,|\,\Phi_{f_{\epsilon}^{(\eta)}}-\Phi_{f_{\epsilon}}\,|^{2}\,d\mu_{\epsilon}\,=\,o_{\epsilon}(1)\,\alpha_{\epsilon}\;.
\]
\end{prop}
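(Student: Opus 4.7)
The plan is to use the linear splitting
\[
\Phi_{f_{\epsilon}^{(\eta)}}-\Phi_{f_{\epsilon}}\,=\,(\nabla f_{\epsilon}^{(\eta)}-\nabla f_{\epsilon})\,+\,\tfrac{1}{\epsilon}(f_{\epsilon}^{(\eta)}-f_{\epsilon})\,\boldsymbol{\ell}\;,
\]
where $\nabla f_{\epsilon}$ is the formal gradient defined in \eqref{e: nabla fe}. Since $\boldsymbol{\ell}$ is uniformly bounded on a neighborhood of $\mathcal{K}_{\epsilon}$, the proposition reduces to the two bounds $\epsilon\|\nabla f_{\epsilon}^{(\eta)}-\nabla f_{\epsilon}\|_{L^{2}(\mu_{\epsilon})}^{2}=o_{\epsilon}(1)\alpha_{\epsilon}$ and $\epsilon^{-1}\|f_{\epsilon}^{(\eta)}-f_{\epsilon}\|_{L^{2}(\mu_{\epsilon})}^{2}=o_{\epsilon}(1)\alpha_{\epsilon}$. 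The crucial structural observation is that $f_{\epsilon}$ is smooth on the interior of each of the regions $\mathcal{B}_{\epsilon}^{\boldsymbol{\sigma}}$, $\mathcal{H}_{0}^{\epsilon}$, $\mathcal{H}_{1}^{\epsilon}$, and $\mathbb{R}^{d}\setminus\mathcal{K}_{\epsilon}$, and jumps only across the codimension-one surfaces $\partial_{\pm}\mathcal{B}_{\epsilon}^{\boldsymbol{\sigma}}$, $\partial_{0}\mathcal{B}_{\epsilon}^{\boldsymbol{\sigma}}$, and $\partial\mathcal{K}_{\epsilon}\cap\partial\mathcal{H}_{0}^{\epsilon}$.

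The strategy is to split the weak derivative as $\nabla^{w}f_{\epsilon}=\nabla f_{\epsilon}+J_{\epsilon}$, where $J_{\epsilon}$ is the $(d-1)$-dimensional jump measure supported on this discontinuity set, so that
\[
\nabla f_{\epsilon}^{(\eta)}-\nabla f_{\epsilon}\,=\,(\phi_{\eta}*\nabla f_{\epsilon}-\nabla f_{\epsilon})+\phi_{\eta}*J_{\epsilon}\;.
\]
The first ``smooth'' term is bounded pointwise by $C\eta\|D^{2}p_{\epsilon}^{\boldsymbol{\sigma}}\|_{\infty}$; a direct differentiation of \eqref{epeps} gives $\|D^{2}p_{\epsilon}^{\boldsymbol{\sigma}}\|_{\infty}=O(\epsilon^{-1})$, so after multiplication by $\epsilon$ and integration against $\mu_{\epsilon}$ over $\bigcup_{\boldsymbol{\sigma}}\mathcal{B}_{\epsilon}^{\boldsymbol{\sigma}}$ this contributes $O(\epsilon^{3}(\log\epsilon^{-1})^{d/2})\alpha_{\epsilon}$. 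The second ``jump'' term $\phi_{\eta}*J_{\epsilon}$ is supported in the $\eta$-neighborhood of the discontinuity set and satisfies a routine convolution estimate $\int|\phi_{\eta}*J_{\epsilon}|^{2}\,d\mu_{\epsilon}\lesssim\eta^{-1}\sum_{S}\int_{S}|\text{jump}|^{2}\,d\mu_{\epsilon}\,d\sigma$, reducing the task to sharp surface integrals.

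For the inner pieces $\partial_{\pm}^{1,a}\mathcal{B}_{\epsilon}^{\boldsymbol{\sigma}}$ (cf.\ \eqref{ebd_B1}) I will use \eqref{e1-p} and its analogue to bound the squared jump by $C(\epsilon/\delta^{2})e^{-\mu(\bm{x}\cdot\bm{v})^{2}/\epsilon}$; multiplying by the Gibbs factor $Z_{\epsilon}^{-1}e^{-H/\epsilon}\,e^{-\bm{x}\cdot\mathbb{H}\bm{x}/(2\epsilon)+O(\delta^{3}/\epsilon)}$ produces the combined weight $Z_{\epsilon}^{-1}e^{-H/\epsilon}\,e^{-\bm{x}\cdot[\mathbb{H}/2+\mu\bm{v}\otimes\bm{v}]\bm{x}/\epsilon}$, in which $\mathbb{H}/2+\mu\bm{v}\otimes\bm{v}=\tfrac{1}{2}(\mathbb{H}+2\mu\bm{v}\otimes\bm{v})$ is positive definite by Lemma \ref{lem83}(1). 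Minimizing this quadratic form on the affine surface $\{x_{1}=\pm J\delta/\sqrt{\lambda_{1}}\}$ gives a positive minimum of order $J^{2}\delta^{2}$, so the transverse Gaussian integral produces a factor $\epsilon^{(d-1)/2+c_{0}J^{2}}$ for some $c_{0}>0$; combined with $\eta^{-1}=\epsilon^{-2}$, the $\epsilon/\delta^{2}$ prefactor, and the outer $\epsilon$, this yields a contribution of order $\epsilon^{c_{0}J^{2}-3/2}(\log\epsilon^{-1})^{-1}\alpha_{\epsilon}$, which is $o_{\epsilon}(1)\alpha_{\epsilon}$ as long as $J$ is chosen large enough. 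On the complementary surfaces $\partial_{\pm}^{2,a}\mathcal{B}_{\epsilon}^{\boldsymbol{\sigma}}$, $\partial_{0}\mathcal{B}_{\epsilon}^{\boldsymbol{\sigma}}$, and $\partial\mathcal{K}_{\epsilon}\cap\partial\mathcal{H}_{0}^{\epsilon}$, the Gibbs density alone carries a factor $e^{-aJ^{2}\delta^{2}/\epsilon}=\epsilon^{aJ^{2}}$ as already exploited in \eqref{eI22}, so trivial pointwise bounds on the jumps suffice. The estimate for the $\epsilon^{-1}\|f_{\epsilon}^{(\eta)}-f_{\epsilon}\|_{L^{2}(\mu_{\epsilon})}^{2}$ term proceeds by the same split but without the $\eta^{-1}$-blowup, hence is strictly easier.

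The main obstacle I anticipate is the verification that the combined weight on the inner surfaces is genuinely controlled by the positive-definite quadratic form $\bm{x}\cdot[\mathbb{H}/2+\mu\bm{v}\otimes\bm{v}]\bm{x}$, together with the careful bookkeeping of the three scales $\eta=\epsilon^{2}\ll\epsilon\ll\delta=\sqrt{\epsilon\log\epsilon^{-1}}$. The choice $\eta=\epsilon^{2}$ --- rather than, say, $\eta=\epsilon$ --- is what leaves the $\epsilon^{-3/2}$ safety margin that is ultimately absorbed by enlarging $J$, and this margin is precisely what the reversible case avoids (cf.\ Remark \ref{rem101}), since there $p_{\epsilon}^{\boldsymbol{\sigma}}$ is constant along $\partial_{\pm}\mathcal{B}_{\epsilon}^{\boldsymbol{\sigma}}$ and no mollification is needed in the first place.
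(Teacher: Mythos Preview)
Your decomposition is exactly the paper's: writing $\Phi_{f_{\epsilon}^{(\eta)}}-\Phi_{f_{\epsilon}}$ as a gradient part plus an $\boldsymbol{\ell}$-part and then splitting $\nabla f_{\epsilon}^{(\eta)}-\nabla f_{\epsilon}=(\phi_{\eta}*\nabla f_{\epsilon}-\nabla f_{\epsilon})+\phi_{\eta}*J_{\epsilon}$ is precisely the triple $(I_{2},\,I_{1},\,I_{3})$ that the paper invokes from \cite{LMS}. Your treatment of the jump term and of $I_{3}$ is correct and matches Lemma~\ref{lem113}: you correctly identify that the combined weight is governed by the positive-definite form $\mathbb{H}+2\mu\bm{v}\otimes\bm{v}$ of Lemma~\ref{lem83}(1), and that enlarging $J$ absorbs all polynomial losses.

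There is, however, a genuine gap in your handling of the ``smooth'' term $\phi_{\eta}*\nabla f_{\epsilon}-\nabla f_{\epsilon}$. You bound it pointwise by $C\eta\,\|D^{2}p_{\epsilon}^{\boldsymbol{\sigma}}\|_{\infty}=O(\epsilon)$ and then claim that integrating against $\mu_{\epsilon}$ over $\mathcal{B}_{\epsilon}^{\boldsymbol{\sigma}}$ yields $O(\epsilon^{3}(\log\epsilon^{-1})^{d/2})\,\alpha_{\epsilon}$. This implicitly uses $\mu_{\epsilon}(\mathcal{B}_{\epsilon}^{\boldsymbol{\sigma}})=O\big((\log\epsilon^{-1})^{d/2}\alpha_{\epsilon}\big)$, i.e.\ $e^{-U/\epsilon}\le e^{-H/\epsilon}$ on the box. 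But $U$ dips \emph{below} $H$ on $\mathcal{B}_{\epsilon}^{\boldsymbol{\sigma}}$: near the centre of $\partial_{\pm}\mathcal{B}_{\epsilon}^{\boldsymbol{\sigma}}$ one has $U\approx H-\tfrac{1}{2}J^{2}\delta^{2}$ (cf.\ \eqref{e881}), so in fact $\mu_{\epsilon}(\mathcal{B}_{\epsilon}^{\boldsymbol{\sigma}})\asymp\epsilon^{-J^{2}/2}\alpha_{\epsilon}$, and your bound becomes $\epsilon^{3-J^{2}/2}\alpha_{\epsilon}$, which \emph{diverges} for the large $J$ you need elsewhere.

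The fix is to do for the smooth term exactly what you already do for the jump term: retain the Gaussian factor. From the explicit formula $D^{2}p_{\epsilon}^{\boldsymbol{\sigma}}(\bm{x})=-c_{\epsilon}^{-1}\tfrac{\mu(\bm{x}\cdot\bm{v})}{\epsilon}\,e^{-\mu(\bm{x}\cdot\bm{v})^{2}/(2\epsilon)}\,\bm{v}\otimes\bm{v}$ one keeps the factor $e^{-\mu(\bm{x}\cdot\bm{v})^{2}/\epsilon}$ in the squared bound; combined with $e^{-\bm{x}\cdot\mathbb{H}\bm{x}/(2\epsilon)}$ from the Gibbs weight this again produces the positive-definite form $\mathbb{H}+2\mu\bm{v}\otimes\bm{v}$, and the integral over $\mathcal{B}_{\epsilon}^{\boldsymbol{\sigma}}$ is then $O(\epsilon^{d/2})$ rather than $O(\epsilon^{d/2-J^{2}/2})$. (You should also note that the pointwise bound $C\eta\|D^{2}p_{\epsilon}^{\boldsymbol{\sigma}}\|_{\infty}$ fails within distance $\eta$ of $\partial\mathcal{B}_{\epsilon}^{\boldsymbol{\sigma}}$, where $\nabla f_{\epsilon}$ itself jumps; that thin layer needs the same surface-integral treatment as your jump term.) With these corrections your argument coincides with the one the paper imports from \cite[Assertions~8.C--8.D]{LMS}.
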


Next, we prove the following estimate. 
\begin{prop}
\label{p112}We have 
\[
\epsilon\int_{\mathbb{R}^{d}}\,[\,\Phi_{f_{\epsilon}}\cdot\nabla h_{\epsilon}\,]\,d\mu_{\epsilon}\,=\,[\,1+o_{\epsilon}(1)\,]\,\alpha_{\epsilon}\,\omega_{0}\;.
\]
\end{prop}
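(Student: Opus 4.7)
The plan is to apply the divergence theorem on each smooth piece of $f_{\epsilon}$, cancel the interior terms using Proposition \ref{p86} together with the identity $\nabla\cdot(\boldsymbol{\ell}\mu_{\epsilon})\equiv 0$ (which follows from \eqref{econ_ell1} and \eqref{econ_ell2}), and reduce the whole expression to the boundary integral that Proposition \ref{p87} already evaluates. Since $\nabla f_{\epsilon}$ vanishes outside $\bigcup_{\boldsymbol{\sigma}\in\Sigma_{0}}\mathcal{B}_{\epsilon}^{\boldsymbol{\sigma}}$ while $\epsilon\Phi_{f_{\epsilon}}=\boldsymbol{\ell}$ on $\mathcal{H}_{0}^{\epsilon}$, I would first decompose
\[
\epsilon\int_{\mathbb{R}^{d}}\Phi_{f_{\epsilon}}\cdot\nabla h_{\epsilon}\,d\mu_{\epsilon}\,=\,\int_{\mathcal{H}_{0}^{\epsilon}}\boldsymbol{\ell}\cdot\nabla h_{\epsilon}\,d\mu_{\epsilon}+\sum_{\boldsymbol{\sigma}\in\Sigma_{0}}\epsilon\int_{\mathcal{B}_{\epsilon}^{\boldsymbol{\sigma}}}\Phi_{p_{\epsilon}^{\boldsymbol{\sigma}}}\cdot\nabla h_{\epsilon}\,d\mu_{\epsilon}.
\]
The identity $\nabla\cdot(\Phi_{f}\mu_{\epsilon})=\epsilon^{-1}(\mathscr{L}_{\epsilon}^{*}f)\mu_{\epsilon}$, read off from \eqref{eL*}, turns each of these volume integrals into a surface integral plus an interior remainder. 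The remainder on $\mathcal{H}_{0}^{\epsilon}$ vanishes identically, the one on $\mathcal{B}_{\epsilon}^{\boldsymbol{\sigma}}$ is $o_{\epsilon}(1)\alpha_{\epsilon}$ by Proposition \ref{p86} and $|h_{\epsilon}|\le 1$, and the pieces of the boundaries lying on $\partial\mathcal{K}_{\epsilon}$ (i.e.\ $\partial_{0}\mathcal{B}_{\epsilon}^{\boldsymbol{\sigma}}$ together with the outer part of $\partial\mathcal{H}_{0}^{\epsilon}$) are negligible for $J$ large by Lemma \ref{lem84}.

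With the orientation convention from \eqref{ebdryis} giving outward normals $\pm\boldsymbol{e}_{1}$ to $\mathcal{B}_{\epsilon}^{\boldsymbol{\sigma}}$ on $\partial_{\pm}$ and $-\boldsymbol{e}_{1}$ to $\mathcal{H}_{0}^{\epsilon}$ on $\partial_{+}\mathcal{B}_{\epsilon}^{\boldsymbol{\sigma}}$, the $\mathcal{H}_{0}^{\epsilon}$-boundary term combines with the $\partial_{+}$-term of each box to produce
\[
\sum_{\boldsymbol{\sigma}\in\Sigma_{0}}\epsilon\int_{\partial_{+}\mathcal{B}_{\epsilon}^{\boldsymbol{\sigma}}}h_{\epsilon}\Big[\big(\Phi_{p_{\epsilon}^{\boldsymbol{\sigma}}}-\tfrac{1}{\epsilon}\boldsymbol{\ell}\big)\cdot\boldsymbol{e}_{1}\Big]\sigma(d\mu_{\epsilon})\,-\,\sum_{\boldsymbol{\sigma}\in\Sigma_{0}}\epsilon\int_{\partial_{-}\mathcal{B}_{\epsilon}^{\boldsymbol{\sigma}}}h_{\epsilon}(\Phi_{p_{\epsilon}^{\boldsymbol{\sigma}}}\cdot\boldsymbol{e}_{1})\sigma(d\mu_{\epsilon})\,+\,o_{\epsilon}(1)\alpha_{\epsilon}.
\]
Proposition \ref{p87} evaluates the $\partial_{+}$-integral \emph{without} the $h_{\epsilon}$-factor as $[1+o_{\epsilon}(1)]\alpha_{\epsilon}\omega^{\boldsymbol{\sigma}}$, which sums to $[1+o_{\epsilon}(1)]\alpha_{\epsilon}\omega_{0}$. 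It thus remains to check that inserting $h_{\epsilon}$ in place of $1$ on $\partial_{+}\mathcal{B}_{\epsilon}^{\boldsymbol{\sigma}}\subset\overline{\mathcal{H}_{0}}$, and $h_{\epsilon}$ in place of $0$ on $\partial_{-}\mathcal{B}_{\epsilon}^{\boldsymbol{\sigma}}\subset\overline{\mathcal{H}_{1}}$, costs only $o(\alpha_{\epsilon})$. The natural tool is Proposition \ref{p101}, which supplies the leveling bounds $|1-h_{\epsilon}|,|h_{\epsilon}|\le C\epsilon^{-d}e^{(U-H)/\epsilon}$ on the respective boundaries.

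The main obstacle is that the prefactor $\epsilon^{-d}$ in Proposition \ref{p101} is not small, and overwhelms the exponential damping wherever $U$ lies within $O(\delta^{2})$ of $H$ --- exactly the scale near the saddle. The remedy is to exploit the Gaussian concentration of the integrand: after the change of variables $\boldsymbol{y}=\Pi_{\epsilon}(\boldsymbol{x})$ from \eqref{eProj} that was used in the proof of Proposition \ref{p87}, the quadratic form $\widetilde{\mathbb{H}}+\mu\,\widetilde{\boldsymbol{v}}\otimes\widetilde{\boldsymbol{v}}$ is positive definite by Lemma \ref{lem810}, so the integrand concentrates at $\boldsymbol{y}=0$, i.e.\ at $\boldsymbol{x}=\overline{\boldsymbol{\gamma}}$, a point at which $U(\overline{\boldsymbol{\gamma}})\le H-c_{0}J^{2}\delta^{2}$ by \eqref{e881}. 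I would partition $\partial_{\pm}\mathcal{B}_{\epsilon}^{\boldsymbol{\sigma}}$ into a bulk region $\{|\boldsymbol{y}|\le K\delta\}$ and its complement, for a fixed constant $K$. On the bulk, a Taylor expansion yields $U\le H-(c_{0}J^{2}-O(K^{2}))\delta^{2}$ uniformly, so that for $J$ chosen sufficiently large (depending on $K$ and $d$) Proposition \ref{p101} gives the pointwise estimate $|1-h_{\epsilon}|,|h_{\epsilon}|\le C\epsilon^{c_{0}J^{2}-O(K^{2})-d}=o_{\epsilon}(1)$, which justifies the replacement. On the complement, the quadratic-form factor in the Gaussian integrand supplies an additional $\epsilon^{cK^{2}}$-damping, so even the trivial bound $|h_{\epsilon}|,|1-h_{\epsilon}|\le 1$ produces an $o(\alpha_{\epsilon})$ contribution once $K$ is taken large enough. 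Summing over $\boldsymbol{\sigma}\in\Sigma_{0}$ produces $\omega_{0}=\sum_{\boldsymbol{\sigma}\in\Sigma_{0}}\omega^{\boldsymbol{\sigma}}$ as required.
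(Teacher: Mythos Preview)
Your plan is correct and matches the paper's argument closely: the same decomposition into $\mathcal{H}_0^\epsilon$ and the boxes $\mathcal{B}_\epsilon^{\boldsymbol{\sigma}}$, the same application of the divergence theorem with Proposition~\ref{p86} controlling the interior remainder, the same negligibility of the $\partial\mathcal{K}_\epsilon$-pieces, and the same reduction to the boundary integral of Proposition~\ref{p87} with $h_\epsilon$ replaced by $1$ (resp.\ $0$) on $\partial_+$ (resp.\ $\partial_-$).

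The one place where your execution differs is the leveling step. The paper avoids your bulk/complement partition by first \emph{weakening} the bound of Proposition~\ref{p101} to $|1-h_\epsilon|\le C\epsilon^{-d}e^{(U-H)/(2\epsilon)}$ (Lemma~\ref{lem118}). This throws away half of the exponent, but the payoff is that when one multiplies by $|\nabla p_\epsilon^{\boldsymbol{\sigma}}|\,\mu_\epsilon$, the surviving factor $e^{-U/(2\epsilon)}$ combines with the Gaussian from $p_\epsilon^{\boldsymbol{\sigma}}$ to produce $\exp\{-\tfrac{1}{4\epsilon}\,\boldsymbol{x}\cdot(\mathbb{H}^{\boldsymbol{\sigma}}+2\mu\,\boldsymbol{v}^{\boldsymbol{\sigma}}\otimes\boldsymbol{v}^{\boldsymbol{\sigma}})\,\boldsymbol{x}\}$, a \emph{strictly} positive-definite quadratic form by Lemma~\ref{lem83}. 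Since $|\boldsymbol{x}|\ge cJ\delta$ everywhere on $\partial_+\mathcal{B}_\epsilon^{\boldsymbol{\sigma}}$, this yields $\epsilon^{cJ^2}$ damping uniformly and no partition is needed (Lemma~\ref{lem119}). Your partition approach also works, but note that on the complement the ``Gaussian integrand'' structure you invoke is immediately available only for the $\nabla p_\epsilon^{\boldsymbol{\sigma}}$-piece of $\Phi_{p_\epsilon^{\boldsymbol{\sigma}}}-\tfrac{1}{\epsilon}\boldsymbol{\ell}$; for the $(1-p_\epsilon^{\boldsymbol{\sigma}})\boldsymbol{\ell}$-piece you still need the dichotomy of Lemma~\ref{lem811} (either $\boldsymbol{x}\cdot\boldsymbol{v}^{\boldsymbol{\sigma}}\ge aJ\delta$, giving the Gaussian via \eqref{e1-p}, or $U\ge H+aJ^2\delta^2$, giving decay of $\mu_\epsilon$ directly) to close the estimate.
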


Before proving these propositions, we explain why Theorem \ref{t74}
is a consequence of these propositions. We define the test function
$g_{\epsilon}$ explicitly as 
\begin{equation}
g_{\epsilon}\,=\,f_{\epsilon}^{(\eta)}\;\;\;\text{where }\eta\,=\,\epsilon^{2}\;.\label{egeps}
\end{equation}

\begin{proof}[Proof of Theorem \ref{t74}]
By Proposition \ref{p112}, it suffices to prove that
\[
\epsilon\,\int_{\mathbb{R}^{d}}\,[\,(\Phi_{g_{\epsilon}}-\Phi_{f_{\epsilon}})\cdot\nabla h_{\epsilon}\,]\,d\mu_{\epsilon}\,=\,o_{\epsilon}(1)\,[\,\alpha_{\epsilon}\,\textrm{cap}_{\epsilon}\,]^{1/2}\;.
\]
With the selection \eqref{egeps}, this is immediate from the Cauchy--Schwarz
inequality, Lemma \ref{lem61}, and Proposition \ref{p111}. 
\end{proof}
In Sections \ref{sec102} and \ref{sec103}, we shall prove Propositions
\ref{p111} and \ref{p112}, respectively. We remark that the proof
of Proposition \ref{p111} is nearly model-independent and is similar
to the proof of \cite[Lemma 6.4]{LMS}. Hence, we explain the structure
of the proof and refer to \cite{LMS} for most of the details. Of
course, there are several differences in the proofs, and we present
the full details for such parts. 

\subsection{\label{sec102}Proof of Proposition \ref{p111}}

By the Cauchy-Schwarz inequality, we can write 
\[
\epsilon\,\int_{\mathbb{R}^{d}}\,|\,\Phi_{f_{\epsilon}^{(\eta)}}-\Phi_{f_{\epsilon}}\,|^{2}\,d\mu_{\epsilon}\,\le\,3\,(I_{1}+I_{2}+I_{3})\;,
\]
where 
\begin{align*}
I_{1}\,=\, & \epsilon\,\int_{\mathbb{R}^{d}}\,|\,\nabla(f_{\epsilon}^{(\eta)})-(\nabla f_{\epsilon})^{(\eta)}\,|^{2}\,d\mu_{\epsilon}\;,\;\;\;I_{2}\,=\,\epsilon\int_{\mathbb{R}^{d}}|\,(\nabla f_{\epsilon})^{(\eta)}-\nabla f_{\epsilon}\,|^{2}\,d\mu_{\epsilon}\;\,\;\text{and}\\
I_{3}\,=\, & \frac{1}{\epsilon}\,\int_{\mathbb{R}^{d}}\,(f_{\epsilon}^{(\eta)}-f_{\epsilon})^{2}\,|\boldsymbol{\ell}|^{2}\,d\mu_{\epsilon}\;.
\end{align*}
To conclude the proof of Proposition \ref{p111}, it suffices to prove
that $I_{1},\,I_{2},\,I_{3}=o_{\epsilon}(1)\,\alpha_{\epsilon}$.
The proofs of $I_{1}=o_{\epsilon}(1)\,\alpha_{\epsilon}$ and $I_{2}=o_{\epsilon}(1)\,\alpha_{\epsilon}$
are identical to those of \cite[Lemma 8.5]{LMS} and \cite[Assertions 8.C and 8.D]{LMS},
respectively. The term $I_{3}$ has not been investigated previously.
We present the proof of $I_{3}=o_{\epsilon}(1)\,\alpha_{\epsilon}$.
Note that the functions $f_{\epsilon}^{(\eta)}$ and $f_{\epsilon}$
are supported on $\mathcal{K}$ for sufficiently small $\epsilon>0$,
and since $|\boldsymbol{\ell}|$ is bounded on $\mathcal{K}$, it
suffices to prove the following lemma. 
\begin{lem}
\label{lem113}We have 
\begin{equation}
\frac{1}{\epsilon}\,\int_{\mathbb{R}^{d}}\,(f_{\epsilon}^{(\eta)}-f_{\epsilon})^{2}\,d\mu_{\epsilon}\,=\,o_{\epsilon}(1)\,\alpha_{\epsilon}\;.\label{e112}
\end{equation}
\end{lem}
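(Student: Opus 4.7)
Let
\[
\mathcal{S}_\epsilon \,=\, \bigcup_{\boldsymbol{\sigma}\in\Sigma_0}\partial\mathcal{B}_\epsilon^{\boldsymbol{\sigma}} \,\cup\, \bigl(\,\partial\mathcal{K}_\epsilon\cap(\overline{\mathcal{H}_0^\epsilon}\cup\overline{\mathcal{H}_1^\epsilon})\,\bigr)
\]
denote the discontinuity set of $f_\epsilon$ and $\mathcal{S}_\epsilon^{[\eta]}$ its $\eta$-neighborhood (cf. \eqref{eAr}). On $\mathbb{R}^d\setminus\bigl(\mathcal{S}_\epsilon^{[\eta]}\cup\bigcup_{\boldsymbol{\sigma}}\mathcal{B}_\epsilon^{\boldsymbol{\sigma}}\bigr)$ the function $f_\epsilon$ is locally constant on every $\eta$-ball, so $f_\epsilon^{(\eta)}\equiv f_\epsilon$ and the integrand vanishes. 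The plan is therefore to split the integral in \eqref{e112} into (i) the interior-of-boxes contribution from $\bigcup_{\boldsymbol{\sigma}}\mathcal{B}_\epsilon^{\boldsymbol{\sigma}}\setminus\mathcal{S}_\epsilon^{[\eta]}$ and (ii) the contribution from $\mathcal{S}_\epsilon^{[\eta]}$, and to show each is $o_\epsilon(1)\alpha_\epsilon$.

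For part (i), $f_\epsilon$ coincides with the smooth function $p_\epsilon^{\boldsymbol{\sigma}}$ on the $\eta$-ball around the point, so the first-derivative bound gives $(f_\epsilon^{(\eta)}-f_\epsilon)^2\le C\eta^2\,\sup_{B(\boldsymbol{x},\eta)}|\nabla p_\epsilon^{\boldsymbol{\sigma}}|^2$. Since $|\nabla p_\epsilon^{\boldsymbol{\sigma}}(\boldsymbol{x})|^2=\tfrac{\mu}{2\pi\epsilon}\,e^{-\mu(\boldsymbol{x}\cdot\boldsymbol{v})^2/\epsilon}$, combining with the Taylor expansion of $U$ around $\boldsymbol{\sigma}$ produces the quadratic form $\boldsymbol{x}\cdot(\mathbb{H}+2\mu\,\boldsymbol{v}\otimes\boldsymbol{v})\boldsymbol{x}$ in the exponent, which is positive definite by Lemma \ref{lem83}(1). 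A routine Laplace asymptotic yields
\[
\int_{\mathcal{B}_\epsilon^{\boldsymbol{\sigma}}}\,|\nabla p_\epsilon^{\boldsymbol{\sigma}}|^2\,d\mu_\epsilon \,\le\, \frac{C\,\alpha_\epsilon}{\epsilon}\;,
\]
so the contribution of (i) is at most $\tfrac{\eta^2}{\epsilon}\cdot\tfrac{C\alpha_\epsilon}{\epsilon}=C\epsilon^2\alpha_\epsilon$.

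For part (ii), I decompose $\mathcal{S}_\epsilon^{[\eta]}$ further using Lemma \ref{lem811} with some fixed $a\in(0,a_0)$. On the easy subregion, namely the $\eta$-neighborhoods of $\partial_0\mathcal{B}_\epsilon^{\boldsymbol{\sigma}}$, $\partial\mathcal{K}_\epsilon$, and $\partial_\pm^{2,a}\mathcal{B}_\epsilon^{\boldsymbol{\sigma}}$, the potential satisfies $U\ge H+cJ^2\delta^2$ up to an $O(\eta\delta)$ crossing-layer correction, with $c\in\{1,a\}$. Using the trivial bound $|f_\epsilon^{(\eta)}-f_\epsilon|^2\le 1$, a surface area $O(\delta^{d-1})$, and $\eta=\epsilon^2$, the contribution is bounded by $C\delta^{d-1}\epsilon^{cJ^2/2-d/2+1}\alpha_\epsilon$, which is $o_\epsilon(1)\alpha_\epsilon$ as soon as $J$ is chosen large enough; this is always admissible because $J$ may be enlarged as needed (cf. the paragraph before Lemma \ref{lem84}).

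The main technical hurdle is the hard subregion, namely the $\eta$-neighborhood of $\partial_\pm^{1,a}\mathcal{B}_\epsilon^{\boldsymbol{\sigma}}$, since there $U$ can be as low as $H-\tfrac12 J^2\delta^2$ and the Gibbs density actually blows up like $\epsilon^{-J^2/2}$. I will parametrize this thin layer by Fermi coordinates $(\boldsymbol{x}_*,s)$ with $\boldsymbol{x}_*\in\partial_+^{1,a}\mathcal{B}_\epsilon^{\boldsymbol{\sigma}}$ and $s\in[-\eta,\eta]$ and use the pointwise bound $|f_\epsilon^{(\eta)}(\boldsymbol{x})-f_\epsilon(\boldsymbol{x})|\le\sup_{|\boldsymbol{y}-\boldsymbol{x}|\le\eta}\bigl(1-p_\epsilon^{\boldsymbol{\sigma}}(\boldsymbol{y})\bigr)$ (and its symmetric analogue near $\partial_-^{1,a}$), which reduces the contribution to
\[
\frac{\eta}{\epsilon}\,[1+o_\epsilon(1)]\,\int_{\partial_+^{1,a}\mathcal{B}_\epsilon^{\boldsymbol{\sigma}}}\,(1-p_\epsilon^{\boldsymbol{\sigma}})^2\,\sigma(d\mu_\epsilon)\;.
\]
The key observation is that applying the Gaussian tail bound \eqref{e_eleeq} to extract the factor $e^{-\mu(\boldsymbol{x}\cdot\boldsymbol{v})^2/\epsilon}$ from $(1-p_\epsilon^{\boldsymbol{\sigma}})^2$ and combining with $e^{-U/\epsilon}$ produces exactly the exponent $-\tfrac{1}{2\epsilon}\boldsymbol{x}\cdot(\mathbb{H}+2\mu\,\boldsymbol{v}\otimes\boldsymbol{v})\boldsymbol{x}$, again positive definite by Lemma \ref{lem83}(1). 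The change of variables $\boldsymbol{y}=\Pi_\epsilon(\boldsymbol{x})$ and the Gaussian surface computation already carried out in the proof of Proposition \ref{p87} (together with Lemmas \ref{lem88}--\ref{lem812}) then yield a bound of order $\alpha_\epsilon/\delta$ (or better) for the surface integral, and multiplying by the external $\eta/\epsilon=\epsilon$ gives $\epsilon\alpha_\epsilon/\delta=\sqrt{\epsilon/\log(1/\epsilon)}\,\alpha_\epsilon=o_\epsilon(1)\alpha_\epsilon$, completing the proof. The main obstacle is thus this last Gaussian computation, whose success hinges on Lemma \ref{lem83}(1) providing the positive-definite form that beats the blow-up of the Gibbs density from the negative direction of $\mathbb{H}$.
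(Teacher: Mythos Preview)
Your proposal is correct and follows essentially the same route as the paper: the same decomposition into (a) constant regions, (b) the interior of the boxes $\mathcal{B}_\epsilon^{\boldsymbol{\sigma}}$ handled via the gradient bound and the positive-definite form $\mathbb{H}+2\mu\,\boldsymbol{v}\otimes\boldsymbol{v}$, (c) the $\eta$-neighborhoods of the high-potential boundaries $\partial_0\mathcal{B}_\epsilon^{\boldsymbol{\sigma}}$, $\partial\mathcal{K}_\epsilon$, $\partial_\pm^{2,a}\mathcal{B}_\epsilon^{\boldsymbol{\sigma}}$ handled by the trivial bound $|f_\epsilon^{(\eta)}-f_\epsilon|\le 1$, and (d) the delicate $\eta$-neighborhoods of $\partial_\pm^{1,a}\mathcal{B}_\epsilon^{\boldsymbol{\sigma}}$ handled via the Gaussian tail for $(1-p_\epsilon^{\boldsymbol{\sigma}})^2$ producing again the form $\mathbb{H}+2\mu\,\boldsymbol{v}\otimes\boldsymbol{v}$.

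One small correction for step (d): your appeal to Lemmas \ref{lem88}--\ref{lem812} is misplaced, since those lemmas are tailored to the semi-definite form $\mathbb{H}+\mu\,\boldsymbol{v}\otimes\boldsymbol{v}$, not the strictly positive-definite $\mathbb{H}+2\mu\,\boldsymbol{v}\otimes\boldsymbol{v}$ that arises from squaring $(1-p_\epsilon^{\boldsymbol{\sigma}})$; in particular the identity of Lemma \ref{lem88} does not carry over verbatim. The paper instead uses the simpler argument you already set up: by Lemma \ref{lem83}(1) one has $\boldsymbol{x}\cdot(\mathbb{H}+2\mu\,\boldsymbol{v}\otimes\boldsymbol{v})\boldsymbol{x}\ge\gamma|\boldsymbol{x}|^2$, and since $|\boldsymbol{x}|\ge cJ\delta$ throughout $(\partial_\pm\mathcal{B}_\epsilon^{\boldsymbol{\sigma}})^{[\eta]}$ this yields a factor $\epsilon^{cJ^2}$, which for $J$ large kills all polynomial prefactors. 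This gives a bound far smaller than the $\alpha_\epsilon/\delta$ you quote, so your ``(or better)'' is what actually does the work.
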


\begin{proof}
Recall the notation $\mathcal{A}^{[r]}$ from \eqref{eAr} and define
\begin{align*}
\widetilde{\mathcal{B}}_{\epsilon}^{\bm{\sigma}} & \,=\,\mathcal{B}_{\epsilon}^{\bm{\sigma}}\setminus(\partial\mathcal{B}_{\epsilon}^{\bm{\sigma}})^{[\eta]}\;\;\;\text{and\;\;\;}\widetilde{\mathcal{H}}_{i}^{\epsilon}\,=\,\mathcal{H}_{i}^{\epsilon}\setminus\,\Big[\,(\partial\mathcal{K}_{\epsilon})^{[\eta]}\,\cup\,\Big(\,{}_{\bm{\sigma}\in\Sigma_{0}}(\partial\mathcal{B}_{\epsilon}^{\bm{\sigma}})^{[\eta]}\,\Big)\,\Big]\;\;\;;\;i=1,\,2\;.
\end{align*}
By the Cauchy--Schwarz inequality, we have 
\[
\begin{aligned}[][\,(f_{\epsilon}^{(\eta)}-f_{\epsilon})(\bm{x})\,]^{2} & \,=\,\Big(\,\int_{\mathbb{R}^{d}}\,(\,f_{\epsilon}(\bm{x})-f_{\epsilon}(\bm{x}-\bm{y})\,)\,\phi_{\eta}(\bm{y})\,d\bm{y}\,\Big)^{2}\\
 & \,\leq\,\int_{\mathbb{R}^{d}}\,(\,f_{\epsilon}(\bm{x})-f_{\epsilon}(\bm{x}-\bm{y})\,)^{2}\,\phi_{\eta}(\bm{y})\,d\bm{y}\;.
\end{aligned}
\]
Since 
\begin{equation}
f_{\epsilon}(\bm{x})\,=\,f_{\epsilon}(\bm{x}-\bm{y})\;\;\text{\;\;if\;}\boldsymbol{x}\notin\mathcal{K}_{\epsilon}^{[\eta]}\text{ and }|\boldsymbol{y}|\le\eta\;,\label{e113}
\end{equation}
the left-hand side of \eqref{e112} is bounded from above by 
\[
\int_{\mathcal{K}_{\epsilon}^{[\eta]}}\,\int_{\mathbb{R}^{d}}\,\frac{1}{\epsilon}\,|f_{\epsilon}(\bm{x})-f_{\epsilon}(\bm{x}-\bm{y})|^{2}\,\phi_{\eta}(\bm{y})\,d\bm{y}\,\mu_{\epsilon}(d\bm{x})\;.
\]
Now, we divide the integral $\int_{\mathcal{K}_{\epsilon}^{[\eta]}}$
in the previous case into 
\begin{equation}
\int_{\widetilde{\mathcal{H}}_{0}^{\epsilon}}\,+\,\int_{\widetilde{\mathcal{H}}_{1}^{\epsilon}}\,+\,\int_{(\partial\mathcal{K}_{\epsilon})^{[\eta]}}\,+\,\sum_{\boldsymbol{\sigma}\in\Sigma_{0}}\int_{\widetilde{\mathcal{B}}_{\epsilon}^{\bm{\sigma}}}\,+\,\sum_{\boldsymbol{\sigma}\in\Sigma_{0}}\int_{(\partial\mathcal{B}_{\epsilon}^{\bm{\sigma}})^{[\eta]}\setminus(\partial\mathcal{K}_{\epsilon})^{[\eta]}}\;\label{eintdec}
\end{equation}
and consider the five integrals separately. 

The first two integrals are $0$ for the same reason with regard to
\eqref{e113}. Now, we consider the third one. Since $|\,f_{\epsilon}(\bm{x})-f_{\epsilon}(\bm{x}-\bm{y})\,|\leq1$
for all $\bm{x},\,\bm{y}\in\mathbb{R}^{d}$, the integral is bounded
from above by 
\begin{equation}
\int_{(\partial\mathcal{K}_{\epsilon})^{[\eta]}}\,\int_{\mathbb{R}^{d}}\frac{1}{\epsilon}\,\phi_{\eta}(\bm{y})\,d\bm{y}\,\mu_{\epsilon}(d\bm{x})\,=\,\frac{1}{\epsilon}\,\mu_{\epsilon}(\,(\partial\mathcal{K}_{\epsilon})^{[\eta]}\,)\;.\label{e115}
\end{equation}
Since $U(\bm{y})=H+J^{2}\,\delta^{2}$ for $\boldsymbol{y}\in\partial\mathcal{K}_{\epsilon}$,
there exists $C>0$ such that 
\[
U(\bm{x})\,\geq\,H+J^{2}\,\delta^{2}-C\,\eta\;\;\;\;\text{ for all }\bm{x}\in(\partial\mathcal{K}_{\epsilon})^{[\eta]}\;.
\]
Hence, the right-hand side of \eqref{e115} is bounded by 
\[
\frac{C}{\epsilon\,Z_{\epsilon}}\,e^{-H/\epsilon}\,\int_{(\partial\mathcal{K}_{\epsilon})^{[\eta]}}\epsilon^{J^{2}}\,e^{C\eta/\epsilon}\,d\bm{x}\,\le\,C\,\epsilon^{J^{2}-d/2-1}\,\alpha_{\epsilon}\,\text{vol}((\partial\mathcal{K}_{\epsilon})^{[\eta]})\,=\,o_{\epsilon}(1)\,\alpha_{\epsilon}
\]
for sufficiently large $J$, since $\text{vol}\,(\,(\partial\mathcal{K}_{\epsilon})^{[\eta]}\,)=O(1)$. 

Next, we consider the fourth term in \eqref{eintdec}. Fix $\boldsymbol{\sigma}\in\Sigma_{0}$
and assume, for simplicity of notation, that $\boldsymbol{\sigma}=\boldsymbol{0}$.
By the mean value theorem, for $\boldsymbol{x}\in\mathcal{\widetilde{B}}_{\epsilon}^{\bm{\sigma}}$
and $\bm{y}\in\mathcal{D}_{\eta}(\boldsymbol{0})$, 
\begin{equation}
|\,f_{\epsilon}(\bm{x})-f_{\epsilon}(\bm{x}-\bm{y})\,|\,\leq\,|\bm{y}|\,\sum_{k=1}^{d}\,\sup_{\bm{z}\in\mathcal{D}_{\eta}(\bm{x})}\,|\,\nabla_{k}f_{\epsilon}(\bm{z})\,|\;.\label{e116}
\end{equation}
First, we remark from the expression \eqref{e: nabla fe} that, for
$\boldsymbol{u}\in\mathcal{B}_{\epsilon}^{\bm{\sigma}}$,
\begin{equation}
\nabla_{k}f_{\epsilon}(\bm{u})\,=\,\frac{1}{c_{\epsilon}}\exp\,\Big\{\,-\frac{\mu}{2\epsilon}(\boldsymbol{u}\cdot\bm{v}^{\boldsymbol{\sigma}})^{2}\,\Big\}\,v_{k}\;.\label{e117}
\end{equation}
Since $\eta\ll\delta$ and $|\bm{x}|=O(\delta)$, we have
\begin{equation}
(\boldsymbol{z}\cdot\bm{v}^{\boldsymbol{\sigma}})^{2}\,\ge\,(\boldsymbol{x}\cdot\bm{v}^{\boldsymbol{\sigma}})^{2}-C\eta\delta\;\;\;\text{for }\boldsymbol{x}\in\mathcal{\widetilde{B}}_{\epsilon}^{\bm{\sigma}}\;\;\text{and \;}\bm{z}\in\mathcal{D}_{\eta}(\bm{x})\;.\label{e118}
\end{equation}
By combining \eqref{e117} and \eqref{e118}, we get
\[
|\nabla_{k}f_{\epsilon}(\bm{z})|^{2}\,\leq\,\frac{C}{\epsilon}\exp\,\Big\{\,-\frac{\mu}{\epsilon}(\boldsymbol{x}\cdot\bm{v}^{\boldsymbol{\sigma}})^{2}\,\Big\}\,\;.
\]
Inserting this into \eqref{e116}, we obtain, for $\boldsymbol{x}\in\mathcal{\widetilde{B}}_{\epsilon}^{\bm{\sigma}}$,
\[
\int_{\mathbb{R}^{d}}\,|\,f_{\epsilon}(\bm{x})-f_{\epsilon}(\bm{x}-\bm{y})\,|^{2}\,\phi_{\eta}(\bm{y})d\bm{y}\,\leq\,\frac{C\eta^{2}}{\epsilon}\exp\,\Big\{\,-\frac{\mu}{\epsilon}(\boldsymbol{x}\cdot\bm{v}^{\boldsymbol{\sigma}})^{2}\,\Big\}\;.
\]
Therefore, the integral in the fourth term of \eqref{eintdec} is
bounded by 
\[
\frac{1}{\epsilon\,Z_{\epsilon}}\,\frac{C\,\eta^{2}}{\epsilon}\,e^{-H/\epsilon}\int_{\mathcal{\widetilde{B}}_{\epsilon}^{\bm{\sigma}}}\exp\,\Big\{\,-\frac{1}{2\epsilon}\,\boldsymbol{x}\cdot(\mathbb{H}^{\boldsymbol{\sigma}}+2\mu\bm{v}^{\boldsymbol{\sigma}}\otimes\bm{v}^{\boldsymbol{\sigma}})\boldsymbol{x}\,\Big\}\,d\bm{x}
\]
by the Taylor expansion of $U$ around $\boldsymbol{\sigma}$. By
Lemma \ref{lem83}, the last integral is $O(\epsilon^{d/2})$; hence,
the whole expression is $o_{\epsilon}(1)\,\alpha_{\epsilon}$. 

Now, we consider the last integral of \eqref{eintdec}. We also fix
$\boldsymbol{\sigma}$ and assume that $\boldsymbol{\sigma}=\boldsymbol{0}$.
Since
\[
(\partial\mathcal{B}_{\epsilon}^{\bm{\sigma}})^{[\eta]}\setminus(\partial\mathcal{K}_{\epsilon})^{[\eta]}\,\subset\,(\partial_{+}\mathcal{B}_{\epsilon}^{\boldsymbol{\sigma}})^{[\eta]}\,\cup\,(\partial_{-}\mathcal{B}_{\epsilon}^{\boldsymbol{\sigma}})^{[\eta]}\;,
\]
it suffices to prove that the integral over $(\partial_{+}\mathcal{B}_{\epsilon}^{\boldsymbol{\sigma}})^{[\eta]}$
is small, as the argument for $(\partial_{-}\mathcal{B}_{\epsilon}^{\boldsymbol{\sigma}})^{[\eta]}$
is identical. Since $\eta\ll\delta$, by Lemma \ref{lem811}, there
exists a constant $a>0$ such that 
\begin{equation}
U(\bm{x})\,\geq\,aJ^{2}\delta^{2}\;\;\;\;\text{or\;\;\;\;}\boldsymbol{x}\cdot\bm{v}^{\boldsymbol{\sigma}}\,\geq\,aJ\delta\label{edecet}
\end{equation}
holds for all $\bm{x}\in(\partial_{+}\mathcal{B}_{\epsilon}^{\boldsymbol{\sigma}})^{[\eta]}$.
Let us first assume that the former holds. Then, since $|f_{\epsilon}|\leq1$
and $\textup{vol}\,(\,(\partial_{+}\mathcal{B}_{\epsilon}^{\boldsymbol{\sigma}})^{[\eta]}\,)=O(1)$,
by the first condition of \eqref{edecet}, the integral over $\boldsymbol{x}\in(\partial_{+}\mathcal{B}_{\epsilon}^{\boldsymbol{\sigma}})^{[\eta]}$
satisfying the former condition of \eqref{edecet} is bounded from
above by
\begin{equation}
\frac{C}{\epsilon\,Z_{\epsilon}}\,\int_{(\partial_{+}\mathcal{B}_{\epsilon}^{\boldsymbol{\sigma}})^{[\eta]}}\,e^{-U(\bm{x})/\epsilon}\,d\bm{x}\,\le\,\frac{C}{\epsilon\,Z_{\epsilon}}\,e^{-H/\epsilon}\,\epsilon^{aJ^{2}}\,\textup{vol}(\,(\partial_{+}\mathcal{B}_{\epsilon}^{\boldsymbol{\sigma}})^{[\eta]}\,)\,=\,o_{\epsilon}(1)\,\alpha_{\epsilon}\label{e10.10}
\end{equation}
for sufficiently large $J$. 

Now, assume that the second condition of \eqref{edecet} holds for
$\boldsymbol{x}\in(\partial_{+}\mathcal{B}_{\epsilon}^{\boldsymbol{\sigma}})^{[\eta]}$.
As in the proof of Lemma \ref{p87}, we can rewrite $1-f_{\epsilon}(\bm{x})$
as 
\[
[\,1+o_{\epsilon}(1)\,]\,\frac{\epsilon^{1/2}}{(2\pi\mu)^{1/2}\,(\bm{x}\cdot\bm{v}^{\boldsymbol{\sigma}})}\,\exp\,\Big\{\,-\frac{\mu}{2\epsilon}(\boldsymbol{x}\cdot\bm{v}^{\boldsymbol{\sigma}})^{2}\,\Big\}\,\leq\,\frac{C\,\epsilon^{1/2}}{\delta}\,\exp\,\Big\{\,-\frac{\mu}{2\epsilon}(\boldsymbol{x}\cdot\bm{v}^{\boldsymbol{\sigma}})^{2}\,\Big\}\;.
\]
Similarly, we can check that, for $\boldsymbol{y}\in\mathcal{D}_{\eta}(\boldsymbol{0})$,
\[
|\,1-f_{\epsilon}(\bm{x}-\bm{y})\,|\leq\frac{C\,\epsilon^{1/2}}{\delta}\,\exp\,\Big\{\,-\frac{\mu}{2\epsilon}(\boldsymbol{x}\cdot\bm{v}^{\boldsymbol{\sigma}})^{2}\,\Big\}\;.
\]
By the two bounds above, we can bound $|f_{\epsilon}(\bm{x})-f_{\epsilon}(\bm{x}-\bm{y})|^{2}$
from above by
\[
2\,[\,|1-f_{\epsilon}(\bm{x})|^{2}+|1-f_{\epsilon}(\bm{x}-\bm{y})|^{2}\,]\,\le\,\frac{C\,\epsilon}{\delta^{2}}\exp\,\Big\{\,-\frac{\mu}{\epsilon}(\boldsymbol{x}\cdot\bm{v}^{\boldsymbol{\sigma}})^{2}\,\Big\}\;.
\]
Hence, we can bound the last integral of \eqref{eintdec} and restrict
it to $\boldsymbol{x}\in(\partial_{+}\mathcal{B}_{\epsilon}^{\boldsymbol{\sigma}})^{[\eta]}$,
satisfying the second condition of \eqref{edecet}, from above by
\[
\frac{C}{\delta^{2}}\,\int_{(\partial_{+}\mathcal{B}_{\epsilon}^{\boldsymbol{\sigma}})^{[\eta]}}\,\exp\,\Big\{\,-\frac{\mu}{\epsilon}(\boldsymbol{x}\cdot\bm{v}^{\boldsymbol{\sigma}})^{2}\,\Big\}\,\mu_{\epsilon}(d\bm{x})\;.
\]
By applying the Taylor expansion of $U$ around $\boldsymbol{\sigma}$,
this is bounded by 
\[
\frac{1}{\delta^{2}\,Z_{\epsilon}}\,e^{-H/\epsilon}\,\int_{(\partial_{+}\mathcal{B}_{\epsilon}^{\boldsymbol{\sigma}})^{[\eta]}}\,\exp\,\Big\{\,-\frac{\mu}{2\epsilon}\boldsymbol{x}\cdot[\,\mathbb{H}^{\boldsymbol{\sigma}}+2\mu\bm{v}^{\boldsymbol{\sigma}}\otimes\bm{v}^{\boldsymbol{\sigma}}\,]\,\boldsymbol{x}\,\Big\}\,d\bm{x}\;.
\]
By Lemma \ref{lem83}, there exists $c>0$ such that $\boldsymbol{x}\cdot[\,\mathbb{H}^{\boldsymbol{\sigma}}+2\mu\,\bm{v}^{\boldsymbol{\sigma}}\otimes\bm{v}^{\boldsymbol{\sigma}}\,]\boldsymbol{x}\ge c\,|\boldsymbol{x}|^{2}$.
Furthermore, there exists $C>0$ such that $|\boldsymbol{x}|\ge C\delta$
for all $\boldsymbol{x}\in(\partial_{+}\mathcal{B}_{\epsilon}^{\boldsymbol{\sigma}})^{[\eta]}$.
Therefore, we can bound the last centered display from above by 
\begin{equation}
\frac{1}{Z_{\epsilon}}\,e^{-H/\epsilon}\text{\,}\epsilon^{cJ^{2}}\,\text{vol}((\partial_{+}\mathcal{B}_{\epsilon}^{\boldsymbol{\sigma}})^{[\eta]})\,=\,o_{\epsilon}(1)\,\alpha_{\epsilon}\label{e552}
\end{equation}
for sufficiently large $J$ since $\text{vol}((\partial_{+}\mathcal{B}_{\epsilon}^{\boldsymbol{\sigma}})^{[\eta]})=O(1)$.
By \eqref{e10.10} and \eqref{e552}, we can verify that the last
integral of \eqref{eintdec} is $o_{\epsilon}(1)\,\alpha_{\epsilon}$,
and this completes the proof. 
\end{proof}

\subsection{\label{sec103}Proof of Proposition \ref{p112}}

First, note that we can write
\begin{equation}
\epsilon\,\int_{\mathbb{R}^{d}}\,[\,\Phi_{f_{\epsilon}}\cdot\nabla h_{\epsilon}\,]\,d\mu_{\epsilon}\,=\,A_{1}+\sum_{\boldsymbol{\sigma}\in\Sigma_{0}}A_{2}(\boldsymbol{\sigma})\ ,\label{edelev1}
\end{equation}
where 
\begin{align*}
A_{1} & \,=\,\int_{\mathcal{H}_{0}^{\epsilon}}[\,\boldsymbol{\ell}\cdot\nabla h_{\epsilon}\,]\,d\mu_{\epsilon}\;\;\;\;\text{and\;\;\;\;}A_{2}(\boldsymbol{\sigma})\,=\,\epsilon\int_{\mathcal{B}_{\epsilon}^{\bm{\sigma}}}[\,\Phi_{p_{\epsilon}^{\boldsymbol{\sigma}}}\cdot\nabla h_{\epsilon}\,]\,d\mu_{\epsilon}\;.
\end{align*}
To estimate these integrals, we first mention a technical result. 
\begin{lem}
\label{lem14}There exists $C>0$ such that 
\[
\int_{\mathcal{\partial K}^{\epsilon}}\,\sigma(d\mu_{\epsilon})\,\le\,C\,\epsilon^{J^{2}-d/2}\,\alpha_{\epsilon}\;.
\]
\end{lem}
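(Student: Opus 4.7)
The plan is to compute the integral directly by exploiting the fact that $\partial \mathcal{K}_\epsilon$ is a level set of $U$. By the definition \eqref{emck} of $\mathcal{K}_\epsilon$, we have $U(\bm{x}) = H + J^2 \delta^2$ for every $\bm{x} \in \partial \mathcal{K}_\epsilon$. Substituting $\delta^2 = \epsilon \log(1/\epsilon)$, the density $\mu_\epsilon(\bm{x}) = Z_\epsilon^{-1} e^{-U(\bm{x})/\epsilon}$ simplifies on this boundary to the constant
\[
\mu_\epsilon(\bm{x}) \,=\, Z_\epsilon^{-1} e^{-H/\epsilon} \epsilon^{J^2} \;.
\]

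Pulling this constant out of the integral, I reduce the problem to proving that $\sigma(\partial \mathcal{K}_\epsilon)$ is bounded by a constant independent of $\epsilon$. For this, note that $\partial \mathcal{K}_\epsilon \subset \mathcal{K}$ for all sufficiently small $\epsilon$, and $\mathcal{K}$ is a bounded set by the growth condition \eqref{econ_U1}. Since $U$ is a Morse function with only finitely many critical points in $\mathcal{K}$, the set of critical values of $U$ in $\mathcal{K}$ is finite; hence for all sufficiently small $\epsilon$, the value $H + J^2 \delta^2$ is a regular value of $U$. Consequently, $\partial \mathcal{K}_\epsilon = \{U = H + J^2\delta^2\}$ is a smooth $(d-1)$-dimensional submanifold, and by the continuity of level-set areas (via the coarea formula applied to $U$ on a neighborhood of $\partial \mathcal{K}_0 = \{U = H\}$), $\sigma(\partial \mathcal{K}_\epsilon)$ is uniformly bounded as $\epsilon \to 0$.

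Combining these two pieces with the identity $Z_\epsilon^{-1} e^{-H/\epsilon} = \alpha_\epsilon / (2\pi\epsilon)^{d/2}$ coming from \eqref{ealphae}, I conclude
\[
\int_{\partial \mathcal{K}_\epsilon} \sigma(d\mu_\epsilon) \,\le\, Z_\epsilon^{-1} e^{-H/\epsilon} \epsilon^{J^2} \cdot \sigma(\partial \mathcal{K}_\epsilon) \,\le\, C \alpha_\epsilon \epsilon^{J^2 - d/2}\;,
\]
which is the desired bound. No real obstacle is expected here; the only mildly delicate point is the uniform bound on $\sigma(\partial \mathcal{K}_\epsilon)$, which is handled by the Morse/regular-value argument sketched above.
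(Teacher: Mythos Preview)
Your proof is correct and follows essentially the same approach as the paper: use that $U \equiv H + J^2\delta^2$ on $\partial\mathcal{K}_\epsilon$ to pull the constant density out, then invoke $\sigma(\partial\mathcal{K}_\epsilon)=O(1)$ and the definition of $\alpha_\epsilon$. The paper simply asserts $\sigma(\partial\mathcal{K}^\epsilon)=O(1)$ without justification, whereas you supply a regular-value/coarea sketch for it; otherwise the arguments are identical.
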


\begin{proof}
Since $U(\boldsymbol{x})=H+J^{2}\delta^{2}$\textbf{ }on $\mathcal{\partial K}^{\epsilon}$,
we have 
\[
\int_{\mathcal{\partial K}^{\epsilon}}\,\sigma(d\mu_{\epsilon})\,=\,\int_{\mathcal{\partial K}^{\epsilon}}\,\mu_{\epsilon}(\boldsymbol{x})\,\sigma(d\boldsymbol{x})\,=\,Z_{\epsilon}^{-1}\,e^{-H/\epsilon}\,\epsilon^{J^{2}}\,\sigma(\mathcal{\partial K}^{\epsilon})\;.
\]
Since $\sigma(\mathcal{\partial K}^{\epsilon})=O(1)$, the proof is
completed by the definition \eqref{ealphae} of $\alpha_{\epsilon}$. 
\end{proof}
We now consider $A_{1}$. 
\begin{lem}
\label{lem115}We can write
\[
A_{1}\,=\,o_{\epsilon}(1)\,\alpha_{\epsilon}+\sum_{\boldsymbol{\sigma}\in\Sigma_{0}}A_{1,\,1}(\boldsymbol{\sigma})\;,
\]
where
\begin{equation}
A_{1,\,1}(\boldsymbol{\sigma})\,=\,\int_{\partial_{+}\mathcal{B}_{\epsilon}^{\boldsymbol{\sigma}}}\,[\,\boldsymbol{\ell}\cdot\boldsymbol{n}_{\mathcal{H}_{0}^{\epsilon}}\,]\,h_{\epsilon}\,\sigma(d\mu_{\epsilon})\;.\label{eA11}
\end{equation}
\end{lem}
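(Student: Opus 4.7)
My plan is to derive the claimed decomposition by applying the divergence theorem to $A_1$. The key algebraic observation is that
\[
\nabla\cdot(\boldsymbol{\ell}\,\mu_{\epsilon}) \,=\, \mu_{\epsilon}\,(\nabla\cdot\boldsymbol{\ell}) + \boldsymbol{\ell}\cdot\nabla\mu_{\epsilon} \,=\, 0 - \frac{1}{\epsilon}\,(\boldsymbol{\ell}\cdot\nabla U)\,\mu_{\epsilon} \,=\, 0,
\]
where the two terms vanish by \eqref{econ_ell2} and \eqref{econ_ell1}, respectively. (This is precisely the computation that makes $\mu_{\epsilon}$ invariant for $\boldsymbol{x}_{\epsilon}(\cdot)$; cf. the proof of Theorem \ref{t25}.) Consequently, on the smooth function $h_{\epsilon}$ we have the pointwise identity $(\boldsymbol{\ell}\cdot\nabla h_{\epsilon})\,\mu_{\epsilon} = \nabla\cdot(h_{\epsilon}\,\boldsymbol{\ell}\,\mu_{\epsilon})$, which upon applying the divergence theorem on the domain $\mathcal{H}_{0}^{\epsilon}$ yields
\[
A_{1} \,=\, \int_{\partial\mathcal{H}_{0}^{\epsilon}}\, h_{\epsilon}\,[\,\boldsymbol{\ell}\cdot\boldsymbol{n}_{\mathcal{H}_{0}^{\epsilon}}\,]\,\sigma(d\mu_{\epsilon}).
\]

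Next, I would decompose $\partial\mathcal{H}_{0}^{\epsilon}$ into two disjoint pieces. By the construction of $\mathcal{H}_{0}^{\epsilon}$ in Section \ref{sec81} and the inclusion \eqref{ebdryis}, one piece is exactly $\bigcup_{\boldsymbol{\sigma}\in\Sigma_{0}}\partial_{+}\mathcal{B}_{\epsilon}^{\boldsymbol{\sigma}}$, and its contribution is by definition $\sum_{\boldsymbol{\sigma}\in\Sigma_{0}}A_{1,\,1}(\boldsymbol{\sigma})$. The complementary piece lies in $\partial\mathcal{K}_{\epsilon}$. It remains only to show that the integral over this second piece is $o_{\epsilon}(1)\,\alpha_{\epsilon}$.

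For this remaining integral, I would use the trivial bound $|h_{\epsilon}|\le 1$ together with the uniform bound $\sup_{\boldsymbol{x}\in\mathcal{K}}|\boldsymbol{\ell}(\boldsymbol{x})|\le C$ (valid since $\partial\mathcal{K}_{\epsilon}\subset\mathcal{K}$ is contained in a fixed compact set) to obtain
\[
\Big|\,\int_{\partial\mathcal{H}_{0}^{\epsilon}\cap\partial\mathcal{K}_{\epsilon}}\, h_{\epsilon}\,[\,\boldsymbol{\ell}\cdot\boldsymbol{n}_{\mathcal{H}_{0}^{\epsilon}}\,]\,\sigma(d\mu_{\epsilon})\,\Big| \,\le\, C\,\int_{\partial\mathcal{K}_{\epsilon}}\,\sigma(d\mu_{\epsilon}) \,\le\, C\,\epsilon^{J^{2}-d/2}\,\alpha_{\epsilon},
\]
where the last inequality is Lemma \ref{lem14}. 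Taking $J$ large enough so that $J^{2}>d/2$ (which we are free to do, per the convention on $J$ fixed in Section \ref{sec81}), the right-hand side is $o_{\epsilon}(1)\,\alpha_{\epsilon}$, and combining the two pieces gives the lemma.

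I do not expect any serious obstacle: the entire argument is one application of the divergence theorem made possible by the invariance identity $\nabla\cdot(\boldsymbol{\ell}\mu_{\epsilon})=0$, followed by a trivial surface estimate absorbed by Lemma \ref{lem14}. The only minor technical point is that $\partial\mathcal{H}_{0}^{\epsilon}$ is only piecewise smooth, with corners along $\partial\mathcal{K}_{\epsilon}\cap\partial\mathcal{B}_{\epsilon}^{\boldsymbol{\sigma}}$; however, since the boundary is Lipschitz and the integrand is smooth up to the boundary, the standard divergence theorem for Lipschitz domains applies without difficulty.
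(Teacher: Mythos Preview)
Your proposal is correct and follows essentially the same approach as the paper: apply the divergence theorem using the identity $\nabla\cdot(\boldsymbol{\ell}\,\mu_{\epsilon})=0$ to convert $A_{1}$ into a boundary integral over $\partial\mathcal{H}_{0}^{\epsilon}$, then split off the pieces $\partial_{+}\mathcal{B}_{\epsilon}^{\boldsymbol{\sigma}}$ and bound the remainder (contained in $\partial\mathcal{K}_{\epsilon}$) via $|h_{\epsilon}|\le1$, the boundedness of $\boldsymbol{\ell}$ on $\mathcal{K}$, and Lemma~\ref{lem14}. The paper's proof is slightly terser but identical in substance.
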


\begin{proof}
By the divergence theorem, we have 
\[
\int_{\mathcal{H}_{0}^{\epsilon}}\,[\,\boldsymbol{\ell}\cdot\nabla h_{\epsilon}\,]\,d\mu_{\epsilon}\,=\,\int_{\partial\mathcal{H}_{0}^{\epsilon}}\,[\,\boldsymbol{\ell}\cdot\boldsymbol{n}_{\mathcal{H}_{0}^{\epsilon}}\,]\,h_{\epsilon}\,\sigma(d\,\mu_{\epsilon})\ .
\]
Write 
\[
\partial\widehat{\mathcal{H}}_{0}^{\epsilon}\,=\,\partial\mathcal{H}_{0}^{\epsilon}\setminus\Big[\,\bigcup_{\boldsymbol{\sigma}\in\Sigma_{0}}\partial_{+}\mathcal{B}_{\epsilon}^{\boldsymbol{\sigma}}\,\Big]\,\subset\,\partial\mathcal{K}_{\epsilon}\;.
\]
Then, it suffices to prove that 
\[
\int_{\partial\widehat{\mathcal{H}}_{0}^{\epsilon}}\,[\,\boldsymbol{\ell}\cdot\boldsymbol{n}_{\mathcal{H}_{0}^{\epsilon}}\,]\,h_{\epsilon}\,\sigma(d\mu_{\epsilon})\,=\,o_{\epsilon}(1)\,\alpha_{\epsilon}\;.
\]
Since $|h_{\epsilon}|$ and $|\boldsymbol{\ell}|$ are bounded on
$\partial\widehat{\mathcal{H}}_{0}^{\epsilon}\subset\mathcal{K}$,
and since $\partial\widehat{\mathcal{H}}_{0}^{\epsilon}\subset\partial\mathcal{K}_{\epsilon}$,
the absolute value of the left-hand side of the previous case is bounded
by $\int_{\partial\mathcal{K}_{\epsilon}}\sigma(d\mu_{\epsilon})$,
which is $o_{\epsilon}(1)\,\alpha_{\epsilon}$ for sufficiently large
$J$ by Lemma \ref{lem14}. This completes the proof. 
\end{proof}
Now, we focus on $A_{2}(\boldsymbol{\sigma})$. 
\begin{lem}
\label{lem116}For $\boldsymbol{\sigma}\in\Sigma_{0}$, we can write
\[
A_{2}(\boldsymbol{\sigma})\,=\,o_{\epsilon}(1)\,\alpha_{\epsilon}+A_{2,\,1}(\boldsymbol{\sigma})\;,
\]
where
\begin{equation}
A_{2,\,1}(\boldsymbol{\sigma})\,=\,\epsilon\,\int_{\partial_{+}\mathcal{B}_{\epsilon}^{\bm{\sigma}}\cup\partial_{-}\mathcal{B}_{\epsilon}^{\bm{\sigma}}}\,[\,\Phi_{p_{\epsilon}^{\boldsymbol{\sigma}}}\cdot\bm{n}_{\mathcal{B}_{\epsilon}^{\bm{\sigma}}}\,]\,h_{\epsilon}\,\sigma(d\mu_{\epsilon})\;.\label{eA21}
\end{equation}
\end{lem}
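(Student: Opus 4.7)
The plan is to convert the interior integral $A_2(\boldsymbol{\sigma})$ into boundary integrals by the divergence theorem, and then discard the $\partial_0\mathcal{B}_\epsilon^{\boldsymbol{\sigma}}$ piece and the bulk error. The starting point is the identity obtained by combining the divergence form \eqref{eL*} of $\mathscr{L}_\epsilon^{*}$ with $\mu_\epsilon=Z_\epsilon^{-1}e^{-U/\epsilon}$, namely
\begin{equation*}
\nabla\cdot[\,\mu_\epsilon\,\Phi_{p_\epsilon^{\boldsymbol{\sigma}}}\,]\,=\,\frac{1}{\epsilon}\,\mu_\epsilon\,\mathscr{L}_\epsilon^{*}p_\epsilon^{\boldsymbol{\sigma}}\;,
\end{equation*}
which in turn relies on \eqref{econ_ell1} and \eqref{econ_ell2}. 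Combining this with Leibniz and the divergence theorem on $\mathcal{B}_\epsilon^{\boldsymbol{\sigma}}$ yields
\begin{equation*}
A_2(\boldsymbol{\sigma})\,=\,\epsilon\int_{\partial\mathcal{B}_\epsilon^{\boldsymbol{\sigma}}}h_\epsilon\,[\,\Phi_{p_\epsilon^{\boldsymbol{\sigma}}}\cdot\bm{n}_{\mathcal{B}_\epsilon^{\boldsymbol{\sigma}}}\,]\,\sigma(d\mu_\epsilon)\,-\,\int_{\mathcal{B}_\epsilon^{\boldsymbol{\sigma}}}h_\epsilon\,\mathscr{L}_\epsilon^{*}p_\epsilon^{\boldsymbol{\sigma}}\,d\mu_\epsilon\;.
\end{equation*}

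The bulk term is handled immediately: since $|h_\epsilon|\leq 1$, it is bounded in absolute value by $\int_{\mathcal{B}_\epsilon^{\boldsymbol{\sigma}}}|\mathscr{L}_\epsilon^{*}p_\epsilon^{\boldsymbol{\sigma}}|\,d\mu_\epsilon$, which is $o_\epsilon(1)\,\alpha_\epsilon$ by Proposition \ref{p86}. For the boundary term, I would use the decomposition $\partial\mathcal{B}_\epsilon^{\boldsymbol{\sigma}}=\partial_+\mathcal{B}_\epsilon^{\boldsymbol{\sigma}}\cup\partial_-\mathcal{B}_\epsilon^{\boldsymbol{\sigma}}\cup\partial_0\mathcal{B}_\epsilon^{\boldsymbol{\sigma}}$. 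The first two pieces give $A_{2,1}(\boldsymbol{\sigma})$ by definition \eqref{eA21}, so it remains to show that the contribution from $\partial_0\mathcal{B}_\epsilon^{\boldsymbol{\sigma}}$ is $o_\epsilon(1)\,\alpha_\epsilon$.

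On $\partial_0\mathcal{B}_\epsilon^{\boldsymbol{\sigma}}$, by Lemma \ref{lem84} we have $U(\bm{x})\ge H+\tfrac{5}{4}J^2\delta^2$, so $e^{-U(\bm{x})/\epsilon}\leq e^{-H/\epsilon}\epsilon^{5J^2/4}$. Using the explicit form \eqref{epeps} and $c_\epsilon=\sqrt{2\pi\epsilon/\mu}$, we have $|\nabla p_\epsilon^{\boldsymbol{\sigma}}|\leq C\epsilon^{-1/2}$ and $|p_\epsilon^{\boldsymbol{\sigma}}|\leq 1$; together with the boundedness of $|\boldsymbol{\ell}|$ on $\mathcal{K}$ (since $\mathcal{B}_\epsilon^{\boldsymbol{\sigma}}\subset\mathcal{K}$), this gives $\epsilon|\Phi_{p_\epsilon^{\boldsymbol{\sigma}}}|\leq C$. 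Since $|h_\epsilon|\leq 1$ and $\sigma(\partial_0\mathcal{B}_\epsilon^{\boldsymbol{\sigma}})=O(\delta^{d-1})$, the boundary integral over $\partial_0\mathcal{B}_\epsilon^{\boldsymbol{\sigma}}$ is bounded by
\begin{equation*}
C\,Z_\epsilon^{-1}\,e^{-H/\epsilon}\,\epsilon^{5J^2/4}\,\delta^{d-1}\,=\,C\,\alpha_\epsilon\,\epsilon^{5J^2/4-d/2}\,\delta^{d-1}\,=\,o_\epsilon(1)\,\alpha_\epsilon
\end{equation*}
provided $J$ is sufficiently large, which we may assume. This completes the proof.

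The main obstacle is conceptual rather than technical: one must observe that the divergence identity connecting $\Phi_f$ to $\mathscr{L}_\epsilon^{*}$ is exactly what makes the non-reversible computation work as cleanly as the reversible one, and that the boundary contribution on $\partial_0\mathcal{B}_\epsilon^{\boldsymbol{\sigma}}$ is negligible purely because of the height-barrier $U\geq H+\tfrac{5}{4}J^2\delta^2$ on that set, without any further cancellation. The routine calculations are standard Laplace-asymptotics bookkeeping already employed throughout Section \ref{sec8}.
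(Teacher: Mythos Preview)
Your proof is correct and follows essentially the same route as the paper: apply the divergence theorem to turn $A_2(\boldsymbol{\sigma})$ into a bulk term (killed by Proposition~\ref{p86} and $|h_\epsilon|\le 1$) plus a boundary integral, and then show the $\partial_0\mathcal{B}_\epsilon^{\boldsymbol{\sigma}}$ contribution is negligible because $U$ is high there and $\epsilon|\Phi_{p_\epsilon^{\boldsymbol{\sigma}}}|\le C$.

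One small slip: you invoke Lemma~\ref{lem84} on $\partial_0\mathcal{B}_\epsilon^{\boldsymbol{\sigma}}$, but that lemma concerns $\partial_0\mathcal{C}_\epsilon^{\boldsymbol{\sigma}}$, which is a different set. In fact $\partial_0\mathcal{B}_\epsilon^{\boldsymbol{\sigma}}\subset\partial\mathcal{K}_\epsilon$ (this is how $\partial_0\mathcal{B}_\epsilon^{\boldsymbol{\sigma}}$ is defined, since Lemma~\ref{lem84} guarantees $\partial_0\mathcal{C}_\epsilon^{\boldsymbol{\sigma}}$ lies outside $\mathcal{K}_\epsilon$), so on $\partial_0\mathcal{B}_\epsilon^{\boldsymbol{\sigma}}$ one has $U(\bm{x})=H+J^2\delta^2$ exactly, not $\ge H+\tfrac{5}{4}J^2\delta^2$. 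The paper handles this via Lemma~\ref{lem14}. Your estimate goes through with the exponent $J^2$ in place of $\tfrac{5}{4}J^2$, and the cruder bound $\sigma(\partial_0\mathcal{B}_\epsilon^{\boldsymbol{\sigma}})=O(1)$ already suffices; the conclusion is unchanged.
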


\begin{proof}
By the divergence theorem, we can write 
\begin{align*}
A_{2}(\boldsymbol{\sigma})\,=\, & -\int_{\mathcal{B}_{\epsilon}^{\bm{\sigma}}}(\,\mathscr{L}_{\epsilon}^{*}\,p_{\epsilon}^{\bm{\sigma}}\,)\,h_{\epsilon}\,d\mu_{\epsilon}+\epsilon\int_{\partial\mathcal{B}_{\epsilon}^{\bm{\sigma}}}[\,\Phi_{p_{\epsilon}^{\boldsymbol{\sigma}}}\cdot\bm{n}_{\mathcal{B}_{\epsilon}^{\bm{\sigma}}}\,]\,h_{\epsilon}\,\sigma(d\mu_{\epsilon})\;.
\end{align*}
By Proposition \ref{p86}, the first integral on the right-hand side
is $o_{\epsilon}(1)\,\alpha_{\epsilon}$. Hence, it suffices to prove
that 
\begin{equation}
\epsilon\,\int_{\partial_{0}\mathcal{B}_{\epsilon}^{\bm{\sigma}}}\,[\,\Phi_{p_{\epsilon}^{\boldsymbol{\sigma}}}\cdot\bm{n}_{\mathcal{B}_{\epsilon}^{\bm{\sigma}}}\,]\,h_{\epsilon}\,\sigma(d\mu_{\epsilon})\,=\,o_{\epsilon}(1)\,\alpha_{\epsilon}\;.\label{e1117}
\end{equation}
By the explicit formula for $p_{\epsilon}^{\bm{\sigma}}$ and by the
boundedness of $\boldsymbol{\ell}$ on $\mathcal{K}$, we can check
that there exists $C>0$ such that $|\,\Phi_{p_{\epsilon}^{\boldsymbol{\sigma}}}\,|\le C\epsilon^{-1}$
on $\partial_{0}\mathcal{B}_{\epsilon}^{\bm{\sigma}}$. Therefore,
the absolute value of the left-hand side of \eqref{e1117} is bounded
from above by $C\int_{\partial_{0}\mathcal{B}_{\epsilon}^{\bm{\sigma}}}\,\sigma(d\mu_{\epsilon})$.
Since $\partial_{0}\mathcal{B}_{\epsilon}^{\bm{\sigma}}\subset\partial\mathcal{K}_{\epsilon}$,
the proof is completed by Lemma \ref{lem14}, provided that we take
$J$ to be sufficiently large.
\end{proof}
By \eqref{edelev1} and Lemmas \ref{lem115} and \ref{lem116}, it
suffices to check the following Lemma to complete the proof of Proposition
\ref{p112}. 
\begin{lem}
\label{lem117}For $\boldsymbol{\sigma}\in\Sigma_{0}$, we have 
\[
A_{1,\,1}(\boldsymbol{\sigma})+A_{2,\,1}(\boldsymbol{\sigma})\,=\,[\,1+o_{\epsilon}(1)\,]\,\alpha_{\epsilon}\,\omega^{\boldsymbol{\sigma}}\;.
\]
\end{lem}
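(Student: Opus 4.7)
The strategy is to exploit Proposition \ref{p87}, which already computes the $\partial_+$-integral of $(\Phi_{p_\epsilon^{\bm\sigma}} - \epsilon^{-1}\bm\ell)\cdot\bm e_1^{\bm\sigma}$ against $\sigma(d\mu_\epsilon)$ without any factor of $h_\epsilon$. The task reduces to showing that inserting $h_\epsilon$ does not alter the leading asymptotics on $\partial_+\mathcal{B}_\epsilon^{\bm\sigma}$, and renders the $\partial_-$-integral negligible. Both facts will follow from the leveling property of $h_\epsilon$ in Proposition \ref{p101}.

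First, I would identify outward normals. By the product-box construction of $\mathcal{B}_\epsilon^{\bm\sigma}$, $\bm n_{\mathcal{B}_\epsilon^{\bm\sigma}} = \pm \bm e_1^{\bm\sigma}$ on $\partial_\pm\mathcal{B}_\epsilon^{\bm\sigma}$, while \eqref{ebdryis} yields $\bm n_{\mathcal{H}_0^\epsilon} = -\bm e_1^{\bm\sigma}$ on $\partial_+\mathcal{B}_\epsilon^{\bm\sigma}$. Substituting these into \eqref{eA11} and \eqref{eA21} and combining gives
\begin{align*}
A_{1,1}(\bm\sigma) + A_{2,1}(\bm\sigma) \;=\; & \epsilon\int_{\partial_+\mathcal{B}_\epsilon^{\bm\sigma}}\bigl[(\Phi_{p_\epsilon^{\bm\sigma}} - \epsilon^{-1}\bm\ell)\cdot\bm e_1^{\bm\sigma}\bigr]\,h_\epsilon\,\sigma(d\mu_\epsilon) \\
& -\,\epsilon\int_{\partial_-\mathcal{B}_\epsilon^{\bm\sigma}}\bigl[\Phi_{p_\epsilon^{\bm\sigma}}\cdot\bm e_1^{\bm\sigma}\bigr]\,h_\epsilon\,\sigma(d\mu_\epsilon).
\end{align*}

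Next, for the $\partial_+$-integral I would decompose $\partial_+\mathcal{B}_\epsilon^{\bm\sigma} = S_+^{\mathrm{main}}\cup S_+^{\mathrm{tail}}$ with $S_+^{\mathrm{main}} = \Pi_\epsilon^{-1}\!\bigl(\mathcal{D}_{K\sqrt{\epsilon}}^{(d-1)}(\bm 0)\bigr)$ for $K = K(\epsilon)\to\infty$ satisfying $K = o(\sqrt{\log(1/\epsilon)})$ (e.g.\ $K = \log\log(1/\epsilon)$). Since $\Pi_\epsilon^{-1}(\bm 0) = \overline{\bm\gamma}$ satisfies $U(\overline{\bm\gamma}) < H - c_0 J^2\delta^2$ by \eqref{e881}, a Taylor expansion of $U$ around $\overline{\bm\gamma}$ (whose gradient there is $O(\delta)$) gives $U(\bm x) \le H - (c_0/2)J^2\delta^2$ uniformly on $S_+^{\mathrm{main}}$ for all small $\epsilon$, thanks to the growth condition $K = o(\sqrt{\log(1/\epsilon)})$. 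Hence $\bm x \in \mathcal{H}_0$, and since $U$ is Morse with finitely many critical values and all saddles inside $\mathcal{H}_0$ lie at levels strictly below $H$, we have $\mathfrak{H}_{\{\bm x\},\mathcal{D}_\epsilon(\bm m_0)}\le H-(c_0/2)J^2\delta^2$. Proposition \ref{p101}(1) then yields $1 - h_\epsilon(\bm x) \le C\epsilon^{(c_0 J^2/2)-d} = o_\epsilon(1)$ uniformly on $S_+^{\mathrm{main}}$ once $J$ is large enough. On $S_+^{\mathrm{tail}}$ I bound $|h_\epsilon|\le 1$, apply the change of variables $\Pi_\epsilon$ and Lemma \ref{lem88} to rewrite the integrand in terms of the positive-definite form $\bm y\cdot(\widetilde{\mathbb H}+\mu\widetilde{\bm v}\otimes\widetilde{\bm v})\bm y$ (Lemma \ref{lem810}), and use the standard Gaussian tail estimate on $\{|\bm y|>K\sqrt\epsilon\}$ to obtain a contribution of order $\alpha_\epsilon\cdot K^{d-2}e^{-\rho_{\min}K^2/2} = o(\alpha_\epsilon)$. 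Combined with Proposition \ref{p87}, the $\partial_+$-integral equals $[1+o_\epsilon(1)]\alpha_\epsilon\omega^{\bm\sigma}$.

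The $\partial_-$-integral is treated identically: the analog base point $\overline{\bm\gamma}_-$ lies in $\mathcal{H}_1$ with $U(\overline{\bm\gamma}_-) < H - c_0 J^2\delta^2$, so Proposition \ref{p101}(2) gives $h_\epsilon(\bm x) \le C\epsilon^{(c_0 J^2/2)-d} = o_\epsilon(1)$ uniformly on the analogous $S_-^{\mathrm{main}}$; the Gaussian tail estimate handles $S_-^{\mathrm{tail}}$. This makes the $\partial_-$-integral $o(\alpha_\epsilon)$, and summing yields the claim. The main obstacle is the two-scale tuning of $K$: it must diverge fast enough to kill the Gaussian tail, yet slowly enough that the Taylor remainder at $\overline{\bm\gamma}$ on $S_+^{\mathrm{main}}$ remains $o(\delta^2)$, so that the cushion $c_0 J^2\delta^2$ survives; this is coupled with the freedom to enlarge $J$, which is legitimate since all estimates in Sections \ref{sec8}--\ref{sec10} have been set up to allow $J$ arbitrarily large.
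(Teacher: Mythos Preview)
Your approach is correct but takes a different and somewhat longer route than the paper, and your tail estimate has a gap that needs patching.

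The paper avoids any main/tail split by establishing a uniform pointwise bound on $\partial_+\mathcal B_\epsilon^{\bm\sigma}$ (Lemma~\ref{lem118}): $1-h_\epsilon(\bm x)\le C\epsilon^{-d}\exp\tfrac{U(\bm x)-H}{2\epsilon}$, trivial when $U(\bm x)\ge H$ and otherwise following from Proposition~\ref{p101}(1) together with $\mathfrak H_{\{\bm x\},\mathcal D_\epsilon(\bm m_0)}=U(\bm x)$. The deliberate halving of the exponent is the trick: multiplied by $\mu_\epsilon\propto e^{-U/\epsilon}$ and by the factor $e^{-\mu(\bm x\cdot\bm v)^2/(2\epsilon)}$ coming from $\nabla p_\epsilon^{\bm\sigma}$ (or from the asymptotic \eqref{e1-p} for $1-p_\epsilon^{\bm\sigma}$ on $\partial_+^{1,a}\mathcal B_\epsilon^{\bm\sigma}$), one obtains $\exp\{-\tfrac1{4\epsilon}\bm x\cdot(\mathbb H^{\bm\sigma}+2\mu\bm v\otimes\bm v)\bm x\}$, and $\mathbb H^{\bm\sigma}+2\mu\bm v\otimes\bm v$ is positive definite by Lemma~\ref{lem83}(1). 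Since $|\bm x|\ge cJ\delta$ everywhere on $\partial_+\mathcal B_\epsilon^{\bm\sigma}$, this yields an $\epsilon^{cJ^2}$ factor uniformly, and the four error integrals of Lemma~\ref{lem119} are $o(\alpha_\epsilon)$ in one stroke. Your localization instead buys a uniform $o(1)$ bound on $1-h_\epsilon$ over $S_+^{\mathrm{main}}$, at the price of a separate tail analysis.

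The gap in your tail step: Lemma~\ref{lem88} produces the positive-definite $\bm y$-form only when the integrand already carries the factor $e^{-\mu(\bm x\cdot\bm v)^2/(2\epsilon)}$. That factor is present in the $\nabla p_\epsilon^{\bm\sigma}$ piece, and in the $(1-p_\epsilon^{\bm\sigma})\bm\ell$ piece on $\partial_+^{1,a}\mathcal B_\epsilon^{\bm\sigma}$ via \eqref{e1-p}; but $S_+^{\mathrm{tail}}\not\subset\partial_+^{1,a}\mathcal B_\epsilon^{\bm\sigma}$. On $S_+^{\mathrm{tail}}\setminus\partial_+^{1,a}\mathcal B_\epsilon^{\bm\sigma}$ the crude bound $|1-p_\epsilon^{\bm\sigma}|\le1$ combined with $\mu_\epsilon$ gives only $e^{-\tfrac1{2\epsilon}\bm x\cdot\mathbb H^{\bm\sigma}\bm x}$, which can be as large as $\epsilon^{-J^2/2}$ and is not Gaussian in $\bm y$. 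You must invoke Lemma~\ref{lem811} here: points outside $\partial_+^{1,a}\mathcal B_\epsilon^{\bm\sigma}$ lie in $\partial_+^{2,a}\mathcal B_\epsilon^{\bm\sigma}$, where $U\ge H+aJ^2\delta^2$ and $\mu_\epsilon$ itself supplies the missing $\epsilon^{aJ^2}$ factor. With that addendum your argument closes.
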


We defer the proof of Lemma \ref{lem117} to the next subsection and
conclude the proof of Proposition \ref{p112} first. 
\begin{proof}[Proof of Proposition \ref{p112}]
The proof is completed by combining \ref{edelev1} and Lemmas \ref{lem115},
\ref{lem116}, and \ref{lem117}. 
\end{proof}

\subsection{\label{sec104}Proof of Lemma \ref{lem117}}

As a consequence of Proposition \ref{p101}, we can get the following
estimate of the equilibrium potential at the boundaries $\mathcal{\partial}_{+}\mathcal{B}_{\epsilon}^{\boldsymbol{\sigma}}$
and $\mathcal{\partial}_{-}\mathcal{B}_{\epsilon}^{\boldsymbol{\sigma}}$
for $\boldsymbol{\sigma}\in\Sigma_{0}$. 
\begin{lem}
\label{lem118}There exists a constant $C>0$ such that, for all $\boldsymbol{\sigma}\in\Sigma_{0}$,
\begin{align*}
 & h_{\epsilon}(\bm{x})\,\ge\,1-C\,\epsilon^{-d}\,\exp\frac{U(\bm{x})-H}{2\epsilon}\;\;\;\;\forall\bm{x}\in\partial_{+}\mathcal{B}_{\epsilon}^{\boldsymbol{\sigma}}\;\;\;\text{and}\\
 & h_{\epsilon}(\bm{x})\,\leq\,C\,\epsilon^{-d}\,\exp\frac{U(\bm{x})-H}{2\epsilon}\;\;\;\;\forall\bm{x}\in\partial_{-}\mathcal{B}_{\epsilon}^{\boldsymbol{\sigma}}\;.
\end{align*}
\end{lem}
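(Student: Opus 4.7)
We will handle the two inequalities separately. For the upper bound on $\partial_-\mathcal{B}_\epsilon^{\boldsymbol{\sigma}}$, a case-split on $U(\bm{x})$ reduces the claim to Proposition \ref{p101}(2). If $U(\bm{x})\ge H$, then the right-hand side is at least $C\epsilon^{-d}\ge 1$ for any $C\ge 1$ and all small $\epsilon$, so the bound is trivial. Suppose then that $U(\bm{x})<H$. In local coordinates with $\boldsymbol{\sigma}$ at the origin, $\bm{x}$ has first coordinate $x_1=-J\delta/\sqrt{\lambda_1}<0$, and the Morse normal form shows that in a small neighborhood of $\boldsymbol{\sigma}$ the sub-level set $\{U<H\}$ splits into exactly two connected components, distinguished by the sign of $x_1$. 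By our convention on $\bm{e}_1^{\boldsymbol{\sigma}}$, the $x_1>0$ component is contained in $\mathcal{H}_0$ and the $x_1<0$ component in $\mathcal{H}_1$; hence $\bm{x}\in\mathcal{H}_1$. Proposition \ref{p101}(2) then yields $h_\epsilon(\bm{x})\le C\epsilon^{-d}\exp\frac{U(\bm{x})-H}{\epsilon}$, which is stronger than the claimed inequality since $U(\bm{x})-H<0$ makes halving the denominator of the exponent only enlarge the right-hand side.

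For the lower bound on $\partial_+\mathcal{B}_\epsilon^{\boldsymbol{\sigma}}$, the case $U(\bm{x})\ge H$ is trivial in the same way, and when $U(\bm{x})<H$ the same Morse-lemma argument places $\bm{x}$ in $\mathcal{H}_0$. Invoking Proposition \ref{p101}(1) requires controlling the communication height $\mathfrak{H}_{\{\bm{x}\},\mathcal{D}_\epsilon(\bm{m}_0)}$. We will construct an admissible path from $\bm{x}$ to $\mathcal{D}_\epsilon(\bm{m}_0)$ in two stages: first, follow the gradient flow of $U$ from $\bm{x}$ (along which $U$ is strictly decreasing) until it enters a small ball around some local minimum $\bm{m}\in\mathcal{M}_0$; then concatenate with a path inside $\mathcal{H}_0$ from $\bm{m}$ to $\mathcal{D}_\epsilon(\bm{m}_0)$ whose maximum height is at most $H':=\max\{U(\boldsymbol{\tau}):\boldsymbol{\tau}$ is a saddle of $U$ lying in $\mathcal{H}_0\}$. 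Because $U$ is Morse with finitely many critical points and $\mathcal{H}_0$ is a connected component of $\{U<H\}$, every such $\boldsymbol{\tau}$ satisfies $U(\boldsymbol{\tau})<H$, so $H'<H$. The Taylor expansion at $\boldsymbol{\sigma}$ gives $U(\bm{x})\ge H-\frac{1}{2}J^2\delta^2+O(\delta^3)$ on $\partial_+\mathcal{B}_\epsilon^{\boldsymbol{\sigma}}$, and since $\delta\to 0$ as $\epsilon\to 0$, we have $U(\bm{x})>H'$ for all sufficiently small $\epsilon$. The concatenated path therefore has maximum height $U(\bm{x})$, which gives $\mathfrak{H}_{\{\bm{x}\},\mathcal{D}_\epsilon(\bm{m}_0)}\le U(\bm{x})$; Proposition \ref{p101}(1) then delivers $h_\epsilon(\bm{x})\ge 1-C\epsilon^{-d}\exp\frac{U(\bm{x})-H}{\epsilon}$, again stronger than the stated claim.

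The only genuinely substantive point is the identification, via the Morse normal form at $\boldsymbol{\sigma}$, of the two local connected components of $\{U<H\}$ with (subsets of) $\mathcal{H}_0$ and $\mathcal{H}_1$, together with the observation that the values of $U$ on $\partial_+\mathcal{B}_\epsilon^{\boldsymbol{\sigma}}$ dominate the internal-saddle levels of $\mathcal{H}_0$ once $\epsilon$ is small. The factor $1/2$ in the denominator of the exponent in the statement is merely slack that simplifies the later use of the lemma in the proof of Proposition \ref{p112}.
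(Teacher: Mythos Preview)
Your proof is correct and follows essentially the same approach as the paper: case-split on whether $U(\bm{x})\ge H$ (trivial case) or $U(\bm{x})<H$ (apply Proposition~\ref{p101}), and note that the factor $1/2$ in the exponent is pure slack. The paper simply asserts $\mathfrak{H}_{\{\bm{x}\},\mathcal{D}_{\epsilon}(\bm{m}_{0})}=U(\bm{x})$ for $\bm{x}\in\partial_{+}\mathcal{B}_{\epsilon}^{\boldsymbol{\sigma}}\cap\mathcal{H}_0$ ``for all sufficiently small $\epsilon$'' without further comment; your two-stage path construction (gradient descent to some $\bm{m}\in\mathcal{M}_0$, then a connection inside $\mathcal{H}_0$ staying below the maximal internal-saddle level $H'<H$) is a welcome unpacking of exactly that assertion, and makes explicit why smallness of $\epsilon$ is needed (so that $U(\bm{x})\ge H-\tfrac12 J^2\delta^2+O(\delta^3)>H'$).
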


\begin{proof}
Let us consider the first inequality. If $\bm{x}\in\partial_{+}\mathcal{B}_{\epsilon}$
satisfies $U(\bm{x})\geq H$, then the inequality is obvious for all
sufficiently small $\epsilon$. Otherwise, $\bm{x}\in\mathcal{H}_{0}$;
hence, the bound follows from part (1) of Proposition \ref{p101}
since we have $\mathfrak{H}_{\{\bm{x}\},\,\mathcal{D}_{\epsilon}(\bm{m}_{0})}=U(\bm{x})$
for all sufficiently small $\epsilon$. The proof of the second one
is similar and left to the reader. 
\end{proof}
In the next lemma, we provide a consequence of the previous lemma. 
\begin{lem}
\label{lem119}For $\boldsymbol{\sigma}\in\Sigma_{0}$, we have 
\begin{align}
\epsilon\,\int_{\partial_{+}\mathcal{B}_{\epsilon}^{\boldsymbol{\sigma}}}\,|\,\nabla p_{\epsilon}^{\bm{\sigma}}\,|\,(\,1-h_{\epsilon}\,)\,\sigma(d\mu_{\epsilon}) & \,=\,o_{\epsilon}(1)\,\alpha_{\epsilon}\;,\label{e1118}\\
\int_{\partial_{+}\mathcal{B}_{\epsilon}^{\boldsymbol{\sigma}}}\,(\,1-p_{\epsilon}^{\bm{\sigma}}\,)\,(\,1-h_{\epsilon}\,)\,\sigma(d\mu_{\epsilon}) & \,=\,o_{\epsilon}(1)\,\alpha_{\epsilon}\;,\label{e1119}\\
\epsilon\,\int_{\partial_{-}\mathcal{B}_{\epsilon}^{\boldsymbol{\sigma}}}\,|\,\nabla p_{\epsilon}^{\bm{\sigma}}\,|\,h_{\epsilon}\,\sigma(d\mu_{\epsilon}) & \,=\,o_{\epsilon}(1)\,\alpha_{\epsilon}\;,\label{e1120}\\
\int_{\partial_{-}\mathcal{B}_{\epsilon}^{\boldsymbol{\sigma}}}\,p_{\epsilon}^{\bm{\sigma}}\,h_{\epsilon}\,\sigma(d\mu_{\epsilon}) & \,=\,o_{\epsilon}(1)\,\alpha_{\epsilon}\;.\label{e1121}
\end{align}
\end{lem}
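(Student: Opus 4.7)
The plan is to derive all four bounds from Lemma~\ref{lem118} (which controls $h_\epsilon$ and $1-h_\epsilon$ on the boundaries $\partial_\pm\mathcal{B}_\epsilon^{\bm{\sigma}}$) combined with the explicit Gaussian structure of $p_\epsilon^{\bm{\sigma}}$, mirroring the decompositions introduced in the proof of Proposition~\ref{p87}. Throughout I assume $\bm{\sigma}=\bm{0}$ and use the Taylor expansion $U(\bm{x})=H+\tfrac{1}{2}\bm{x}\cdot\mathbb{H}\bm{x}+O(\delta^{3})$ on $\mathcal{B}_\epsilon^{\bm{\sigma}}$, the explicit form $|\nabla p_\epsilon^{\bm{\sigma}}(\bm{x})|=c_\epsilon^{-1}\exp(-\mu(\bm{x}\cdot\bm{v})^{2}/(2\epsilon))$ with $c_\epsilon=\sqrt{2\pi\epsilon/\mu}$, and the key algebraic identity
\[
e^{(U-H)/(2\epsilon)}\cdot e^{-U/\epsilon}\,=\,e^{-H/\epsilon}\,e^{-(U-H)/(2\epsilon)}\,=\,[1+o_\epsilon(1)]\,e^{-H/\epsilon}\,e^{-\bm{x}\cdot\mathbb{H}\bm{x}/(4\epsilon)}\;,
\]
so that the ``half-strength'' exponential factor provided by Lemma~\ref{lem118} still leaves a non-trivial Gaussian contribution from the invariant measure.

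For~\eqref{e1118}, combining the estimates above shows that the integrand is bounded, up to a $[1+o_\epsilon(1)]$ factor, by
\[
C\epsilon^{1/2-d}Z_\epsilon^{-1}e^{-H/\epsilon}\exp\Big\{-\tfrac{1}{4\epsilon}\bm{x}\cdot[\mathbb{H}+2\mu\bm{v}\otimes\bm{v}]\bm{x}\Big\}\,\sigma(d\bm{x})\;.
\]
By Lemma~\ref{lem83} the matrix $\mathbb{H}+2\mu\bm{v}\otimes\bm{v}$ is positive definite, and a direct computation via Sherman--Morrison together with Lemma~\ref{lem82} shows that its restriction to the affine hyperplane $\partial_+\mathcal{B}_\epsilon^{\bm{\sigma}}\subset\{x_1=J\delta/\lambda_1^{1/2}\}$ attains its minimum $\lambda_1 J^2\delta^2/(2\mu v_1^2-\lambda_1)\ge c_0 J^2\delta^2$ for some $c_0>0$ independent of $\epsilon$ and $J$. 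Extending to the full hyperplane and evaluating the Gaussian integral produces a bound of order $C\epsilon^{c_0 J^2/4-d}\,\alpha_\epsilon$, which is $o_\epsilon(1)\,\alpha_\epsilon$ once $J$ is chosen sufficiently large.

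For~\eqref{e1119}, I replace $\epsilon|\nabla p_\epsilon^{\bm{\sigma}}|$ by $(1-p_\epsilon^{\bm{\sigma}})$ and decompose $\partial_+\mathcal{B}_\epsilon^{\bm{\sigma}}=\partial_+^{1,a}\mathcal{B}_\epsilon^{\bm{\sigma}}\cup\partial_+^{2,a}\mathcal{B}_\epsilon^{\bm{\sigma}}$ as in Lemma~\ref{lem811}. On $\partial_+^{1,a}\mathcal{B}_\epsilon^{\bm{\sigma}}$, the elementary estimate~\eqref{e_eleeq} yields $1-p_\epsilon^{\bm{\sigma}}(\bm{x})\le C\epsilon^{1/2}(aJ\delta)^{-1}\exp(-\mu(\bm{x}\cdot\bm{v})^{2}/(2\epsilon))$, so the argument of the previous paragraph applies with an extra harmless factor $\delta^{-1}$. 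On $\partial_+^{2,a}\mathcal{B}_\epsilon^{\bm{\sigma}}$, the trivial bound $1-p_\epsilon^{\bm{\sigma}}\le 1$ combined with $U(\bm{x})\ge H+aJ^2\delta^2$ extracts a factor $\epsilon^{aJ^2/2}$ from the invariant measure, which dominates the $\epsilon^{-d}$ from Lemma~\ref{lem118} for $J$ sufficiently large.

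The bounds~\eqref{e1120} and~\eqref{e1121} are proved by the completely symmetric argument on $\partial_-\mathcal{B}_\epsilon^{\bm{\sigma}}$, using the other inequality of Lemma~\ref{lem118} for $h_\epsilon$ and the obvious analog of Lemma~\ref{lem811} with $\bm{x}\cdot\bm{v}$ replaced by $-\bm{x}\cdot\bm{v}$. The main obstacle, though technical rather than conceptual, is the explicit computation showing that the quadratic form $\bm{x}\cdot(\mathbb{H}+2\mu\bm{v}\otimes\bm{v})\bm{x}$ restricted to each of the hyperplanes $\partial_\pm\mathcal{B}_\epsilon^{\bm{\sigma}}$ admits a strictly positive minimum of order $\delta^2$, so that the factor $\epsilon^{-d}$ from Lemma~\ref{lem118} can be absorbed by the Gaussian decay once $J$ is taken large.
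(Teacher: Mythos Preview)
Your proof is correct and follows essentially the same route as the paper: combine Lemma~\ref{lem118} with the explicit Gaussian form of $p_\epsilon^{\bm\sigma}$ and the Taylor expansion of $U$ to reduce to an integral of $\exp\{-\tfrac{1}{4\epsilon}\bm{x}\cdot(\mathbb{H}+2\mu\bm{v}\otimes\bm{v})\bm{x}\}$ over $\partial_\pm\mathcal{B}_\epsilon^{\bm\sigma}$, then use positive definiteness of $\mathbb{H}+2\mu\bm{v}\otimes\bm{v}$ (Lemma~\ref{lem83}) to extract a factor $\epsilon^{cJ^2}$, and handle~\eqref{e1119} via the $\partial_+^{1,a}/\partial_+^{2,a}$ decomposition of Lemma~\ref{lem811}. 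The only cosmetic difference is that the paper bounds the quadratic form by $\gamma|\bm{x}|^2$ with $\gamma$ the smallest eigenvalue and then uses $|\bm{x}|\ge CJ\delta$ on $\partial_+\mathcal{B}_\epsilon^{\bm\sigma}$, whereas you compute the exact affine minimum via Sherman--Morrison; both yield the needed $c_0J^2\delta^2$ lower bound.
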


\begin{proof}
\textcolor{black}{Since the proofs of \eqref{e1120} and \eqref{e1121}
are identical to those of \eqref{e1118} and \eqref{e1119}, respectively,
we focus only on \eqref{e1118} and \eqref{e1119}. }

Let us first consider \eqref{e1118}. We use the explicit formula
for $p_{\epsilon}^{\bm{\sigma}}$ and Lemma \ref{lem118} to bound
the left-hand side of \eqref{e1118} by
\begin{equation}
C\,\epsilon^{-1/2-3d/2}\,\alpha_{\epsilon}\,\int_{\partial_{+}\mathcal{B}_{\epsilon}^{\boldsymbol{\sigma}}}\,\exp\,\Big\{\,-\frac{U(\bm{x})-H}{2\epsilon}-\frac{\mu}{2\epsilon}(\bm{x}\cdot\bm{v}^{\boldsymbol{\sigma}})^{2}\,\Big\}\,\sigma(d\bm{x})\;.\label{e1122}
\end{equation}
By the Taylor expansion, the last line can be further bounded by 
\begin{align}
 & C\,\epsilon^{-1/2-3d/2}\,\alpha_{\epsilon}\,\int_{\partial_{+}\mathcal{B}_{\epsilon}^{\boldsymbol{\sigma}}}\,\exp\,\Big\{\,-\frac{1}{4\epsilon}\bm{x}\cdot[\mathbb{H}^{\boldsymbol{\sigma}}+2\mu\bm{v}^{\boldsymbol{\sigma}}\otimes\bm{v}^{\boldsymbol{\sigma}}]\bm{x}\,\Big\}\,\sigma(d\bm{x})\nonumber \\
 & \,\le\,C\,\epsilon^{-1/2-3d/2}\,\alpha_{\epsilon}\,\int_{\partial_{+}\mathcal{B}_{\epsilon}^{\boldsymbol{\sigma}}}\,\exp\,\Big\{\,-\frac{\gamma}{4\epsilon}|\bm{x}|^{2}\,\Big\}\,\sigma(d\bm{x})\ ,\label{e1123}
\end{align}
where $\gamma>0$ is the smallest eigenvalue of the positive-definite
matrix $\mathbb{H}+2\mu\bm{v}\otimes\bm{v}$ (cf. Lemma \ref{lem83}).
Since there exists $C>0$ such that $|\bm{x}|\ge CJ\delta$ for all
$\bm{x}\in\partial_{+}\mathcal{B}_{\epsilon}^{\boldsymbol{\sigma}}$,
and since $\sigma(\partial_{+}\mathcal{B}_{\epsilon}^{\boldsymbol{\sigma}})=O(\delta^{d-1})$,
we can bound \eqref{e1123} from above, for some $c,\,C>0$, by 
\[
C\,\epsilon^{-1/2-3d/2}\,\delta^{d-1}\,\epsilon^{cJ^{2}}\,\alpha_{\epsilon}\,=\,C\,\Big(\log\frac{1}{\epsilon}\Big)^{\frac{d-1}{2}}\,\epsilon^{cJ^{2}-d-1}\,=\,o_{\epsilon}(1)\,\alpha_{\epsilon}
\]
for sufficiently large $J$. This completes the proof of \eqref{e1118}. 

For \eqref{e1119}, recall $\partial_{+}^{1,\,a}\mathcal{B}_{\epsilon}^{\boldsymbol{\sigma}}$
and $\partial_{+}^{2,\,a}\mathcal{B}_{\epsilon}^{\boldsymbol{\sigma}}$
from \eqref{ebd_B1} and \eqref{ebd_B2}, respectively. By Lemma \ref{lem811},
it suffices to prove that, for $a\in(0,\,a_{0})$, 
\begin{align}
\int_{\partial_{+}^{k,\,a}\mathcal{B}_{\epsilon}^{\boldsymbol{\sigma}}}\,(\,1-p_{\epsilon}^{\bm{\sigma}}\,)\,(\,1-h_{\epsilon}\,)\,\sigma(d\mu_{\epsilon}) & \,=\,o_{\epsilon}(1)\,\alpha_{\epsilon}\;\;\;\;;\;k=1,\,2\;.\label{eintt}
\end{align}
For $k=1$, by \eqref{e1-p} and Lemma \ref{lem118}, we can bound
the integral from above by 
\[
\frac{C\,e^{-H\,}\epsilon^{1/2}}{Z_{\epsilon}\,\epsilon^{d}\,\delta}\,\int_{\partial_{+}\mathcal{B}_{\epsilon}}\,\exp\,\Big\{\,-\frac{U(\bm{x})-H}{2\epsilon}-\frac{\mu}{2\epsilon}(\bm{x}\cdot\bm{v}^{\boldsymbol{\sigma}})^{2}\,\Big\}\,\sigma(d\bm{x})\;.
\]
Hence, we can proceed as in the computation of \eqref{e1122} to prove
that this is $o_{\epsilon}(1)\,\alpha_{\epsilon}$. 

Now, we finally consider the $k=2$ case of \eqref{eintt}. Since
$U(\boldsymbol{x})\geq H+aJ^{2}\delta^{2}$ for $\boldsymbol{x}\in\partial_{+}^{2,\,a}\mathcal{B}_{\epsilon}^{\boldsymbol{\sigma}}$,
the left-hand side of \eqref{eintt} with $k=2$ is bounded from above
by 
\[
\frac{1}{Z_{\epsilon}}\,e^{-H/\epsilon}\,\epsilon^{aJ^{2}}\,\sigma(\partial_{+}\mathcal{B}_{\epsilon}^{\boldsymbol{\sigma}})\,\le\,\frac{C}{Z_{\epsilon}}\,e^{-H/\epsilon}\,\epsilon^{aJ^{2}}\,\delta^{d-1}\,=\,o_{\epsilon}(1)\,\alpha_{\epsilon}
\]
for sufficiently large $J$. This completes the proof. 
\end{proof}
Now, we are ready to prove Lemma \ref{lem117}.
\begin{proof}[Proof of Lemma \ref{lem117}]
In view of the expressions \eqref{eA11} and \eqref{eA21} for $A_{1,\,1}(\boldsymbol{\sigma})$
and $A_{2,\,1}(\boldsymbol{\sigma})$, respectively, it suffices to
prove the following estimates: 
\begin{align}
\epsilon\,\int_{\partial_{+}\mathcal{B}_{\epsilon}^{\bm{\sigma}}}\,\Big[\,\Big(\Phi-\frac{1}{\epsilon}\bm{\ell}\,\Big)\cdot\bm{n}_{\mathcal{B}_{\epsilon}^{\bm{\sigma}}}\,\Big]\,h_{\epsilon}\,\sigma(d\mu_{\epsilon}) & \,=\,[\,1+o_{\epsilon}(1)\,]\,\alpha_{\epsilon}\,\omega^{\boldsymbol{\sigma}}\;,\label{e1001}\\
\epsilon\,\int_{\partial_{-}\mathcal{B}_{\epsilon}^{\bm{\sigma}}}[\,\Phi_{p_{\epsilon}^{\boldsymbol{\sigma}}}\cdot\bm{n}_{\mathcal{B}_{\epsilon}^{\bm{\sigma}}}\,]\,h_{\epsilon}\,\sigma(d\mu_{\epsilon}) & \,=\,o_{\epsilon}(1)\,\alpha_{\epsilon}\;.\label{e1002}
\end{align}
Let us first consider \eqref{e1001}. By \eqref{e1118} and \eqref{e1119}
of Lemma \ref{lem119}, we can replace the $h_{\epsilon}(\bm{x})$
term with $1$ with an error term of order $o_{\epsilon}(1)\,\alpha_{\epsilon}$.
Then, we can apply Proposition \ref{p87} to prove \eqref{e1001}.
On the other hand, the estimate \eqref{e1002} is a direct consequence
of \eqref{e1120} and \eqref{e1121} of Lemma \ref{lem119}. 
\end{proof}
\begin{acknowledgement*}
IS was supported by the National Research Foundation of Korea (NRF)
grant funded by the Korea government (MSIT) (No. 2016K2A9A2A13003815,
2017R1A5A1015626 and 2018R1C1B6006896). JL was supported by the NRF
grant funded by the Korea government  (No. 2017R1A5A1015626 and 2018R1C1B6006896).
\end{acknowledgement*}

\end{document}